\numberwithin{equation}{section}
\newtheorem{theorem}{Theorem}[section]
\newtheorem{definition}[theorem]{Definition}
\newtheorem{proposition}[theorem]{Proposition}
\newtheorem{lemma}[theorem]{Lemma}
\newtheorem{remark}[theorem]{Remark}
\newtheorem{corollary}[theorem]{Corollary}
\newtheorem*{remark*}{Remark}
\numberwithin{equation}{section}
\newcommand{\eps}{\varepsilon}
\newcommand{\dt}{\delta}
\newcommand{\al}{\alpha}
\newcommand{\bn}{\mathbf{n}}
\newcommand{\br}{\mathbf{r}}
\newcommand{\cN}{\mathcal{N}}
\newcommand{\cP}{\mathcal{P}}
\newcommand{\cH}{\mathcal{H}}
\newcommand{\cS}{\mathcal{S}}
\newcommand{\cC}{\mathcal{C}}
\newcommand{\cL}{\mathcal{L}}
\newcommand{\cK}{\mathcal{K}}
\newcommand{\cT}{\mathcal{T}}
\newcommand{\cR}{\mathcal{R}}
\newcommand{\cF}{\mathcal{F}}
\newcommand{\R}{\mathbb{R}}
\DeclareMathOperator{\Ric}{Ric}
\DeclareMathOperator{\Hess}{Hess}
\DeclareMathOperator{\diam}{diam}
\DeclareMathOperator{\loc}{loc}
\title[Global perturbation of MCF II]{Generic Dynamics of Mean Curvature Flows with Asymptotically Conical Singularities}
\author{Ao Sun, Jinxin Xue}
\address{Lehigh University, Department of Mathematics, Bethlehem, PA 18015}
\email{aos223@lehigh.edu}
\address{New Cornerstone Science Laboratory, Department of Mathematical Sciences,  Tsinghua University, Beijing, 100084}
\email{jxue@tsinghua.edu.cn}
\date{\today}
\begin{document}
	\begin{abstract}This is the second paper in the series to study the generic dynamics of mean curvature flows. We study the initial perturbation of mean curvature flows, whose first singularity is modeled by an asymptotically conical shrinker. The noncompactness of the limiting shrinker creates essential difficulties. We introduce the Feynman-Kac formula to get precise asymptotic behaviour of the linearized rescaled mean curvature equation along an orbit. We also develop the invariant cone method for the noncompact setting for the local dynamics near the shrinker. As a consequence, we prove that after a generic initial perturbation, the perturbed rescaled mean curvature flow avoids the conical singularity.
	\end{abstract}
	\maketitle

	\section{Introduction}
	In this paper, we extend the idea in our previous paper \cite{SX} to study the initial perturbation of mean curvature flows whose first singularity is modeled by an asymptotically conical self-shrinker. 
	
	A \emph{mean curvature flow} (MCF) is a family of closed embedded hypersurfaces $\{\mathbf M_t\}$ in $\R^{n+1}$ satisfying the equation
	$
	\partial_t x=\vec{H}.
	$
	Here $x$ is the position vector, $\vec{H}$ is the mean curvature vector, which is the trace of the second fundamental forms. It is known that an MCF always develops singularities within a finite time, so the analysis of the singularity blowup becomes a central topic when we study MCFs. After a spacetime rescaling, an MCF can be turned to a \emph{rescaled mean curvature flow} (RMCF), satisfying the equation
	\begin{equation}\label{EqRMCF}
		\partial_t x=\vec{H}+\frac{x^\perp}{2},
	\end{equation} where 
	$\vec{H}+\frac{x^\perp}{2}$ is called the rescaled mean curvature vector. Here $x^\perp$ is the projection of the position vector to the normal bundle. The MCF and its corresponding RMCF are related as follows: if $\{\mathbf{M}_\tau\}_{\tau\in[-1,0)}$ is an MCF, then 
	\begin{equation}\label{EqMCFRMCF}
		M_t=\mathrm{e}^{t/2}\mathbf{M}_{-\mathrm{e}^{-t}}\end{equation} 
	is its corresponding RMCF zooming at the spacetime origin, while $t\in[0,\infty)$.
	A hypersurface that is static under the RMCF is called a \emph{self-shrinker}, which satisfies the equation $\vec{H}+\frac{x^\perp}{2}=0$. The RMCF was first introduced by Huisken in \cite{H} to study the singularity of MCFs. We say a singularity of the MCF is modeled by a self-shrinker $\Sigma$, if the corresponding RMCF converges to $\Sigma$ smoothly as time goes to infinity. If $\Sigma$ is noncompact, then the convergence is in the sense of $C_{\loc}^\infty$, i.e., on any compact subset the convergence is smooth.

	Colding-Minicozzi introduced the ideas from dynamical systems to study MCFs (c.f. \cite{CM1,CM2,CM3,CM4} etc.). Their dynamical approach views the RMCF \eqref{EqRMCF} as the negative gradient flow of the $F$-functional
	$$\cF(M_t)=\frac{1}{(4\pi)^{n/2}}\int_{M_t}\mathrm{e}^{-\frac{|x|^2}{4}}d\mu $$
	and shrinkers as the critical points of $F$.  So it is natural to anticipate that generic RMCF avoids those shrinkers that are saddles in the second variation, modulo translations and dilations.  In \cite{CMP}, Colding-Minicozzi-Pedersen proposed the conjecture that one can perturb the initial data of an MCF so that the MCF will only encounter singularities modeled by generic self-shrinkers (c.f. \cite[Conjecture 8.2]{CMP}).
	Since Euclidean rigid motion does not change the MCF essentially, in \cite{CM1} Colding-Minicozzi also introduced the entropy  $$\lambda(M)=\sup_{x\in\R^{n+1},t\in(0,\infty)}\cF(t^{-1}(M-x))$$ modulo the translations and dilations. 
	In \cite{SX}, we made progress to Colding-Minicozzi's dynamical program. We studied the initial perturbation of an MCF whose first singularity is modeled by a closed embedded self-shrinker. 
	
	In this paper, we make further progress to Colding-Minicozzi's program. We study the initial perturbation of MCFs whose first singularity is unique and modeled by an asymptotic conical self-shrinker. A self-shrinker $\Sigma$ is called \emph{asymptotically conical} if it converges to a cone after blowing down. More precisely, $\tau^{-1}\Sigma$ converges to a cone $\Gamma$ smoothly on any compact subset of $\R^{n+1}\backslash\{0\}$ as $\tau\to\infty$. We make the following standing assumption throughout the paper if not otherwise mentioned. 
	
	($\star$) {\it Let $(\mathbf M_\tau)_{\tau\in[-1,0)}$ be an MCF with a unique first-time singularity at the spacetime point $(0,0)$, and let $(M_t)_{t\in[0,\infty)}$ be the corresponding RMCF with $M_t\to\Sigma$ in the $C^\infty_{\loc}$ sense as $t\to\infty$, where $\Sigma$ is an asymptotically conical self-shrinker.}
	
	We remark that this assumption is natural: by the resolution of the Multiplicity One Conjecture by Bamler-Kleiner \cite{BK}, if the first-time singularity of an MCF of closed embedded surfaces in $\R^3$ is modeled by an asymptotically conical shrinker, then the corresponding RMCF indeed converges to the asymptotically conical shrinker in the $C_{\text{loc}}^\infty$ sense; recently Tang-Kai Lee and Xinrui Zhao \cite{LeZh} constructed examples of MCFs of closed hypersurfaces explicitly in $\R^{n+1}$ whose first-time singularity is modeled by an asymptotically conical shrinker.

	The main theorem tackles a given singularity. While the perturbation does modify the shape of the flow near the given singularity, it does not modify the region far away from the singularity. In fact, this is the consequence of the pseudolocality of MCFs. As entropy characterizes all the spacetime scales, it is not sensitive enough to capture the modification of a given singularity. In \cite{Su}, the first author introduced a finer quantity called \emph{local entropy}. More precisely, suppose $U\times I\subset\R^{n+1}\times(0,\infty)$ is a spacetime region,  define the local entropy to be
	\[
	\lambda^I_U(M)=\sup_{x\in U,t\in I}\cF(t^{-1}(M-x)).
	\]
	The benefit of introducing local entropy is that it only detects the scales that we are interested in. We refer the reader to \cite{Su} for further discussions.
	
	\begin{theorem}\label{thm:MainThmLocal}
		Assume $(\star)$. Then there exist $\dt_0>0$ and   an open dense subset $\cS$ of $\{u\in C^{2,\alpha}(M_0)\ |\ \|u\|_{C^{2,\alpha}}=1\}$, such that for any $0<\dt<\dt_0$ and any $u_0\in \mathcal S$, there exists $\eps_0:=\eps_0(u_0)$, such that for all $0<\eps<\eps_0$, there exists $T>0$, such that the RMCF $\{\widetilde{M}_t\}$ starting from $\widetilde{M}_0:=\{x+\eps u_0(x)\mathbf n(x)\ |\ x\in M_0\}$ satisfies $$\lambda^{(1-\delta,1+\delta)}_{B_\delta}(\widetilde{M}_T)<\lambda(\Sigma).$$
	\end{theorem}
	
	More quantitatively, we have the following theorem. 
	\begin{theorem}\label{thm:main1}
		
		Assume $(\star)$. Then there exist $\dt_0>0$ and   an open dense subset $\cS$ of $\{u\in C^{2,\alpha}(M_0)\ |\ \|u\|_{C^{2,\alpha}}=1\}$, such that for any $0<\dt<\dt_0$ and any $u_0\in \mathcal S$, there exists $\eps_0:=\eps_0(u_0)$, such that for all $0<\eps<\eps_0$, there exists $T>0$, such that the RMCF $\{\widetilde{M}_t\}$ starting from $\widetilde{M}_0:=\{x+\eps u_0(x)\mathbf n(x)\ |\ x\in M_0\}$ satisfies $$\cF( \widetilde{M}_T)<\lambda(\Sigma)-\dt^{2.5}.$$
		Moreover, there exists $R=R(\eps)\to\infty$ as $\eps\to 0,$ such that \begin{enumerate}
			\item $\cF(\widetilde{M}_T\setminus B_R)<\dt^3$,
			\item  $\widetilde{M}_T\cap B_R$ can be written as the graph of a function $\bar u(T):\ \Sigma\cap B_R\to \R$ with $\|\bar u(T)\|_{C^{2,\al}}=\dt$,
			\item $\cF(\mathcal R \widetilde{M}_T)<\lambda(\Sigma)-\dt^{2.5}$ for any translation and dilation $\cR$ of scale $\dt$.
		\end{enumerate}

	\end{theorem}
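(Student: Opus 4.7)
\emph{Setup as a graph over $\Si$.} Since $M_t\to \Si$ in $C^\infty_{\loc}$, for $t$ sufficiently large and on any compact region the perturbed flow $\widetilde M_t$ can be written as a normal graph $\bar u(\cdot,t)$ over $\Si$. In these coordinates the RMCF becomes
\begin{equation*}
\partial_t\bar u=L\bar u+Q(\bar u,\nabla\bar u,\nabla^2\bar u),
\end{equation*}
where $L=\Delta_\Si-\tfrac12\langle x,\nabla\rangle+|A|^2+\tfrac12$ is the Jacobi operator of $F$ at $\Si$ and $Q$ is a quadratic error. By the spectral theory of Colding--Minicozzi for asymptotically conical shrinkers, $L$ is self-adjoint on $L^2_w:=L^2(e^{-|x|^2/4}d\mu)$ with discrete spectrum: unstable eigenvalues $\mu_1\geq\mu_2\geq\cdots>0$ of finite multiplicity, a finite-dimensional kernel spanned by the translation and dilation variations of $\Si$, and a stable negative part. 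I define $\cS$ as the set of unit $u_0\in C^{2,\al}(M_0)$ whose projection onto the unstable eigenspace, modulo the translation-dilation kernel, is nonzero in a fixed norm. Openness and density follow since the complement is cut out by vanishing of finitely many continuous linear functionals.

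\emph{Feynman--Kac pointwise estimates and invariant cones.} The key new input, compared with the compact case handled in \cite{SX}, is a pointwise description of the linear semigroup $e^{tL}$ over the non-compact $\Si$. Splitting $L=\cL+V$ with $\cL=\Delta_\Si-\tfrac12\langle x,\nabla\rangle$ the drift Laplacian of an Ornstein--Uhlenbeck-type diffusion $X_s$ on $\Si$, and $V=|A|^2+\tfrac12$, the Feynman--Kac formula gives
\begin{equation*}
(e^{tL}v)(x)=\mathbb E^x\!\left[\exp\!\Big(\int_0^tV(X_s)\,ds\Big)v(X_t)\right].
\end{equation*}
Because the drift of $\cL$ is inward and $|A|^2(x)\lesssim|x|^{-2}$ at infinity on an asymptotically conical $\Si$, the exponential potential along typical trajectories is controllable uniformly in the starting point $x$, yielding pointwise asymptotics $(e^{tL}v)(x)\sim e^{\mu_1 t}(\Pi_1 v)(x)$ with quantitative spatial decay. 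With these pointwise linear estimates in hand, I port the invariant cone method of \cite{SX} to the non-compact setting: define a cone $\cK$ in which the unstable eigencomponents dominate the neutral and stable ones and whose decay at infinity matches the Feynman--Kac prediction, then verify that $Q$ stays subdominant to the linear growth so long as $\|\bar u\|_{C^{2,\al}}$ is small, so $\cK$ is forward-invariant under the nonlinear flow.

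\emph{Termination and $F$-drop.} Growth inside $\cK$ is exponential in time, so there is a finite first time $T$ at which $\|\bar u(T)\|_{C^{2,\al}(\Si\cap B_R)}=\dt$, with radius $R=R(\eps)\to\infty$ chosen so that the piece of $\widetilde M_T$ outside $B_R$ carries $F$-mass less than $\dt^3$; this delivers (1) and (2). For the $F$-drop, expand around $\Si$: since the Hessian of $F$ on normal graphs is $-L$ in $L^2_w$,
\begin{equation*}
F(\widetilde M_T)\leq F(\Si)-c\,\|\Pi_+\bar u(T)\|_{L^2_w}^2+O(\|\bar u(T)\|^3),
\end{equation*}
where $\Pi_+$ projects onto the unstable eigenspace. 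The cone property, combined with an interpolation between the localized $C^{2,\al}$ bound $\dt$ on $\Si\cap B_R$ and the smallness outside $B_R$, gives $\|\Pi_+\bar u(T)\|_{L^2_w}\gtrsim \dt^{5/4}$, producing the claimed $\dt^{2.5}$ drop. For (3), any translation or dilation $\cR$ of scale $\dt$ perturbs $\bar u$ only within or close to the neutral kernel and hence does not essentially change $\Pi_+\bar u$, so the $F$-drop survives.

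\emph{Main obstacle.} The entire technical weight is the non-compactness of $\Si$. On compact shrinkers, Hilbert-space spectral estimates immediately give uniform exponential separation of modes and closing the invariant cone is routine bookkeeping; on asymptotically conical $\Si$, $L^2_w$ estimates integrate against a rapidly decaying weight and give no pointwise control of the nonlinearity at infinity, where in principle $Q$ could destroy the cone. The Feynman--Kac representation is the device that recovers pointwise exponential estimates, exploiting the inward drift of $\cL$ and the $|x|^{-2}$ decay of $|A|^2$; marrying this stochastic input to the geometric cone dynamics and to cutoff arguments at radius $R(\eps)$ is the principal difficulty to overcome.
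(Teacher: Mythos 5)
There is a genuine gap: your argument misplaces where and how the Feynman--Kac formula is used, and as a result the definition of the generic set $\cS$ does not make sense as stated. You define $\cS$ as the set of unit $u_0\in C^{2,\al}(M_0)$ whose projection onto the unstable eigenspace of $L_\Si$ (modulo the translation/dilation kernel) is nonzero, and claim density from ``vanishing of finitely many continuous linear functionals.'' But $u_0$ lives on $M_0$, not on $\Si$, and $M_0$ is far from $\Si$; there is no natural spectral projection of $u_0$ onto eigenmodes of $L_\Si$. The connection from $u_0$ on $M_0$ to the mode decomposition at $\Si$ is precisely the hard dynamical problem: one must track the \emph{nonautonomous} linearized equation $\partial_t v = L_{M_t}v$ along the evolving manifolds $M_t\to\Si$ for all large $t$ and show that the solution aligns with $\phi_1$. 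The paper's Feynman--Kac formula is built exactly for this nonautonomous equation over the family $\{M_t\}$ (Theorems \ref{ThmFK} and \ref{mainthm-FK}), together with convergence results $\lambda_1(M_t)\to\lambda_1(\Si)$, $\phi_1(t)\to\phi_1$ (Section \ref{SConvergence}). Your version applies Feynman--Kac only to the autonomous semigroup $e^{tL_\Si}$ on the fixed shrinker; that piece is comparatively easy and does not touch the red-curve dynamics in Figure \ref{Figure}. Density of $\cS$ in the paper is argued differently and does not rely on a finite-rank condition: one distinguishes whether $\|u\|_{L^2(M_t)}$ grows with the top Lyapunov rate or not, and in the slow case one perturbs by a small positive function (which is shown to grow at the top rate by the Feynman--Kac positivity argument).

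A second, related gap is your treatment of the non-compactness of $\Si$. You set up $\widetilde M_t$ ``as a normal graph $\bar u$ over $\Si$'' and then write $\partial_t\bar u = L\bar u + Q$, but neither $M_t$ nor $\widetilde M_t$ is a global graph over $\Si$ for any finite $t$, and the deviation outside a large ball cannot be absorbed into the quadratic term $Q$. The paper addresses this by (i) writing $\widetilde M_t$ first as a graph $v$ over $M_t$, (ii) transplanting $v$ to $\Si^{\mathbf r(t)}$, a domain whose radius grows like $e^{t/2}$ by pseudolocality (Proposition \ref{PropPL}), and (iii) showing that the boundary terms introduced by the cutoff are beaten by the Gaussian weight $e^{-\mathbf r(t)^2/4}$, which decays super-exponentially (this is the content of Propositions \ref{LmT} and \ref{LmCM3}). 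Your invariant-cone step as written has no mechanism to control the information discarded outside the compact region over the $O(\log\eps^{-1})$ time window; the pseudolocality/exponentially-growing-window argument is exactly what closes this, and it is missing from your proposal. Finally, the claimed lower bound $\|\Pi_+\bar u(T)\|_{L^2_w}\gtrsim\dt^{5/4}$ is asserted but not derived; the paper obtains $\|u^*\|_Q\geq\dt^{d'}$ with $d'>1$ arbitrarily close to $1$ from the interpolation Lemma \ref{LmInterpolation} and the $C^\ell$ bounds of Ecker--Huisken, which is what makes the $\dt^{2.5}$ drop possible.
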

	
	{In Theorem \ref{thm:main1}, we first consider the part of the perturbed RMCF inside a large ball $B_R$. Item (1) says that the part outside this ball is negligible; item (2) says that inside the ball $B_R$, the perturbed RMCF can be written as a graph; item (3) says that the perturbed RMCF has strictly smaller entropy than the shrinker $\Sigma$.}
	By Huisken's monotonicity formula, we immediately obtain the following corollary.
	
	\begin{corollary}\label{CorMain}
		In the setting of Theorem \ref{thm:main1}, there exists a spacetime neighbourhood of $(0,0)$ with size $\dt$, such that the perturbed MCF $\{\widetilde{\mathbf{M}}_t\}$ starting from $\widetilde{M}_0:=\{x+\eps u_0(x)\mathbf n(x)\ |\ x\in M_0\}$ has no singularity modeled by $\Sigma$ in this neighbourhood.
	\end{corollary}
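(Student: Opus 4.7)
The plan is to deduce the corollary directly from condition (3) of Theorem \ref{thm:main1} via Huisken's monotonicity formula. Condition (3) is engineered precisely to uniformly bound Huisken's Gaussian density at a whole family of spacetime base points near $(0,0)$, not merely at $(0,0)$ itself, which is exactly what one needs.

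First I would unwind the rescaling \eqref{EqMCFRMCF}. A direct change of variables shows that for any translation-dilation $\mathcal R(y)=(y-a)/s$,
$$F(\mathcal R\,\widetilde M_T) \;=\; \Theta_{(x_0,\tau_0)}\!\bigl(\widetilde{\mathbf M}_{\tau_T}\bigr),\qquad x_0=e^{-T/2}a,\ \ \tau_0=(s^2-1)e^{-T},$$
where $\tau_T:=-e^{-T}$ and $\Theta$ denotes Huisken's Gaussian density. As $(a,s)$ ranges over a $\dt$-neighbourhood of $(0,1)$, the base point $(x_0,\tau_0)$ sweeps out a spacetime neighbourhood $\mathcal U$ of $(0,0)$ of parabolic size $\sim \dt\,e^{-T/2}$. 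Since the theorem gives $T=T(\eps)\to\infty$ as $\eps\to 0$, this neighbourhood shrinks with $\eps$, and it can be arranged to have size comparable to $\eps$. In these coordinates, condition (3) reads $\Theta_{(x_0,\tau_0)}(\widetilde{\mathbf M}_{\tau_T}) < \lambda(\Sigma) - \dt^{2.5}$ uniformly over $(x_0,\tau_0)\in\mathcal U$.

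Next I would invoke Huisken's monotonicity: for each fixed base point, $\tau\mapsto \Theta_{(x_0,\tau_0)}(\widetilde{\mathbf M}_\tau)$ is non-increasing on $[\tau_T,\tau_0)$, so the strict upper bound above persists forward in $\tau$ up to the (potential) singular time.

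Finally, suppose for contradiction that $\{\widetilde{\mathbf M}_\tau\}$ developed a singularity modelled by $\Sigma$ at some spacetime point $(x_*,\tau_*)\in\mathcal U$. By the definition of being modelled by $\Sigma$ (the corresponding RMCF converges to $\Sigma$ in $C^\infty_{\loc}$) together with the fact that $\lambda(\Sigma)=F(\Sigma)$ for a shrinker,
$$\lim_{\tau\nearrow \tau_*}\Theta_{(x_*,\tau_*)}(\widetilde{\mathbf M}_\tau) \;=\; F(\Sigma) \;=\; \lambda(\Sigma),$$
contradicting the strict bound $\lambda(\Sigma)-\dt^{2.5}$. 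I do not foresee any genuine obstacle; the only non-routine point is the careful bookkeeping that translates the ``scale $\dt$'' of translations and dilations appearing in condition (3) into the physical size of the spacetime neighbourhood around $(0,0)$ from which $\Sigma$-singularities are excluded.
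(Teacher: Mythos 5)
Your proof is correct and follows the same essential route the paper signals right after stating the corollary (``By Huisken's monotonicity formula, we immediately obtain\dots''): translate condition~(3) of Theorem~\ref{thm:main1} into Gaussian-density bounds at a family of spacetime base points via the change of variables between $F(\mathcal R\,\widetilde M_T)$ and $\Theta_{(x_0,\tau_0)}(\widetilde{\mathbf M}_{\tau_T})$, propagate the strict bound forward by monotonicity, and contradict the fact that a $\Sigma$-modelled singularity forces the density to equal $\lambda(\Sigma)=F(\Sigma)$. The one place you differ from the paper's written proof in Section~\ref{SSProofMain} is in how the size of the excluded neighbourhood is pinned down: the paper invokes the avoidance principle to compare the separation $\widetilde M_t$ vs.\ $M_t$ against the $e^{t/2}\eps$ and $e^{t}\eps$ scales of translations and dilations under the rescaling, whereas you read the neighbourhood size directly off the change of variables $x_0=e^{-T/2}a$, $\tau_0=(s^2-1)e^{-T}$; your ``it can be arranged to have size comparable to $\eps$'' is a bit loose since $e^{-T/2}$ really scales like a power $(\eps/\dt)^{O(1/\lambda_1)}$ rather than $\eps$ exactly, but this is a bookkeeping point and does not affect the validity of the argument.
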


	In other words, after an initial generic perturbation, any singularity (if there is one) in a spacetime neighbourhood of the original singularity can not be modeled by the same asymptotically conical shrinker $\Sigma$. Moreover, the neighborhood is larger than that created by translations and dilations of the same order of magnitude of the initial perturbation.
	
	In \cite{SX}, we study the problem of initial perturbations for MCFs with compact singularities and obtained a stronger conclusion that after a generic initial perturbation, the perturbed MCF will never generate a singularity modeled by the original limit closed shrinker. Here we can only avoid a conical shrinker in a spacetime neighbourhood. The difference between these two results illustrates the nature of noncompactness. A similar issue appears in the study of higher multiplicity singularities (see \cite{Su}).

	As an application, we can study the behaviour of the perturbed RMCF near the original singularity. Our first application shows that after appropriate rescalings, the perturbed MCF will converge to an ancient solution.

	\begin{theorem}\label{thm:ancient} Assume $(\star)$. 
		Suppose $u_0$ is a generic smooth function on $M_0$ with the unit $C^{2,\al}$ norm and $\epsilon_i\to 0$ is a sequence of positive numbers. Suppose $\{\widetilde{M}^i_t\}$ is the RMCF starting from $M_0+\epsilon_i u_0\bn$. Then there exists a sequence $\{T_i\}_{i=1}^\infty$, $T_i\to\infty$ as $i\to\infty$, such that $\{\widetilde M_{t+T_i}\}$ smoothly converges to an ancient RMCF $\{N_t\}_{t\in(-\infty,0)}$ on any compact spacetime subset, and $N_t$ is not the static flow $\Sigma$.
	\end{theorem}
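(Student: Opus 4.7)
The plan is to use Theorem~\ref{thm:main1} to produce times $T_i\to\infty$ at which the perturbed flow is a nontrivially sized graph over $\Sigma$ with a definite $F$-value drop, and then to extract a smooth subsequential limit by parabolic compactness. Fix $\delta\in(0,\delta_0)$ and $u_0\in\mathcal S$; for each $\epsilon_i<\epsilon_0(u_0)$, Theorem~\ref{thm:main1} supplies a time $T_i$ and a radius $R_i\to\infty$ such that $\widetilde M^i_{T_i}\cap B_{R_i}$ is the graph over $\Sigma\cap B_{R_i}$ of some function $\bar u_i$ with $\|\bar u_i\|_{C^{2,\al}}=\delta$, $F(\widetilde M^i_{T_i})<\lambda(\Sigma)-\delta^{2.5}$, and $F(\widetilde M^i_{T_i}\setminus B_{R_i})<\delta^3$. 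I would first show $T_i\to\infty$ by contradiction: if $T_{i_k}\to T^*<\infty$ along a subsequence, then continuous dependence of the RMCF on initial data on the compact interval $[0,T^*+1]$ gives $\widetilde M^{i_k}_{T_{i_k}}\to M_{T^*}$ smoothly as closed hypersurfaces, so $F(M_{T^*})\leq\lambda(\Sigma)-\delta^{2.5}$; but $M_t$ is a smooth closed hypersurface for every finite $t$ while $\Sigma$ is non-compact, so the rigidity case of Huisken's monotonicity (applied via $\lambda(\Sigma)=F(\Sigma)$) forces $F(M_t)>\lambda(\Sigma)$ strictly, a contradiction.

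For the compactness, consider the time-shifted flows $\widetilde N^i_t:=\widetilde M^i_{t+T_i}$, $t\in[-T_i,0]$. The precise linear analysis along orbits via the Feynman-Kac formula, together with the invariant-cone construction developed in the body of this paper, implies that for any fixed $T,R>0$ and $i$ sufficiently large, $\widetilde N^i_t\cap B_R$ is a $C^{2,\al}$-graph over $\Sigma\cap B_R$ of norm at most $\delta$ for every $t\in[-T,0]$, with the graph norm in fact decaying exponentially as $t$ decreases from $0$. Since the graphical RMCF is a quasilinear parabolic equation, these uniform $C^{2,\al}$ bounds propagate by interior parabolic regularity to uniform $C^k$ bounds on $\Sigma\cap B_R\times[-T+\eta,0]$ for every $\eta>0$ and every $k$. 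Arzel\`a-Ascoli extracts a smoothly convergent subsequence of graph functions, and a diagonal argument over $R,T\to\infty$ and $\eta\to 0^+$ produces an ancient smooth RMCF $\{N_t\}_{t\in(-\infty,0)}$.

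To see $\{N_t\}$ is not the static flow $\Sigma$, smooth convergence on each ball $B_R$ yields $F(N_0\cap B_R)\leq\lambda(\Sigma)-\delta^{2.5}$ for every $R$, so $F(N_0)\leq\lambda(\Sigma)-\delta^{2.5}<F(\Sigma)$, hence $N_0\neq\Sigma$. The principal obstacle throughout is the non-compactness of $\Sigma$: to keep the graphical representation valid uniformly on larger and larger balls while controlling the flow over arbitrarily long backward times, a crude Gronwall-type estimate on the perturbation would be insufficient, because the growth of the linearized perturbation at large radius may a priori exceed the unstable eigenvalues of the linearized operator. It is precisely the Feynman-Kac asymptotics and the invariant-cone dynamics from the main body of the paper that resolve this, and both are needed for the compactness step above.
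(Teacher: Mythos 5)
Your argument is correct and follows the route the paper intends: take the times $T_i=T_\dagger(\epsilon_i)\to\infty$ produced by Theorem \ref{thm:main1}, use the cone/graphical estimates of Sections \ref{S:Dynamics in a neighborhood of the shrinker}--\ref{SEstimate} to get uniform $C^{2,\alpha}$ control on compact backward time-space regions, extract a smooth subsequential limit, and exclude the static flow via the strict $F$-drop $F(N_0)\leq\lambda(\Sigma)-\delta^{2.5}<F(\Sigma)$. The paper itself only sketches this (Section \ref{SSAncient}), deferring to Theorem 4.1 of \cite{SX}; your write-up fills in exactly those details in the same spirit.
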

	
	In \cite{SX}, we have proved similar existence results of ancient solutions when the limit shrinker is compact. In \cite{CCMS1}, Chodosh-Choi-Mantoulidis-Schulze proved the existence of ancient solutions coming out from an asymptotically conical shrinker with dimension assumption $n\leq 6$ due to the requirement in geometric measure theory.  \cite{CCMS1} studied the problem of initial perturbation using the geometric measure theory method, while here we  purely use PDEs and the dynamical system method. Our approach has the disadvantage of being unable to handle multiple singularities, while it also enjoys some merits such as being free of dimension or low entropy assumptions and allowing generic perturbations, not necessarily only one-sided. 
	
	In the following, we discuss related backgrounds and give literature reviews. 
	\subsection{Generic MCFs}
	It is known that a closed MCF in $\R^{n+1}$ must generate finite-time singularities, and the singularities are modeled by shrinkers (c.f. \cite{H, I, Wh}). The shrinkers are minimal surfaces in the Gaussian metric space, and there are many constructions of shrinkers (see \cite{KKM}, \cite{Ngu} etc). It seems impossible to classify all embedded self-shrinkers even in $\R^3$. Therefore, it is very complicated to understand the singular behaviour of an MCF. The MCF has been proposed by Yau as a potential tool to approach some important problems in topology such as the Smale conjecture, the Schoenflies conjecture, etc.  However, the complicated singularity structure becomes a serious obstacle to the application of the MCF to topological problems. 
	
	Generic MCF is proposed to overcome this issue. The concept of generic MCF was first proposed by Huisken in \cite{H} (c.f. \cite{AIC} for similar ideas in the study of MCFs in $\R^3$). Colding-Minicozzi in \cite{CM1} formulated the notion of stability and classified the generic shrinkers that are spheres $\mathbb S^n$ and cylinders $\mathbb S^k\times \R^{n-k},\ k=0,1,\ldots,n-1$. We thus expect that a generic MCF avoids all singularities that are non-spherical and non-cylindrical. The idea of \cite{CM1} is to study the linearization of \eqref{EqRMCF} at a shrinker $\Sigma$. The linearized equation has the form $\partial_t u=L_\Sigma u$ where $L_\Sigma=\Delta_{\Sigma}-\frac{1}{2}\langle x,\nabla_\Sigma \cdot \rangle+(|A|^2+\frac12)$ is adjoint with respect to the Gaussian weighted inner product $\langle u,v\rangle=\int_{\Sigma} u(x)v(x)\mathrm{e}^{-\frac{|x|^2}{4}}d\mu.$ It is known that the mean curvature $H$ of $\Sigma$ is the eigenfunction of $L_\Sigma$ with eigenvalue 1, i.e., $L_\Sigma H=H$ (we remark that we use the different sign convention than \cite{CM1} for the definition of eigenvalues for the purpose of studying dynamics). Moreover, from elliptic operator theory, it is known that the leading eigenfunction does not change the sign. Shrinkers with positive $H$ are classified by Huisken and Colding-Minicozzi to be spheres and cylinders. So for a non-spherical and non-cylindrical shrinker, the leading eigenfunction $\phi_1$ cannot be $H$ and hence the leading eigenvalue $\lambda_1$ has to be larger than 1. The idea of \cite{CM1} is then to perturb $\Sigma$ in the direction of $\phi_1$, which can decrease the entropy strictly.  
	
	Note that the perturbations in \cite{CM1} are constructed on the shrinker $\Sigma$ and hence are local in nature. To avoid the shrinker by perturbing the initial condition, we have to control the perturbed RMCF all the way up to leaving a neighborhood of the shrinker. Our main theorem is a consequence of the following key estimate on the global dynamics, which shows that a generic initial perturbation realizes the local perturbation in the $\phi_1$ direction used by \cite{CM1}. 
	
	\begin{theorem}\label{ThmMain}
		In the setting of Theorem \ref{thm:main1}, we have 
		$$|\langle\bar u(T),\phi_1\rangle_{W^{1,2}}|\geq (1-o_\dt(1))\|\bar u(T)\|_{W^{1,2}}, $$ where $\phi_1$ is the first eigenfunction of the linearized operator on $\Sigma$ with the $L^2$-norm $1$. 
		
		In other words, $\widetilde M_t$ drifts to the most unstable direction on $\Sigma$. 
		
	\end{theorem}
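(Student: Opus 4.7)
The plan is to expand the graph function $\bar u(t)$ of $\widetilde M_t\cap B_R$ over $\Si\cap B_R$ in the eigenbasis $\{\phi_i\}$ of the self-adjoint linearized operator $L_\Si=\Delta_\Si-\tfrac12\langle x,\nabla_\Si\cdot\rangle+|A|^2+\tfrac12$ and to prove that the leading-eigenvalue component dominates at the exit time $T$. Write $L_\Si\phi_i=\la_i\phi_i$ with $\la_1>\la_2\geq\cdots$; since $\Si$ is asymptotically conical it is neither a sphere nor a cylinder, so by the Colding--Minicozzi classification $\phi_1\neq H$ and a strict gap $\la_1>\max(\la_2,1)$ holds. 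While $\|\bar u\|_{C^{2,\al}}\leq\dt$, the function $\bar u$ satisfies a quasilinear parabolic equation $\partial_t\bar u=L_\Si \bar u+\mathcal{Q}(\bar u,\nabla\bar u,\nabla^2\bar u)$ whose nonlinearity is at least quadratic, so the dynamics is a small perturbation of the linear semigroup $e^{tL_\Si}$.

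Two ingredients introduced earlier in the paper enter crucially. First, the Feynman--Kac representation
\begin{equation*}
(e^{tL_\Si}f)(x)=\mb E_x\!\left[\exp\!\left(\int_0^t\!\big(|A|^2(X_s)+\tfrac12\big)\,ds\right)f(X_t)\right],
\end{equation*}
where $X_s$ is the Ornstein--Uhlenbeck-type diffusion on $\Si$ associated to the drift Laplacian, gives pointwise control on the linear semigroup even in the non-compact regime, where $|A|^2\to 0$ on the asymptotic cone and classical energy methods are insufficient. This yields a sharp splitting of $e^{tL_\Si}f$ into its $\phi_1$-projection and a remainder growing no faster than $e^{\la_2 t}$ in a weighted norm. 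Second, for generic $u_0\in\cS$ the initial graph function satisfies $|\langle \bar u(0),\phi_1\rangle|\geq c\eps$ for a fixed $c>0$; the set $\cS$ is constructed precisely so that this non-degeneracy survives after quotienting out the translation and dilation quasi-modes and the higher-order graph corrections.

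With these tools in hand I would implement the invariant-cone method of the paper. Define
\begin{equation*}
\cC_\kappa=\Big\{u\in W^{1,2}(\Si,e^{-|x|^2/4})\ :\ |\langle u,\phi_1\rangle_{W^{1,2}}|\geq(1-\kappa)\|u\|_{W^{1,2}}\Big\}.
\end{equation*}
Along the linear flow, any orbit with nonzero $\phi_1$-component enters $\cC_\kappa$ in time $O((\la_1-\la_2)^{-1}\log(1/\kappa))$ and remains, since the ratio $e^{(\la_1-\la_2)t}$ is unbounded. For the nonlinear RMCF, the Feynman--Kac bound shows $\mathcal{Q}$ is controlled in the weighted $W^{1,2}$ norm by $C\dt\|\bar u\|_{W^{1,2}}$, which is small relative to the spectral gap once $\dt$ is small; this makes $\cC_\kappa$ forward-invariant on $\{\|\bar u\|_{C^{2,\al}}\leq\dt\}$. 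Running the argument from an entry time $t_*\ll T$ up to the first exit time $T$ (at which $\|\bar u(T)\|_{C^{2,\al}}=\dt$) delivers the stated bound $|\langle\bar u(T),\phi_1\rangle_{W^{1,2}}|\geq(1-o_\dt(1))\|\bar u(T)\|_{W^{1,2}}$.

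The decisive obstacle, and the reason the present paper requires new machinery beyond \cite{SX}, is the non-compactness of $\Si$: the drift Laplacian on an asymptotically conical shrinker has essential spectrum reaching into the unstable range, so unstable and stable modes cannot be separated by a naive Fredholm or spectral-gap argument, and the coefficients of $\mathcal{Q}$ depend on ambient geometry that does not vanish at infinity. The Feynman--Kac formula is what quantifies how the diffusion $X_s$ transports mass into the conical end where $|A|^2$ is small, and thereby controls the weighted-$W^{1,2}$ norm of $e^{tL_\Si}$ restricted to the stable subspace. A secondary technicality is reconciling the $C^{2,\al}$ graph norm defining $T$ with the $W^{1,2}$ norm defining $\cC_\kappa$; this uses the off-$B_R$ decay $F(\widetilde M_T\setminus B_R)<\dt^3$ from conclusion (1) of Theorem \ref{thm:main1} together with standard Schauder interior estimates on the compact part.
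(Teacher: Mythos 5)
Your proposal has the right flavor (eigenbasis decomposition, spectral gap, an invariant cone near $\Sigma$), but it has two genuine gaps where it diverges from what is actually needed.

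First, the initial/intermediate stage. You assume that for generic $u_0\in\cS$ the graph function satisfies $|\langle \bar u(0),\phi_1\rangle|\geq c\eps$ and then treat the evolution as a small perturbation of the autonomous semigroup $e^{tL_\Sigma}$. But at time $0$ the perturbation lives on $M_0$, which is nowhere near a graph over $\Sigma$, so this non-degeneracy condition is not even well defined, and the evolution up to the time both flows are $\dt$-close to $\Sigma$ is governed by the non-autonomous operator $L_{M_t}$ along the unperturbed RMCF, not by $L_\Sigma$. The heart of the paper's argument is precisely to show that an arbitrary positive (and then generic) initial perturbation \emph{acquires} a definite $\phi_1$-component by this time: this is Theorem \ref{mainthm-FK}, proved via the Feynman--Kac representation along the evolving manifolds $\{M_t\}$ (not the Ornstein--Uhlenbeck-type formula on the fixed $\Sigma$ that you write), whose role is to compare the global solution with the Dirichlet problem on $M_t^R$ (Proposition \ref{PropLocalOp}) and then invoke the cone-preservation property for the localized transplanted equation, together with the convergence of $\lambda_1(M_t)$, the spectral gap, and the convergence of $\phi_1(t)$ proved in Section \ref{SConvergence}. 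Your version of Feynman--Kac, used only to control $e^{tL_\Sigma}$, does no work here --- on the fixed shrinker the spectral decomposition already gives the splitting you want --- and your genericity claim simply assumes the conclusion of this stage. Relatedly, your diagnosis of the difficulty is off: in the Gaussian-weighted space the spectrum of $L_\Sigma$ on an asymptotically conical shrinker is discrete (Appendix \ref{AppKernel}, Bernstein--Wang), so there is no essential spectrum in the unstable range; the real obstruction is that $M_t$ and $\widetilde M_t$ are never global graphs over $\Sigma$.

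Second, the local stage. Your forward-invariance of $\cC_\kappa$ treats the nonlinear error as $C\dt\|\bar u\|_{W^{1,2}}$, but because of the necessary cutoff at the graphical radius $\mathbf r(t)$ the transplanted equation is not a closed evolution: it has boundary/cutoff terms coming from the part of the flow outside $B_{\mathbf r(t)}$, and these are not small merely because the nonlinearity is quadratic. The paper handles this with pseudolocality (Proposition \ref{PropPL}), which makes $\mathbf r(t)$ grow like $e^{t/2}$ so the Gaussian weight at the cutoff decays super-exponentially and the boundary terms can be absorbed (Proposition \ref{LmT}(2) and Proposition \ref{LmCM3}), and it reconciles the $C^{2,\al}$ exit condition with the $Q$-norm via interpolation against the Ecker--Huisken higher-derivative bounds (Section \ref{SSCQ}), not via conclusion (1) of Theorem \ref{thm:main1} --- using that conclusion here would be circular, since Theorem \ref{ThmMain} is part of the machinery by which Theorem \ref{thm:main1} is proved. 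Without these ingredients the cone argument cannot be iterated over the $\log(1/\eps)$-long time scale, so the proposal as written does not close.
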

	The proofs of the above theorems are given in Section \ref{S:Dynamics in a neighborhood of the shrinker}. 
	\subsection{Two dynamical problems}
	From our work in the compact case \cite{SX}, we see that one of the main ingredients is to study the asymptotic behavior of positive solutions to  the linearized RMCF equation (also called variational equation)
	\begin{equation}\label{EqMain}
		\partial_t u=L_{M_t} u,
	\end{equation}
	where $(M_t)$ is an RMCF converging to a limit shrinker $\Sigma$ as $t\to\infty$ in the $C^\infty_{\loc}$ sense. This equation can be considered as the Jacobi equation along an RMCF which governs how nearby orbits diverge. 
	Our dynamical approach to the problem of initial perturbations consists of the study of the following two dynamical problems:
	\begin{enumerate}
		\item The asymptotic behavior of the solution to \eqref{EqMain}. In particular, we want to find the initial condition $u(0):\ M_0\to \R$ such that $$\lim_{t\to\infty}\frac{1}{t}\log\|u(t)\|_{L^2(M_t)}=\lambda_1(\Sigma),\quad \frac{u(t)}{\|u(t)\|_{L^2(M_t)}}\to \phi_1.$$ One easily recognizes in this case that $\lambda_1(\Sigma)$ is the leading Lyapunov exponent. 
		\item The local (nonlinear) dynamics of the RMCF near the shrinker. 
	\end{enumerate}
	In \cite{SX}, we studied the case when $\Sigma$ is a compact shrinker. The first dynamics problem is addressed by a Harnack estimate given by the Li-Yau estimate. For the second dynamical problem, we write the RMCF equation as $\partial_t u=L_\Sigma u+Q(u,\nabla u,\nabla^2u)$ in a neighborhood of $\Sigma$ where each manifold corresponds to the graph of a function $u$.  This problem can be approached by the invariant manifold theory in hyperbolic dynamics, so the dynamics of the RMCF in a neighborhood of the shrinker is approximately that of the linear equation $\partial_t u=L_\Sigma u$. 
	
	When $\Sigma$ is noncompact, serious issues arise in both problems. For the first problem, we do not have a uniform Harnack estimate, and the Li-Yau estimate gets worse as time gets longer. In this paper, we introduce a Feynman-Kac representation of the solutions to \eqref{EqMain}, which enables us to prove the following theorem, and hence address the first problem (see Subsection \ref{SSFKCone}).
	\begin{theorem}\label{mainthm-FK}Let $\{M_t\}_{t\in[0,\infty)}$ be an RMCF with $M_t\to \Sigma$ as $t\to\infty$ in the $C^\infty_{\loc}$ sense, where $\Sigma$ is a shrinker that either is compact or satisfies \begin{enumerate}\item $\limsup_t\lambda_1(M_t)\leq \lambda_1(\Sigma);$
			\item there exists constant $D>0$ such that $\lambda_1(M_t)-\lambda_2(M_t)\geq D,$
		\end{enumerate}
		as $t\to\infty$ where $\lambda_1(M_t)$ $($resp. $\lambda_{1}(\Sigma))$ is the leading eigenvalue of $L_{M_t}$ $($resp. $L_\Sigma)$ and $\lambda_2(M_t)$ is the second. 
		Let $v^\star$ be the solution to the initial value problem \eqref{EqMain} with  initial condition $v^\star_0>0$. 
		
		Then we have
		\begin{enumerate}
			\item $\lim_{t\to\infty} \frac{1}{t}\log \|v^\star(t)\|_{L^2(t)}=\lambda_1(\Sigma)$, the leading eigenvalue of $L_\Sigma$;
			\item Let $\phi_1(t)$ be the first eigenfunction of $L_{M_t}$ on $M_t$ with the $L^2$-norm $1$. There exist constants $1>c>0,C>1$ and a sequence of times $t_i\to\infty$ such that $$\|v^\star\|_{Q(M_{t_i})}\leq C\|v^\star\|_{L^2(M_{t_i})},\ \mathrm{and}\ \frac{|\langle v^\star(t_i),\phi_1(t_i)\rangle_{L^2(M_{t_i})}|}{\|v^\star(t_i)\|_{Q(M_{t_i})}}>c.$$ 
		\end{enumerate}
	\end{theorem}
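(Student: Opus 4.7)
The plan is to give a Feynman-Kac representation for the solution $v^\star$ of \eqref{EqMain} and then combine it with the variational characterization of $\lambda_1(M_t)$ and the spectral gap assumption. Writing $L_{M_s}=\Delta_{M_s}-\frac12\langle x,\nabla_{M_s}\cdot\rangle+V_s$ with $V_s=|A_{M_s}|^2+\frac12$, I would express
$$v^\star(t,x)=\mathbb{E}_x\!\left[v_0^\star(X_t)\,\exp\!\left(\int_0^t V_{t-s}(X_s)\,ds\right)\right],$$
where $X_s$ is the time-inhomogeneous diffusion generated by $\Delta_{M_{t-s}}-\frac12\langle x,\nabla\cdot\rangle$, i.e.\ an Ornstein--Uhlenbeck-type process living on the moving family. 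Because $v_0^\star>0$, every sample path contributes positively, which is the structural feature that makes the formula useful both for the lower bound on growth and for the projection onto $\phi_1$.

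For part $(1)$, the upper bound $\limsup_t\frac{1}{t}\log\|v^\star(t)\|_{L^2(M_t)}\leq\lambda_1(\Sigma)$ should follow from assumption $(1)$ by a standard Gronwall argument: expanding $v^\star$ in the instantaneous eigenbasis of $L_{M_s}$ gives instantaneous growth rate at most $\lambda_1(M_s)$, and integrating yields growth bounded by $\int_0^t\lambda_1(M_s)\,ds$. For the matching lower bound I would use the Feynman-Kac formula together with $C^\infty_{loc}$ convergence $V_s\to V_\Sigma$: choose a compactly supported nonnegative test function $\psi$ localized where $M_s$ is uniformly close to $\Sigma$, estimate $\langle v^\star(t),\psi\rangle$ from below by restricting the expectation to paths that stay in a fixed large ball, and then compare the restricted expectation to the eigenfunction expansion for $L_\Sigma$ via the Markov property. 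This forces $\frac{1}{t}\log\|v^\star(t)\|_{L^2(M_t)}\geq\lambda_1(\Sigma)-o(1)$, giving the claimed limit.

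For part $(2)$, the spectral gap assumption $\lambda_1(M_t)-\lambda_2(M_t)\geq D$ lets me decompose $v^\star(t)=a_1(t)\phi_1(t)+w(t)$ with $w(t)\perp_{L^2(M_t)}\phi_1(t)$, and derive a coupled ODE system for $a_1$ and $\|w\|_{L^2}$. The coupling terms involve $\langle\partial_t\phi_j,\phi_k\rangle$ which are controlled by the smooth convergence of eigenfunctions coming from $C^\infty_{loc}$ convergence of $M_t\to\Sigma$. The Feynman-Kac positivity argument, combined with the fact that $\phi_1(t)$ is positive on $M_t$ by Perron-Frobenius, forces $a_1(t)$ to be nonzero; then the gap forces the $w$-component to grow at rate at most $\lambda_1(\Sigma)-D+o(1)$, whereas the total norm grows at rate $\lambda_1(\Sigma)-o(1)$ by part $(1)$, so $a_1(t_i)$ must dominate $\|w(t_i)\|_{L^2}$ along some subsequence $t_i\to\infty$. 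The $Q$-norm bound $\|v^\star\|_{Q(M_{t_i})}\leq C\|v^\star\|_{L^2(M_{t_i})}$ follows from parabolic regularity applied to $v^\star$ backwards over a unit-length time window ending at $t_i$, combined with uniform geometric bounds on $M_s$.

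The principal obstacle is noncompactness. Sample paths of $X_s$ can wander to large $|x|$ where $M_s\to\Sigma$ only in $C^\infty_{loc}$ and where the potential $V_s=|A_{M_s}|^2+\frac12$ is effectively unbounded (the shrinker is asymptotic to a cone, on which $|A|^2$ decays but remains nontrivial at every scale). Thus one must prove that paths escaping a large ball of radius $R$ contribute negligibly to the Feynman-Kac expectation, uniformly in $t$, using the inward drift of the Ornstein-Uhlenbeck term together with Gaussian weighted heat kernel bounds. Calibrating this tail estimate so that it is simultaneously sharp enough to recover the exponent $\lambda_1(\Sigma)$ exactly and robust enough to hold uniformly in $t$ and in $R$ is what I expect to be the most delicate step.
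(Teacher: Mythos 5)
Your overall architecture for part (1) — Feynman-Kac representation, positivity, localization to a ball, comparison with the Dirichlet spectrum of $L_\Sigma$, then let $R\to\infty$ — is essentially the route the paper takes. But you have skipped the part that carries most of the work. After localizing, the operator acting on the restricted solution is $L^R_{M_t}$, not $L^R_\Sigma$, so after transplanting to $\Sigma^R$ the equation is nonautonomous: $\partial_t v^* = L^R_\Sigma v^* + P(v^*,t)$ with $P\to 0$ only as $t\to\infty$. Over a time interval of length $t-T$ that must itself go to infinity, the error $P$ can in principle accumulate and destroy the exponent. The paper controls this by a per-unit-time linear approximation (Lemma~\ref{LmApproximation}) and an invariant-cone argument (Proposition~\ref{PropCone}): once the transplanted $\phi_1(T)$ lies in a cone around $\phi^R_1$, the cone is forward-invariant and the $\phi^R_1$-component grows at rate $\geq\lambda^R_1-\varepsilon$. ``Compare the restricted expectation to the eigenfunction expansion for $L_\Sigma$'' is not a substitute for this; it is precisely the step that needs justification.

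For part (2) your approach is genuinely different and has a gap. You propose to decompose $v^\star = a_1\phi_1(t)+w(t)$ and derive a coupled ODE system, controlling the coupling terms $\langle\partial_t\phi_j,\phi_k\rangle$ via $C^\infty_{\loc}$ convergence. The hypotheses of the theorem are only about eigenvalues, not about eigenfunction time-derivatives, and $C^\infty_{\loc}$ convergence of the hypersurfaces does not by itself give quantitative control on $\partial_t\phi_j(t)$ (the eigenfunctions solve a nonlocal, noncompact spectral problem, and the paper devotes Section~\ref{SConvergence} to extracting even qualitative convergence under the stronger hypothesis $(\star)$). Without that control, the claim that the $w$-component grows at rate at most $\lambda_1(\Sigma)-D+o(1)$ fails: $w$ can be fed by $a_1$ through the coupling. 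The paper avoids eigenfunction derivatives entirely. It writes a one-sided differential inequality for $\|v^\star\|^2_{L^2(M_t)}$ using only the \emph{instantaneous} spectral decomposition of $L_{M_t}$, i.e.
\[
\frac12\partial_t\|v^\star\|^2_{L^2}\leq\big[\max\{0,\lambda_2(t)\}g_2(t)+\lambda_1(t)g_1(t)-g_3(t)\big]\|v^\star\|^2_{L^2},
\]
with $g_1,g_2,g_3$ the instantaneous proportions, and then argues by contradiction with the exact growth rate from part (1): if $g_1$ were always small relative to $g_3$, the effective exponent would be strictly below $\lambda_1(\Sigma)$. No $\partial_t\phi_j$ appears. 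You should adopt this rather than the coupled ODE.

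Two minor points. First, your worry that $V_s=|A_{M_s}|^2+\tfrac12$ is ``effectively unbounded'' on the conical end is backwards: $|A|$ decays like $|x|^{-1}$ on an asymptotically conical shrinker, and the paper shows (Lemma~\ref{lem:eigenconverge-Abound}) the same decay on $M_t$ under $(\star)$. Second, the paper does not need heat-kernel tail estimates for escaping paths: for the lower bound, discarding them is harmless by positivity (Proposition~\ref{PropLocalOp}), and the upper bound is a pure energy estimate with no Feynman-Kac at all.
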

	Here the $L^2(M_t)$-norm is the Gaussian weighted $L^2$-norm for functions on $M_t$ and $Q(M_t)$ is a norm equivalent to the weighted $W^{1,2}$-norm on $M_t$. 
	\begin{figure}[h!]
		\centering
		\includegraphics[width=0.5\textwidth]{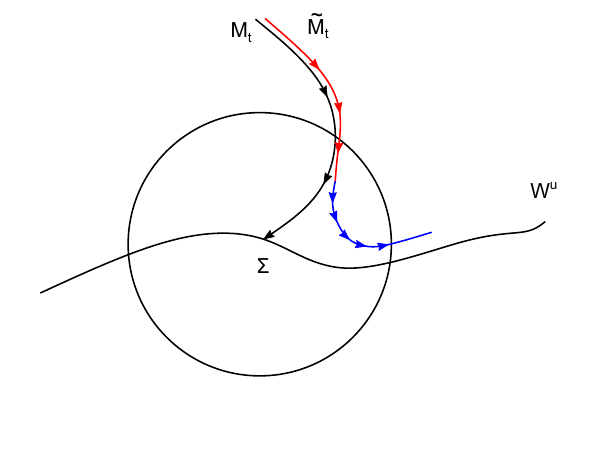}
		\caption{Dynamics of the perturbed RMCF $\widetilde{M}_t$}
		\label{Figure}
	\end{figure}
	
	We shall apply Theorem \ref{mainthm-FK} to control the perturbed RMCF $\widetilde M_t$ over a long time $T$ so that both $M_t$ and $\widetilde M_t$ are $\dt$-close to $\Sigma$ in the $C^{2,\al}$ norm over a large domain. See the red curve in Figure \ref{Figure}. Item (2) of Theorem \ref{mainthm-FK} gives that the difference of the two manifolds $\widetilde M_T$ and $M_T$ has a nontrivial projection to the $\phi_1$ direction. Here comes the second problem. We wish to approximate the local dynamics near $\Sigma$ by the linear equation $\partial_t u=L_\Sigma u$ and show that the $\phi_1$-component dominates all other Fourier modes when the perturbed flow leaves a $\dt$-neighborhood of $\Sigma$. See the blue curve in Figure \ref{Figure}. The main difficulty is that the flow $\{M_t\}$ cannot be written as a global graph over $\Sigma$ for any time $t$ so that the linear approximation of the local dynamics can only be done by restricting the flow to a compact domain, which makes the system nonautonomous, since the information outside the compact domain is discarded. Moreover, the time span for the blue curve in Figure \ref{Figure}, even though is only finite, is rather long depending on the smallness of the initial perturbation.  
	
	To overcome this difficulty, one important ingredient in the proof is the pseudo-locality property of MCFs. Pseudo-locality is a special property for the nonlinear geometric heat equation, first discovered by Perelman in the setting of Ricci flows, then by Ecker-Huisken in \cite{EH}, and later studied by \cite{INS} in the setting of MCFs. Roughly speaking, pseudo-locality says that if the MCF is graphical in a small neighbourhood, then it keeps being graphical for a short time. Using the correspondence \eqref{EqMCFRMCF} between the MCF and the 
	RMCF we get that the region close to the singularity in the MCF will be expanded to infinity at an exponential rate in the RMCF. Thus pseudolocality enables us to get control of the dynamics of RMCF over an exponentially growing domain where we can approximate the RMCF using the linear equation $\partial_t u=L_\Sigma u$ over a sufficiently long time.  This part will be elaborated in Section \ref{S:Dynamics in a neighborhood of the shrinker}, where we prove Theorem \ref{ThmMain}.

	\subsection{The Feynman-Kac formula}
	
	When the limiting shrinker $\Sigma$ is compact, the RMCF $M_t$ will be very close to $\Sigma$ when $t$ is sufficiently large. In particular, the geometry of $M_t$ will be uniformly close to $\Sigma$, and we can identify the function space of $M_t$ with the function space of $\Sigma$. In \cite{SX}, this fact is crucial, and it allows us to use a Li-Yau type Harnack inequality, to show that the positive solutions to \eqref{EqMain} satisfy the estimates in the conclusion (2) of Theorem \ref{mainthm-FK}.
	
	In the setting of noncompact shrinkers, the Li-Yau estimate does not meet our purpose. The new tool we introduce to prove Theorem \ref{mainthm-FK} is a Feynman-Kac formula in the setting of RMCFs. The Feynman-Kac formula views the equation \eqref{EqMain} from a dynamical and probabilistic perspective.  Indeed, if we consider the dynamical system $\partial_t u=\mathcal L_\Sigma u:=(\Delta_\Sigma-\frac12\langle x,\nabla_\Sigma\cdot \rangle)u$ on a shrinker $\Sigma$, the fact (following from the adjointness of $\mathcal L_\Sigma$)$$\frac{d}{dt}\int_{\Sigma}u(t)\mathrm{e}^{-\frac{|x|^2}{4}}d\mu=\int_{\Sigma}\mathcal L_\Sigma u(t)\mathrm{e}^{-\frac{|x|^2}{4}}d\mu=-\int_{\Sigma}\nabla 1\cdot \nabla u(t)\mathrm{e}^{-\frac{|x|^2}{4}}d\mu=0$$
	means that the dynamical system has $\mathrm{e}^{-\frac{|x|^2}{4}}d\mu$ as the invariant measure. The situation is then rather similar to the well-known Ornstein-Uhlenbeck process in $\R^n$. The stochastic differential equation $dX=-Xdt+dW$, where $W$ is the Brownian motion, has the Ornstein-Uhlenbeck operator $\mathcal L=\Delta-\langle x,\nabla\cdot \rangle$ as the generator and Gaussian as the invariant measure. When there is a potential $V$ added to the Ornstein-Uhlenbeck operator, there is no underlying stochastic differential equation, instead, the Feynman-Kac formula gives a representation of  the solution to a linear heat equation in the presence of a potential term, for example, the equation $\partial_t u=\Delta u+Vu$ on $\R^n$, with the initial condition $u(x,0)=f(x)$ as
	\[
	u(x,t)=\int_\Omega f(\omega(0))\exp\left(\int_0^t V(\omega(s))ds\right)d\nu_{x,t}(\omega),
	\]
	where  $\nu_{x,t}$ is a  probability measure on the ``space of all paths" $\Omega$ ending at $x$ at the time $t$. We establish a Feynman-Kac type representation of solutions to \eqref{EqMain} in Section \ref{S:FK}. 
	The path integral feature is useful for localizing the linearized equation \eqref{EqMain} to a neighborhood of the shrinker and to a bounded domain, and enables us to establish the correct exponential growth of the solution in Theorem \ref{mainthm-FK}. Details are presented in Section \ref{S:Asymvariation}.

	\subsection{Asymptotically conical shrinkers}
	
	Our analysis in this paper in principle should apply to general shrinkers and even singularities of other flows. However, we choose to study the singularity modeled by asymptotically conical shrinkers for the sake of concreteness and simplicity. 
	
	Firstly, asymptotically conical shrinkers form an important class of shrinkers. In fact, it is not known whether other types of singularities really exist. Particularly in $\R^3$, L. Wang \cite{Wa2} shows that all noncompact shrinkers are only those with finitely many cylindrical or conical ends, and the asymptotics are smooth. Ilmanen \cite{I} conjectured that in $\R^3$, any asymptotic cylindrical shrinker is actually a standard cylinder. If this conjecture is true, then the shrinkers in $\R^3$ can be classified into three classes: compact, cylinder, and asymptotically conical. The cylinder is known to be generic. Therefore, together with \cite{SX}, we know how to perturb away all types of first nongeneric singularities of MCFs in $\R^3$ by generic initial perturbations.
	
	Secondly, Chodosh-Schulze \cite{CS} proved that the tangent flow of an asymptotically conical shrinker is unique. Therefore, over any compact region, the RMCF can be written as a graph over the limit shrinker for a sufficiently large time. 
	
	Thirdly, asymptotically conical shrinkers have some nice properties themselves. For example, in \cite{BW} Bernstein-Wang analyzed the spectrum and eigenfunctions on an asymptotically conical shrinker and proved certain nice bounds. In Section \ref{SConvergence}, we proved that if the RMCF converges to an asymptotically conical shrinker and it models the unique singularity, some geometric quantities converge.

	Examples of asymptotically conical shrinkers are firstly constructed by Angenent-Ilmanen-Chopp in \cite{AIC} using numerical methods, and later Nguyen \cite{Ngu} and Kapouleas-Kleene-M$\phi$ller \cite{KKM} constructed examples theoretically. The theory of asymptotically conical shrinkers is interesting and has attracted mathematicians. 
	We refer the reader to \cite{BW} for further detailed discussions on asymptotically conical shrinkers. 
	
	\subsection{Convergence of eigenvalues and eigenfunctions}
	
	To apply Theorem \ref{mainthm-FK}, we have to verify the assumptions on the convergence of eigenvalues. The problem of spectral flow, i.e., how eigenvalues and eigenfunctions of parameter-dependent elliptic operators depend on the parameter, is important in many applications and has been studied widely in literature (c.f. \cite{A,HM,U,Z} etc). However, our setting is rather special since a family of compact manifolds $M_t$ converges to a noncompact one in the $C^\infty_{\loc}$ sense and the $L^2$-norm is defined with a Gaussian weight. 
	
	In Section \ref{SConvergence}, we prove the following convergence result assuming ($\star$), which may have an independent interest.  
	\begin{theorem}Assume $(\star)$, then we have as $t\to\infty$ 
		\begin{enumerate}
			\item $\lambda_1(M_t)\to \lambda_1(\Sigma)$;
			\item there is a constant $D>0$ such that $\lambda_1(M_t)-\lambda_2(M_t)>D. $
		\end{enumerate}
	\end{theorem}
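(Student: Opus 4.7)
The plan is to prove both items by combining the variational characterization of eigenvalues with the $C^\infty_{\loc}$ convergence $M_t\to\Si$ and the structure of the drift-Laplacian-type operator on the asymptotically conical ends of $\Si$. Throughout, $L^2$ denotes the Gaussian-weighted $L^2$.

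First I would establish the upper bound $\limsup_t\lambda_1(M_t)\le \lambda_1(\Si)$ (and the analogous statement for each $\lambda_k$) by a test function. Let $\phi_1$ be the first eigenfunction of $L_\Si$ with unit $L^2$-norm; the Bernstein--Wang analysis in \cite{BW} on asymptotically conical shrinkers gives that $\phi_1$ grows at most polynomially, so the Gaussian weight makes the tails exponentially small. Cutting $\phi_1$ off outside $B_R$ and transplanting to $M_t\cap B_R$ via the graph identification coming from $C^\infty_{\loc}$ convergence (and from the uniqueness of tangent flow \cite{CS}) produces a test function $\psi_t$ with Rayleigh quotient $\lambda_1(\Si)+o_R(1)+o_t(1)$, giving the desired upper bound after sending $t\to\infty$ and then $R\to\infty$.

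For the matching lower bound $\liminf_t \lambda_1(M_t)\ge \lambda_1(\Si)$, let $\phi_1^t$ be the first eigenfunction of $L_{M_t}$ with unit $L^2$-norm. By the upper bound just established, $\lambda_1(M_t)$ is uniformly bounded; combined with the uniform curvature bounds inherited from $C^\infty_{\loc}$ convergence on compacta, weighted elliptic estimates give uniform $C^{2,\al}_{\loc}$-control on $\phi_1^t$ (pulled back to $\Si$), so a subsequential $C^2_{\loc}$ limit $\phi_\infty$ on $\Si$ exists and solves $L_\Si \phi_\infty=\la\phi_\infty$ with $\la\le \liminf_t\lambda_1(M_t)$. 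The crucial step, and the main obstacle, is to rule out escape of mass to infinity and show $\phi_\infty\not\equiv 0$. For this I would use that $|A|^2\to 0$ at infinity on the asymptotically conical $\Si$, so $C^\infty_{\loc}$ convergence together with curvature decay for RMCFs with conical tangent flow yields uniform smallness of $|A|^2$ on $M_t\setminus B_R$ for $R$ and $t$ large. Outside $B_R$ the operator $L_{M_t}$ is then close to the drift Laplacian plus $\tfrac12$, whose Gaussian $L^2$-spectrum is bounded above by $\tfrac12$. An IMS-type localization with Gaussian cutoffs then yields
\begin{equation*}
\lambda_1(M_t)\int_{M_t\setminus B_R}|\phi_1^t|^2 e^{-|x|^2/4}\,d\mu \le \bigl(\tfrac12+o_R(1)\bigr)\int_{M_t\setminus B_R}|\phi_1^t|^2 e^{-|x|^2/4}\,d\mu + o_R(1),
\end{equation*}
and since $L_\Si H=H$ forces $\lambda_1(\Si)\ge 1>\tfrac12$, the tails of $\phi_1^t$ are uniformly small. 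Hence $\phi_\infty$ carries nearly all its mass on $B_R$ and is nonzero, which completes (1) because $\la\ge \lambda_1(\Si)$ and the reverse inequality was already established.

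Finally, (2) follows from applying the same two-sided scheme to $\lambda_2$, together with the spectral analysis of \cite{BW} which shows that $\lambda_1(\Si)$ is a simple isolated eigenvalue of $L_\Si$ lying above the essential spectrum; in particular $\lambda_1(\Si)-\lambda_2(\Si)>0$. Setting $D:=\tfrac12(\lambda_1(\Si)-\lambda_2(\Si))$, combining $\lambda_1(M_t)\to\lambda_1(\Si)$ with $\limsup_t\lambda_2(M_t)\le\lambda_2(\Si)$ gives $\lambda_1(M_t)-\lambda_2(M_t)\ge \lambda_1(\Si)-\lambda_2(\Si)-o_t(1)>D$ for all $t$ sufficiently large. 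The essential difficulty, as above, is the tail control: the compactness of $M_t$ versus the noncompactness of $\Si$ means the first few eigenfunctions of $L_{M_t}$ could a priori concentrate far from the origin, and only the interplay between the Gaussian weight, the curvature decay along conical ends, and the strict inequality $\lambda_1(\Si)>\tfrac12$ keeps them localized in the limit.
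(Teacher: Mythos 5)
Your toolkit is essentially the right one --- transplantation on compact pieces, the curvature decay $|A|\le C(1+|x|)^{-1}$ on all of $M_t$ (which, note, needs the uniqueness/first-time assumption in $(\star)$ together with pseudolocality, not just $C^\infty_{\loc}$ convergence), the fact that the potential $|A|^2+\frac12$ is $\le\frac12+o_R(1)$ outside $B_R$, an IMS-type cutoff, and $\lambda_1(\Sigma)\ge1>\frac12$ --- but the logic is assembled backwards and one key step is unjustified. Under the sign convention of the statement (the one you are in fact using, since you invoke $L_\Sigma H=H\Rightarrow\lambda_1(\Sigma)\ge1$), $\lambda_1$ is the \emph{supremum} of the Rayleigh quotient, so a cutoff of $\phi_1$ transplanted to $M_t$ and used as a test function gives $\lambda_1(M_t)\ge\lambda_1(\Sigma)-o_R(1)-o_t(1)$, i.e.\ the \emph{lower} bound, not the claimed $\limsup_t\lambda_1(M_t)\le\lambda_1(\Sigma)$; the upper bound is the hard direction precisely because $\phi_1^t$ could a priori concentrate far out on $M_t$, and it cannot come from any single test function. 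Second, in your compactness paragraph the conclusion ``$\lambda\ge\lambda_1(\Sigma)$'' for the subsequential limit is asserted without argument: a nonzero weighted-$L^2$ eigenfunction only yields $\lambda\le\lambda_1(\Sigma)$; to get the reverse you must use that $\phi_\infty\ge0$ (limit of positive first eigenfunctions), is strictly positive by Harnack, and that a positive eigenfunction in the weighted $L^2$ space is necessarily the first one (\cite{BW}). There is also a mild circularity: to extract tail smallness from your displayed inequality you need $\lambda_1(M_t)\ge\frac12+c$, which comes from the correctly interpreted test-function \emph{lower} bound, not from ``the upper bound just established'' (the uniform upper bound needed for the elliptic estimates follows simply from $\lambda_1(M_t)\le\sup_{M_t}|A|^2+\frac12$). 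If you swap the roles of your two paragraphs --- test function $\Rightarrow\ \liminf_t\lambda_1(M_t)\ge\lambda_1(\Sigma)$; tail control plus compactness $\Rightarrow$ the subsequential limit $\lambda$ is an honest eigenvalue of $L_\Sigma$, hence $\limsup_t\lambda_1(M_t)\le\lambda_1(\Sigma)$ --- the proof of (1) closes and is close in spirit to the paper's, which obtains the liminf from convergence of Dirichlet eigenvalues $\lambda_1^R(M_t)\to\lambda_1^R(\Sigma)\to\lambda_1(\Sigma)$ and the limsup from exactly your cutoff estimate applied to $\phi_1^t$ together with the curvature decay.

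For (2), the one-sentence reduction to ``$\limsup_t\lambda_2(M_t)\le\lambda_2(\Sigma)$ by the same scheme'' is a genuine gap. Bounding $\lambda_2(M_t)$ from above is a min--max statement: you would have to transplant the two-dimensional span of $\phi_1^t,\phi_2^t$ to $\Sigma$ after cutoff, control the tails of the sign-changing $\phi_2^t$ (which requires a dichotomy: either $\lambda_2(M_t)\le\frac12+c$, in which case the gap is already at least $\lambda_1(\Sigma)-\frac12-c$, or $\lambda_2(M_t)>\frac12+c$ and the maximum-principle/curvature-decay localization applies), and check that the transplanted space remains two-dimensional; none of this follows formally from the $\lambda_1$ argument, and you also rely on discreteness of the spectrum of $L_\Sigma$ and $\lambda_2(\Sigma)<\lambda_1(\Sigma)$, which the paper establishes separately (simplicity via the positive first eigenfunction, discreteness in its appendix on the heat kernel). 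The paper sidesteps all of this with a contradiction argument: if $\lambda_1(t)-\lambda_2(t)\to0$ then $\lambda_2(t)\to\lambda_1(\Sigma)>1$, the max and min points of $\phi_2^t$ are confined to a fixed ball by the maximum principle and the decay of $|A|$, elliptic estimates give a nonzero subsequential limit which is a sign-changing eigenfunction of $L_\Sigma$ with eigenvalue $\lambda_1(\Sigma)$, contradicting the positivity of first eigenfunctions --- and this needs no quantitative information about $\lambda_2(\Sigma)$ at all.
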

	Section \ref{SConvergence} also contains further information on the convergence of eigenfunctions, etc. 
	
	\subsection{Organization of paper}
	The paper is organized as follows. In Section \ref{S:FK}, we establish the Feynman-Kac formula in the setting of RMCFs. In Section \ref{S:Asymvariation}, we study the asymptotic behavior of the solution to the variational equation using the Feynman-Kac formula. In Section \ref{S:Dynamics in a neighborhood of the shrinker}, we study the dynamics in a neighborhood of the shrinker, and hence completing the proofs of the main theorems stated above. In Section \ref{SEstimate}, we give some graphical estimates for the RMCF close to the shrinker using pseudolocality and the Ecker-Huisken estimate, etc. In Section \ref{SConvergence}, we study the convergence of the leading eigenvalue and eigenfunctions for the $L$-operator on $M_t$ as $t\to\infty$. Finally, we have four appendices containing some technical ingredients. In Appendix \ref{SS:transplantation}, we give the estimates for functions on $M_t$ pulled back to $\Sigma$ on a compact set that we call transplantations. In Appendix \ref{AppP}, we introduce polar-spherical transplantations adapted to conical shrinkers and give the estimates for the pullback of the $L$-operator. In Appendix \ref{AppPropCM}, we give the proof of Proposition \ref{LmCM3}. In Appendix \ref{AppKernel}, we prove the existence of the heat kernel of $L$ on a conical shrinker. 
	
	\subsection*{Acknowledgement}
	A.S. wants to thank his advisor Professor Bill Minicozzi for his encouragement and support, as well as many helpful comments. J. X. would like to thank Professor Tobias Colding for introducing him to the subject. J. X. is supported by NSFC grant No. 12271285, the New Cornerstone Investigator Program and the Xiaomi Foundation.

	\section{The Feynman-Kac formula}\label{S:FK}
	In this section, we derive a Feynman-Kac formula adapted to the variational equation $u_t=L_{M_t} u$ over the RMCF $\{M_t\}_{t\in[0,\infty)}$. Throughout this section, $u,v$ are functions defined on the RMCF $M_t$. 
	
	\subsection{Heat kernels}\label{SSHeat}
	
	In this section we study the heat kernel of the equation
	$$\partial_tu=L_{M_t}u=\mathcal L_{M_t} u+V(x,t)u,$$
	where $\{M_t\}_{t\in[0,\infty)}$ is the RMCF with the limit $M_t\to \Sigma$ in $C_{\loc}^\infty$ as $t\to \infty$, $V$ is a smooth potential which is always chosen as 0 or $|A|^2+\frac12 $ in the paper and $\cL_{M_t}$ is the drifted Laplacian, defined by $\cL_{M_t}u=\Delta_{M_t}u-\frac{1}{2}\langle x,\nabla u\rangle$.
	\begin{definition}\label{DefHeatKernel}
		The heat kernel of the above equation is a function of the form $\cH(x,t; y,s)$, where $x\in M_t$ and $y\in M_s$ and we always assume $s<t$, satisfying
		\begin{enumerate}
			\item $\partial_t \cH=\cL_{x,t} \cH+V\cH,$
			\item $\lim_{t\searrow s}\cH(\cdot ,t,y,s)\mathrm{e}^{-\frac{|y|^2}{4}}=\delta_y$.
		\end{enumerate}
	\end{definition}
	With the heat kernel, we can express the solution to the initial value problem \begin{equation}\label{EqCauchy}\begin{cases}\partial_t u=L_{M_t} u,\\
			u(\cdot, s)=f(\cdot)\end{cases}\end{equation} as \begin{equation}\label{EqReproducing}u(x,t)=\int_{M_s} \cH(x,t;y,s) f(y)\mathrm{e}^{-|y|^2/4}\,d\mu_s(y).\end{equation}
	Of particular importance for us is the following cocycle property. 
	\begin{theorem}[cocycle property]
		Let $\cH$ be the heat kernel for the equation  $\partial_t u=\cL u+Vu$, then for all $x\in M_r, y\in M_s, z\in M_t$ with $ r<s<t$, we have
		\begin{equation}\label{EqCocycle}\int_{M_s} \cH(z,t;y,s) \cH(y,s;x,r)\mathrm{e}^{-|y|^2/4}\,d\mu_s=\cH(z,t;x,r). \end{equation}
	\end{theorem}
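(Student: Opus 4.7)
The plan is to deduce the cocycle property from the uniqueness of solutions to the initial value problem \eqref{EqCauchy} together with the reproducing formula \eqref{EqReproducing}. Fix $r<s<t$ and $x\in M_r$. Regarding $x$ and $r$ as parameters, consider the function $(y,\tau)\mapsto \mathcal H(y,\tau;x,r)$ for $\tau>r$. By Definition \ref{DefHeatKernel}(1), this function solves $\partial_\tau \mathcal H = L_{M_\tau}\mathcal H$ as a function of $(y,\tau)$. Restricting to $\tau\geq s$ and using continuity in $\tau$, we may view $y\mapsto \mathcal H(y,s;x,r)$ as the initial datum at time $s$ of the solution $(z,\tau)\mapsto \mathcal H(z,\tau;x,r)$.

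Next, I would apply the reproducing formula \eqref{EqReproducing} with this initial datum. Define
\[
w(z,\tau):=\int_{M_s}\mathcal H(z,\tau;y,s)\,\mathcal H(y,s;x,r)\,e^{-|y|^2/4}\,d\mu_s(y),\qquad \tau\geq s.
\]
By Definition \ref{DefHeatKernel}(1) applied to $\mathcal H(z,\tau;y,s)$ in the $(z,\tau)$ variables, and differentiating under the integral, $w$ satisfies $\partial_\tau w = L_{M_\tau} w$ for $\tau>s$. The defining property Definition \ref{DefHeatKernel}(2) of $\mathcal H(z,\tau;y,s)$ says that the weighted measure $\mathcal H(z,\tau;y,s)e^{-|y|^2/4}d\mu_s(y)$ converges to $\delta_z$ as $\tau\searrow s$, so $w(z,\tau)\to \mathcal H(z,s;x,r)$ as $\tau\searrow s$. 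Hence $w(\cdot,\tau)$ and $\mathcal H(\cdot,\tau;x,r)$ are two solutions of \eqref{EqCauchy} on $\tau\in[s,\infty)$ with the same initial datum $\mathcal H(\cdot,s;x,r)$. By uniqueness of solutions to the initial value problem for the linear parabolic equation $\partial_\tau u=L_{M_\tau}u$, the two must coincide; evaluating at $\tau=t$ produces \eqref{EqCocycle}.

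The step I expect to be the main obstacle is justifying both the differentiation under the integral and the uniqueness statement in the noncompact, time-dependent setting: one must control the behavior of $\mathcal H(y,s;x,r)$ as $|y|\to\infty$ on $M_s$ and argue that this decay is preserved under the heat semigroup so that the integral defining $w$ converges, $L_{M_\tau}$ can be passed inside, and a uniqueness class for \eqref{EqCauchy} can be specified. Assuming the potential $V=|A|^2+\tfrac12$ has the appropriate growth on the asymptotically conical RMCF (which should be established in the same section as the existence of the heat kernel, see Appendix \ref{AppKernel}), standard Gaussian estimates for $\mathcal H$ against the shrinker measure $e^{-|y|^2/4}d\mu_s$ will ensure absolute convergence of the integral and that $w$ lies in the uniqueness class. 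Once these technical points are in place, the argument above delivers the cocycle property as a purely formal consequence of $\mathcal H$ being the fundamental solution of \eqref{EqCauchy}.
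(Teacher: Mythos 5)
Your argument is correct and is essentially the paper's own proof: the paper's one-line justification (take the limit in time and apply item (2) of Definition \ref{DefHeatKernel}, citing \cite[Chapter 26]{CC} for the standard properties) is exactly your scheme of showing both sides solve $\partial_\tau u=L_{M_\tau}u$ with the same initial datum $\cH(\cdot,s;x,r)$ at $\tau=s$ and invoking uniqueness. Your added discussion of integrability, differentiation under the integral, and the uniqueness class is precisely the technical content the paper delegates to the cited reference, so there is no substantive difference in approach.
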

	The proof is to take $s\to 0$ and apply item (2) of Definition \ref{DefHeatKernel}. We refer the reader to \cite[Chapter 26]{CC} for the existence of heat kernel and related properties.
	\subsection{The Trotter product formula for evolving manifolds}
	The next ingredient is the Trotter product formula. The classical Trotter product formula is as follows: let $A$ and $B$ be two adjoint operators bounded from below on a Hilbert space $\mathbb H$ and suppose that $A+B$ is adjoint on $\mathcal D(A)\cap \mathcal D(B)$, where $\mathcal D(\bullet)$ is the domain of $\bullet$, then $\mathrm{e}^{-t(A+B)}=\lim_{n} (\mathrm{e}^{-\frac{t}{n}A}\mathrm{e}^{-\frac{t}{n}B})^n$ in the strong operator norm on $\mathbb H$. 
	
	To adapt this formula to our setting of evolving manifolds, we need a non-autonomous Trotter formula, which was developed in  \cite{V, VWZ}. 
	Let $\mathcal T(t,s)$ be the fundamental solution to the variational equation \eqref{EqMain}, i.e., for all $u(s)\in C^\infty(M_s)$, the function $\mathcal T(t,s)u(s):=u(t):\ M_t\to \R$ satisfies \eqref{EqMain} with the initial condition $u(s)$. The fundamental solution is related to the heat kernel by $$u(x,t)=\mathcal T(t,s)u(y,s)=\int_{M_s}\mathcal H(x,t;y,s)u(y,s)\mathrm{e}^{-\frac{|y|^2}{4}}\,d\mu_s(y).$$ 
	So the fundamental solution $\mathcal T(t,s)$ can be extended to a linear operator from $L^2(M_s)$ to $L^2(M_t)$. 
	Similarly, we let $\bar{\mathcal T}(t,s)$ be the fundamental solution to the equation $\partial_t u=\cL_{M_t} u$. Then we get the following result by applying the non-autonomous Trotter product formula in \cite{V} in our setting. 
	
	\begin{proposition}Let $V(\cdot, \tau):\ M_\tau\to \R,\ 0\leq s\leq \tau\leq t<\infty$ be a smooth function that is also smooth in $\tau$ and let $\mathcal T(t,s)$ and $\bar{\mathcal T}(t,s)$ be as above. Let $t_k=k(t-s)/n+s$. Then we have 
		$$\mathcal T(t,s)=\lim_n\prod_{k=0}^{n-1}\left(\bar{\mathcal T}(t_{k+1}, t_k) \mathrm{e}^{\frac{t-s}{n}V(\cdot ,t_k)}\right)$$
		in the strong operator topology as linear operators from $L^2(M_s)$ to $L^2(M_t)$. 
	\end{proposition}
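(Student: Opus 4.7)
The plan is to reduce the claim to the non-autonomous Trotter product formula of Vuillermot and collaborators cited as [V, VWZ]. Since the theorem there is formulated on a fixed Hilbert space while our operators act between the varying spaces $L^2(M_t, e^{-|x|^2/4}\,d\mu_t)$, the first order of business is to set up a common functional-analytic framework.

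For this, I would fix a reference time $s$ and use the diffeomorphisms induced by the RMCF to transplant every $L^2(M_\tau)$ onto a single Hilbert space. Concretely, for $\tau \geq s$ let $F_{\tau,s}:M_s\to M_\tau$ be the map obtained by following the normal flow (using pseudolocality of RMCF to guarantee the flow stays graphical over compact subsets of $\Sigma$ for large $s$, and smoothness of the RMCF elsewhere). The pullback $F_{\tau,s}^*$ composed with the appropriate Jacobian factor produces a family of unitary isomorphisms between the Gaussian-weighted $L^2$-spaces. Under these unitaries, $\mathcal L_{M_\tau}$ and multiplication by $V(\cdot,\tau)$ become families of self-adjoint operators on the fixed space $L^2(M_s)$ that depend smoothly on $\tau$, and $\bar{\mathcal T}(t,s)$ and $\mathcal T(t,s)$ become two-parameter strongly continuous evolution systems in the classical sense.

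Next I would verify the hypotheses of the non-autonomous Trotter formula in this transplanted picture: (i) the drifted Laplacian $\mathcal L_{M_\tau}$ is essentially self-adjoint on $C_c^\infty$ with respect to the Gaussian-weighted inner product; (ii) the potential $V(\cdot,\tau)$, being either $0$ or $|A|^2+\tfrac12$, defines a bounded multiplication operator, thanks to uniform bounds on $|A|^2$ along $M_t$ that follow from $C^\infty_{\loc}$ convergence to $\Sigma$ together with the curvature decay on asymptotically conical ends; (iii) smoothness of the RMCF in $\tau$ gives strong-operator continuity of $\tau \mapsto V(\cdot,\tau)$ and $\tau\mapsto \mathcal L_{M_\tau}$, and in particular a common core for the family of generators. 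These inputs match the standard hypotheses for the non-autonomous Trotter-Kato type theorems in [V, VWZ].

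With the hypotheses verified, the conclusion follows by the usual Chernoff-Trotter telescoping argument: on each subinterval $[t_k, t_{k+1}]$ of length $h=(t-s)/n$, the cocycle property \eqref{EqCocycle} and Duhamel's principle give $\mathcal T(t_{k+1},t_k)=\bar{\mathcal T}(t_{k+1},t_k)\,e^{hV(\cdot,t_k)}+R_{n,k}$ with $\|R_{n,k}\|_{\mathrm{op}}=O(h^2)$ on the common core; telescoping and using uniform bounds on the intermediate products (from the boundedness of $V$) yield an overall error of size $O(1/n)$ strongly on the core, which extends to all of $L^2(M_s)$ by density and the uniform norm bound. The main obstacle is the variable-Hilbert-space issue, which forces the transplantation step and requires care to show that the transplanted operators inherit all the regularity needed for [V, VWZ]; once that setup is in place, the remainder is a direct application of the cited non-autonomous Trotter formula.
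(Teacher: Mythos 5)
Your overall strategy — trivialize the variable-Hilbert-space issue by transplanting each $L^2(M_\tau,e^{-|x|^2/4}d\mu_\tau)$ unitarily onto the fixed space $L^2(M_s)$ via the flow map (with the Jacobian density absorbed), then invoke the non-autonomous Trotter product formula of Vuillermot et al. — is exactly the reduction the paper has in mind; the paper itself gives no proof beyond the statement ``we get the following result by applying the non-autonomous Trotter product formula in [V] in our setting,'' so your proposal is usefully filling in what is left implicit.

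Two remarks on the details. First, the appeal to pseudolocality when constructing the diffeomorphisms $F_{\tau,s}:M_s\to M_\tau$ is a red herring: each $M_\tau$ is a \emph{closed} embedded hypersurface and the RMCF is a smooth flow of compact manifolds, so the flow map along $\partial_t x=-(H-\tfrac12\langle x,\bn\rangle)\bn$ is already a global diffeomorphism $M_s\to M_\tau$ for every $\tau\in[s,t]$; no graphicality over $\Sigma$ or pseudolocality is needed to define it. (Pseudolocality is used elsewhere in the paper, to compare $M_t$ with $\Sigma$, which is not what is happening here.) Second, once you have transplanted to a fixed Hilbert space and verified the hypotheses of [V] (self-adjointness of the transplanted $\mathcal L_{M_\tau}$, uniform boundedness of $V(\cdot,\tau)=|A|^2+\tfrac12$ on the compact $M_\tau$, and strong continuity of the families in $\tau$), the conclusion is immediate; the final Chernoff--Trotter telescoping paragraph in your write-up is a re-derivation of the very theorem you are citing and is therefore redundant for the purpose of this proposition, though of course it does no harm.
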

	
	\subsection{The Feynman-Kac formula}
	We next prove the Feynman-Kac formula in the setting of RMCFs.
	
	\begin{theorem}\label{ThmFK}
		Let $V(\cdot, \tau):\ M_\tau\to \R,\ 0\leq s\leq \tau\leq t<\infty$ be a smooth function that is also smooth in $\tau$. Then there exists a positive measure $\nu_{x,t}$ on the infinite product space $\Omega:=\prod_{0\leq \tau\leq t} M_\tau $ such that the solution of the Cauchy problem \eqref{EqCauchy}
		is represented as follows in the $L^2$-norm: 
		$$u(x,t)=\int_{\Omega} f(\omega(0))\exp\left(\int_0^t V(\omega(s),s)\,ds\right)d\nu_{x,t}(\omega).$$
	\end{theorem}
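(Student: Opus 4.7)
The plan is to combine the non-autonomous Trotter product formula stated above with the reproducing property of the heat kernel $\bar{\mathcal H}$ of the potential-free operator $\mathcal L_{M_t}$, and then pass to a path-integral limit via Kolmogorov extension. Starting from the Trotter identity with partition $t_k=kt/n$, $k=0,\ldots,n$, I would apply both sides to $f$ and unfold each $\bar{\mathcal T}(t_{k+1},t_k)$ using the reproducing formula \eqref{EqReproducing}. With the convention $y_n=x$, this gives the $n$-fold iterated integral
$$u^n(x,t)=\int_{M_{t_{n-1}}}\!\!\!\cdots\!\int_{M_0}\!\! f(y_0)\prod_{k=0}^{n-1}\!\bar{\mathcal H}(y_{k+1},t_{k+1};y_k,t_k)\,e^{\frac{t}{n}V(y_k,t_k)}\,e^{-|y_k|^2/4}d\mu_{t_k}(y_k),$$
and by the Trotter convergence $u^n\to u$ in $L^2(M_t)$.

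The key observation is that $\mathcal L_{M_t}1=0$, so the constant function $1$ is a global solution to $\partial_tu=\mathcal L_{M_t}u$. Applying \eqref{EqReproducing} to $u\equiv 1$ yields the normalization $\int_{M_s}\bar{\mathcal H}(z,t;y,s)e^{-|y|^2/4}d\mu_s(y)=1$ for all $s<t$. Consequently, the expression
$$d\nu^n_{x,t}(y_0,\ldots,y_{n-1}):=\prod_{k=0}^{n-1}\bar{\mathcal H}(y_{k+1},t_{k+1};y_k,t_k)\,e^{-|y_k|^2/4}d\mu_{t_k}(y_k),\quad y_n=x,$$
defines a Borel probability measure on $\prod_{k=0}^{n-1}M_{t_k}$. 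The cocycle identity \eqref{EqCocycle} is precisely the Chapman-Kolmogorov relation: integrating out an intermediate coordinate $y_j$ collapses two adjacent kernels into the single kernel spanning the gap. This makes the family of measures indexed by arbitrary finite subsets $\{\tau_1<\cdots<\tau_m\}\subset[0,t]$, defined by the analogous formula, a consistent projective system.

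Since each $M_\tau$ is a Polish space, the Kolmogorov extension theorem produces a Borel probability measure $\nu_{x,t}$ on the cylindrical $\sigma$-algebra of $\Omega=\prod_{0\leq\tau\leq t}M_\tau$ whose finite-dimensional projections recover the $\nu^n_{x,t}$. Under this measure, the Riemann sum $\tfrac{t}{n}\sum_{k=0}^{n-1}V(\omega(t_k),t_k)$ converges pointwise to $\int_0^tV(\omega(s),s)\,ds$, using smoothness of $V$ in time and the measurability built into the cylindrical structure. A dominated-convergence argument then identifies the limit of $u^n$ with
$$\int_\Omega f(\omega(0))\exp\!\left(\int_0^tV(\omega(s),s)\,ds\right)d\nu_{x,t}(\omega),$$
and combining this with the $L^2$ Trotter convergence establishes the claimed representation.

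The main obstacle is the measure-theoretic construction on the uncountable product $\Omega$ and the uniform control needed to push the limit inside the integral. Specifically, one must check Kolmogorov consistency across all finite partitions of $[0,t]$, not only the uniform ones used in the Trotter approximation, which is where the cocycle property \eqref{EqCocycle} is essential; and one must secure enough integrability, uniformly in $n$ and in $x$, to justify dominated convergence against $\nu^n_{x,t}$. The heat-kernel existence and Gaussian-type upper bounds for $\bar{\mathcal H}$ on the noncompact time-dependent hypersurfaces $M_t$, proved in Appendix \ref{AppKernel}, together with the boundedness of $V$ on compact time intervals, supply precisely this integrability; once these inputs are in place, the path-integral limit proceeds along the classical lines of the Euclidean Feynman-Kac construction.
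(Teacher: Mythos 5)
Your proposal is correct in spirit and closely parallels the paper's argument, but it takes a genuinely different route for the measure-theoretic construction. The paper follows Reed--Simon's purely functional-analytic approach: it introduces the algebra $C_{\mathrm{fin}}(\Omega)$ of cylinder functions, defines a positive linear functional $\mathbb{L}_{x,t}$ on it using the potential-free kernel $\bar{\mathcal H}$, uses the cocycle identity to check that the functional is well-defined under refinement of the time partition, extends to $C(\Omega)$ by Stone--Weierstrass (with $\Omega$ compact by Tychonoff, since each slice $M_\tau$ is a compact hypersurface for finite $t$), and then invokes Riesz representation to extract $\nu_{x,t}$. You instead build the measure by the Kolmogorov extension theorem from the explicit finite-dimensional marginals $\nu^n_{x,t}$, with the cocycle property playing the role of Chapman--Kolmogorov consistency. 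Both are standard and interchangeable here; the paper's choice is deliberately deterministic, while yours makes the projective structure explicit and, via the observation $\mathcal L_{M_t}1=0$ together with the reproducing property, yields the sharper statement that $\nu_{x,t}$ is in fact a probability measure (the paper only claims positivity).

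Two small cautions. First, Kolmogorov extension produces a measure on the product $\sigma$-algebra of the uncountable product $\Omega$, which is strictly smaller than the Borel $\sigma$-algebra of the product topology; the functional $\omega\mapsto\int_0^tV(\omega(s),s)\,ds$ is not product-measurable, and your asserted pointwise convergence of the Riemann sums holds only on paths $\omega$ for which $s\mapsto V(\omega(s),s)$ is Riemann integrable. Without a path-regularity statement this step should be read, as in Reed--Simon, as convergence of the Riemann exponentials in $L^1(\nu_{x,t})$ rather than pointwise --- the paper's own display glosses the same point, so this is not a defect particular to your proposal, but it is worth flagging. Second, you cite Appendix \ref{AppKernel} for existence and Gaussian bounds of $\bar{\mathcal H}$ on the evolving $M_t$; that appendix in fact treats the heat kernel of $L_\Sigma$ on the noncompact static limit shrinker. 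Existence of $\bar{\mathcal H}$ on the compact time slices $M_t$ comes from the classical parabolic theory cited in the paper (Chow--Chu, Chapter 26).
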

	\begin{proof}
		We follow the argument in Chapter X of \cite{RS}, which avoids using the probabilistic language. 
		We introduce the space $\Omega:=\prod_{0\leq \tau\leq t} M_\tau $ of all paths $\{\omega_\tau\}$ along the RMCF such that $\omega_\tau\in M_\tau$ endowed with the product topology. For fixed $\mathbf t:=(t_1,\ldots,t_m )$ with $t_1<t_2<\ldots<t_m$, we introduce a subspace $\Omega(\mathbf t)$ of $\Omega$ defined as $\Omega(\mathbf t)=\prod_{i=1}^m M_{t_i}$. Let $F:\ \Omega(\mathbf t)\to \R$ be a continuous function on $\Omega(\mathbf t)$ and $\omega\in \Omega$ be a path along $(M_t)$. The restriction map $\varphi:\ \Omega\to \R$ is defined as $\varphi(\omega):=F(\omega(t_1),\ldots,\omega(t_m))$. 
		
		We denote by $C_{\mathrm{fin}}(\Omega)$ the set of all such functions on $\Omega$ with all possible choices of time slices $\mathbf t$ and introduce a linear functional $\mathbb L_{x_{m+1},t_{m+1}}$ for each given point $x_{m+1}$ on the $t_{m+1}$-slice $M_{t_{m+1}}$:
		\begin{equation*}
			\begin{aligned}
				\mathbb L_{x_{m+1},t_{m+1}}(\varphi)&=\int_{M_{t_m}}\cdots \int_{M_{t_1}}F(x_1,\ldots,x_m) \bar\cH(x_{m+1},t_{m+1};x_m,t_m)\cdots \bar\cH(x_2,t_2;x_1,t_1)\\
				&\mathrm{e}^{-\frac14(|x_0|^2+\cdots+|x_{m}|^2)}\,d\mu_{x_1}\ldots d\mu_{x_m},\end{aligned}\end{equation*}
		where $ \bar\cH$ is the heat kernel for the heat equation $\partial_t u=\mathcal L_{M_t}u$ defined as in Section \ref{SSHeat} with $V\equiv0$. 
		
		The linear functional $\mathbb L$ is well-defined on $C_{\mathrm{fin}}$. Indeed, let $\mathbf t'$ be a finite superset of $\mathbf t$, then $F:\ \Omega(\mathbf t)\to \R$ can be considered as a function $F':\ \Omega(\mathbf t')\to R$ which agrees with $F$ on the time slices in $\mathbf t$ and constant on the time slices in $\mathbf t'\setminus\mathbf t$. Then the cocycle property of the heat kernel enables us to integrate out the variables on the slices in $\mathbf t'\setminus\mathbf t$. 
		
		Then we obtain a bounded positive linear functional $\mathbb L_{x_{m+1}, t_{m+1}}$ on $C_{\mathrm{fin}}(\Omega)$, and by Stone-Weierstrass theorem there exists a unique extension to $C(\Omega)$. Then by the Riesz representation, we obtain a unique Borel  measure $\nu_{x_{m+1}, t_{m+1}}$ such that for all $\varphi\in C(\Omega)$
		$$\mathbb L_{x_{m+1},t_{m+1}}(\varphi)=\int_\Omega \varphi(\omega) \, d\nu_{x_{m+1}, t_{m+1}}(\omega).$$
		This gives us a representation of solutions of the Cauchy problem \eqref{EqCauchy} as follows: for each $x\in M_t$, we have 
		$$\mathbb L_{x,t} f=\int_{\Omega} f(\omega(0))\,d\nu_{x,t}(\omega)=\int_{M_0} f(y)\bar\cH(x,t;y,0) \mathrm{e}^{-\frac{|y|^2}{4}}d\mu_{y}.$$
		
		Finally, the Trotter product formula shows that (denoting $t_k=tk/n,\ x_n=x,\ t_n=t$)
		\begin{equation*}
			\begin{aligned}
				u(x,t)&=\lim_n\prod_{k=0}^{n-1}\left(\bar{\mathcal T}(t_{k+1},t_k) \mathrm{e}^{\frac{t}{n}V(x_k,t_k)}\right)f\\
				&=\lim_n\int_{M_{t_{n-1}}}\cdots\int_{M_0}\mathrm{e}^{\frac{t}{n}\sum_k V(x_k,t_k)} \bar\cH(x, t;x_{n-1},t_{n-1})\cdots \bar\cH(x_1, t_1;x_0,0)f(x_0)\\
				&\mathrm{e}^{-\frac14(|x_0|^2+\cdots+|x_{n-1}|^2)}\,d\mu_{x_0}\cdots d\mu_{x_{n-1}}\\
				&=\int_{\Omega}\mathrm{e}^{\int_0^t V(\omega(s),s)\,ds}f(\omega(0))\,d\nu_{x,t}(\omega). 
			\end{aligned}
		\end{equation*}
		This gives the Feynman-Kac formula. 
	\end{proof}
	
	\begin{remark}
		The measure $\nu$ is called the Wiener measure.
	\end{remark}
	
	\subsection{The localization}
	One notable difficulty in the study of singularities modeled by noncompact shrinkers in the MCF theory is that in general the manifold $M_t$ cannot be written as a global graph over the limiting shrinker $\Sigma$ no matter how large $t$ is, where $\{M_t\}_{t\in[0,\infty)}$ is the RMCF with $M_t\to \Sigma$ in $C_{\loc}^\infty$ as $t\to\infty. $
	Therefore it is natural to consider the localized Dirichlet boundary value problem. Let $B_R$ be a big open ball in $\R^{n+1}$. When $t$ is sufficiently large, we can write part of $M_t$ as a normal graph over $B_R\cap \Sigma$. We denote by $M_t^R$ this part of $M_t$ and by $L^R_{M_t}$ the restriction of the $L_{M_t}$-operator  to $M_t^R$, and introduce the evolutionary Dirichlet boundary value problem
	\begin{equation}\label{EqDiri}
		\begin{cases}
			\partial_t u&=L^R_{M_t} u\\
			u(t,\cdot)|_{\partial M_t^R}&=0
		\end{cases}.
	\end{equation}
	For problem \eqref{EqDiri}, we can also introduce the heat kernel as in Section \ref{SSHeat}, which also has the cocycle property (c.f. Lemma 26.12 of \cite{CC}). Let us denote by $\mathcal H^R$ its heat kernel,  and by $\bar\cH^R$ the heat kernel for the Dirichlet boundary value problem with $V\equiv0$. Then we can repeat the argument of Theorem \ref{ThmFK} to obtain a Wiener measure $\nu^R$ that is supported on $\Omega^R:=\prod_{0\leq \tau\leq t}M_\tau^R$ such that the solution to \eqref{EqDiri} can be represented as
	$$[\cT^R(t,s)u(s,\cdot )](x)=\int_{\Omega} u(s,\omega(s))\exp\left({\int_s^t V(\omega(\tau),\tau)\,d\tau}\right) d\nu^R_{x,t}(\omega),$$
	where we use $\cT^R(t,s)$ to denote the fundamental solution to \eqref{EqDiri}. 
	
	One remarkable property of the Feynman-Kac representation is that we can compare the solution of \eqref{EqDiri} to that of the original Cauchy problem without cutoff. Using the parabolic maximum principle, we have $\bar\cH(x,t;y,s)\geq \bar\cH^R(x,t;y,s)$ pointwise. Thus we get the following proposition by comparing the argument of Theorem \ref{ThmFK}. 
	\begin{proposition}\label{PropLocalOp}
		Let $\nu$ be the Wiener measure constructed in Theorem \ref{ThmFK} and $\cT^R(t,s)$ be as above. Then we have pointwise for any positive function $u$,
		$$\int_{\Omega}  u(\omega(s),s)\exp\left({\int_s^t V(\omega(\tau),\tau)\,d\tau}\right) d\nu_{x,t}(\omega)\geq [\cT^R(t,s)u(\cdot,s )](x).$$
	\end{proposition}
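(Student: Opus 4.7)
The plan is to compare the two Wiener measures $\nu_{x,t}$ and $\nu^R_{x,t}$ at the level of their defining cylinder-set integrals, and then to pass to the limit using the Trotter product formula exactly as in the proof of Theorem \ref{ThmFK}. The single input beyond what is already done for the global problem is the pointwise inequality between the heat kernels $\bar{\mathcal H}$ and $\bar{\mathcal H}^R$ supplied by the parabolic maximum principle.

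First I would construct $\nu^R_{x,t}$ by repeating verbatim the construction in Theorem \ref{ThmFK}, with $\bar{\mathcal H}$ replaced by $\bar{\mathcal H}^R$ and each slicewise integration taken over $M^R_{t_i}$ rather than $M_{t_i}$. The cocycle property for the Dirichlet heat kernel (Lemma 26.12 of \cite{CC}) makes the finite-dimensional functional $\mathbb L^R_{x,t}$ well-defined on $C_{\mathrm{fin}}(\Omega^R)$, and Stone--Weierstrass plus Riesz representation then yields a positive Borel measure $\nu^R_{x,t}$ on $\Omega^R$. Viewing this as a measure on the full path space $\Omega$ (supported on $\Omega^R$) makes the two sides of the desired inequality directly comparable.

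Next, the parabolic maximum principle applied to $\bar{\mathcal H}(\cdot,t;y,s) - \bar{\mathcal H}^R(\cdot,t;y,s)$ gives
\[
\bar{\mathcal H}(x,t;y,s) \ \geq\ \bar{\mathcal H}^R(x,t;y,s) \ \geq\ 0
\]
pointwise, with $\bar{\mathcal H}^R$ extended by zero outside $M^R$. Insert this into the finite-dimensional approximants provided by the non-autonomous Trotter formula applied to both $\mathcal T(t,s)$ and $\mathcal T^R(t,s)$. Since $u(s,\cdot)$ and $\exp\bigl(\frac{t-s}{n}\sum_k V(x_k,t_k)\bigr)$ are nonnegative, each approximating integral for $[\mathcal T^R(t,s)u(s,\cdot)](x)$ is dominated termwise by the corresponding approximating integral for the full Feynman--Kac representation: the integration domains $M^R_{t_i}$ are subsets of $M_{t_i}$, and each product of heat kernels is pointwise smaller. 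Passing $n\to\infty$ preserves the inequality and yields the proposition.

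The main technical point to check is that the Dirichlet construction of $\nu^R$ is genuinely parallel to the global one despite the moving lateral boundary $\partial M^R_t$: one must verify that $\mathbb L^R_{x,t}$ remains a bounded positive linear functional on $C_{\mathrm{fin}}(\Omega^R)$, that the consistency property under refinement of time slices still holds (this is exactly the cocycle property, which is available for Dirichlet kernels), and that the non-autonomous Trotter formula of \cite{V,VWZ} applies in the Dirichlet setting with the appropriate domain considerations. Once these are in place, nonnegativity of $u$ combined with the heat-kernel comparison gives the pointwise inequality in the statement.
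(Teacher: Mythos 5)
Your proposal is correct and follows the same route as the paper's (very terse) proof: the parabolic maximum principle gives the pointwise domination $\bar{\mathcal H}\geq\bar{\mathcal H}^R\geq 0$, which combined with the Trotter/cylinder-set construction of Theorem \ref{ThmFK} and nonnegativity of $u$ and of the exponential weight yields the inequality. You have merely spelled out the details the paper leaves implicit (including the correct observation that $u\geq 0$ is being used, which is how the proposition is applied with $u=\phi_1(T)$).
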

	
	\section{The asymptotic behavior of the solution to the variational equation}\label{S:Asymvariation}
	In this section, we use the Feynman-Kac formula to study the asymptotic behavior of positive solutions to the variational equation $\partial_t v^\star=L_{M_t}v^\star$. We shall give the proof of Theorem \ref{mainthm-FK} in Section \ref{SSFKCone}.
	We introduce the  $L^2$-norm and the $Q$-norm  on $M_t$ as follows. 
	
	\begin{definition}
		\begin{enumerate}
			\item The $L^2(M_t)$-norm is defined by
			$\|u\|_{L^2(M_t)}=\left(\int_{M_t} |u(x)|^2  \mathrm{e}^{-|x|^2/4} d\mu\right)^{1/2}$ for a function $u:\ M_t\to\R$;
			\item the $Q$-norm is defined by 
			$$\|u\|_{Q(M_t)}=\left(\int_{M_t} \left(|\nabla u(x)|^2+ \Lambda u(x)^2- (|A|^2+\frac12)u(x)^2\right)  \mathrm{e}^{-|x|^2/4} d\mu\right)^{1/2}.$$
			Here we pick $\Lambda> \sup_t\lambda_1(t)$ where $\lambda_1(t)$ is the first eigenvalue of $L_{M_t}$ on $M_t$. In the case we study later, $\lambda_1(t)$ is uniformly bounded from above, so we can always pick such a $\Lambda$. 
			\item We abbreviate $L^2(t)$ to $L^2(M_t)$, and $Q(t)$ to $Q(M_t)$.
			\item We also introduce $L^2(\Sigma)$ and $Q(\Sigma)$ similarly and abbreviate them as $L^2$ and $Q$ respectively. 
		\end{enumerate}
	\end{definition}

	\subsection{The cone-preservation property}\label{SSConeLinear}
	In this section, we study the asymptotic dynamics of the evolutionary Dirichlet boundary value problem \eqref{EqDiri} with some large $R$ fixed. For each $R>0$, there exists $T=T(R)$ such that for all $t>T$, we can write $M_t^R$ as a normal graph of a function $m_t$ over $\Sigma^R:=\Sigma \cap B_R$, i.e., 
	$$M_t^R=\mathrm{Graph}\{x+m_t(x)\bn(x),\quad x\in \Sigma^R\}. $$
	This provides a diffeomorphism $\varphi_t:\ \Sigma^R\to M_t^R$,  via $x\mapsto x+m_t(x)\bn(x)$,  that converges to identity in the $C^2$-norm as $t\to\infty$.
	
	A function $f:\ M_t^R\to \R$ is pulled back by $\varphi_t$ to a function $f^*=f\circ \varphi_t:\ \Sigma^R\to \R$. This is called ``transplantation" in \cite{SX}.
	From the equation $\partial_tv^\star=L^R_{M_t} v^\star$, we obtain the equation for $v^*:=v^\star\circ\varphi_t$ as 
	\begin{equation}\label{EqPullBack}
		\partial_t v^*=L^R_\Sigma v^*+P(v^*,t),
	\end{equation}
	where $L^R_\Sigma$ is the restriction of $L_\Sigma$ to $\Sigma^R$, and $P(v^*,t)\to 0$ as $t\to\infty$ uniformly for all $\|v^*\|_{C^2}\leq 1$. Because $v^\star$ satisfies a linear equation, we have $P(\lambda v^*, t)=\lambda P(v^*,t)$. Moreover, we have the following estimate for $P$ 
	
	\begin{equation}\label{EqP}
		\begin{aligned}
			|P(v^*,t)|
			\leq C\|m\|_{C^2}(|\nabla^2_\Sigma v^*|+|\nabla_\Sigma v^*|+|v^*|).
		\end{aligned}
	\end{equation}
	\begin{lemma}
		The $L^2(M_t^R)$ and the $L^2(\Sigma^R)$-norms are equivalent under the transplantation.
	\end{lemma}
	\begin{proof}
		
		Given $v^\star\in L^2(M^R_t)$, we have that $v^*\in L^2(\Sigma^R)$. Indeed, in the definition of the $L^2_R(M_t)$-norm, we introduce a coordinate change $x\mapsto \varphi_t(x)$. The Jacobian is close to identity. We next consider the Gaussian weight, we have $\mathrm{e}^{-\frac{|x+m_t(x)\bn(x)|^2}{4}}=\mathrm{e}^{-\frac{|x|^2}{4}-\frac{m_t(x)^2}{4}-\frac{m_t(x)}{2} x\cdot \bn(x)}.$
		Since $x\in \Sigma$ and $\bn(x)$ is the unit normal at $x$, we get $\frac12x\cdot \bn(x)=H(x)$.
		Restricted to $B_R$, we have $|H|<C$ and then we get the following estimate for $|m_t|$ sufficiently small over $B_R$ $$(1-\eps )\mathrm{e}^{-\frac{|x|^2}{4}}\leq \mathrm{e}^{-\frac{|x+m_t(x)n(x)|^2}{4}}\leq (1+\eps) \mathrm{e}^{-\frac{|x|^2}{4}}. $$
	\end{proof} 
	
	In this way, we have converted the fundamental solution $\mathcal T^R(t,s)$ to \eqref{EqDiri} over the RMCF $M_t^R$ into a non-autonomous dynamical system \eqref{EqPullBack} on the fixed manifold $\Sigma^R$.  Abusing notation slightly, we still use $\mathcal T^R(t,s)$ to denote the fundamental solution to the system \eqref{EqPullBack}. 
	
	Let $\alpha>0$ be a positive number. We introduce the cone of functions:
	$$\mathcal K^R(\alpha):=\{ f\in  L^{2}(\Sigma^R)\ |\ \|\pi_1 f\|_{L^{2}(\Sigma^R)}\geq \alpha\|\pi_2 f\|_{L^{2}(\Sigma^R)}\},$$
	where $\pi_1$ means the $L^2(\Sigma^R)$-projection to the direction of the first eigenfunction of $L^R_{\Sigma}$ and $\pi_2$ is the $L^2(\Sigma^R)$-orthogonal complement of $\pi_1$. The larger $\al$ is, the narrower the cone is around $\phi_1^R$, the leading eigenvector of $L_\Sigma^R$. 
	
	\begin{proposition}\label{PropCone}
		\begin{enumerate}
			\item For each $\alpha$ and $R$, there exists $T$ sufficiently large such that for all $t>s>T$ we have
			$\cT^R(t,s)\mathcal K^R(\alpha)\subsetneqq \mathcal K^R(\alpha).$
			\item  For all $\eps>0,\al>0$, and $R>0$, there exists $T$ sufficiently large,   such that for all  $t>s\geq T$ and all $v^*\in \mathcal K^R(\alpha)$ we have $$\|\pi_1 (\cT^R(t,s)v^*)\|_{L^2(\Sigma^R)}\geq \mathrm{e}^{(t-s)(\lambda^R_1-\eps)}\|\pi_1(v^*)\|_{L^2(\Sigma_R)},$$
			where $\lambda^R_1$ is the leading eigenvalue of $L_\Sigma^R$. 
		\end{enumerate}
	\end{proposition}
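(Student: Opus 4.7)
The approach is to treat \eqref{EqPullBack} as a small non-autonomous perturbation of the autonomous equation $\partial_t w = L_\Sigma^R w$. On the compact domain $\Sigma^R$ with Dirichlet boundary, $L_\Sigma^R$ is self-adjoint in the weighted $L^2$-inner product, has discrete spectrum, and its leading eigenvalue $\lambda_1^R$ is simple with positive eigenfunction $\phi_1^R$ (by the strong maximum principle / Krein-Rutman), leaving a strict spectral gap $\lambda_1^R - \lambda_2^R > 0$. For the pure linear flow $e^{(t-s)L_\Sigma^R}$ this immediately gives strict invariance of $\cK^R(\alpha)$ with aperture contracting at rate $\lambda_1^R - \lambda_2^R$. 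The perturbation $P(v^*, t)$ is linear in $v^*$, and by \eqref{EqP} combined with the $C^\infty_{\loc}$-convergence $M_t \to \Sigma$, its operator norm as a map $W^{2,2}(\Sigma^R) \to L^2(\Sigma^R)$ tends to $0$ as $t \to \infty$.

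For part (1), I would decompose $v^* = a\phi_1^R + w_\perp$ with $w_\perp$ orthogonal to $\phi_1^R$ in $L^2(\Sigma^R)$, and derive from \eqref{EqPullBack} the coupled system $\dot a = \lambda_1^R a + \langle P, \phi_1^R\rangle_{L^2}$ and $\dot w_\perp = L_\Sigma^R w_\perp + \pi_2 P$. Standard energy estimates then give $\frac{d}{dt}|a| \geq \lambda_1^R |a| - \|P\|_{L^2}$ and $\frac{d}{dt}\|w_\perp\|_{L^2} \leq \lambda_2^R \|w_\perp\|_{L^2} + \|P\|_{L^2}$, so on the cone boundary $\{|a| = \alpha\|w_\perp\|_{L^2}\}$ one obtains
\[
\frac{d}{dt}\bigl(|a| - \alpha\|w_\perp\|_{L^2}\bigr) \geq \alpha(\lambda_1^R - \lambda_2^R)\|w_\perp\|_{L^2} - (1+\alpha)\|P(v^*,t)\|_{L^2}.
\]
On the boundary $\|v^*\|_{L^2} = \sqrt{1+\alpha^2}\|w_\perp\|_{L^2}$, so after converting the $\|v^*\|_{W^{2,2}}$-bound on $\|P\|_{L^2}$ into a $\|v^*\|_{L^2}$-bound (see below) the perturbation is $o(\|w_\perp\|_{L^2})$ and the right-hand side is strictly positive for $T$ large, yielding strict forward invariance of $\cK^R(\alpha)$.

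Part (2) then follows from the $a$-ODE alone: inside the now-invariant cone one has $\|v^*\|_{L^2} \leq \sqrt{1+\alpha^{-2}}\,|a|$, hence $\frac{d}{dt}|a| \geq (\lambda_1^R - C_\alpha\|m(t)\|_{C^2(\Sigma^R)})|a|$ for some constant $C_\alpha$ depending only on $\alpha$. Choosing $T$ so that $C_\alpha\|m(t)\|_{C^2(\Sigma^R)} < \eps$ for all $t > T$ and integrating from $s$ to $t$ gives the desired exponential lower bound; the identity $\|\pi_1 v^*\|_{L^2(\Sigma^R)} = |a|$ closes the argument.

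The principal obstacle is converting the $W^{2,2}$-bound that naturally appears in \eqref{EqP} into the $L^2$-bound required by the cone argument. I would resolve this using parabolic smoothing on the fixed compact domain $\Sigma^R$: for any $\tau > 0$ there is $C_\tau$ with $\|v^*(t)\|_{W^{2,2}(\Sigma^R)} \leq C_\tau \|v^*(t-\tau)\|_{L^2(\Sigma^R)}$, and the harmless time shift $\tau$ is absorbed into the choice of $T$. Alternatively one can invoke analytic perturbation theory to work with the instantaneous eigenfunctions of $L_\Sigma^R + P(\cdot, t)$, which converge to those of $L_\Sigma^R$; this avoids the time shift but introduces extra drift terms, so the direct parabolic estimate above seems cleaner.
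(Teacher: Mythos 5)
Your overall strategy---treating \eqref{EqPullBack} as a small perturbation of the autonomous Dirichlet problem on $\Sigma^R$, using the simplicity of $\lambda_1^R$ and the gap $\lambda_1^R-\lambda_2^R>0$, and running a cone argument in the decomposition $v^*=a\phi_1^R+w_\perp$---is the same as the paper's, which proves the statement by comparing the time-one flow map with $e^{L^R_\Sigma}$ (Lemma \ref{LmApproximation}) and iterating the resulting discrete estimates $\|\pi_1v^*(1)\|\geq e^{\lambda_1-\dt}\|\pi_1v^*(0)\|$ and $\|\pi_2v^*(1)\|\leq e^{\lambda_2+\dt}\|\pi_2v^*(0)\|$. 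The difference is that you work with pointwise-in-time differential inequalities, and that is exactly where your argument has a genuine gap, at the step you yourself flag as the principal obstacle.

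The bound \eqref{EqP} gives $\|P(v^*(t),t)\|_{L^2}\leq C\|m_t\|_{C^2}\|v^*(t)\|_{W^{2,2}}$, and your proposed fix is the backward smoothing estimate $\|v^*(t)\|_{W^{2,2}(\Sigma^R)}\leq C_\tau\|v^*(t-\tau)\|_{L^2(\Sigma^R)}$, with the time shift ``absorbed into the choice of $T$.'' The shift is not harmless: it makes $\|P(t)\|_{L^2}$ controlled by the $L^2$-norm of the solution at the \emph{earlier} time $t-\tau$, while your differential inequality at time $t$ needs it to be $o(\|w_\perp(t)\|_{L^2})$ on the cone boundary (and $o(|a(t)|)$ for part (2)). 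For the Dirichlet problem there is no general bound $\|v^*(t-\tau)\|_{L^2}\leq C\|v^*(t)\|_{L^2}$: high Dirichlet modes decay like $e^{\lambda_k t}$ with $\lambda_k\to-\infty$, so the $L^2$-norm can drop by an arbitrarily large factor over time $\tau$. Ruling this out requires knowing that the $\phi_1^R$-component already dominates, i.e.\ the cone membership you are in the middle of proving; and even a bootstrap on a slightly larger cone does not immediately close, because the lower bound on the decay of $|a|$ again involves $\|P\|_{L^2}$, hence the past norm---a delay structure your write-up does not address. So, as written, ``the perturbation is $o(\|w_\perp\|_{L^2})$'' does not follow. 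The paper's way around this is a \emph{forward} space-time energy estimate over unit intervals, $\int_T^{T+1}\int_{\Sigma^R}(|\Hess_{v^*}|^2+|\nabla v^*|^2)e^{-|x|^2/4}\,d\mu\,dt\leq C\|v^*(T)\|^2_{L^2(\Sigma^R)}$ (Corollary A.9 of \cite{SX}, cf.\ Lemma 5.4 of \cite{CM2}), which bounds the accumulated effect of $P$ on $[T,T+1]$ by the $L^2$-norm of the data at time $T$---precisely the quantity the discrete iteration starts from. If you integrate your two inequalities by Duhamel over unit steps and insert this estimate, you recover Lemma \ref{LmApproximation} and the rest of your cone argument goes through (and, once repaired, your continuous-time formulation would even give the invariance for all real $t>s$ directly, whereas the paper iterates integer steps).
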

	\begin{proof}We first consider $t=s+1$ and the initial condition to the equation \eqref{EqPullBack} to be $v^*(0)$ at the time $s$.
		With the following Lemma \ref{LmApproximation}, we get (denoting $\|\cdot\|=\|\cdot\|_{L^2(\Sigma^R)} $)
		\begin{equation*}
			\begin{aligned}
				\|\pi_1v^*(1)\|&\geq \mathrm{e}^{\lambda_1-\dt}\|\pi_1v^*(0)\|,\quad \|\pi_2v^*(1)\|\leq \mathrm{e}^{\lambda_2+\dt}\|\pi_2v^*(0)\|.\\
			\end{aligned}
		\end{equation*}
		Taking the quotient, we get that $\frac{\|\pi_1v^*(1)\|}{\|\pi_2v^*(1)\|}\geq \mathrm{e}^{\lambda_1-\lambda_2-2\dt}\frac{\|\pi_1v^*(0)\|}{\|\pi_2v^*(0)\|}$. The statement then follows from taking $\dt$ small and iterating the argument. 
		
	\end{proof}
	
	\begin{lemma}\label{LmApproximation}
		Let $v^*(t)$ be a solution to the non-autonomous system \eqref{EqPullBack} with the initial value $v^*(T)$ at the initial time $t=T$ and the Dirichlet boundary value. Then for any $\dt>0$, there exists $T_0>0$ such that for all $T>T_0$  we have
		$$\|v^*(T+1)-\mathrm{e}^{L_\Sigma^R} v^*(T)\|_{L^2(\Sigma^R)}\leq \dt\|v^*(T)\|_{L^{2}(\Sigma^R)}.$$
		
	\end{lemma}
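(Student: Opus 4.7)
The plan is to apply Duhamel's principle to reduce the estimate to controlling a time integral of the perturbation $P(v^*(s), s)$ against the semigroup $e^{(T+1-s)L_\Sigma^R}$, then to exploit the smallness of $\|m_t\|_{C^k}$ for $t\ge T_0$ together with parabolic smoothing on the compact manifold with boundary $\Sigma^R$. Setting $w(t) := v^*(t) - e^{(t-T)L_\Sigma^R}v^*(T)$, the function $w$ solves $\partial_t w = L_\Sigma^R w + P(v^*(s),s)$ with $w(T)=0$ and zero Dirichlet data, so
$$w(T+1) = \int_T^{T+1} e^{(T+1-s)L_\Sigma^R} P(v^*(s), s)\,ds,$$
and it suffices to show the $L^2(\Sigma^R)$-norm of the right-hand side is at most $\delta\|v^*(T)\|_{L^2(\Sigma^R)}$ for $T\ge T_0$. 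Note that on the bounded region $\Sigma^R$ the Gaussian weight is comparable to a constant, so I will freely use unweighted Sobolev norms.

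First I would quantify the smallness of $P$. Since $M_t\to\Sigma$ in $C^\infty_{loc}$ and $\Sigma^R$ is compact, one has $\eta(T):=\sup_{s\in[T,T+1]}\|m_s\|_{C^3(\Sigma^R)}\to 0$ as $T\to\infty$. Rewriting the second-order linear operator $P(\cdot,s)$ in divergence form, the bound \eqref{EqP} and its analogue for derivatives of the coefficients yield the dual estimate
$$\|P(v^*(s), s)\|_{H^{-1}(\Sigma^R)} \le C\eta(T)\,\|v^*(s)\|_{H^1(\Sigma^R)}.$$

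Next I would invoke two standard parabolic smoothing estimates for the Dirichlet problem on $\Sigma^R$: the semigroup bound $\|e^{\tau L_\Sigma^R}\|_{H^{-1}\to L^2}\le C\tau^{-1/2}$ for small $\tau$, and the flow smoothing $\|v^*(s)\|_{H^1(\Sigma^R)}\le C(s-T)^{-1/2}\|v^*(T)\|_{L^2(\Sigma^R)}$, which I would obtain by testing the equation \eqref{EqPullBack} against $v^*$ and against $(s-T)L_\Sigma^R v^*$ and absorbing the $P$-contributions using $\eta(T)$ small. Inserted into the Duhamel integral these give
$$\|w(T+1)\|_{L^2} \le C\eta(T)\|v^*(T)\|_{L^2}\int_T^{T+1}(T+1-s)^{-1/2}(s-T)^{-1/2}\,ds,$$
and the right-hand integral is a finite Beta-function value $B(1/2,1/2)=\pi$, so the whole expression falls below $\delta\|v^*(T)\|_{L^2}$ once $T\ge T_0$ is chosen so that $C\pi\eta(T)<\delta$.

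The main obstacle is the mismatch between the $L^2$ regularity of the initial data and the second-order nature of $P$: bounding $P(v^*(s),s)$ directly in $L^2$ would force control of $\|v^*(s)\|_{H^2}$, which can only be estimated by $(s-T)^{-1}\|v^*(T)\|_{L^2}$ near $s=T$ and is not integrable against an $L^2\to L^2$ operator bound of the semigroup. Splitting the derivative via the $H^{-1}/H^1$ duality and distributing the $\tau^{-1/2}$ smoothing between the semigroup factor and the flow factor converts the offending divergent integral into the integrable Beta-function above; this splitting is the crux of the argument, and everything else is a routine perturbative estimate.
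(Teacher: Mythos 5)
Your Duhamel-based argument is correct but proceeds by a genuinely different route than the paper. The paper sets $w = v^* - v^\star$ (where $v^\star$ solves the pure linear equation $\partial_t v^\star = L_\Sigma^R v^\star$), tests $\partial_t w = L_\Sigma^R w + P(v^*,t)$ against $w$, runs a Gronwall-type energy estimate, and then bounds the resulting source term
$\int_0^1 \int_{\Sigma^R} P(v^*,T+t)^2\, e^{-|x|^2/4} d\mu\, dt$ via the pointwise bound \eqref{EqP} combined with an integrated-in-time space-time $L^2$-estimate on $|\mathrm{Hess}_{v^*}| + |\nabla v^*|$ (citing Corollary~A.9 of \cite{SX} and \cite[Lemma 5.4]{CM2}). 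You instead solve the same inhomogeneous problem for $w$ in Duhamel form, replace the pointwise bound on $P$ by a divergence-form $H^1\to H^{-1}$ operator bound with small constant $\eta(T)$, and distribute the parabolic smoothing between the semigroup factor $\|e^{\tau L_\Sigma^R}\|_{H^{-1}\to L^2}\le C\tau^{-1/2}$ and the flow factor $\|v^*(s)\|_{H^1}\le C(s-T)^{-1/2}\|v^*(T)\|_{L^2}$, landing on a convergent Beta integral. The trade-offs: the paper's path is shorter given the auxiliary lemma it cites, but it requires control of second-order space derivatives of $v^*$ in space-time $L^2$; your path only ever needs $H^1$ control of $v^*$ (the Hessian is absorbed into the $H^{-1}$ dual pairing), which is more robust from $L^2$ initial data, at the cost of needing one more derivative of the graph function $m_s$ (hence your $C^3$ control of $m$ versus the $C^2$ that appears in \eqref{EqP}) to write $P$ in divergence form with $C^1$ coefficients. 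Your bootstrap justification of the flow smoothing $\|v^*(s)\|_{H^1}\lesssim (s-T)^{-1/2}\|v^*(T)\|_{L^2}$ by weighted energy testing (against $v^*$ and $(s-T)L_\Sigma^R v^*$), absorbing the $P$-contributions for $\eta(T)$ small, is the right mechanism; it would be worth writing that step out fully in a final version since it is the one place a careless Duhamel iteration could give a divergent convolution.
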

	
	\begin{proof}
		Suppose $M^R_t$ is written as the graph of the function $m_t$ over $\Sigma^R$, then we have the bound \eqref{EqP}, where the constant $C$ is a constant depending on the geometry of $\Sigma^R$. Here we restrict the problem to a fixed ball of radius $R$, so $C$ is uniformly bounded. 
		When $t$ is large, we have that $M_t$ is sufficiently close to $\Sigma$, and hence $\|m_t\|_{C^2}$ is very small.
		
		To prove the lemma, let $w(t)=v^*(t)-v^\star(t)$ where $\partial_tv^*=L_\Sigma^R v^*+P(v^*,t)$ and $\partial_tv^\star=L_\Sigma^R v^\star$ with the same initial value $v^*(T)$ at the time $t=T$, then we have $\partial_t w=L_\Sigma^R w+P(v^*,t)$ and 
		\begin{equation*}
			\begin{aligned}
				\partial_t\int_{\Sigma^R} |w|^2 \mathrm{e}^{-\frac{|x|^2}{4}}d\mu&=-\int_{\Sigma^R}  |\nabla w|^2  \mathrm{e}^{-\frac{|x|^2}{4}}d\mu+\int_{\Sigma^R}  wP(v^*,t)  \mathrm{e}^{-\frac{|x|^2}{4}}d\mu\\
				&\leq \int_{\Sigma^R}  w^2  \mathrm{e}^{-\frac{|x|^2}{4}}d\mu+\int_{\Sigma^R}  P(v^*,t)^2  \mathrm{e}^{-\frac{|x|^2}{4}}d\mu\\
				\mathrm{e}^{-1}\int_{\Sigma^R}  |w(T+1)|^2 \mathrm{e}^{-\frac{|x|^2}{4}}d\mu&\leq \int_0^1 \mathrm{e}^{-t} \int_{\Sigma^R}  P(v^*,T+t)^2  \mathrm{e}^{-\frac{|x|^2}{4}}d\mu\,dt.
			\end{aligned}
		\end{equation*}
		Next,  for any $\dt$, we can choose $t$ large enough so that $C\|m_t\|_{C^2}<\dt$. Then  we get from \eqref{EqP} that $|P(v^*,t)|\leq \dt(|\mathrm{Hess}_{v^*}|+|\nabla v^*|+|v^*|)$. Then following Corollary A.9 in \cite{SX} (also see \cite{CM2} Lemma 5.4), we bound 
		$\int_0^1\int (|\mathrm{Hess}_{v^*}|+|\nabla v^*|)^2\mathrm{e}^{-\frac{|x|^2}{4}}d\mu\,dt$ by a multiple of $\int |v^*(T)|^2\mathrm{e}^{-\frac{|x|^2}{4}}d\mu$. This completes the proof. 
	\end{proof}
	
	\subsection{The Feynman-Kac formula and the cone condition}\label{SSFKCone}
	
	With the Feynman-Kac formula, we give the proof of our main result in this subsection. 
	\begin{proof}[Proof of Theorem \ref{mainthm-FK}]
		We express $v^\star(x,t)$, the solution to $\partial_t v^\star=L_{M_t} v^\star$ with the initial condition $v^\star_0>0$, by Feynman-Kac formula as
		$$v^\star(x,t)=\int_{\Omega}v^\star_0(\omega(0))\exp\left(\int_0^t V(\omega(s),s)\,ds\right)d\nu_{x,t}(\omega).$$
		Here $\Omega$ is the set of all paths $\omega:\ [0,t]\to (M_s)_{0\leq s\leq t}$ with the endpoint $\omega(t)=x$. 
		
		We then pick a large $T$ such that for $t\geq T$, $M_t$ is sufficiently close to $\Sigma$ over a ball $B_R$ for some $R$ large, and such that we have the preservation of the cone condition (see Proposition \ref{PropCone}). 
		We next modify $V(\cdot, t)$ to the function $\widetilde V(\cdot,t)=\begin{cases}0,\ t<T,\\
			V(\cdot,t),\ t\geq T
		\end{cases}$. Then we get $$\exp\left(\int_0^t V(\omega(s),s)\,ds\right)\geq \exp\left(\int_0^t \widetilde V(\omega(s),s)\,ds\right)=\exp\left(\int_T^t V(\omega(s),s)\,ds\right),\ \mathrm{and}\ $$
		\begin{equation*}
			\begin{aligned}
				v^\star(x,t)&\geq c_T\int_{\Omega}v^\star_0(\omega(0))\exp\left(\int_T^t V(\omega(s),s)\,ds\right)d\nu_{x,t}(\omega)\\
				&\geq \frac{c_T\min v^\star_0}{\|\phi_1(T)\|_{L^\infty}}\int_{\Omega}\phi_1(T)(\omega(0))\exp\left(\int_T^t V(\omega(s),s)\,ds\right) d\nu_{x,t}(\omega),
			\end{aligned}
		\end{equation*}
		where $\phi_1(T)$ is the eigenfunction associated with the leading eigenvalue of the operator $L_{M_T}$.
		
		By the cocycle property of the heat kernel applied to the time interval $[0,T]$, the above integral becomes $$\int_{\Omega}\phi_1(T)\exp\left(\int_T^t V(\omega(s),s)\,ds\right)d\nu_{x,t}(\omega)=\int_{\Omega_1}\phi_1(T)\exp\left(\int_T^t V(\omega(s),s)\,ds\right)d\nu_{x,t}(\omega),$$
		where $\Omega_1$ is the set of paths $\omega:\ [T,t]\to (M_s)_{T\leq s\leq t}$.
		We next use Proposition \ref{PropLocalOp} to obtain 
		\begin{equation*}
			\begin{aligned}
				&\int_{\Omega_1}\phi_1(T)\exp\left(\int_T^t V(\omega(s),s)\,ds\right)d\nu_{x,t}(\omega)
				\geq\int_{\Omega_1}\phi_1(T)\exp\left(\int_T^t V(\omega(s),s)\,ds\right)d\nu^R_{x,t}(\omega).
			\end{aligned}
		\end{equation*}
		By the Feynman-Kac formula, the right-hand side of the above formula is exactly the solution to the variational equation with the initial condition $v^\star(\cdot ,T)=\phi_1(T)|_{M_T^R}$ and the Dirichlet boundary condition $v^\star(t,\cdot)|_{M_t^R}=0$ for all $t>T$. 
		
		Now we transplant the problem to $\Sigma^R$. By the cone preservation property Proposition \ref{PropCone}(1), if $\phi_1(T)$ lies in the cone $\mathcal K(\al)$, then its future orbit lies in the cone. This implies for all $\eps>0$, $\lim_{t\to\infty} \frac{1}{t}\log\|v^\star(t)\|_{L^2_R(\Sigma)}\geq \lambda^R_1-\eps$ (see Proposition \ref{PropCone}(2)).  By Lemma 9.25 of \cite{CM1} (see also Theorem \ref{thm:conv1steigenfun} for $\Sigma$ being asymptotically conical), we have that $\lambda_1^R\to \lambda_1$ which is the first eigenvalue of $L_\Sigma$ and the leading eigenfunction $\phi^R_1$ of $L_\Sigma^R$ converges in the $C^\infty_{\loc}$ sense to $\phi_1$, the leading eigenfunction of $L_\Sigma$. Thus $\lim_{t\to\infty} \frac{1}{t}\log\|v^\star(t)\|_{L^2(M_t)}\geq \lambda^R_1-\eps$. On the other hand, we have
		$$\frac12\partial_t\int_{M_t} (v^\star)^2 \mathrm{e}^{-\frac{|x|^2}{4}}d\mu=\int_{M_t} \left( v^\star L_{M_t}v^\star -\widetilde H^2 (v^\star)^2\right)\mathrm{e}^{-\frac{|x|^2}{4}}d\mu\leq \lambda_1(t)\int_{M_t} (v^\star)^2 \mathrm{e}^{-\frac{|x|^2}{4}}d\mu,$$
		where $\widetilde H$ is the rescaled mean curvature $\left(H-\frac{\langle x,\bn\rangle}{2}\right)$ and $\lambda_1(t)$ is the leading eigenvalue of $L_{M_t}$. By assumption, we have $\limsup_t\lambda_1(t)\leq\lambda_1(\Sigma)$. This gives $$\|v^\star(t)\|_{L^2(M_t)}\leq \mathrm{e}^{(\lambda_1(\Sigma)+\eps)t}\|v^\star_0\|_{L^2(M_0)}$$ for $t$ sufficiently large, for any $\eps>0$. Thus we get item (1). 
		
		We next work on item (2) of the statement.  
		For any time $t$, we decompose $v^\star(t)=v^\star_1(t)+v^\star_2(t)+v^\star_3(t)$, where $v^\star_1$ is the projection of $v^\star$ to the eigenspace corresponding to the leading eigenvalue of $L_{M_t}$,  $v^\star_3$ consists of Fourier modes with negative eigenvalues and $v^\star_2$ consists of the rest,  and write $$  g_1(t)\|v^\star\|^2_{L^2(M_t)}=\|v^\star_1\|_{L^2(M_t)}^2,\quad
		g_2(t)\|v^\star\|^2_{L^2(M_t)}=\|v^\star_2\|_{L^2(M_t)}^2,$$
		\[g_3(t)\|v^\star\|^2_{L^2(M_t)}=-\int_{M_t} v^\star_3L_{M_t}v^\star_3 \mathrm{e}^{-\frac{|x|^2}{4}}d\mu.\]
		By Pythagoras' theorem we have $g_1(t)+g_2(t)\leq 1$. 
		Notice that with this notion, we have
		\[\|v^\star\|_{Q(M_t)}^2\leq (\Lambda+g_3(t))\|v^\star\|_{L^2(M_t)}^2.\]
		Then we can write the time derivative of $\|v^\star\|_{L^2(M_t)}^2$ as follows:
		\[\begin{split}
			\frac{1}{2}\partial_t \|v^\star\|_{L^2(M_t)}^2
			\leq& \int_{M_t} v^\star L_{M_t} v^\star \mathrm{e}^{-\frac{|x|^2}{4}}d\mu\\
			\leq&\max\{0,\lambda_2(t)\} g_2(t)\| v^\star(t)\|^2_{L^2(M_t)}
			+\lambda_1(t) g_1(t)\| v^\star(t)\|^2_{L^2(M_t)}-g_3(t)\|v^\star\|^2_{L^2(M_t)}
			\\
			=&
			[\max\{0,\lambda_2(t)\}g_2(t)+g_1(t)\lambda_1(t)-g_3(t)]\|v^\star\|^2_{L^2(M_t)}.
		\end{split}\]
		Then we claim that there exist $c>0$ and infinitely many $t_i\to\infty$ such that $\frac{g_1(t_i)}{\Lambda+g_3(t_i)}\geq c$. In fact, there exist $c_1>0$ and infinitely many $t_i\to\infty$ such that $$(\lambda_1(t_i)-\lambda_2(t_i)) g_1(t_i)-g_3(t_i)\geq c_1>\lambda_1(t_i)-\lambda_2(t_i).$$ Otherwise it violates the exponential growth in item (1). Then we get $g_1(t_i)\geq 1+(\lambda_1(t_i)-\lambda_2(t_i))^{-1}(g_3(t_i)-\epsilon)$. For $i$ large, this forces $|g_3(t_i)|<\epsilon$. This shows that there exist $c>0$ and infinitely many $t_i\to\infty$ such that
		$\frac{|\langle v^\star(t_i),\phi_1(t_i)\rangle_{L^2({t_i})}|}{\|v^\star(t_i)\|_{Q({t_i})}}>c$.
	\end{proof}
	
	We need the following approximation of the RMCF equation and the linearized RMCF equation in \cite{SX}.
	
	\begin{proposition}[Proposition 3.3 in \cite{SX}]\label{Prop:DiffRMCFandLinear}\label{prop:approx}
		Given an RMCF $\{M_t\}_{t\in[0,T]}$, there exist $\delta_0>0$, $\alpha'>0$, $\eps_0>0$ and $C$ so that if $v_0:M_0\to\R$ satisfies
		\[
		|v_0|+|\nabla v_0|\leq \delta\leq\delta_0,\quad |\Hess_{v_0}|\leq 1,
		\]
		$v:[0,T]\to\R$ satisfies the RMCF equation, and $v^\star:[0,T]\to\R$ satisfies the linearized equation, with $v(\cdot,0)=v^\star(\cdot,0)=v_0$, then we have 
		\begin{equation*}
			\|(v-v^\star)(\cdot,T)\|_{C^{2,\alpha'}}\leq C\delta^{1+\eps_0}.
		\end{equation*}
	\end{proposition}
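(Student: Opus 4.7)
The plan is to write the difference $w := v - v^\star$ as the solution of an inhomogeneous linear parabolic equation whose inhomogeneity is quadratically small, and then apply parabolic Schauder theory after interpolating between $C^1$-smallness and $C^{2,\alpha}$-boundedness of $v$.

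First I would expand the nonlinear RMCF equation around the reference flow. Writing the perturbed flow as a normal graph over $M_t$, the height function $v$ satisfies a quasilinear equation
\[
\partial_t v \;=\; L_{M_t} v \,+\, Q(v,\nabla v, \nabla^2 v),
\]
where $Q$ collects the nonlinear remainders from the Taylor expansion of the mean curvature and of $\langle x,\bn\rangle$ in $(v,\nabla v,\nabla^2 v)$. Because the linearization of the RMCF operator at $v=0$ is exactly $L_{M_t}$, the remainder $Q$ vanishes to second order, yielding pointwise and Hölder bounds of the form
\[
|Q(v,\nabla v,\nabla^2 v)| \;\leq\; C\bigl((|v|+|\nabla v|)^2 \,+\, (|v|+|\nabla v|)\,|\nabla^2 v|\bigr),
\]
and analogous $C^{\alpha'}$ estimates. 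Subtracting the linearized equation from the RMCF equation then gives $\partial_t w = L_{M_t} w + Q(v,\nabla v,\nabla^2 v)$ with $w(\cdot, 0) = 0$.

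Next I would establish uniform a priori bounds on $v$. Parabolic Krylov--Safonov/Schauder estimates, combined with the initial data $\|v_0\|_{C^2}\leq 1+\delta$ and the smoothness of the RMCF $\{M_t\}_{t\in[0,T]}$, produce a uniform bound $\|v\|_{C^{2,\alpha}(M\times[0,T])} \leq C$ for some $\alpha\in(0,1)$, with $C$ independent of $\delta\leq \delta_0$. In parallel, treating the equation for $v$ as a perturbation of the linear equation $\partial_t u=L_{M_t} u$ and using a continuity argument with the maximum principle (together with a Bernstein-type estimate for $\nabla v$), one propagates the $C^1$-smallness to $\|v\|_{C^1([0,T])}\leq C\delta$ provided $\delta_0$ is small enough. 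Interpolating between $\|v\|_{C^1}\leq C\delta$ and $\|v\|_{C^{2,\alpha}}\leq C$ then yields $\|v\|_{C^{2,\alpha'}}\leq C\delta^{\eps_0}$ for any $\alpha'<\alpha$, with $\eps_0 = \eps_0(\alpha,\alpha') > 0$. Plugging these back into the estimate for $Q$ gives
\[
\|Q\|_{C^{\alpha'}([0,T])} \;\leq\; C\,\|v\|_{C^1}\cdot \|v\|_{C^{2,\alpha'}} \;\leq\; C\,\delta^{1+\eps_0}.
\]
The parabolic Schauder estimate applied to $\partial_t w = L_{M_t} w + Q$ with zero initial data then delivers $\|w(\cdot,T)\|_{C^{2,\alpha'}}\leq C\|Q\|_{C^{\alpha'}([0,T])}\leq C\delta^{1+\eps_0}$, as required.

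The main obstacle is that the Hessian bound $|\Hess_{v_0}|\leq 1$ is only bounded, not small, so the $|\nabla^2 v|$ factor in $Q$ contributes nothing $\delta$-small on its own; all of the gain above $\delta^1$ must come from interpolating the genuine $C^1$-smallness against boundedness at top order. Making this quantitative requires uniform parabolic Schauder constants for the RMCF equation on $[0,T]$, which in the present setting (where $M_t$ may be noncompact) also rely on the $C^\infty_{\loc}$ convergence $M_t\to\Sigma$ to control those constants locally; since $T$ is fixed and finite this step is manageable but is where most of the care is required.
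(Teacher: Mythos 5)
Your outline — isolate the quadratic remainder $Q$ in the graph RMCF equation, write $w := v - v^\star$ as a solution of $\partial_t w = L_{M_t} w + Q(v)$ with $w(0)=0$, gain the extra power of $\delta$ by interpolating $C^1$-smallness against $C^{2,\alpha}$-boundedness, and close with parabolic Schauder — is the intended strategy; the paper only cites \cite{SX} and remarks that the proposition ``is a consequence of the fact that the nonlinear term in RMCF is quadratic,'' which is exactly your opening observation. However, one intermediate step you treat as routine is actually the crux, and as stated your justification for it does not close.

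The claim $\|v\|_{C^1([0,T])}\leq C\delta$ via ``continuity argument with the maximum principle (together with a Bernstein-type estimate)'' is not secured by those tools. The graph equation is quasilinear with coefficients $a^{ij}(x,t,v,\nabla v)$ whose spatial variation does not vanish at $v=0$; differentiating the equation and applying the maximum principle to $|\nabla v|^2$ therefore produces source terms of size $O(|\nabla^2 v|^2)$ and $O(|\nabla^2 v|\,|\nabla^3 v|)$, and since $|\Hess_{v_0}|$ is only assumed bounded (not small), these are $O(1)$, giving only $\sup|\nabla v|\lesssim \delta + t$. A Bernstein bound of the form $|\nabla v|\lesssim \sup|v|/\sqrt{t}$ degenerates as $t\to 0$; patching it to the initial data over $[0,\tau]\cup[\tau,T]$ optimally yields $|\nabla v|\lesssim\sqrt\delta$, which is not strong enough — with $\|v\|_{C^1}\lesssim\sqrt\delta$ the subsequent interpolation and the estimate on $\|Q\|_{C^{\alpha'}}$ do not reach exponent $>1$. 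The clean way to get $\|v\|_{C^1}\lesssim\delta$ is to exploit $w(0)=0$: the linear flow preserves $C^1$-smallness, so $\|v^\star\|_{C^1}\leq C\delta$; then $\|Q(s)\|_{C^0}\lesssim \|v(s)\|_{C^1}\|v(s)\|_{C^2}\lesssim (\delta+\|w(s)\|_{C^1})$, and the Duhamel representation $\|w(t)\|_{C^1}\leq C\int_0^t(t-s)^{-1/2}\|Q(s)\|_{C^0}\,ds$ together with a singular Gronwall inequality gives $\|w\|_{C^1}\lesssim\delta$ and hence $\|v\|_{C^1}\lesssim\delta$. With this in hand the rest of your argument works, but note also that the displayed inequality $\|Q\|_{C^{\alpha'}}\leq C\|v\|_{C^1}\|v\|_{C^{2,\alpha'}}$ is imprecise: the Hölder seminorm of the product costs a factor $\|v\|_{C^{1,\alpha'}}$ rather than $\|v\|_{C^1}$, and after interpolation this forces $\alpha'<\alpha/2$ (or a similar restriction) to keep $\eps_0>0$. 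Finally, the caveat about noncompactness is not relevant here: each $M_t$ is a closed hypersurface on the fixed finite interval $[0,T]$; only the limit shrinker $\Sigma$, which does not appear in this proposition, is noncompact.
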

	
	This proposition is a consequence of the fact that the nonlinear term of the RMCF equation is quadratic. With this proposition, we can prove that if the initial positive perturbation is sufficiently small, then the solution to the RMCF equation on $M_t$ will also drift to the first eigenfunction direction. 
	
	\begin{proposition}\label{thm:perturbedRMCFto1st}
		Assume the setting of Theorem \ref{mainthm-FK}.	Suppose $v_0>0$ is a $C^{2}$ positive function on $M_0$. Then there exist $1>c>0$ and $C>1$ and a sequence $t_i\to\infty$, such that for any fixed $t_i$, there exists $\epsilon_i>0$, such that for $\epsilon<\epsilon_i$, the perturbed RMCF starting from $\{x+\epsilon v_0(x)\bn(x)\ :\ x\in M_0\}$ can be written as a graph of the function $v(\cdot,t)$ over $M_t$ for $t\in[0,t_i]$, such that 
		$$\|v\|_{Q({t_i})}\leq C\|v\|_{L^2({t_i})},\ \mathrm{and}\ \frac{|\langle v(t_i),\phi_1(t_i)\rangle_{L^2({t_i})}|}{\|v(t_i)\|_{Q({t_i})}}>c.$$ 
	\end{proposition}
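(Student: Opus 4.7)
The plan is to reduce the nonlinear statement to its linear counterpart (Theorem \ref{mainthm-FK}) by exploiting the quadratic remainder estimate of Proposition \ref{Prop:DiffRMCFandLinear}. Since $v_0>0$, the solution $v^\star$ of the linearized equation $\partial_t v^\star=L_{M_t}v^\star$ with initial datum $v_0$ is positive for all time by the parabolic maximum principle, so Theorem \ref{mainthm-FK} applies (its spectral hypotheses are exactly those inherited in ``assume the setting of Theorem \ref{mainthm-FK}''). This produces constants $c_0>0$, $C_0>1$ and a sequence $t_i\to\infty$ along which
\[
\|v^\star(t_i)\|_{Q(t_i)}\leq C_0\|v^\star(t_i)\|_{L^2(t_i)},\qquad \frac{|\langle v^\star(t_i),\phi_1(t_i)\rangle_{L^2(t_i)}|}{\|v^\star(t_i)\|_{Q(t_i)}}>c_0,
\]
together with the growth lower bound $\|v^\star(t_i)\|_{L^2(t_i)}\geq e^{(\lambda_1(\Sigma)-o(1))t_i}\|v_0\|_{L^2(M_0)}$ coming from item (1) of that theorem.

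Next I would fix one such $t_i$ and apply Proposition \ref{Prop:DiffRMCFandLinear} on the time window $[0,t_i]$. By linearity, $\eps v^\star$ solves the linearized equation starting from $\eps v_0$; for $\eps$ small enough this perturbation satisfies the smallness hypothesis of that proposition, so
\[
\|v(\cdot,t_i)-\eps v^\star(\cdot,t_i)\|_{C^{2,\alpha'}}\leq C(t_i)\,\eps^{1+\eps_0},
\]
where $v$ denotes the graph function of the nonlinear RMCF perturbation and $C(t_i)$ depends on $\{M_t\}_{t\in[0,t_i]}$. Because $M_{t_i}$ is a closed hypersurface, this $C^{2,\alpha'}$-closeness immediately yields the same bound in the weighted $L^2(t_i)$ and $Q(t_i)$ norms. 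A standard parabolic continuation argument using the same smallness then guarantees that the perturbed flow is indeed a graph over $M_t$ for every $t\in[0,t_i]$.

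The final step is triangle-inequality bookkeeping. Writing $v(t_i)=\eps v^\star(t_i)+r$, and choosing $\eps_i$ so small that $C(t_i)\,\eps_i^{\eps_0}\ll e^{(\lambda_1(\Sigma)-o(1))t_i}\|v_0\|_{L^2(M_0)}$, the remainder satisfies $\|r\|_{L^2(t_i)}+\|r\|_{Q(t_i)}=o\bigl(\eps\|v^\star(t_i)\|_{L^2(t_i)}\bigr)$. One then reads off $\tfrac12\eps\|v^\star(t_i)\|_{L^2(t_i)}\leq\|v(t_i)\|_{L^2(t_i)}$, $\|v(t_i)\|_{Q(t_i)}\leq 2C_0\eps\|v^\star(t_i)\|_{L^2(t_i)}$ and $|\langle v(t_i),\phi_1(t_i)\rangle_{L^2(t_i)}|\geq\tfrac{c_0}{2}\eps\|v^\star(t_i)\|_{Q(t_i)}$, which combine to give the stated inequalities with fresh constants $C=4C_0$ and $c=c_0/4$. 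The main point requiring care is that $\eps_i$ must shrink with $i$, because $C(t_i)$ may a priori grow (even exponentially) with $t_i$; the statement, however, only asks for $\eps_i$ to exist at each fixed $t_i$, and the exponential growth of $v^\star$ from Theorem \ref{mainthm-FK}(1) gives just enough room to absorb $C(t_i)\eps^{1+\eps_0}$ into a lower-order error, which is the key mechanism making the reduction go through.
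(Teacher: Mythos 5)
Your proof is correct and is exactly the argument the paper intends: the paper states Proposition~\ref{thm:perturbedRMCFto1st} immediately after Proposition~\ref{prop:approx} with the remark that the latter implies the former, i.e.\ one passes the cone condition of Theorem~\ref{mainthm-FK} from the linearized solution $\eps v^\star$ to the true graph function $v$ via the quadratic error bound, shrinking $\eps_i$ at each fixed $t_i$ so that $C(t_i)\eps^{1+\eps_0}$ is negligible against $\eps\|v^\star(t_i)\|$. Your triangle-inequality bookkeeping (including the observation that $\|\cdot\|_Q\gtrsim\|\cdot\|_{L^2}$, and the use of compactness of $M_{t_i}$ to pass from $C^{2,\alpha'}$ to the weighted $L^2$ and $Q$ norms) fills in precisely the details the paper leaves to the reader.
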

	
	\subsection{The cone condition with cutoff}
	In this subsection, we show that conclusion (2) in Theorem \ref{mainthm-FK} also holds if we restrict it to a bounded part of the manifold $M_{t_i}$. This will be useful in the next section when we study the dynamics in a neighborhood of $\Sigma$, considering that $M_t$ can not be written as a global graph over $\Sigma$.

	\begin{theorem}\label{thm:vstarcutsmall}
		there exist $R_0>0$ and $C>0$ such that for $R>R_0$ and the sequence of $t_i$ in Theorem \ref{mainthm-FK}, we have 
		$\|v^\star(t_i)\|_{L^2(M_{t_i})}\leq C\|v^\star(t_i)\|_{L^2(M_{t_i}^R)}.$
	\end{theorem}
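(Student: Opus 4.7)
The strategy combines Theorem \ref{mainthm-FK}(2), which asserts that $v^\star(t_i)$ has a uniformly non-trivial projection onto the leading eigenfunction $\phi_1(t_i)$ of $L_{M_{t_i}}$, with the fact that the $L^2$-normalised $\phi_1(t_i)$ is essentially concentrated inside a large ball $B_R$ due to the Gaussian weight. Intuitively, $v^\star(t_i)$ is forced to align with $\phi_1(t_i)$, and $\phi_1(t_i)$ lives in $B_R$, so the $L^2$ mass of $v^\star(t_i)$ outside $B_R$ cannot exceed that inside by more than a universal factor.

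First I would convert the $Q$-bound in Theorem \ref{mainthm-FK}(2) into an $L^2$-bound. Using the identity $\|u\|_{Q(M_t)}^2=-\int u L_{M_t} u\, e^{-|x|^2/4}d\mu+\Lambda\|u\|^2_{L^2(M_t)}$ together with the Rayleigh inequality $\int u L_{M_t} u\, e^{-|x|^2/4}d\mu\leq \lambda_1(M_t)\|u\|^2_{L^2(M_t)}$, one obtains $\|u\|_{Q(M_t)}\geq (\Lambda-\lambda_1(M_t))^{1/2}\|u\|_{L^2(M_t)}$, and the constant is uniformly positive since $\Lambda>\sup_t \lambda_1(t)$ by construction. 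Hence $|\langle v^\star(t_i),\phi_1(t_i)\rangle_{L^2(M_{t_i})}|\geq c_1\|v^\star(t_i)\|_{L^2(M_{t_i})}$ for a uniform $c_1>0$. Splitting the inner product as $\int_{M_{t_i}^R}+\int_{M_{t_i}\setminus M_{t_i}^R}$, applying Cauchy--Schwarz to each piece, and using $\|\phi_1(t_i)\|_{L^2(M_{t_i})}=1$ to estimate the eigenfunction factor on $M_{t_i}^R$ by $1$, I arrive at
\begin{equation*}
c_1\|v^\star(t_i)\|_{L^2(M_{t_i})}\leq \|v^\star(t_i)\|_{L^2(M_{t_i}^R)}+\|\phi_1(t_i)\|_{L^2(M_{t_i}\setminus M_{t_i}^R)}\|v^\star(t_i)\|_{L^2(M_{t_i})}.
\end{equation*}
Rearranging, the theorem with $C=2/c_1$ follows as soon as I establish the uniform tail bound $\|\phi_1(t_i)\|_{L^2(M_{t_i}\setminus B_R)}\leq c_1/2$ for every $R$ above some $R_0$ and every sufficiently late $t_i$ in the sequence; the finitely many initial $t_i$ can be absorbed into $C$ since $v^\star>0$ by the maximum principle makes each ratio finite.

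The main obstacle is precisely this uniform tightness of the normalised family $\{\phi_1(t_i)\}$. I would establish it by combining (i) the fact that $\phi_1\in L^2(\Sigma,e^{-|x|^2/4}d\mu)$ has unit norm, so that $\|\phi_1\|^2_{L^2(\Sigma\cap B_R)}\to 1$ as $R\to\infty$ by dominated convergence, with (ii) the $C^\infty_{\loc}$-convergence $\phi_1(t_i)\to\phi_1$ furnished by Section \ref{SConvergence}, paired with the smooth graphical convergence of $M_{t_i}\cap B_R$ to $\Sigma\cap B_R$, which upgrade to convergence of the weighted integrals $\int_{M_{t_i}\cap B_R}\phi_1(t_i)^2 e^{-|x|^2/4}d\mu_{M_{t_i}}\to\int_{\Sigma\cap B_R}\phi_1^2 e^{-|x|^2/4}d\mu_\Sigma$ for each fixed $R$. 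The plan is to pick $R_0$ so that $\|\phi_1\|^2_{L^2(\Sigma\setminus B_{R_0})}<(c_1/4)^2$ and then $T$ large enough that the graphical approximation error inside $B_{R_0}$ is negligible for $t_i>T$. The delicate point is that bare $C^\infty_{\loc}$-convergence of $\phi_1(t_i)$ would a priori permit $L^2$-mass to leak to infinity; it is the unit $L^2(\Sigma)$-norm of the non-compact limit $\phi_1$ that rules this out and yields the uniform tightness needed to close the argument.
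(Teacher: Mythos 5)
Your proposal matches the paper's proof in all essential respects: convert the $Q$-bound of Theorem \ref{mainthm-FK}(2) into $|\langle v^\star,\phi_1(t_i)\rangle|\gtrsim\|v^\star\|_{L^2}$, split the inner product over $B_R$ and its complement, apply Cauchy--Schwarz, and absorb the tail once the $L^2$-mass of $\phi_1(t_i)$ outside $B_R$ is uniformly small. The only minor variation is in how that tail smallness is obtained: the paper invokes Corollary \ref{cor:1stcutsmall} directly, which gives the quantitative decay $\|\phi_1(t)(1-\chi_R)\|_{L^2(M_t)}\leq CR^{-1}$ via the Bernstein--Wang decay of $\phi_1$ on $\Sigma$ together with the maximum principle, whereas you re-derive a qualitative version from the fact (Theorem \ref{thm:conv1steigenfunL2}) that the limit $\phi_1$ has unit $L^2(\Sigma)$-norm, combined with dominated convergence and $C^\infty_{\loc}$-convergence. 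Both inputs live in Section \ref{SConvergence} and serve the same role, so the argument goes through; I would only caution that the key input ``no $L^2$-mass leaks to infinity'' is nontrivial and is itself proved in the paper using precisely the decay estimate of Corollary \ref{cor:1stcutsmall}, so your route is not logically independent of it but rather repackages it.
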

	
	\begin{proof}
		Theorem \ref{thm:conv1steigenfunL2} shows that there exist $R_0>0$ and $\eta>0$ such that when $R>R_0$, $\|\phi_1(t)-\eta\phi_1\|_{L^2(\Sigma^R)}\to 0$ as $t\to\infty$. Here $\phi_1(t)$ is transplanted to $\Sigma^R$ when $t$ is sufficiently large. From Theorem \ref{mainthm-FK}, we have
		$$\|v^\star\|_{Q({t_i})}\leq C\|v^\star\|_{L^2({t_i})},\ \mathrm{and}\ \frac{|\langle v^\star(t_i),\phi_1(t_i)\rangle_{L^2({t_i})}|}{\|v^\star(t_i)\|_{Q({t_i})}}>c.$$ 
		Therefore (we choose $\phi_1(t)>0$)
		\[\|v^\star\|_{L^2({t_i})}< c\langle v^\star(t_i),\phi_1(t_i)\rangle_{L^2({t_i})}= c\langle v^\star(t_i),\phi_1(t_i)\chi_{B_R}\rangle_{L^2({t_i})}+c\langle v^\star(t_i),\phi_1(t_i)(1-\chi_{B_R})\rangle_{L^2({t_i})}.\]
		
		Corollary \ref{cor:1stcutsmall} implies that $\|\phi_1(t)(1-\chi_R)\|_{L^2(M_t)}\leq CR^{-1}$ for sufficiently large $R,t$. Thus, 
		\[\|v^\star\|_{L^2(t_i)}<c\langle v^\star(t_i),\phi_1(t_i)\chi_{B_R}\rangle_{L^2({t_i})}+CR^{-1}\|v^\star\|_{L^2({t_i})}.\]
		So if $R$ is sufficiently large, 
		\[\|v^\star\|_{L^2({t_i})}\leq C\langle v^\star(t_i),\phi_1(t_i)\chi_{B_R}\rangle_{L^2({t_i})}\leq C \|v^\star\|_{L^2(M_{t_i}^R)}.\]
	\end{proof}
	
	\begin{corollary}\label{cor:vstarcutsmall}
		there exist $R_0>0$ and $C>0$ with the following significance: in the setting of Theorem \ref{mainthm-FK}, we have 
		$$\|v^\star\|_{Q(M_{t_i}^R)}\leq C\|v^\star\|_{L^2(M_{t_i}^R)},\ \mathrm{and}\ \frac{|\langle v^\star(t_i),\phi_1(t_i)\rangle_{L^2(M_{t_i}^R)}|}{\|v^\star(t_i)\|_{Q(M_{t_i})}}>c.$$ 
	\end{corollary}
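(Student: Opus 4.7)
This corollary should follow by composing three ingredients already in hand: the global estimates $\|v^\star\|_{Q(M_{t_i})}\leq C\|v^\star\|_{L^2(M_{t_i})}$ and $|\langle v^\star(t_i),\phi_1(t_i)\rangle_{L^2(M_{t_i})}|>c\|v^\star(t_i)\|_{Q(M_{t_i})}$ from Theorem \ref{mainthm-FK}; the cutoff comparison $\|v^\star\|_{L^2(M_{t_i})}\leq C\|v^\star\|_{L^2(M_{t_i}^R)}$ from Theorem \ref{thm:vstarcutsmall}; and the eigenfunction tail decay $\|\phi_1(t)(1-\chi_{B_R})\|_{L^2(M_t)}\leq CR^{-1}$ from Corollary \ref{cor:1stcutsmall}. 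Positivity of $v^\star$, guaranteed by the parabolic maximum principle from the hypothesis $v^\star_0>0$, together with the sign choice $\phi_1(t_i)>0$, will be used to handle truncated inner products without absolute-value issues.

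For the first inequality, I would first extract a global gradient bound. Writing
$$\int_{M_{t_i}}|\nabla v^\star|^2e^{-|x|^2/4}d\mu = \|v^\star\|_{Q(M_{t_i})}^2 - \int_{M_{t_i}}\bigl(\Lambda - |A|^2 - \tfrac{1}{2}\bigr)(v^\star)^2 e^{-|x|^2/4}d\mu,$$
the first term is bounded by $C\|v^\star\|_{L^2(M_{t_i})}^2$ via Theorem \ref{mainthm-FK}, and the second by $C\|v^\star\|_{L^2(M_{t_i})}^2$ using the uniform boundedness of $|A|$ on $M_t$ (valid because $\Sigma$ is asymptotically conical and by the curvature estimates recorded in Section \ref{SEstimate}). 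Theorem \ref{thm:vstarcutsmall} then upgrades $\|v^\star\|_{L^2(M_{t_i})}$ to $\|v^\star\|_{L^2(M_{t_i}^R)}$. Restricting the resulting gradient bound to $M_{t_i}^R$ and absorbing the pointwise coefficient $\Lambda - |A|^2 - \tfrac{1}{2}$ into an additional $C\|v^\star\|_{L^2(M_{t_i}^R)}^2$ term yields $\|v^\star\|_{Q(M_{t_i}^R)}^2\leq C\|v^\star\|_{L^2(M_{t_i}^R)}^2$.

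For the second inequality, positivity of $v^\star(t_i)$ and $\phi_1(t_i)$ gives the clean decomposition
$$\langle v^\star(t_i),\phi_1(t_i)\rangle_{L^2(M_{t_i})} = \langle v^\star(t_i),\phi_1(t_i)\rangle_{L^2(M_{t_i}^R)} + \langle v^\star(t_i),\phi_1(t_i)(1-\chi_{B_R})\rangle_{L^2(M_{t_i})},$$
with the tail bounded by $CR^{-1}\|v^\star(t_i)\|_{L^2(M_{t_i})}$ via Cauchy-Schwarz and Corollary \ref{cor:1stcutsmall}. Combining with the $L^2$-$Q$ comparability $\|v^\star(t_i)\|_{L^2(M_{t_i})}\leq C\|v^\star(t_i)\|_{Q(M_{t_i})}$ (which follows from the uniform $|A|$ bound and the choice of $\Lambda$) and dividing by $\|v^\star(t_i)\|_{Q(M_{t_i})}$, the global cone bound from Theorem \ref{mainthm-FK} shows that the cutoff ratio exceeds $c - CR^{-1}\geq c/2$ once $R\geq R_0$ is chosen large enough.

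The main delicacy in the argument is uniformity along the entire sequence $\{t_i\}$: a single pair $(R_0,C)$ must work for all $i$, which requires that the tail decay rate in Corollary \ref{cor:1stcutsmall}, the curvature bound on $M_t$, and the cutoff constant in Theorem \ref{thm:vstarcutsmall} all be independent of $t$ for $t$ large. This is precisely what Sections \ref{SEstimate} and \ref{SConvergence} deliver, via the pseudolocality-based graphical estimates for $M_t$ outside large balls and the uniform $L^2$-convergence $\phi_1(t)\to\eta\phi_1$ on $\Sigma^R$, so no new technical input beyond those sections is needed.
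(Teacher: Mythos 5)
Your proposal is correct and follows the same route the paper intends: the paper leaves the corollary as a direct consequence of Theorem \ref{thm:vstarcutsmall} (whose proof already contains the decomposition $\langle v^\star,\phi_1\rangle_{L^2(M_{t_i})}=\langle v^\star,\phi_1\chi_{B_R}\rangle+\langle v^\star,\phi_1(1-\chi_{B_R})\rangle$ with the tail controlled by Corollary \ref{cor:1stcutsmall}), Theorem \ref{mainthm-FK}, and the $L^2$--$Q$ norm equivalence coming from the choice $\Lambda>\sup_t\lambda_1(t)$ together with the uniform curvature bound of Lemma \ref{lem:eigenconverge-Abound}. Your handling of the restricted $Q$-norm via a global gradient bound and restriction is a correct (if slightly roundabout) way to get the first inequality; one can equivalently note that for $R$ large enough the curvature decay makes the integrand $|\nabla v^\star|^2+(\Lambda-|A|^2-\tfrac12)(v^\star)^2$ nonnegative outside $B_R$, so $\|v^\star\|_{Q(M_{t_i}^R)}\le\|v^\star\|_{Q(M_{t_i})}$ directly.
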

	
	\section{The dynamics in a neighborhood of the shrinker}\label{S:Dynamics in a neighborhood of the shrinker}
	
	In this section, we show that the RMCF in a neighborhood of the shrinker is approximated by the linear equation $\partial_t u=L_\Sigma u$. 
	
	Let $M_t$ be an RMCF converging to a conical shrinker $\Sigma$ in the $C^\infty_{\loc}$ sense as $t\to\infty$ and we perturb the initial condition slightly to give a new RMCF $\widetilde M_t$. We choose the initial perturbation so small that the perturbed flow will stay close to the unperturbed flow for a long time until they both enter a $\dt$-neighborhood of the shrinker in the $C^{2,\al}$ norm over a large compact ball $B_R$. This is the red curve in Figure \ref{Figure}, which is controlled by the variational equation. The main body of this section is devoted to the blue curve which is the dynamics in a neighborhood of the shrinker to be approximated by the linear equation $\partial_t u=L_\Sigma u$. As we have discussed in the introduction, the main difficulty is created by the fact that $M_t$ and $\widetilde M_t$ cannot be written as global graphs over $\Sigma$ so that a cutoff is not avoidable. The time span of the blue curve of local dynamics depends on the smallness $\eps$ of the initial perturbation (roughly $\log\eps^{-1}$) so that the linear approximation has to be done over the growing domain since otherwise the discarded information by the cutoff will accumulate large errors over  $\log\eps^{-1}$ long time. In Section \ref{SSDomain}, we state a result on the exponential growth of the graphical domain to be proved in Section \ref{SEstimate}. In Section \ref{SSDifferenceEq}, we derive the equation governing the dynamics of the blue curve. In Section \ref{SSInitiateCone}, we give the necessary cone condition and the boundary condition to initiate the blue curve. In Section \ref{SSApproximate}, we show that the dynamics on the blue curve can be approximated by the solution to the linear equation. In Section \ref{SS:The cone preservation property}, we show that the cones become narrower and narrower under the dynamics, which enables us to complete the proof of the main theorem in Section \ref{SSProofMain}. In Section \ref{SSGeneric}, we show the perturbation can be generic and in Section \ref{SSAncient}, we consider ancient solutions.

	\subsection{The exponential growth of the graphical domain}\label{SSDomain}
	
	Denote by $\mathbb A_{r_1,r_2}$ the closure of the annulus region $B_{r_2}\setminus B_{r_1}$ for $r_2>r_1.$
	\begin{definition}\label{DefGraphical}
		Let us fix an integer $\ell>3$ and $r>0$, $\eps_0>0$. We define \emph{the graphical scale $\mathit r(M_t)$} as the largest radius $R$ such that $  M_t$ can be written as a graph of a function $u:\ \Sigma^R\to \R$ satisfying
		\begin{enumerate}
			\item $\|u\|_{C^{2,\alpha}(\Sigma^r)}<\eps_0$,
			\item $\|\nabla^i u\|_{C^{0}(\Sigma\cap \mathbb A_{s-1,s})}< s^{-i+1}\eps_0$, $i=0,1,2,\ell$, $r<s<R$.
		\end{enumerate}
		Here $\eps_0$ is a uniform constant independent of $R$.

	\end{definition}
	
	We will fix $\eps_0$ later, such that the following Proposition \ref{PropPL} holds. Proposition \ref{PropPL} will be proved in Section \ref{SSPL}.
	
	\begin{proposition}\label{PropPL}
		There exist $\eps_0$, $T>0$ and $C>0$ such that for all $t>T$ we have 
		$\mathit r(M_t)\geq C \mathrm{e}^{t/2}. $
	\end{proposition}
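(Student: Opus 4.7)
The strategy is to reduce the RMCF statement to a fixed-scale question in the MCF picture via the rescaling \eqref{EqMCFRMCF}, and then apply pseudolocality. Since $M_t=e^{t/2}\mathbf{M}_{-e^{-t}}$, a ball $B_R$ in $M_t$ corresponds to the ball $B_{Re^{-t/2}}$ in $\mathbf{M}_{-e^{-t}}$; thus $\mathit r(M_t)\ge C_*e^{t/2}$ is equivalent to showing that $\mathbf{M}_\tau$ is a small normal graph over $\sqrt{-\tau}\,\Sigma$ on the \emph{fixed} ball $B_{C_*}$ for all $\tau$ sufficiently close to $0^-$. The key geometric input is that as $\tau\to 0^-$, $\sqrt{-\tau}\,\Sigma\cap B_{C_*}$ converges to the asymptotic cone $\Gamma\cap B_{C_*}$ (because $\Sigma$ is asymptotically conical and $\sqrt{-\tau}\,\Sigma$ at physical scale $C_*$ corresponds to $\Sigma$ at its own scale $C_*/\sqrt{-\tau}\to\infty$, where $\Sigma\to\Gamma$). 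Uniqueness of the tangent flow at $(0,0)$ \cite{CS} then gives closeness of $\mathbf{M}_\tau$ to $\Gamma$ on $B_{C_*}$ for $\tau$ close to $0$, provided $C_*$ is chosen small enough.

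To implement this, I would treat the two clauses of Definition \ref{DefGraphical} separately. Clause (1) — the $C^{2,\alpha}$ bound on the inner ball $\Sigma^r$ for fixed $r$ — is immediate from the assumed $C^\infty_{\mathrm{loc}}$ convergence $M_t\to\Sigma$ together with \cite{CS}, once $t$ is large enough. For clause (2), fix an initial MCF time $\tau_*<0$ close to $0$. By the inner tangent-flow control applied at time $\tau_*$ after rescaling, combined with the asymptotic conical decay $|A_\Sigma|\lesssim s^{-1}$ at large $\Sigma$-radius $s$, one shows that at each point $p\in\mathbf{M}_{\tau_*}$ with $|p|\in[\rho_1,C_*]$ for a small constant $\rho_1>0$, the surface $\mathbf{M}_{\tau_*}$ is a small Lipschitz graph over its tangent plane on a ball of radius $\sim|p|$, with Lipschitz constant uniformly small in $|p|$. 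Apply pseudolocality \cite{INS} to this graphical piece: the graphical property propagates for time $\delta|p|^2$, and since $|p|\ge\rho_1>0$ is bounded below while $|\tau_*|$ can be taken arbitrarily small, the time interval extends all the way up to $\tau=0^-$; pseudolocality simultaneously gives the scale-covariant curvature estimate $|A_{\mathbf{M}_\tau}|\lesssim 1/|p|$ on $B_{c|p|}(p)$.

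Rescaling back via \eqref{EqMCFRMCF}, a point $q\in M_t$ at RMCF radius $s\in[c_1 e^{t/2},c_2 e^{t/2}]$ corresponds to $p=qe^{-t/2}\in\mathbf{M}_\tau$ with $|p|\in[c_1,c_2]$, and the MCF estimate $|A_{\mathbf{M}_\tau}(p)|\lesssim 1/|p|$ translates to $|A_{M_t}(q)|=e^{-t/2}|A_{\mathbf{M}_\tau}(p)|\lesssim 1/s$, which is exactly the scale-weighted decay demanded by clause (2). The bounds $|u|\lesssim s\eps_0$, $|\nabla u|\lesssim\eps_0$, $|\nabla^2 u|\lesssim s^{-1}\eps_0$ on $\Sigma\cap\mathbb A_{s-1,s}$ then follow from standard graph identities combined with the fact that $\Sigma$ and $M_t$ are both close to the same cone $\Gamma$ at that scale. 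Finally, matching the inner (tangent-flow based) and outer (pseudolocality based) graph pieces on the overlap $\rho_1\le |p|\le r\sqrt{-\tau}$ glues them into a single graph function $u:\Sigma\cap B_R\to\R$ with $R\ge C_* e^{t/2}$.

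\textbf{Main obstacle.} The principal difficulty is extracting \emph{quantitative} uniform Lipschitz control at the initial MCF time $\tau_*$ across the full annular range $|p|\in[\rho_1,C_*]$, so that pseudolocality can be applied uniformly. This requires a careful matching between (i) the $C^\infty_{\mathrm{loc}}$ convergence $M_t\to\Sigma$, which a priori is only a qualitative statement on fixed compact sets in RMCF, (ii) the asymptotic conical decay of $\Sigma\to\Gamma$ at infinity with quantitative control on the rate, and (iii) a quantitative version of the tangent cone convergence $\mathbf{M}_\tau\to\Gamma$ near $(0,0)$. One likely needs Ecker-Huisken interior curvature estimates to transfer the small-scale tangent-flow information to the fixed-scale statement on $B_{C_*}$, with constants that do not blow up as $\tau_*\to 0^-$. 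The bulk of the technical work is thus to produce uniform-in-scale graphical data at the single time $\tau_*$, after which pseudolocality promotes it forward in time at negligible cost; this is the content deferred to Section \ref{SEstimate}.
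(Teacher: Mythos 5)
Your proposal identifies the right tools (pseudolocality, the MCF--RMCF rescaling correspondence, the asymptotic conical structure of $\Sigma$) and correctly names the sticking point, but it does not resolve it, and the resolution is precisely the structural idea of the paper's proof that you have not supplied. You propose to fix a single MCF time $\tau_*$ close to $0^-$, establish uniform small-Lipschitz graphicality on the full fixed annulus $\rho_1\le |p|\le C_*$, and then apply pseudolocality once to carry this forward to $\tau\to 0^-$. But the required initial data at $\tau_*$ — graphicality across $|p|\in[\rho_1,C_*]$ — translates in the RMCF picture at time $T_*=-\log|\tau_*|$ to graphicality over $\Sigma$ on the annulus $\rho_1 e^{T_*/2}\le|x|\le C_* e^{T_*/2}$, whose inner radius already grows like $e^{T_*/2}$. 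The $C^\infty_{\mathrm{loc}}$ convergence $M_t\to\Sigma$ only controls fixed compact balls, so it cannot supply this; and as $\tau_*\to 0^-$ the annulus recedes to infinity in RMCF. In short, establishing your "initial data at $\tau_*$" \emph{is} the content of Proposition \ref{PropPL} at time $T_*$, so the proposed route is circular. Neither the conical decay $|A_\Sigma|\lesssim s^{-1}$ nor Ecker--Huisken interior estimates fills this gap on their own; Ecker--Huisken is used in the paper (Lemma \ref{LmPseudoMCF-high}, Corollary \ref{CorPseudo-high}) only for upgrading the propagated Lipschitz bound to higher-order derivative decay.

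What the paper does instead is iterate pseudolocality in the RMCF picture. One fixes $R$ and takes $T$ large enough (by $C^\infty_{\mathrm{loc}}$ convergence) that $M_T$ is a small $C^\ell$ graph over $\Sigma\cap B_{4R}$; Corollary \ref{CorPseudo-high} then propagates the annulus $\mathbb A_{R,2R}$ to $\mathbb A_{Re^{t/2},2Re^{t/2}}$ for $t>1/2$, which overlaps $B_{4R}$ for $t\le 2$, so $M_{T+t}$ is graphical over $B_{2Re^{t/2}}\cap\Sigma$ on a unit window of $t$. Restarting from $t=1,2,3,\dots$ with the enlarged graphical region as new initial data compounds these unit-time extensions, and the graphical radius grows geometrically like $Ce^{t/2}$. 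The essential point is that at each stage one only needs the graphical control already secured at the previous stage (plus the always-available inner control on $B_{4R}$), never the full-range control at a single time. Your plan elides exactly this bootstrap; without it, the claim that $\mathbf{M}_{\tau_*}$ is uniformly graphical across $[\rho_1,C_*]$ at a single $\tau_*$ is unsupported.
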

	
	We define ${\bf r}(M_t)$ to be the minimum of the above $\mathit r(M_t)$ and $C\mathrm{e}^{t/2}$. Sometimes we write $\mathbf{r}(t)$ for simplicity. On $\Sigma\cap B_ {\br(t)}$, $M_t$ can always be written as a graph with an appropriate decay rate. 
	
	\subsection{The evolution equation governing the difference of two nearby RMCFs}\label{SSDifferenceEq}
	We consider the following setting: Let $\{M_t\}_{t\in[0,\infty)}$ be an RMCF as in ($\star$). Suppose $\widetilde M_0$ is a small perturbation of $M_0$ and $\widetilde M_t$ is the rescaled MCF with the initial condition $\widetilde M_0$.

	We first write $\widetilde M_t$ as the normal graph of a function  $v(\cdot,t)$ over $M_t$, i.e., 
	\begin{equation}\label{EqGraphRMCF}\widetilde M_t=\{x+v(x,t)\bn(x)\ |\ x\in M_t\}.\end{equation}
	
	Then taking the difference of the RMCF equations for $\widetilde M_t$ and $M_t$ we get
	\begin{equation}\label{EqDifference2}\partial_t v=L_{M_t}v+ Q(v),\end{equation}
	where $Q(v)$ is quadratically small in $\|v\|_{C^2}$ (see \cite[Lemma 4.3]{CM2} and \cite[Appendix A]{SX}). 
	
	Suppose $t$ is sufficiently large so that the graphical scale of $M_t$ can be defined as in Definition \ref{DefGraphical}. We can introduce a diffeomorphism $\varphi$ between $\Sigma^{\mathbf r(t)}$ and $M_t^{\mathbf r(t)}$ so that a function $v$ on $M_t^{\mathbf r(t)}$ can be transplanted to a function on $\Sigma^{\mathbf r(t)}$ as the pullback $\varphi^* v$. Note that the difference between the two manifolds $M_t^{\mathbf r(t)}$ and $\Sigma^{\mathbf r(t)}$ grows linearly in the radial direction, since the $C^0$-norm in item (2) of Definition \ref{DefGraphical} does so. Instead of using the normal graphical function in Definition \ref{DefGraphical} to define the diffeomorphism $\varphi$, we adopt a polar-spherical coordinates approach. Let $\mathcal C:=\{r\theta\ |\ r\geq0,\ \theta\in \mathcal S\subset \mathbb S^n(1)\}$ be the cone such that $\lambda \Sigma\to \mathcal C$ as $\lambda\to 0_+$, where $\mathcal S$ is a codimension-$1$ submanifold of $\mathbb S^n(1)$. On each spherical slice $\mathbb S^n(r)$, both $M_t\cap \mathbb S^n(r)$ and $\Sigma\cap  \mathbb S^n(r)$ can be written as a normal graph over $r\mathcal S$ within $\mathbb S^n(r)$.
	
	\begin{definition}\label{DefTransplant}
		We define the diffeomorphism $\varphi_t:\ \Sigma^{\mathbf r(t)}\to M_t^{\mathbf r(t)}$ by mapping each point in $\Sigma^{\mathbf r(t)}\cap  \mathbb S^n(r)$ to the point of $M_t^{\mathbf r(t)}\cap  \mathbb S^n(r)$ if these two points are on the same normal line of $r\mathcal S$. Here the normal line can be understood as the exponential map from the normal vector of $r\mathcal S$ within $\mathbb S^n(r)$.
	\end{definition}
	Note that the diffeomorphism preserves the Gaussian weight $\mathrm{e}^{-\frac{|x|^2}{4}}$. 
	We next use  $\varphi_t$ to pull back functions on $M_t$ to $\Sigma$ to rewrite the equation as a nonautonomous system over $\Sigma^{\mathbf r(t)}$ as
	\begin{equation}\label{EqDifference1}\partial_t  v^*=L_{\Sigma} v^*+\mathcal Q (v^*,t),\quad \mathrm{with}\ v^*=(\varphi_t)^*v,\ \mathrm{and}\end{equation} 
	\begin{equation}\label{EqQ}\mathcal Q(v^*,t)=P( v^*,t)+((\varphi_t)^{*}Q)(v^*)+(\varphi_t)^*(\langle \partial_t \varphi_t,\nabla v\rangle),\quad P( v^*,t)=(\varphi_t)^*(L_{M_t} v)-L_\Sigma v^*.\end{equation} We emphasize that \eqref{EqDifference1} is the equation satisfied by the restriction of $v^*=(\varphi_t)^*v$ to the graphical part $\Sigma^{\mathbf r(M_t)} $ where $v$ satisfies \eqref{EqDifference2}. Equation \eqref{EqDifference1} itself is not an autonomous evolutionary equation since the solution over the region $ \Sigma^{\mathbf r(M_t)}$ is influenced by the part outside $ B_{\mathbf r(M_t)}$ according to \eqref{EqDifference2}. Among the three terms of $\mathcal Q$ in \eqref{EqQ}, the error term $P$ is estimated in Appendix \ref{AppP} (see Lemma \ref{LmAppP}). The term $(\varphi_t)^*Q$ is bounded by $Q$ and smallness of $\varphi_t$, and the term $(\varphi_t)^*(\langle \partial_t \varphi_t,\nabla v\rangle)$ is bounded by the smallness of $\varphi_t$ and $\partial_t\varphi_t$. We refer the reader to Appendix \ref{AppP} as well.
	
	Similar to Proposition \ref{PropPL}, we have the following estimate for the perturbed RMCF. 
	
	\begin{proposition}\label{PropPL1}
		For any large $r>0$ and small $\dt_1>0$, there exists $\epsilon_2>0$ with the following significance. Suppose $\widetilde{M}_t$ is a perturbed RMCF where $\widetilde{M}_0=\{x+v_0(x)\bn(x)\ |\ x\in M_0\}$. If $\|v_0\|_{C^{2,\al}}<\epsilon_2$, then there exists $T>0$ such that $\widetilde{M}_{t}\cap B_{\mathbf{r}(t)}$ can be written as a graph of the function $u(\cdot,t)$ on $\Sigma^{\mathbf{r}(t)}$ with $\|u(\cdot,t)\|_{C^2(\Sigma^r)}<\dt_1$ on $\Sigma^r$ for $t\in[T,T+1]$.
	\end{proposition}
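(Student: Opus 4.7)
The plan is to combine Proposition \ref{PropPL} for the unperturbed flow $\{M_t\}$ with continuous dependence on initial data for the difference equation \eqref{EqDifference2} over a \emph{bounded} time interval $[0,T+1]$. The key observation is that the two requirements in the conclusion are of different natures: $\|u(\cdot,t)\|_{C^2(\Sigma^r)}<\dt_1$ is a small-quantitative bound on a fixed ball, which we will get from continuous dependence; whereas the mere existence of a graphical representation on $B_{\mathbf r(t)}$ is a qualitative statement which we will get from pseudo-locality exactly as in Proposition \ref{PropPL}.

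\textbf{Setting up $T$ and $\epsilon_2$.} Fix $r$ and $\dt_1$. First I would choose $T=T(r,\dt_1)>0$ large enough that, by the $C^\infty_{\loc}$ convergence $M_t\to\Sigma$ in $(\star)$, the manifold $M_t\cap B_{r+1}$ is the normal graph of a function $u_0(\cdot,t)$ over $\Sigma\cap B_{r+1}$ with $\|u_0(\cdot,t)\|_{C^{2,\al}}<\dt_1/4$ for all $t\geq T$, and simultaneously (enlarging $T$ if needed) Proposition \ref{PropPL} gives $\mathbf r(M_t)\geq C e^{t/2}$ for $t\geq T$. With $T$ fixed, I would then apply parabolic regularity to \eqref{EqDifference2} on $\{M_t\}_{t\in[0,T+1]}$, whose local geometry is uniformly bounded on bounded sets: writing $\widetilde M_t$ as the normal graph of $v(\cdot,t)$ over $M_t$ with $\partial_t v=L_{M_t}v+Q(v)$, $v(\cdot,0)=v_0$, the quadratic nature of $Q$ in $\|v\|_{C^2}$ and a Gronwall-type bound yield: for any $\eta>0$, there exists $\epsilon_2=\epsilon_2(T,\eta)>0$ such that $\|v_0\|_{C^{2,\al}}<\epsilon_2$ forces $\|v(\cdot,t)\|_{C^{2,\al}(M_t\cap B_{r+1})}<\eta$ throughout $[0,T+1]$. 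I set $\eta=\dt_1/4$.

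\textbf{Assembling the graph.} For $t\in[T,T+1]$, on the bounded region $B_r$, $M_t\cap B_{r+1}$ is a $C^{2,\al}$-small normal graph over $\Sigma\cap B_{r+1}$ and $\widetilde M_t\cap B_{r+1}$ is a $C^{2,\al}$-small normal graph over $M_t$. Composing these two representations, and passing from the normal-graph parametrization to the polar-spherical transplantation $\varphi_t$ of Definition \ref{DefTransplant} (which introduces only a universal distortion absorbable into the threshold $\dt_1$), expresses $\widetilde M_t\cap B_r$ as the graph of a function $u(\cdot,t)$ over $\Sigma^r$ satisfying $\|u(\cdot,t)\|_{C^2(\Sigma^r)}<\dt_1$. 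For the extension to $B_{\mathbf r(t)}$, once $\widetilde M_T$ has been shown to be graphical and close to $\Sigma$ on a fixed bounded ball, the same pseudo-locality argument used to prove Proposition \ref{PropPL} applies verbatim to $\widetilde M_t$ in place of $M_t$ and propagates graphical structure, with the decay estimates required by Definition \ref{DefGraphical}, out to radius $Ce^{t/2}$ for $t\in[T,T+1]$.

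\textbf{Main obstacle.} The genuine difficulty is the mismatch between the exponentially growing radius $\mathbf r(t)$ and the continuous-dependence argument, which only yields control of $v$ on \emph{bounded} sets (the continuous-dependence constants blow up as the domain enlarges over the fixed time interval $[0,T+1]$). This is exactly the reason we separate the two conclusions: the quantitative $C^2$ bound is only asserted on the fixed ball $\Sigma^r$, whereas the existence of the graphical extension to $\Sigma^{\mathbf r(t)}$ is obtained by running pseudo-locality for $\widetilde M_t$ independently, using only smallness on a fixed ball as input. Once this separation is made the proof is a direct assembly of the ingredients above.
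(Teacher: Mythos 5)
Your treatment of the fixed ball $\Sigma^r$ matches the paper's: choose $T$ from the $C^\infty_{\loc}$ convergence, use smooth dependence on initial data of the RMCF over the finite interval $[0,T+1]$ (with $\epsilon_2$ allowed to depend on $T$), and compose the two graph representations via the transplantation lemma (Lemma \ref{Lem;transplantation}) to get $\|u(\cdot,t)\|_{C^2(\Sigma^r)}<\dt_1$. However, your extension step to $\Sigma^{\mathbf r(t)}$ has a genuine gap. Pseudolocality propagates graphicality \emph{forward in time} at the rate $e^{(t-T)/2}$ measured from the time and the radius at which you have control; if all you feed it is that $\widetilde M_T$ is graphical and close to $\Sigma$ on a fixed bounded ball at time $T$, then over $t\in[T,T+1]$ you only obtain graphicality out to a radius comparable to that fixed ball (times $e^{1/2}$), not out to $\mathbf r(t)\sim Ce^{t/2}\approx Ce^{T/2}$, which is enormous compared with the fixed ball once $T$ is large. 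So the claim that the proof of Proposition \ref{PropPL} ``applies verbatim to $\widetilde M_t$'' starting from time $T$ does not deliver the stated radius.

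Relatedly, the premise of your ``Main obstacle'' paragraph is not correct, and removing it is exactly what closes the gap: each $M_t$ and $\widetilde M_t$ is a \emph{closed} hypersurface, and over the bounded time interval $[0,T+1]$ the unperturbed flow has uniformly bounded geometry (the singularity is at $t=\infty$ in rescaled time), so smooth dependence on initial data gives that $\widetilde M_t$ is a normal graph $v(\cdot,t)$ over \emph{all} of $M_t$ with $\|v(\cdot,t)\|_{C^{2,\al}}$ as small as you like for $t\in[0,T+1]$, not merely on bounded subsets. This is what the paper uses: since $M_t\cap B_{\mathbf r(t)}$ is already a graph over $\Sigma^{\mathbf r(t)}$ with the decay of Definition \ref{DefGraphical}, and $\widetilde M_t$ is a uniformly small graph over $M_t$, the transplantation lemmas (Lemma \ref{Lem;transplantation} on the compact part and Lemma \ref{Lem:transplantation-cone} on the conical part, the latter being where pseudolocality actually enters) convert this into graphicality of $\widetilde M_t\cap B_{\mathbf r(t)}$ over $\Sigma^{\mathbf r(t)}$. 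Alternatively, if you insist on running pseudolocality for $\widetilde M$ directly, you must start the propagation at the much earlier time $T_*$ at which $M_{T_*}$ (hence, by smooth dependence, $\widetilde M_{T_*}$) is Lipschitz-close to $\Sigma$ on $B_{4R}$, so that the exponential expansion has the full time $t-T_*$ to reach radius comparable to $\mathbf r(t)$ by $t\in[T,T+1]$; as written, your argument does not do this.
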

	
	\begin{proof}
		$M_t$ converging to $\Sigma$ in $C_{\loc}^\infty$ sense implies that for any fixed $r$, when $T$ is sufficiently large, $M_t$ can be written as a small graph over $\Sigma^r$. If the initial perturbation is sufficiently small, then by the smooth dependence on the initial data of the RMCF equation, we have $\widetilde{M}_T$ can be written as a graph over $M_T$ which is sufficiently small. Then Theorem \ref{Lem;transplantation} implies that $\widetilde{M}_T$ can be written as a small graph over $\Sigma^r$. The fact that $\widetilde{M}_{t}\cap B_{\mathbf{r}(t)}$ can be written as the graph of $u$ over $\Sigma^{\mathbf{r}(t)}$ follows from  the following Proposition \ref{LmT}(2) and Theorem \ref{Lem;transplantation}. \end{proof}
	
	\subsection{The cone condition to initiate the local dynamics}\label{SSInitiateCone}
	The main result of this subsection is the following proposition which gives the cone condition and the estimate of the boundary behavior for a small initial positive perturbation. 
	
	\begin{proposition} \label{LmT} Let $\dt>0$ be a small number and $r$ be a large number, then there exist $T_\sharp$ and $\eps_1$ with the following significance:  
		\begin{enumerate}
			\item For $t\geq T_\sharp$, $M_{t}$ can be written as the graph of a function $m(\cdot,t):\ \Sigma^{\mathbf r(t)}\to \R$ with $\|m(\cdot,t)\|_{C^{2,\al}(\Sigma^r)}\leq \dt/2;$ 
			\item For all $0<\eps<\eps_1$, suppose $\widetilde M_t$  is the RMCF written as the normal graph of $v(\cdot,t)$ over $M_t$ as in \eqref{EqGraphRMCF} and with the initial condition $v_0>0$ and $\|v_0\|_{C^{2,\al}(M_0)}\leq \eps$. Let $v^*$ be the transplantation of $v$ to $\Sigma^{\mathbf{r}(t)}$ which satisfies \eqref{EqDifference1}. Then at the time $T_\sharp$, the transplanted function $v^*(\cdot,T_\sharp)$ is defined on $\Sigma^{\mathbf{r}(T_\sharp)}$, and
			$$\|v^*(T_\sharp)\chi_{\mathbf{r}({T_\sharp})}\|_Q\geq  \sup_{\substack{ t'\in[T_{\sharp}-1,T_{\sharp}+1]}}
			\|v^*(t',\cdot)\|_{C^2(\mathbb A_{\mathbf r({t'})-1,\mathbf r({t'})})} \mathrm{e}^{-\frac{{\mathbf r({t'})^{3/2}}}{4}}; $$
			\item $\frac{\left|\left\langle v^*(T_\sharp)\chi_{\br(T_\sharp)},\phi_1\right\rangle_{L^2(\Sigma)}\right|}{\|v^*(T_\sharp)\chi_{\br(T_\sharp)}\|_{Q(\Sigma)}}>c/3>0$, where $c$ is in Proposition \ref{thm:perturbedRMCFto1st}. 
		\end{enumerate}
	\end{proposition}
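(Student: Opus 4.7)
Item (1) is immediate from the $C^\infty_{loc}$-convergence $M_t\to\Sigma$ in $(\star)$: on the compact ball $B_r$, $M_t$ is the normal graph of a function $m(\cdot,t)$ over $\Sigma^r$ with $\|m(\cdot,t)\|_{C^{2,\al}(\Sigma^r)}$ as small as we like for $t$ large. For items (2) and (3), the plan is to pick $T_\sharp=t_i$ for some large index $i$ from the sequence of Proposition~\ref{thm:perturbedRMCFto1st}, combine it with the cutoff Corollary~\ref{cor:vstarcutsmall}, then transplant via the polar-spherical map of Definition~\ref{DefTransplant} and invoke the eigenfunction convergence in Theorem~\ref{thm:conv1steigenfunL2} to push the estimate onto $\Sigma$.

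\textbf{Proof of (3).} Fix $R$ large (depending on $\dt$) and then select $T_\sharp=t_i$ so that $\br(T_\sharp)\gg R$ and the convergences in Section~\ref{SConvergence} have become effective on $\Sigma^R$. Proposition~\ref{PropPL1} with $\eps_1$ small guarantees that $\widetilde M_{T_\sharp}\cap B_{\br(T_\sharp)}$ is the graph of $v$ over $M_{T_\sharp}^{\br(T_\sharp)}$, so $v^*(\cdot,T_\sharp)$ is defined on $\Sigma^{\br(T_\sharp)}$. Proposition~\ref{thm:perturbedRMCFto1st}, combined with the $O(\eps^{1+\eps_0})$ control on $v-v^\star$ from Proposition~\ref{prop:approx}, yields the cone condition for $v$ on $M_{T_\sharp}$, and Corollary~\ref{cor:vstarcutsmall} localizes it to $M_{T_\sharp}^R$. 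Transplanting to $\Sigma^R$ via Definition~\ref{DefTransplant}, whose diffeomorphism preserves the Gaussian weight and is $C^2$-close to the identity on $B_R$ (Appendix~\ref{SS:transplantation}), distorts $L^2$- and $Q$-norms only by factors close to $1$. Replacing $\phi_1(T_\sharp)$ by $\eta\phi_1$ via Theorem~\ref{thm:conv1steigenfunL2} then yields the cone condition with $\chi_R$ in place of $\chi_{\br(T_\sharp)}$. To extend to $\chi_{\br(T_\sharp)}$, the annular contribution to the numerator is controlled by Cauchy--Schwarz and Corollary~\ref{cor:1stcutsmall}: it is bounded by $\|v^*\chi_{\br}\|_{L^2}\|\phi_1\chi_{B_R^c}\|_{L^2}\leq CR^{-1}\|v^*\chi_{\br}\|_{L^2}$; meanwhile Theorem~\ref{thm:vstarcutsmall} together with the $\|v\|_Q\lesssim\|v\|_{L^2}$ bound from Theorem~\ref{mainthm-FK} gives $\|v^*\chi_{\br}\|_Q\lesssim\|v^*\chi_R\|_Q$. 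Choosing $R$ large absorbs the $R^{-1}$ loss.

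\textbf{Proof of (2).} Both sides scale linearly in $\eps$ modulo the $O(\eps^{1+\eps_0})$ nonlinear correction. The LHS is bounded below by a fixed positive multiple of $\eps$ at the fixed time $T_\sharp$: item (1) of Theorem~\ref{mainthm-FK} gives $\|v^*(T_\sharp)\|_{L^2}\gtrsim\eps\, e^{(\lambda_1(\Sigma)-o(1))T_\sharp}$, Theorem~\ref{thm:vstarcutsmall} keeps a definite fraction of this $L^2$-mass inside $\Sigma^R\subset\Sigma^{\br(T_\sharp)}$, and the cone condition from (3) rules out cancellation in the $Q$-norm. For the RHS, since $\widetilde M_{t'}$ and $M_{t'}$ are both normal graphs over $\Sigma$ with the $C^2$-decay imposed by Definition~\ref{DefGraphical}, their difference $v^*(t',\cdot)$ satisfies $\|v^*(t',\cdot)\|_{C^2(\mathbb A_{\br(t')-1,\br(t')})}\leq$ polynomial in $\br(t')$ times an $\eps$-dependent factor $\leq C\eps e^{CT_\sharp}$ that comes from pseudolocality on the growing graphical domain. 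Combined with $\br(t')\geq C' e^{t'/2}$ from Proposition~\ref{PropPL}, the factor $e^{-\br(t')^{3/2}/4}$ is double-exponentially small in $T_\sharp$, which easily beats the polynomial-times-exponential prefactor. Taking $T_\sharp$ large thus forces the RHS far below the LHS uniformly for $t'\in[T_\sharp-1,T_\sharp+1]$.

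\textbf{Main obstacle.} The principal difficulty is balancing four coupled parameters: $T_\sharp$ must be large enough to activate the cone condition of Proposition~\ref{thm:perturbedRMCFto1st}, the eigenfunction convergence on $\Sigma^R$ from Theorem~\ref{thm:conv1steigenfunL2}, and the double-exponential decay gap used in (2); $R$ must absorb the $R^{-1}$ tail in (3) and make the convergence results of Section~\ref{SConvergence} effective; $\eps_1$ must be small enough for Proposition~\ref{PropPL1} to preserve the graphical picture up to time $T_\sharp$ and for Proposition~\ref{prop:approx}'s $O(\eps^{1+\eps_0})$ error to be dominated by the linear $\eps$-order term. A subtle point worth emphasizing is that the polar-spherical transplantation of Definition~\ref{DefTransplant}, not a normal-graph one, is used precisely because it preserves the Gaussian weight, which is indispensable for comparing $L^2$- and $Q$-norms between $M_t$ and $\Sigma$ uniformly over the noncompact shrinker.
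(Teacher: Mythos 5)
Your proposal is correct and follows essentially the same strategy as the paper's proof: choose $T_\sharp = t_i$ from the sequence supplied by the linear theory (Theorem~\ref{mainthm-FK} / Corollary~\ref{cor:vstarcutsmall}), use Proposition~\ref{prop:approx} to pass from the linearized solution $v^\star$ to the nonlinear graph function $v$ for $\eps$ small, exploit the super-exponential decay of $e^{-\br(t')^{3/2}/4}$ against the at-most-exponential growth of $\|v^*\|_{C^2}$ to establish item (2), and transplant via Definition~\ref{DefTransplant} together with eigenfunction convergence to obtain item (3). The only cosmetic differences are that the paper proves item (1) by directly reading off the definition of the graphical scale with $\eps_0=\dt/2$ (rather than appealing to $C^\infty_{\loc}$-convergence alone, which by itself only covers $\Sigma^r$ and not the full domain $\Sigma^{\br(t)}$), and the paper passes from $\chi_{R}$ to $\chi_{\br(T_\sharp)}$ primarily through the denominator estimate via \eqref{EqNormCompare} rather than your split of the numerator — both routes work.
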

	\begin{proof}
		The item (1) follows from the definition of the graphical scale and $\mathbf{r}(t)$. We only need to choose $\eps_0$ in Definition \ref{DefGraphical} to be $\dt/2$. 
		
		Next we prove the item (2). We first prove the desired inequality for $v^\star$, the solution to the linearized RMCF equation. From Theorem \ref{thm:vstarcutsmall} and Corollary \ref{cor:vstarcutsmall} we know that there exist $R_0>0$ and a sequence of $t_i$ such that the $Q_{R_0}({t_i})$-norm of $v^\star$ dominates the $L^2({t_i})$-norm of $v^\star$, which grows exponentially by Theorem \ref{mainthm-FK} (1). On the other hand, the parabolic maximum principle shows that $v^\star$ grows exponentially in time, and our discussion of the time range assures that $v^\star$ is bounded in space (in fact $\|u\|_{}C^{2,\alpha}$ is bounded in space); meanwhile $\mathrm{e}^{-{\mathbf r({t})^{3/2}}}$ decays superexponentially since $\mathbf r({t})$ grows exponentially by Proposition \ref{PropPL}. So we can choose a $T_\sharp$ such that
		$$\|v^\star(T_\sharp)\chi_{R_0}\|_{Q(t_\sharp)}\geq  
		\frac12 \sup_{\substack{ t'\in[T_{\sharp}-1,T_{\sharp}+1] }} \|v^\star(t',\cdot)\|_{C^2(\mathbb A_{\mathbf r({t'})-3,\mathbf r({t'})+2})}
		\mathrm{e}^{-\frac{{\mathbf r({t'})^{3/2}}}{4}}.$$
		
		Now we fix such $T_\sharp$. From Proposition \ref{prop:approx}, namely the approximation of $v^\star$ and $v$, we know that if $\eps$ is sufficiently small, $v$ and $v^\star$ will be sufficiently close to each other, and thus
		$$\|v(T_\sharp)\chi_{R_0}\|_{Q(t_\sharp)}\geq  
		\frac23 \sup_{\substack{ t'\in[T_{\sharp}-1,T_{\sharp}+1]}}
		\|v(t',\cdot)\|_{C^2(\mathbb A_{\mathbf r({t'})-2,\mathbf r({t'})+1})}\mathrm{e}^{-\frac{{\mathbf r({t'})^{3/2}}}{4}}.$$
		Finally, Lemma \ref{Lem:transplantation-cone} implies that after the transplantation, we will have 
		$$\|v^*(T_\sharp)\chi_{R_0}\|_{Q(\Sigma)}\geq  
		\sup_{\substack{ t'\in[T_{\sharp}-1,T_{\sharp}+1]}}
		\|v^*(t',\cdot)\|_{C^2(\mathbb A_{\mathbf r({t'})-1,\mathbf r({t'})})}\mathrm{e}^{-\frac{{\mathbf r({t'})^{3/2}}}{4}}.$$
		When $T_\sharp$ is sufficiently large, $\br({T_\sharp})>R_0$. Then we obtain item (2).

		Item (3) follows from Theorem \ref{thm:vstarcutsmall} and Corollary \ref{cor:vstarcutsmall}. Notice that the choice of $T_\sharp$ satisfies $\frac{\left|\left\langle v^\star(T_\sharp)\chi_{R_0},\phi_1(T_\sharp)\right\rangle_{L^2({T_\sharp})}\right|}{\|v^\star(T_\sharp)\|_{Q({T_{\sharp}})}}>c$. Meanwhile, from Theorem \ref{thm:conv1steigenfun}, $\phi_1(t_i)\to\phi_1$ on $B_R$. So when $T_\sharp$ is chosen sufficiently large, we have $\frac{\left|\left\langle v^\star(T_\sharp)\chi_{R_0},\phi_1\right\rangle_{L^2(\Sigma)}\right|}{\|v^\star(T_\sharp)\chi_{\br(T_\sharp)}\|_{Q(\Sigma)}}>\frac{1}{2}c$ applying \eqref{EqNormCompare}. Again, if $\eps$ is sufficiently small, Proposition \ref{thm:perturbedRMCFto1st} and Lemma \ref{Lem:transplantation-cone} imply that $\frac{\left|\left\langle v^*(T_\sharp)\chi_{\br(T_\sharp)},\phi_1\right\rangle_{L^2(\Sigma)}\right|}{\|v^*(T_\sharp)\chi_{\br(T_\sharp)}\|_{Q(\Sigma)}}>\frac{1}{3}c$.

	\end{proof}
	
	\subsection{Approximating the local dynamics by the linear equation}\label{SSApproximate}
	We next consider the orbit of $\widetilde M_t$ for $t\geq T_\sharp$.  We will compare $v^*(t)$ with the solution of the autonomous equation $\partial_t v= L_\Sigma v$. Note that the linear equation is globally defined on $\Sigma$, but $M_t$ is not a global graph, so we choose the initial condition $v(0)$ for the linear equation satisfying $v(0)=v^*(T_\sharp+n)\chi_R$, $R=\mathbf r({T_\sharp+n})$, where $\chi_R:\ \R\to \R$ is a smooth function that is 1 for $|x|\leq R-1$, $0$ for $|x|>R$ and $|\chi'_R|<2$. Then we solve the initial value problem
	\begin{equation}\label{EqLinearAuto}
		\begin{cases}
			\partial_t v_n&= L_\Sigma v_n\\
			v_n(0)&=v^*(T_\sharp+n)\chi_{\mathbf r({T_\sharp+n})}.\\
		\end{cases}
	\end{equation}
	The following lemma shows that the solution of the above equation \eqref{EqDifference1} can be well-approximated by the linearized equation. 
	
	\begin{proposition}\label{LmCM3}
		Let $v^*$, $v_n$ and $T_\sharp$ be as defined above. Suppose that $\|v^*(T_\sharp+n+t)\|_{C^{2,\al}(\Sigma^{\mathbf r({T_\sharp+n+t})})}<\dt $ for $t\in [0,1]$. Then we get 
		\begin{enumerate}
			\item $\|v^*(T_\sharp+n+1)\chi_{\mathbf r({T_\sharp+n+1})}-v_n(1)\|_{Q}\leq \dt\|v^*(T_\sharp+n)\chi_{\mathbf r({T_\sharp+n})}\|_{Q};$
			\item $\displaystyle\|v^*(T_\sharp)\chi_{\mathbf{r}({T_\sharp})}\|_Q\geq  \sup_{\substack{ t'\in[T_{\sharp}-1,T_{\sharp}+1]}}
			\|v^*(t',\cdot)\|_{C^2(\mathbb A_{\mathbf r({t'})-1,\mathbf r({t'})})} \mathrm{e}^{-{\mathbf r({t'})^{3/2}}}. $
		\end{enumerate}
		
	\end{proposition}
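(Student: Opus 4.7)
The plan is to prove item (1) via a standard parabolic energy estimate on the difference
$$w(t) := v^*(T_\sharp+n+t)\chi_{R_n} - v_n(t), \qquad R_n := \mathbf r(T_\sharp+n), \quad t\in[0,1].$$
Since $v_n$ satisfies $\partial_t v_n = L_\Sigma v_n$ globally on $\Sigma$ and $v^*$ satisfies $\partial_t v^* = L_\Sigma v^* + \mathcal Q(v^*,\cdot)$ on $\Sigma^{\mathbf r(T_\sharp+n+t)}$, and because $\mathbf r(t)$ is monotone increasing so that $\Sigma^{R_n}\subset\Sigma^{\mathbf r(T_\sharp+n+t)}$ throughout the interval, the function $w$ solves
$$\partial_t w = L_\Sigma w + \chi_{R_n}\mathcal Q(v^*,\cdot) + [L_\Sigma,\chi_{R_n}]v^*, \qquad w(0)=0.$$
Item (2) is essentially free: since $e^{-\mathbf r(t')^{3/2}} < e^{-\mathbf r(t')^{3/2}/4}$, it is weaker than Proposition \ref{LmT}(2), so I would simply invoke Proposition \ref{LmT}(2).

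I would then treat the two forcing terms in the equation for $w$ separately. For the nonlinear term $\chi_{R_n}\mathcal Q(v^*,\cdot)$, I would use the estimate for $\mathcal Q$ from Appendix \ref{AppP} (Lemma \ref{LmAppP}), which bounds $|\mathcal Q|$ by $C(\|m\|_{C^2}+\|v^*\|_{C^2})(|\Hess_{v^*}|+|\nabla v^*|+|v^*|)$. Both factors $\|m\|_{C^2}$ and $\|v^*\|_{C^2}$ are smaller than $\delta$ for $T_\sharp$ large by Proposition \ref{LmT}(1) and the standing hypothesis of the proposition. Absorbing the Hessian/gradient terms into the $Q$-norm exactly as in Lemma \ref{LmApproximation} (using $\Lambda>\sup_t\lambda_1(t)$ to ensure coercivity of the $Q$-norm), integration in $t\in[0,1]$ yields a contribution of size $O(\delta)\,\|v^*(T_\sharp+n)\chi_{R_n}\|_Q$.

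For the commutator $[L_\Sigma,\chi_{R_n}]v^* = -2\nabla\chi_{R_n}\cdot\nabla v^* - (\Delta_\Sigma\chi_{R_n})v^* + \tfrac12\langle x,\nabla\chi_{R_n}\rangle v^*$, its support lies in the thin annulus $\mathbb A_{R_n-1,R_n}$, and its weighted $L^2$-norm is bounded by a polynomial in $R_n$ times $\|v^*\|_{C^1(\mathbb A_{R_n-1,R_n})}\cdot e^{-R_n^2/4}$. Since $e^{-R_n^2/4}$ is far smaller than $e^{-R_n^{3/2}}$ for $R_n$ large, invoking item (2) (translated to the time $T_\sharp+n$) shows this term is bounded by $O(\delta)\,\|v^*(T_\sharp+n)\chi_{R_n}\|_Q$ as well. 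Finally, the cosmetic difference between $v^*(T_\sharp+n+1)\chi_{R_{n+1}}$ appearing in the statement and $v^*(T_\sharp+n+1)\chi_{R_n}=w(1)+v_n(1)$ is concentrated in $\mathbb A_{R_n,R_{n+1}}$ and is absorbed by the same Gaussian-weight argument together with Definition \ref{DefGraphical}.

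The genuine obstacle is the cutoff/commutator term, not the nonlinearity. On a compact shrinker the entire argument reduces to Lemma \ref{LmApproximation}, but here the cutoff at a finite radius introduces a boundary forcing, and the radius itself depends on time. The key observation that makes the scheme work is that the scale $R_n=\mathbf r(T_\sharp+n)$ grows exponentially by Proposition \ref{PropPL}, so the Gaussian weight at the boundary decays super-exponentially in $n$, while the $C^2$-norm of $v^*$ near the boundary is only tempered by Definition \ref{DefGraphical}. This gap of super-exponential decay against polynomial growth is exactly captured by item (2), which is why item (2) is stated as part of the same proposition and is the mechanism that closes the estimate.
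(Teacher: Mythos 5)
Your overall scheme matches the paper's: both define the difference $w=v^*\chi - v_n$, split the forcing into a transplantation error $\chi\mathcal{Q}(v^*)$ plus a cutoff/commutator term supported on the boundary annulus, and close the energy estimate by playing the super-exponential Gaussian decay at radius $\mathbf{r}(t)\sim e^{t/2}$ against the merely tempered growth of $v^*$ there---which is precisely what item (2) and Proposition \ref{LmT}(2) encode. Your choice to freeze the cutoff at $R_n=\mathbf r(T_\sharp+n)$ over $t\in[0,1]$ and patch the annulus $\mathbb A_{R_n,R_{n+1}}$ at the end, instead of carrying a time-dependent $\chi_{\mathbf r(t)}$ with the extra $v^*\partial_t\chi$ term as the paper does, is a legitimate cosmetic variant and arguably slightly cleaner; your observation that item (2) is simply a weakening of Proposition \ref{LmT}(2) is also exactly what the paper has in mind.

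There is, however, one genuine gap in how you handle the nonlinear term. You quote a bound on $\mathcal{Q}$ with only $|\Hess_{v^*}|+|\nabla v^*|+|v^*|$ on the right, and then say you absorb it ``exactly as in Lemma \ref{LmApproximation}.'' But Lemma \ref{LmApproximation} lives on a \emph{fixed} ball $\Sigma^R$, where $|x|\le R$ is bounded; here the domain radius $\mathbf r(t)$ is growing exponentially. The correct transplantation error estimate (Lemma \ref{LmAppP}) has an additional term $\sim\eps\,|x|\cdot|\nabla v^*|$, which arises from the drift $\tfrac12\langle x,\nabla\cdot\rangle$ and is not bounded on $\Sigma^{\mathbf r(t)}$ uniformly. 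That term cannot be dominated by the $Q$-norm by naive Cauchy--Schwarz; the paper kills it using Ecker's weighted inequality \eqref{EqEcker}, namely $\int|f|^2|x|^2 e^{-|x|^2/4}\,d\mu\le 4\int(nf^2+4|\nabla f|^2)e^{-|x|^2/4}\,d\mu$, which trades the $|x|^2$ weight for one derivative. Without this ingredient your estimate of the $\mathcal{Q}$-term does not close. A related, smaller omission: the boundary term, when you pass to the $Q$-norm and integrate by parts, produces contributions involving $v_n$ itself on the annulus (the paper writes $O(\|v^*,v\|^2_{C^2(\mathbb A_{\mathbf r(t)})}e^{-\mathbf r(t)^2/4})$), and one needs the a priori growth bound on $v_n$ from Proposition \ref{prop:v bound} to control that piece; your writeup only controls $v^*$ near the boundary.
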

	We postpone the proof to Appendix \ref{AppPropCM}. The proof is similar to that of Proposition 4.3 of \cite{CM2}. The main difficulty created by the noncompactness is that the boundary terms behave badly when integration by parts is performed. The key observation is that the graphical domain grows exponentially with respect to $t$ (Proposition \ref{PropPL}), so the Gaussian weight decays superexponentially like $\mathrm{e}^{-c\mathrm{e}^{t/2}}$, and on the other hand, the difference between two nearby RMCFs grows at most exponentially (Proposition \ref{PropPL1}). Thus we use item (2) in the definition of $T_\sharp$ in Proposition \ref{LmT} to absorb the boundary term. 
	\subsection{Iterating the local dynamics}\label{SS:The cone preservation property}
	
	Let $E$ be the Banach space of $C^{2,\al}$ functions on $\Sigma$. We consider an orthogonal decomposition $E=E_1\oplus E_2$ with respect to the $Q$-norm, where $E_1=\R \phi_1$ and $E_2$ is the orthogonal complement of $E_1$. Next, let $\kappa$ be a positive constant. We define the cone
	$$ \mathcal K(\kappa)=\left\{ u=(u_1,u_2)\in E_1\oplus E_2\ |\ \|u_1\|_Q\geq \kappa\|u_2\|_Q\right\}.$$
	This is a cone containing $E_1=\R\phi_1$ and larger $\kappa$ implies a narrower cone. 
	
	Proposition \ref{LmCM3} implies the following cone preservation property. 
	
	\begin{lemma} \label{LmSchlag} 
		Let $\kappa>0$ be a fixed number. For all $\eps$, there exists $\dt$ sufficiently small such that the following holds. 
		Let $v^*$ be as in  \eqref{EqDifference1}  with $v^*(T_\sharp)\chi_{\mathbf r({T_\sharp})}:=(v_1(0),v_2(0))\in \mathcal K(\kappa)$ and $\| v^*(T_\sharp+t)\|_{C^{2,\al}(\mathbf r({T_\sharp+t}))}<\dt $ for $t\in [0,m]$ for any $m\in\mathbb N$,  then we have $$v^*(T_\sharp+n)\chi_{\mathbf r({T_\sharp+n})}:= (v_1(n),v_2(n))\in\mathcal K(\kappa),\quad \forall \ 0\leq n\leq m.$$ Moreover, we have
		\begin{enumerate}
			\item $\|v_1(n)\|_Q\geq \mathrm{e}^{(\lambda_1-\eps)n}\|v_1(0)\|_Q;$
			\item $\frac{\|v_2(n+1)\|_Q}{\|v_1(n+1)\|_Q}\leq \mathrm{e}^{(\lambda_2-\lambda_1+\eps)}\frac{\|v_2(n)\|_Q}{\|v_1(n)\|_Q}+\dt\left(\frac{\kappa+1}{\kappa}\right)$. 
		\end{enumerate}
	\end{lemma}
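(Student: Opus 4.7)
The plan is to prove all three conclusions simultaneously by induction on $n$, using Proposition \ref{LmCM3}(1) to replace one step of the nonlinear evolution of $v^*$ by one step of the autonomous linear semigroup $e^{L_\Sigma}$, and then tracking how the $E_1$ and $E_2$ components evolve.

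The key preliminary fact is that the splitting $E=E_1\oplus E_2$ is invariant under $e^{L_\Sigma}$, acting by multiplication by $e^{\lambda_1}$ on $E_1$ and as a $Q$-contraction of norm $\leq e^{\lambda_2}$ on $E_2$.  This follows from expanding in $L_\Sigma$-eigenfunctions, which simultaneously diagonalise the $Q$-inner product $\langle\,\cdot\,,(\Lambda-L_\Sigma)\,\cdot\,\rangle_{L^2}$.  Assume inductively $(v_1(n),v_2(n))\in\mathcal K(\kappa)$.  The initial datum of \eqref{EqLinearAuto} is then $v_1(n)+v_2(n)$, so $v_n(1)$ admits the $Q$-orthogonal decomposition $v_n(1)=e^{\lambda_1}v_1(n)+w_2$ with $w_2\in E_2$ and $\|w_2\|_Q\leq e^{\lambda_2}\|v_2(n)\|_Q$.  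Moreover, $Q$-orthogonality together with the cone condition gives $\|v^*(T_\sharp+n)\chi_{\mathbf r(T_\sharp+n)}\|_Q\leq\frac{\kappa+1}{\kappa}\|v_1(n)\|_Q$.

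Applying Proposition \ref{LmCM3}(1) and projecting the inequality $\|v^*(T_\sharp+n+1)\chi_{\mathbf r(T_\sharp+n+1)}-v_n(1)\|_Q\leq\delta\|v^*(T_\sharp+n)\chi\|_Q$ onto $E_1$ and $E_2$ (both projections are non-expansive by $Q$-orthogonality) then yields
\begin{equation*}
\|v_1(n+1)\|_Q\geq\Bigl(e^{\lambda_1}-\delta\tfrac{\kappa+1}{\kappa}\Bigr)\|v_1(n)\|_Q,\qquad \|v_2(n+1)\|_Q\leq e^{\lambda_2}\|v_2(n)\|_Q+\delta\tfrac{\kappa+1}{\kappa}\|v_1(n)\|_Q.
\end{equation*}
Iterating the first inequality gives item (1) once $\delta$ is chosen so small (in terms of $\epsilon$ and $\kappa$) that $e^{\lambda_1}-\delta(\kappa+1)/\kappa\geq e^{\lambda_1-\epsilon}$.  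Dividing the second inequality by the first and absorbing the harmless factor $e^{-\lambda_1+\epsilon}<1$ into $\delta$ produces item (2).  Cone preservation then follows from item (2): under the inductive hypothesis,
\begin{equation*}
\frac{\|v_2(n+1)\|_Q}{\|v_1(n+1)\|_Q}\leq\frac{e^{\lambda_2-\lambda_1+\epsilon}}{\kappa}+\delta\,\frac{\kappa+1}{\kappa},
\end{equation*}
which is $\leq 1/\kappa$ provided $e^{\lambda_2-\lambda_1+\epsilon}+\delta(\kappa+1)\leq 1$; since $\lambda_2<\lambda_1$, this holds for $\epsilon<\lambda_1-\lambda_2$ and then $\delta=\delta(\epsilon,\kappa)$ sufficiently small.

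The main delicate point is that a single $\delta$ must serve two roles simultaneously and uniformly in $n$: it is the smallness threshold for $\|v^*\|_{C^{2,\alpha}}$ required to invoke Proposition \ref{LmCM3}(1) at each step, and it is also the coefficient of the one-step approximation error that must be absorbed into the spectral gap to keep the cone invariant.  These roles are reconciled because the hypothesis $\|v^*(T_\sharp+t)\|_{C^{2,\alpha}}<\delta$ is built into the statement of the lemma itself, so the induction closes without any drift in $\delta$ as $n$ increases up to $m$.
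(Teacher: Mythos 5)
Your proof is correct and follows essentially the same route as the paper: invoke Proposition \ref{LmCM3}(1) to replace a unit-time step of the cut-off nonlinear evolution by the autonomous semigroup $e^{L_\Sigma}$, project onto $E_1$ and $E_2$ (using the $Q$-orthogonality of the spectral decomposition), absorb the approximation error via the cone bound $\|v^*\chi\|_Q\leq\frac{\kappa+1}{\kappa}\|v_1\|_Q$, and iterate. The paper states the same one-step inequalities (with harmless $\eps/3$ fudge factors) and then says ``taking quotient... the lemma follows from iterations''; you have just filled in the supporting details — $Q$-orthogonal invariance of $E_1\oplus E_2$ under $e^{L_\Sigma}$, non-expansiveness of the projections, the explicit quotient computation, and the explicit verification that cone membership is preserved for $\eps<\lambda_1-\lambda_2$ and $\dt$ small — none of which deviates from the paper's intent.
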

	\begin{proof}
		We have the following from  Proposition \ref{LmCM3} and the assumption  $(v_1(0),v_2(0))\in \mathcal K(\kappa)$
		$$\|v_2(1)\|_Q\leq \mathrm{e}^{(\lambda_2+\eps/3)}\|v_2(0)\|_Q+\dt\left(\frac{1+\kappa}{\kappa}\right)\|v_1(0)\|_Q,$$$$\|v_1(1)\|_Q\geq \mathrm{e}^{(\lambda_1-\eps/3)}\|v_1(0)\|_Q-\dt\left(\frac{1+\kappa}{\kappa}\right)\|v_1(0)\|_Q.$$ 
		Taking the quotient, we get both items with $n=1$, which implies $(v_1(1),v_2(1))\in \mathcal K(\kappa)$. Then the lemma follows from iterations.  
	\end{proof}

	With this lemma, we prove the following result.
	
	\begin{proposition}\label{ThmDynamics}
		Let $\dt,\eps $ and $T_\sharp$ be as in Proposition \ref{LmT}. Then there exists a time $T_\dagger\ (>T_\sharp)$ of order $|\log (\dt/\eps)|$ such that after the evolution of the RMCF for the time $T_\dagger$, the function $ v^*(T_\dagger,\cdot)$ is defined over $\Sigma^{\mathbf  r({T_\dagger})}$ satisfying  
		\begin{enumerate}
			\item $\| v^*(T_\dagger,\cdot)\chi_{\mathbf  r({T_\dagger})}\|_{C^{2,\al/2}}=\dt;$ 
			\item $\| v^*(T_\dagger,\cdot )\chi_{\mathbf  r({T_\dagger})}\|_{Q}\geq\dt^{d}$, for some $d>0$ independent of $\dt$;
			\item $v^*(T_\dagger,\cdot)\chi_{\mathbf  r({T_\dagger})}\in \mathcal K(1/(C\dt)). $
		\end{enumerate}
	\end{proposition}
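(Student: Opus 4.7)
The plan is to take $T_\dagger$ to be the first time $t\geq T_\sharp$ at which $\|v^*(t)\chi_{\mathbf r(t)}\|_{C^{2,\al/2}}$ reaches $\dt$, and to iterate Lemma \ref{LmSchlag} on the whole interval $[T_\sharp,T_\dagger]$. By construction $\|v^*(t)\chi\|_{C^{2,\al/2}}\leq\dt$ on this interval, so the hypothesis of Lemma \ref{LmSchlag} is satisfied on every unit subinterval; item (1) then holds by definition of $T_\dagger$, while the cone preservation together with the growth of the $\phi_1$-component is available throughout.

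To see that $T_\dagger$ is finite and to estimate $T_\dagger-T_\sharp$, I would use that Proposition \ref{LmT}(3) places $v^*(T_\sharp)\chi$ in a fixed cone $\mathcal K(\kappa_0)$ independent of $\dt,\eps$, while the variational analysis up to $T_\sharp$ (Proposition \ref{thm:perturbedRMCFto1st} combined with the exponential lower bound from Theorem \ref{mainthm-FK}(1)) gives $\|v^*(T_\sharp)\chi\|_Q\geq c_1\eps$ with $c_1$ depending only on $T_\sharp$. Applying item (1) of Lemma \ref{LmSchlag} yields $\|v_1(n)\|_Q\geq c_1\eps\, e^{(\lambda_1-\eps)n}$ for $n=t-T_\sharp$. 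Parabolic Schauder estimates for \eqref{EqDifference1}, whose remainder $\mathcal Q$ is controlled by Lemma \ref{LmAppP}, sandwich $\|v^*\chi\|_{C^{2,\al/2}}$ between constant multiples of $\|v^*\|_Q$ on a unit time-scale, so the $C^{2,\al/2}$-norm reaches $\dt$ when $n\sim\lambda_1^{-1}\log(\dt/\eps)$, giving $T_\dagger-T_\sharp=O(|\log(\dt/\eps)|)$. The same Schauder bound applied at $t=T_\dagger$ gives
\[\dt=\|v^*(T_\dagger)\chi\|_{C^{2,\al/2}}\leq C\sup_{s\in[T_\dagger-1,T_\dagger]}\|v^*(s)\|_Q+(\text{error}).\]
The cone condition forces $\|v^*(s)\|_Q\sim\|v_1(s)\|_Q$, which by item (1) of Lemma \ref{LmSchlag} grows by a bounded factor on any unit interval, so the supremum is controlled by a constant multiple of $\|v^*(T_\dagger)\|_Q$; this yields item (2) with $d=1$.

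For item (3), I would iterate item (2) of Lemma \ref{LmSchlag}. Setting $r_n=\|v_2(n)\|_Q/\|v_1(n)\|_Q$, the recursion reads $r_{n+1}\leq \mu r_n+C\dt$ with $\mu=e^{\lambda_2-\lambda_1+\eps}<1$ (the spectral gap being supplied by Theorem \ref{mainthm-FK}). Iterating gives $r_n\leq \mu^n r_0+C\dt/(1-\mu)$; since $T_\dagger-T_\sharp\gtrsim\log(1/\dt)$ whenever $\eps$ is small enough (which the proposition permits us to require), the first term is dominated by the second and $r_{T_\dagger-T_\sharp}\leq C'\dt$, that is, $v^*(T_\dagger)\chi\in\mathcal K(1/(C\dt))$.

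The main obstacle I expect is the control of the nonautonomous remainder $\mathcal Q$ in \eqref{EqDifference1} over the long interval $T_\dagger-T_\sharp\sim|\log(\dt/\eps)|$. Proposition \ref{LmCM3} dispatches a single unit step, but iterating many times requires that cumulative boundary errors from the cutoff $\chi_{\mathbf r(t)}$ together with the quadratic contribution of $Q(v)$ remain below the $\phi_1$-growth. This is precisely where the exponential growth of the graphical scale (Proposition \ref{PropPL}), combined with the super-exponential Gaussian weight decay near $\partial B_{\mathbf r(t)}$ encoded in Proposition \ref{LmT}(2) and Proposition \ref{LmCM3}(2), carries the argument.
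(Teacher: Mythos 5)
Your overall architecture (take $T_\dagger$ to be the first time the $C^{2,\alpha/2}$-norm of the cutoff function hits $\delta$, iterate Lemma \ref{LmSchlag} up to that time, and derive the cone condition (3) from the recursion in Lemma \ref{LmSchlag}(2)) is correct, and for item (3) your argument is essentially the same as the paper's: the contraction $r_{n+1}\leq \mu r_n + C\delta$ with $\mu = e^{\lambda_2-\lambda_1+\eps}<1$ has $C\delta/(1-\mu)$ as its attracting fixed point, so after $\gtrsim |\log\delta|$ iterations the ratio stabilizes at $O(\delta)$, which is exactly what the paper asserts.

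Where you diverge from the paper is in items (1) and (2), and that divergence contains a genuine gap. The paper proves (1) and (2) via the \emph{elliptic} interpolation Lemma \ref{LmInterpolation} (built on \cite[Appendix B]{CM3} and the uniform $C^\ell$ bounds from Ecker--Huisken in Proposition \ref{propHolder}(3)), which gives
\[
\|u\|_{C^{2,\alpha}(\Sigma^r)} \leq C\|u\chi_{\mathbf r(t)}\|_Q^{\alpha(\ell)}, \qquad \alpha(\ell) = \tfrac{\ell-3}{\ell+n}<1,
\]
so that at $T_\dagger$ one gets $\|v^*\chi\|_Q \geq (\delta/C)^{1/\alpha(\ell)}$, i.e.\ $d = \tfrac{\ell+n}{\ell-3}>1$ (close to $1$ for $\ell$ large; this is precisely what the proof of Theorem \ref{thm:main1} later exploits by taking $d'>1$ close to $1$). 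You instead assert a two-sided \emph{linear} ``parabolic Schauder sandwich'' $\|v^*\chi\|_{C^{2,\alpha/2}}\sim \|v^*\|_Q$ and conclude $d=1$. The direction $\|v^*\chi\|_Q\leq C\|v^*\chi\|_{C^2}$ is unproblematic, but the reverse bound $\|v^*(T)\chi\|_{C^{2,\alpha/2}}\leq C\sup_{[T-1,T]}\|v^*\|_Q$ is not a routine Schauder estimate: it needs a De Giorgi--Nash--Moser $L^2\to L^\infty$ step applied to the \emph{quasilinear, nonautonomous} equation \eqref{EqDifference1}, with Gaussian weight, moving cutoff $\chi_{\mathbf r(t)}$, and error $\mathcal Q$ that itself carries second derivatives of $v^*$. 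That the relevant constant is uniform under the running $C^{2,\alpha}$-smallness hypothesis is plausible but is exactly the kind of delicacy the paper's elliptic interpolation was designed to avoid; note that the paper's own parabolic Schauder output, Proposition \ref{propHolder-small}(2), already produces only a \emph{fractional} power $C(\delta+\epsilon_1)^\eps$, not a linear bound. Finally, your reduction of $\sup_{s\in[T_\dagger-1,T_\dagger]}\|v^*(s)\|_Q$ to $C\|v^*(T_\dagger)\|_Q$ appeals to Lemma \ref{LmSchlag}(1), but that lemma gives only a lower bound on the growth of the $\phi_1$-component; the needed two-sided comparison on a unit interval does hold via the cone condition together with the linear upper bound on $Q$-norm growth supplied by Proposition \ref{LmCM3}(1) and the semigroup $e^{L_\Sigma}$, but this should be said explicitly rather than attributed to Lemma \ref{LmSchlag}(1).

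In short: your treatment of (3) matches the paper; for (1)--(2) you substitute a parabolic-regularity argument for the paper's elliptic interpolation, claim the stronger conclusion $d=1$, and do not actually justify the De Giorgi--Nash--Moser/Schauder step that claim requires. If you instead import the paper's Lemma \ref{LmInterpolation} (fractional-power elliptic interpolation using the uniform $C^\ell$ bounds) your argument closes with $d=\tfrac{\ell+n}{\ell-3}>1$, which is all that the applications need.
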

	\begin{proof}
		The conclusions (1) and (2) will be proved in Section \ref{SSCQ}. Item (3) follows from Lemma \ref{LmSchlag}(2). Indeed, the inequality in Lemma \ref{LmSchlag}(2) gives that for large time $n$, the ratio estimate $\frac{\|v_2(n+1)\|_Q}{\|v_1(n+1)\|_Q}<C\dt$ stabilizes. This translates to the cone condition in item (3). 
	\end{proof}

	\subsection{Proof of the main theorem}\label{SSProofMain}
	
	Now we prove the following theorem, which gives the cone condition for the perturbed RMCF.  
	\begin{theorem}\label{ThmMainSec4}
		Let $\mathbf{M}_t$ and $M_t$ be as in $(\star)$. There exist $\delta_0>0$, $\delta_1>0$ and $r>0$ with the following significance: after a small initial positive perturbation on the initial data $M_0$, there is a time $T'$ such that  the perturbed RMCF $\widetilde{M}_{T'}$ can be written as a graph of the function $u$ on $\Sigma^{\mathbf{r}({T'})}$, and decomposing $u=u_1+u_2$ in $E=E_1\oplus E_2$ we have
		\begin{enumerate}
			\item $\|u\|_{Q}^{1/d}\geq C \|u\|_{C^{2,\alpha}(\Sigma\cap B_r)}\approx\delta_0$, where $d$ is in Proposition \ref{ThmDynamics};
			\item $\|u_1\|_Q\geq C\dt_0^{-1}\|u_2\|_Q$ for some constant $C$ independent of $\delta_0,\delta_1$ or $r$.
		\end{enumerate}
	\end{theorem}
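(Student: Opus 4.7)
The plan is to chain together the tools built up in this section into a single bootstrap: start from a small positive initial perturbation, enter the cone-preserving regime at $T_\sharp$ via Proposition \ref{LmT}, iterate Lemma \ref{LmSchlag} to force exponential growth of the $Q$-norm while keeping the cone invariant, and stop at the first time $T' = T_\dagger$ where Proposition \ref{ThmDynamics} applies. Concretely, I would take $\widetilde{M}_0 = \{x + \varepsilon v_0(x)\bn(x) : x \in M_0\}$ with $v_0 > 0$ smooth and $\|v_0\|_{C^{2,\alpha}(M_0)} = 1$; Proposition \ref{PropPL1} then keeps $\widetilde{M}_t$ graphical over $\Sigma^{\mathbf{r}(t)}$ for all large $t$, so the transplanted function $v^*(\cdot,t)$ of Section \ref{SSDifferenceEq} is well defined.

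Next I fix a small $\delta > 0$ (which will play the role of $\delta_0$) and take $\varepsilon < \varepsilon_1$ small enough that Proposition \ref{LmT} applies at some $T_\sharp$: at this time $M_{T_\sharp}$ is a $(\delta/2)$-graph over $\Sigma^{\mathbf{r}(T_\sharp)}$, the boundary contribution used in Proposition \ref{LmCM3}(2) is dominated by $\|v^*(T_\sharp)\chi_{\mathbf{r}(T_\sharp)}\|_Q$, and $v^*(T_\sharp)\chi_{\mathbf{r}(T_\sharp)}$ sits inside the cone $\mathcal{K}(\kappa)$ for some $\kappa > 0$ (the $L^2$-projection bound from Proposition \ref{LmT}(3) converts to a $Q$-projection bound by equivalence of the two inner products on the one-dimensional space $E_1 = \mathbb{R}\phi_1$). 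I then iterate Lemma \ref{LmSchlag} on unit time steps $[T_\sharp + n, T_\sharp + n + 1]$, defining $T' = T_\sharp + n_\star$ as the first integer time at which $\|v^*(T')\chi_{\mathbf{r}(T')}\|_{C^{2,\alpha/2}} = \delta$. For all earlier times the $C^{2,\alpha}$-smallness hypothesis of the lemma holds by the definition of $n_\star$, so the cone $\mathcal{K}(\kappa)$ is preserved and item (1) of the lemma yields $\|v_1(n)\|_Q \geq e^{(\lambda_1 - \varepsilon')n}\|v_1(0)\|_Q$. Exponential growth with initial size of order $\varepsilon$ then fixes $n_\star \sim \lambda_1^{-1}\log(\delta/\varepsilon)$, and Proposition \ref{ThmDynamics} at $T' = T_\dagger$ gives $\|v^*(T')\chi_{\mathbf{r}(T')}\|_Q \geq \delta^d$ together with the narrow cone condition $v^*(T')\chi_{\mathbf{r}(T')} \in \mathcal{K}(1/(C\delta))$.

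The last step is to transfer these estimates from $v^*$, which is the transplantation of the normal graph of $\widetilde{M}_{T'}$ over $M_{T'}$ through the polar-spherical map of Definition \ref{DefTransplant}, to the direct normal graph function $u$ of $\widetilde{M}_{T'}$ over $\Sigma^{\mathbf{r}(T')}$. Composing the two normal graphs and invoking the quantitative comparison of Appendix \ref{AppP}, I obtain $u = m(\cdot, T') + v^*(\cdot, T') + O(\|m\|_{C^2}\|v^*\|_{C^2})$ on the compact region $\Sigma \cap B_r$. Since $\|m(\cdot, T')\|_{C^{2,\alpha}(\Sigma^r)} \leq \delta/2$ is controlled and much smaller than $\delta_0 = \delta$, the $C^{2,\alpha}$-norm, the $Q$-norm and the cone statement for $v^*\chi_{\mathbf{r}(T')}$ all descend to the corresponding statements for $u$, establishing (1) and (2).

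The principal obstacle is the bootstrap structure: Lemma \ref{LmSchlag} requires $C^{2,\alpha}$-smallness of $v^*$ on the whole time interval, while it is the cone-preservation conclusion of the lemma that controls the growth rate and hence the length of the interval, so the two inputs must be closed up simultaneously (which is precisely why $T'$ is taken to be the first hitting time of the $C^{2,\alpha}$ level $\delta$). A secondary difficulty is ensuring that the cutoff $\chi_{\mathbf{r}(t)}$ does not destroy either the cone condition or the $Q$-norm lower bound as the graphical domain expands; this is handled by the boundary-term absorption in Proposition \ref{LmT}(2), together with the fact that by Proposition \ref{PropPL} the graphical scale $\mathbf{r}(t)$ grows exponentially, so the Gaussian weight outside $B_{\mathbf{r}(t)}$ decays super-exponentially and dominates any exponential growth of $v^*$ that might leak across the cutoff.
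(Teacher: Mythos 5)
Your proposal follows essentially the same route as the paper: start from a positive initial perturbation, use the RMCF/linearized-RMCF approximation to land in the cone at $T_\sharp$ (via Proposition~\ref{LmT}), iterate the local dynamics to a stopping time $T'$ via Lemma~\ref{LmSchlag} and Proposition~\ref{ThmDynamics}, and finally transfer the estimates from the transplanted function $v^*$ to the direct graph $u$ over $\Sigma$ by the transplantation bound. The one place you should be more careful is the final transfer step: the theorem deliberately introduces two separate parameters $\delta_0$ and $\delta_1$, with $\delta_1$ controlling how close $M_{T'}$ is to $\Sigma$ and $\delta_0$ the stopping scale for the perturbed flow, and the paper chooses $\delta_1$ independently much smaller than any needed power of $\delta_0$. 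Your justification that ``$\|m(\cdot,T')\|_{C^{2,\alpha}(\Sigma^r)}\le\delta/2$ is much smaller than $\delta_0=\delta$'' is not literally correct ($\delta/2$ is not much smaller than $\delta$), and in fact the crude $\delta/2$ bound from Proposition~\ref{LmT}(1) would not suffice to preserve the narrow cone $\mathcal K(1/(C\delta_0))$ after adding $m$ to $v^*$, since $\|v_2^*\|_Q$ is of order $\delta_0^{d+1}$, far below $\delta_0/2$. What actually saves the argument is that $T'\to\infty$ as $\varepsilon\to 0$, so by $C^\infty_{\loc}$ convergence $\|m(\cdot,T')\|_{C^{2,\alpha}(\Sigma^r)}$ can be made smaller than any prescribed $\delta_1\ll\delta_0^{d+1}$ — and that is the correct justification, matching the paper's Step~1 where $T$ is chosen large enough that the graph of $M_T$ over $\Sigma^r$ has $C^{2,\alpha}$-norm at most $\delta_1$.
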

	
	\begin{proof}
		
		\textbf{Step 1:} Let us fix an integer $\ell\geq 4$. At first we will choose a large radius $r$ such that $\Sigma\backslash B_r$ is very close to regular cones, say,  $\Sigma$ can be written as a graph over these cones with the $C^\ell$-norm of the graph is less than $10^{-10}\delta_1$ (See \cite[Section 2]{CS} for details). We first choose $T$ sufficiently large such that $M_T$ is a graph of a function $f$ over $\Sigma\cap B_r$ with $\|f\|_{C^{2,\alpha}(\Sigma^r)}\leq \delta_1$, and we also assume $T$ is sufficiently large such that item (3) in Theorem \ref{mainthm-FK} holds for $T$ (we choose $R(t)\equiv r$). We will also choose $T$ sufficiently large such that after the transplantation, $\overline{\phi_1^r}(T)$ is very close to $\phi_1$ on $\Sigma$, in the sense that $\|\overline{\phi_1^r}(T)-\phi_1\|_{C^{2,\alpha}(\Sigma^r)}\leq \delta_1$. In the definition of $\mathbf{r}$ (see Proposition \ref{PropPL}), we can choose $C$ so that $\mathbf{r}(T)=r$. We will fix $\delta_0$ and $\delta_1$ later, but they are both very small constants. 
		
		Let $u_0>0$ be a positive pertubation solution on $M_0$. We pick $\epsilon$ small, to be determined. Let $u$ be the solution to the perturbed RMCF on $M_t$ and $u^\star$ be the solution to the linearized RMCF, both with the initial data $\epsilon u_0$. By the approximation of the RMCF equation and the linearized RMCF, $$\|u(\cdot,T)-u^\star(\cdot,T)\|_{C^{2,\alpha}(M_T)}\leq C\delta_1^\eps \|u(\cdot,T)\|_{C^{2,\alpha}(M_T)}.$$ 
		
		\textbf{Step 2:}  By Theorem  \ref{mainthm-FK}, we have
		$ \frac{|\langle u^\star(T),\phi^{r}_1(T)\rangle_{L^2(M^{r}_{T})}|}{\|u^\star(t_i)\|_{Q(M^{r}_{T})}}>c'.$
		Then when $\delta_1$ is sufficiently small, we can use the triangle inequality to get
		$  \frac{|\langle u(T),\phi^{r}_1(T)\rangle_{L^2(M^{r}_{T})}|}{\|u(T)\|_{Q(M^{r}_{T})}}>c',$
		where $C'$ and $c'$ are some constants that may vary from line to line. Let $\overline{u}(T)$ be the transplantation of $u(T)$ to $\Sigma^R$. Then by Lemma C.1 in \cite{SX}, we also have
		$\frac{|\langle \overline u(T),\overline{\phi^{r}_1}(T)\rangle_{L^2(M^{r}_{T})}|}{\|\overline u(T)\|_{Q(M^{r}_{T})}}>c'.$
		Finally, because we have assumed that $T$ is sufficiently large so that $\|\overline{\phi_1^r}(T)-\phi_1\|_{L^2}\leq \delta_1$,  we have	$\frac{|\langle \overline u(T),\phi_1\rangle_{L^2(M^{r}_{T})}|}{\|\overline u(T)\|_{Q(M^{r}_{T})}}>c',$ which implies that $\overline{u}(T)$ lies in a cone $\cK(\kappa)$ for a $\kappa$ depending on $\delta_1$ (see Section \ref{SS:The cone preservation property}).
		
		{\bf Step 3:} 
		Now we can use Proposition \ref{ThmDynamics} to show that after a definite amount of time $T_\dagger\approx \log|\delta_0/\|\overline{u}\chi_r\|_{Q}|$, we have 
		\begin{itemize}
			\item $\|\overline{u}\chi_{\mathbf{r}({T_\dagger})}\|_{C^{2,\alpha}(B_r\cap\Sigma)}\approx\delta_0$, $\|\overline{u}\chi_{\mathbf{r}({T_\dagger})}\|_{Q}\approx\delta_0^{\alpha_l}$, 
			\item $\overline{u}\chi_{\mathbf{r}({T_\dagger})}\in\cK(1/(C\delta_0))$.
		\end{itemize}
		This implies the desired bound on $\overline{u}(T_\dagger)$.
		
		{\bf Step 4:}
		It remains to show that if we write the perturbed flow as a graph of the function $v$ over $\Sigma\cap B_{\mathbf{r}({T_\dagger})}$, $v$ satisfies the same bound as $\overline{u}$. This is a consequence of the transplantation bound.

	\end{proof}
	
	Theorem \ref{ThmMainSec4} can be used to describe the local feature of the MCF after a positive perturbation. In particular, we have the following local description of the MCF near a singularity modeled by an asymptotic conical shrinker: under the assumption $(\star)$, after a sufficiently small positive perturbation on the initial data, there is a spacetime neighbourhood $\cN$ of $(0,0)$ in which there is no singularity modeled by $\Sigma$.
	
	Recall that the positive perturbation on a non-generic shrinker is not the only possible direction of the perturbation that can decrease the Gaussian area of a non-generic shrinker. Colding-Minicozzi \cite{CM1} have proved that infinitesimal translations (corresponding to eigenfunction $\langle x,\vec e\rangle$ for any unit vector $\vec{e}$) and infinitesimal dilations (corresponding to the eigenfunction $H$) also decrease the Gaussian area of a non-generic shrinker. 
	
	For $\{\mathbf{M}_\tau\}$, the translations and dilation on the initial data will move the singularity $(0,0)$ in the spacetime to somewhere else. We can show that the positive perturbation can perturb the asymptotic conical singularity better than translations and dilation. We are ready to give the proof of the conclusion of the main Theorem \ref{thm:main1} for positive initial perturbations.

	\begin{proof}[Proof of Theorem \ref{thm:main1} for positive initial perturbations] Using the volume growth estimate $\frac{\mathrm{Vol}(\widetilde M_t\cap B_R)}{R^n}\leq C$ (for all $t$ large and all $R$) for RMCFs (c.f. Lemma 2.9 of \cite{CM1}), we get the estimate $\cF(M_t\backslash B_R)\leq CR^n \mathrm{e}^{-R^2/4}$. 
		
		Fix $\dt$ ahead of time and let   $T_\dagger$ be as in Proposition \ref{ThmDynamics}. By choosing the $C^{2,\al}$ norm $\eps$ of the initial perturbation sufficiently small, we may allow $T_\dagger$ sufficiently large and we pick $R=\mathbf r(M_{T_\dagger})\simeq \mathrm{e}^{T_\dagger/2}$, so that $$\cF(M_{T_\dagger}\backslash B_R)\leq CR^n \mathrm{e}^{-R^2/4}<\dt^{3},$$  and fix it in the following. Next we write $\widetilde M_{T_\dagger}^R$ as a graph of a function $u^*$ over $\Sigma^R$, and suppose $\|u^*(T_\dagger,\cdot )\|_{Q}=\dt^{d'}$ with $d'\leq d$ by Proposition \ref{ThmDynamics}(2). Here we can guarantee that $d'>1$ is sufficiently close to 1 (see the interpolation formula in Section \ref{SSCQ}). 
		
		By the second variation formula, we get 
		\begin{equation}\label{Eq2ndVariation}\cF(\widetilde M_{T_\dagger}^R)-\cF(\Sigma)=-\int_{\Sigma^R} u^* L_\Sigma  u^* \mathrm{e}^{-\frac{|x|^2}{4}}+O(\dt^{3}).\end{equation}
		By Theorem \ref{ThmMainSec4}(2), we get that the projection of $u^*$ to the $\phi_1$ component dominates, so we get 
		$$\cF(\widetilde M_{T_\dagger}^R)-\cF(\Sigma)\leq -(\lambda_1-O(\dt))\dt^{2d'}+O(\dt^3).$$
		Lemma 7.10 of \cite{CM1} shows that the entropy of $\Sigma$ is attained, so we have $\cF(\Sigma)=\lambda(\Sigma)$. 
		It was proved in \cite{CM1} (c.f. Theorem 4.30 and 4.31 of \cite{CM1}) that perturbation in the direction of $\phi_1$ strictly decreases the $F$-functional, despite of translation or dilations. So if we have a translation or dilation $\mathcal R$ of size $\dt$, the same calculation as Theorem 4.30 of \cite{CM1} gives the estimate $\cF(\mathcal R\widetilde M_{T_\dagger}^R)-\cF(\Sigma)\leq -\dt^{2.5}$. 
		
		This completes the proof. 
		
	\end{proof}
	
	\begin{proof}[Proof of Theorem \ref{thm:MainThmLocal}]
		The proof is the same as the proof of Theorem \ref{thm:main1}, but just uses the definition of the local entropy. We omit the proof here.
	\end{proof}
	
	Similarly, we have Corollary \ref{CorMain} holds for positive initial perturbations. 
	
	\begin{proof}[Proof of Corollary \ref{CorMain}  for positive initial perturbations]
		
		The proof is a straightforward consequence of Theorem \ref{thm:MainThmLocal} and the monotonicity formula of the local entropy (see \cite{Su} for a detailed discussion of the monotonicity formula of local entropy).
	\end{proof}
	\subsection{Generic perturbations}\label{SSGeneric}
	In this subsection, we complete the proofs of Theorem \ref{thm:main1} and Corollary \ref{CorMain} by allowing generic initial perturbations that are not necessarily positive.

	\begin{proof}[Completing the proofs of Theorem \ref{thm:main1} and Corollary \ref{CorMain}]
		We only need to prove Theorem \ref{ThmMainSec4} holds. The proof is similar to \cite[Theorem 3.11]{SX} so we only sketch the proof here. The key is to prove Step 2 in Theorem \ref{ThmMainSec4} for an open dense subset $\cS$ of $\{u\in C^{2,\alpha}(M_0)\ |\ \|u\|_{C^{2,\alpha}}=1\}$.
		
		Let $\cS$ be the subset of $\{u\in C^{2,\alpha}(M_0)\ |\ \|u\|_{C^{2,\alpha}}=1\}$ such that for any $u_0\in \cS$ Step 2 in Theorem \ref{ThmMainSec4} holds. By the well-posedness of the RMCF, the openness of $\cS$ is straightforward. So we only need to prove the denseness of $\cS$. In fact, for any $u_0$ with $\|u_0\|_{C^{2,\alpha}(M_0)}=1$, let $u$ be the solution to the linearized RMCF equation with the initial data $u_0$, there are two possible growth rates of $u$:
		
		Case 1: $\|u\|_{L^2(M_t)}$ grows faster than $\mathrm{e}^{(\lambda_1(\Sigma)-\epsilon)t}$ for any $\epsilon>0$. Then the proof of Theorem \ref{mainthm-FK} implies that Theorem \ref{mainthm-FK} also holds for such $u$. This implies that such $u_0\in\cS$.
		
		Case 2: $\|u\|_{L^2(M_t)}$ grows slower than $\mathrm{e}^{(\lambda_1(\Sigma)-\epsilon)t}$ for some $\epsilon>0$. Then we can add a small positive function to $u_0$, and normalize it to get a nearby initial condition $u_0'$. Because the positive function grows faster than $\mathrm{e}^{(\lambda_1(\Sigma)-\epsilon)t}$ for any $\epsilon>0$, it will dominate the whole function. Thus $u_0'\in\cS$. 
		
		Combining both cases above we know that $\cS$ is dense. 
	\end{proof}
	\subsection{Ancient solution}\label{SSAncient}
	In the proof of the main theorem, we use only the finite time dynamics without establishing a stable/unstable manifold theorem. The existence of the stable manifold for noncompact shrinkers is an interesting open problem with the main difficulty being again the failure of writing $M_t$ as a graph over $\Sigma$. A similar argument as Theorem 4.1 of \cite{SX} gives an ancient solution dictated by Theorem \ref{thm:ancient}. We refer the reader to \cite{SX} for more details.

	\section{Graphical estimates near a shrinker}\label{SEstimate}
	In this section, we give necessary graphical estimates for an RMCF close to a shrinker on a large compact set. The proofs of Proposition \ref{PropPL} and Theorem \ref{ThmMainSec4}(1) will be completed in this section. We shall mainly solve two problems in this section. The first one is the estimate of the graphical radius, i.e., Proposition \ref{PropPL}, which will be used to control the boundary term due to the cutoff. The second one is to give a bound on the $C^{2,\al}$ norm of the difference function $v^*$ between $M_t$ and $\widetilde M_t$, which is needed in the assumption of Proposition \ref{LmCM3} and the sequel. 
	
	This section is organized as follows. In Section \ref{SSPL}, we use pseudolocality to complete the proof of Proposition \ref{PropPL}. In Section \ref{SSHolder}, we estimate the $C^{2,\al}$ norm of the perturbed flow $\widetilde{M}_t$. In Section \ref{SSCQ}, we bound the  $C^{2,\al}$ norm by the $Q$-norm using an interpolation argument as well as the higher derivative estimate by Ecker-Huisken, and hence completing the proof of Theorem \ref{ThmMainSec4}(1). 
	
	\subsection{Pseudolocality}\label{SSPL}
	In order to handle the noncompactness of the limit shrinker $\Sigma$, we use a pseudolocality lemma to extend the graphical scale of $M_t$. We use the formulation of \cite[Proposition 5.1]{CS}, which is reformulated from Theorem 1.5 in \cite{INS}. 
	
	\begin{lemma}\label{LmPseudoMCF}
		Given $\delta>0$, there exist $\gamma>0$ and a constant $\rho=\rho(n,\delta)>0$ such that for $x\in\R^{n+1}$, if an MCF $\{\mathbf M_t\}_{t\in[0,1]}$ satisfies that $\mathbf M_{-1}\cap B_\rho(x)$ is a Lipschitz graph over the plane $\{x_{n+1}=0\}$ with the Lipschitz constant less than $\gamma$ and $0\in \mathbf M_{-1}$, then $\mathbf M_t\cap B_\rho(x)$ intersects $B_\delta(x)$ and remains a Lipschitz graph over $\{x_{n+1}=0\}\cap B_\delta(x)$ with Lipschitz constant less than $\delta$ for all $t\in[-1,0]$.
	\end{lemma}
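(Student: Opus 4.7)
The plan is to invoke the pseudolocality theorem for MCF as a black box. This lemma is essentially a direct restatement of Chodosh-Schulze \cite{CS} Proposition 5.1, which itself is a reformulation of Ilmanen-Neves-Schulze \cite{INS} Theorem 1.5, so my approach is to verify that the hypotheses of those results match the present formulation, making the translation explicit.

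First I would recall the version in \cite{INS}: if $\mathbf M_{-1}$ has small scale-invariant local area excess (or equivalently, is a Lipschitz graph with small Lipschitz constant) over a hyperplane in a ball $B_\rho(x)$, then for a definite time interval whose length scales like $\rho^2$, the flow $\mathbf M_t$ remains a Lipschitz graph over (a slightly smaller portion of) the same plane, with an a priori gradient bound. The translation between the ``small area excess'' and ``small Lipschitz graph'' formulations is handled via Allard's or Brakke's regularity theorem, and this step is already carried out in \cite{CS}.

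Next, to obtain the graphical property over the full time interval of length $1$, I would use the parabolic scale-invariance of MCF: rescale the ball $B_\rho(x)$ to unit size, apply pseudolocality on the corresponding rescaled interval, and rescale back. The Lipschitz constant $\gamma$ of the initial data is chosen in terms of $\delta$ and $n$ so that, once the flow is graphical, Ecker-Huisken's interior gradient estimate \cite{EH} propagates the smallness of the gradient through the entire time interval, ensuring the Lipschitz constant of $\mathbf M_t$ over $B_\delta(x)$ stays below $\delta$. Combining the two, one chooses $\rho = \rho(n,\delta)$ small enough that the standard pseudolocality statement yields persistence for time at least $1$ after rescaling.

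The main obstacle is ensuring that the graphical domain persists for the full unit time interval rather than just a short time, which would be immediate from short-time existence theory. This is exactly what pseudolocality buys: a \emph{definite} (not infinitesimal) persistence time, coupled with Ecker-Huisken's gradient estimate to keep the Lipschitz constant controlled throughout. The quantitative form in \cite{CS} Proposition 5.1 is precisely tailored for this, so the proof reduces to citing it and matching notation.
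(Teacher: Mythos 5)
Your approach is the same as the paper's. The paper does not give an independent proof of this lemma: the text immediately preceding it states that the lemma "uses the formulation of [CS, Proposition 5.1], which is reformulated from Theorem 1.5 in [INS]," so the lemma is recorded by citation, exactly as you propose. Your additional commentary about translating between the area-excess and Lipschitz-graph formulations, and about parabolic rescaling, is reasonable background, though strictly speaking unnecessary for the paper's purpose of simply quoting CS Proposition 5.1.

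One small caveat worth correcting in your last paragraph: the scaling direction on $\rho$ is backwards. Pseudolocality applied to a ball of radius $\rho$ yields graphicality for a time interval of length on the order of $\rho^2$ (parabolic scaling), so to guarantee persistence over the full unit interval $[-1,0]$ one needs $\rho$ \emph{bounded below}, i.e.\ large enough, not ``small enough.'' The lemma statement is careful not to claim smallness of $\rho$ — it only asserts existence of some $\rho(n,\delta)>0$ — and indeed in the subsequent rescaled statements (Lemma \ref{LmPseudoRMCF}, Corollary \ref{CorPseudo}) the relevant radius is $\sqrt{e^T}\rho$, which is large. This does not affect the soundness of the citation-based approach, but the phrasing ``small enough'' would fail if taken literally.
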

	
	We next reformulate this lemma in the context of RMCFs. Note that the Lipschitz constant of a graph is invariant under dilation and that an RMCF $(M_t),\ t\in[0,\infty)$ is related to an MCF $(\mathbf M_{\tau}),\ \tau\in [-1,0]$ through ${M}_t=\mathrm{e}^{\frac{t}{2}}\mathbf M_{-\mathrm{e}^{-t}}$. 
	
	\begin{lemma}\label{LmPseudoRMCF}
		Given $\delta>0$, there exist $\gamma>0$ and a constant $\rho=\rho(n,\delta)>0$ such that if an RMCF $\{M_t\}_{t\in[T,\infty)}$ satisfies that $M_{T}\cap B_{\sqrt{\mathrm{e}^T}\rho}(x)$ is a Lipschitz graph over the plane $L$ passing $x$ with Lipschitz constant less than $\gamma$ and $x\in M_{T}$, then $M_t \cap B_{\sqrt{\mathrm{e}^t}\rho}(\sqrt{\mathrm{e}^{t-T}}x)$ intersects $B_{\sqrt{\mathrm{e}^t}\delta}(\sqrt{\mathrm{e}^{t-T}}x)$ and remains a Lipschitz graph over $L\cap B_{\sqrt{\mathrm{e}^t}\delta}(\sqrt{\mathrm{e}^{t-T}}x)$ with Lipschitz constant less than $\delta$ for all $t\in[T,\infty)$.
	\end{lemma}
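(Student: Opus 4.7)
The plan is to deduce Lemma~\ref{LmPseudoRMCF} from Lemma~\ref{LmPseudoMCF} via the spacetime correspondence $M_t = e^{t/2}\mathbf{M}_{-e^{-t}}$; the work is essentially to track how balls, centers, planes, and Lipschitz constants transform between the two frames.

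First I will translate the hypothesis into the MCF frame. The relation $\tau = -e^{-t}$ means that spatial distances at time $t$ in the RMCF frame are scaled by $e^{t/2}=\sqrt{e^t}$ relative to the MCF frame, so at $t = T$ the set $M_T \cap B_{\sqrt{e^T}\rho}(x)$ corresponds to $\mathbf{M}_{-e^{-T}} \cap B_\rho(y)$ with $y := e^{-T/2}x \in \mathbf{M}_{-e^{-T}}$. A Euclidean dilation preserves the Lipschitz-graph property and leaves the Lipschitz constant invariant, so the hypothesis becomes: $\mathbf{M}_{-e^{-T}}\cap B_\rho(y)$ is a Lipschitz graph, with constant less than $\gamma$, over the plane $e^{-T/2}L$ (which passes through $y$ and is parallel to $L$).

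Next I will apply Lemma~\ref{LmPseudoMCF} after a parabolic rescaling, which is needed because $\mathbf{M}$ lives on $[-e^{-T},0)$ rather than $[-1,0]$. Set $\tilde{\mathbf{M}}_s := e^{T/2}\mathbf{M}_{e^{-T}s}$ for $s \in [-1, 0)$; this is again an MCF, with $\tilde{\mathbf{M}}_{-1}=M_T$. Choosing $\rho = \rho(n,\delta)$ and $\gamma = \gamma(n,\delta)$ from Lemma~\ref{LmPseudoMCF}, and identifying the relevant ball with $B_{\sqrt{e^T}\rho}(x)$ in the $\tilde{\mathbf{M}}$ frame (which matches the original hypothesis on $M_T$), we conclude that $\tilde{\mathbf{M}}_s$ remains a Lipschitz graph over the same plane on the same ball, intersects the corresponding $\delta$-ball, and has Lipschitz constant less than $\delta$, for every $s \in [-1,0)$.

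Finally I will push the conclusion back to $M_t$ via $M_t = e^{(t-T)/2}\tilde{\mathbf{M}}_{-e^{T-t}}$. The ball $B_{\sqrt{e^T}\rho}(x)$ in $\tilde{\mathbf{M}}_s$ becomes a ball of radius $e^{(t-T)/2}\sqrt{e^T}\rho = \sqrt{e^t}\rho$ centered at $e^{(t-T)/2}x = \sqrt{e^{t-T}}x$ in $M_t$; analogously for the $\delta$-ball. The plane transforms to one through $\sqrt{e^{t-T}}x$ parallel to $L$ (which the statement records by writing $L \cap B_{\sqrt{e^t}\delta}(\sqrt{e^{t-T}}x)$ up to a parallel translate), and the dimensionless Lipschitz constant $\delta$ is preserved. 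This gives the stated conclusion for all $t\in[T,\infty)$. The main (mild) subtlety is just tracking the scaling factors, which emerge naturally from the parabolic scaling built into the MCF-RMCF correspondence.
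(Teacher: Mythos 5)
Your overall strategy — translating between the MCF and RMCF frames via $M_t = e^{t/2}\mathbf{M}_{-e^{-t}}$ — is exactly what the paper's one-line remark before the lemma alludes to, and your bookkeeping of the ball centers is right. But there is a genuine gap in the middle step. After the parabolic rescaling $\tilde{\mathbf{M}}_s = e^{T/2}\mathbf{M}_{e^{-T}s}$ (so $\tilde{\mathbf{M}}_{-1}=M_T$), you apply Lemma~\ref{LmPseudoMCF} "identifying the relevant ball with $B_{\sqrt{e^T}\rho}(x)$" and then treat the $\delta$-ball "analogously", i.e.\ as $B_{\sqrt{e^T}\delta}(x)$. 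But the conclusion ball of Lemma~\ref{LmPseudoMCF} does not dilate with the hypothesis ball: that lemma has a fixed $\rho=\rho(n,\delta)$ and always delivers a Lipschitz graph over $B_\delta(x)$, no matter how much larger than $B_\rho(x)$ the hypothesis ball happens to be. If you apply it honestly to $\tilde{\mathbf{M}}$ you only get a graph over $B_\delta(x)$, and pushing $B_\delta(x)$ forward to $M_t$ yields $B_{\sqrt{e^{t-T}}\delta}(\sqrt{e^{t-T}}x)$, which is off from the claimed $B_{\sqrt{e^t}\delta}(\sqrt{e^{t-T}}x)$ by a factor $e^{T/2}$. To get $B_{\sqrt{e^T}\delta}(x)$ in the $\tilde{\mathbf{M}}$ frame you would need the parabolically scaled form of Lemma~\ref{LmPseudoMCF} at scale $\sqrt{e^T}$ — and that rescaling exactly undoes the one by which you defined $\tilde{\mathbf{M}}$, so the detour does not buy you anything.

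The clean fix is to use time-translation invariance of Lemma~\ref{LmPseudoMCF} rather than a parabolic rescaling. The hypothesis on $M_T$ gives, after dilation by $e^{-T/2}$, that $\mathbf{M}_{-e^{-T}}\cap B_\rho(y)$ is a Lipschitz graph with constant $<\gamma$, where $y=e^{-T/2}x$. Apply Lemma~\ref{LmPseudoMCF} to $\mathbf{M}_\tau$ on the time interval $[-e^{-T},-e^{-T}+1]$ (a time-translate of $[-1,0]$); since $e^{-T}\le 1$, the interval of interest $[-e^{-T},0)$ is contained in it, and the conclusion is that $\mathbf{M}_\tau\cap B_\rho(y)$ intersects $B_\delta(y)$ and is a Lipschitz graph over $L'\cap B_\delta(y)$ with constant $<\delta$ for all $\tau\in[-e^{-T},0)$, where $L'=e^{-T/2}L$. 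Dilating by $e^{t/2}$ at $\tau=-e^{-t}$ sends $B_\rho(y)\mapsto B_{\sqrt{e^t}\rho}(\sqrt{e^{t-T}}x)$ and $B_\delta(y)\mapsto B_{\sqrt{e^t}\delta}(\sqrt{e^{t-T}}x)$, which is exactly the stated conclusion.
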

	
	We will use Lemma \ref{LmPseudoRMCF} in the following way to control the gradient of graphs. 
	
	\begin{corollary}\label{CorPseudo}
		Suppose $\Sigma$ is a conical shrinker, and $M_t$ is an RMCF. Let $R$ be sufficiently large. Then when $T$ is sufficiently large, suppose $M_T$ is a graph of $u(\cdot,T)$ on $\mathbb A_{R-1,R}\cap\Sigma$ with $|\nabla u(\cdot,T)|\leq \gamma$, then $M_{T+t}$ is a graph of $u(\cdot,T+t)$ on $\mathbb A_{\mathrm{e}^{t/2} (R-1),\mathrm{e}^{t/2}R}\cap\Sigma$ with $|\nabla u(\cdot,t)|\leq \delta$ for $t>0$.
	\end{corollary}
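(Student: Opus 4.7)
The plan is to apply the pseudolocality Lemma \ref{LmPseudoRMCF} to finitely many small balls covering the annulus, and then to use the asymptotic conical structure of $\Sigma$ to glue the resulting graphs into a global graph over the rescaled annulus.

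First I would fix $\delta>0$ and obtain constants $\gamma_0$ and $\rho$ from Lemma \ref{LmPseudoRMCF}. Since $\Sigma$ is asymptotically conical, for $R$ sufficiently large the piece $\Sigma\cap\mathbb A_{R-1,R}$ is $C^2$-close to the corresponding piece of the asymptotic cone $\Gamma$, and in particular to its tangent planes on small scales. Hence I can cover $\Sigma\cap\mathbb A_{R-1,R}$ by finitely many balls $B_{r_0}(x_i)$ with $x_i\in\Sigma$ and $r_0\le \sqrt{e^T}\rho$, chosen small enough that on each ball $\Sigma$ is a Lipschitz graph over the tangent plane $L_i:=T_{x_i}\Sigma$ with Lipschitz constant at most $\gamma_0/2$. (For each fixed $R$ the number of balls and the value of $r_0$ are uniformly controlled once $T$ is large.)

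Next I would use the hypothesis $|\nabla u(\cdot,T)|\le \gamma$ to deduce that $M_T$, being a graph over $\Sigma$, is also a Lipschitz graph over each $L_i$ with Lipschitz constant at most $\gamma_0$; this composition of two small graphs is standard and only requires $\gamma$ chosen small depending on $\gamma_0$. Lemma \ref{LmPseudoRMCF} then yields, for every $i$ and every $t\ge 0$, that $M_{T+t}\cap B_{\sqrt{e^{T+t}}\rho}(e^{t/2}x_i)$ is a Lipschitz graph over $L_i$ with constant at most $\delta$.

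It remains to convert the family of graphs over the planes $L_i$ back into a single graph over $\Sigma\cap\mathbb A_{e^{t/2}(R-1),e^{t/2}R}$. Here I use the scale invariance of the cone $\Gamma$ and the convergence rate $\tau^{-1}\Sigma\to\Gamma$ from Chodosh--Schulze: the dilated annulus $\Sigma\cap\mathbb A_{e^{t/2}(R-1),e^{t/2}R}$ lies $o(1)$-close to $e^{t/2}\bigl(\Gamma\cap\mathbb A_{R-1,R}\bigr)$ in $C^2$ for large $R$, while the rescaled centers $e^{t/2}x_i$ and the (translated but unchanged) planes $L_i$ retain their local planar approximation of $\Sigma$ in the dilated annulus. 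The Lipschitz-$\delta$ graphs over $L_i$ therefore rewrite, after composition with this small graphical change of reference, as a graph $u(\cdot,T+t)$ over $\Sigma\cap\mathbb A_{e^{t/2}(R-1),e^{t/2}R}$ with $|\nabla u(\cdot,T+t)|\le \delta$ (after slightly shrinking $\delta$ at the beginning to absorb the lower-order error).

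The main obstacle is the last step: converting the pseudolocality conclusion, which is phrased with respect to the fixed planes $L_i$, into a graph over the stationary but curved $\Sigma$ at the larger scale. This hinges on quantitative control of the rate at which $\Sigma$ approaches its asymptotic cone and on tracking how the Lipschitz constants transform when passing between the graphical references $L_i\leftrightarrow \Sigma$ on the rescaled annulus; both are available since we may choose $R$ (and hence $T$) as large as needed.
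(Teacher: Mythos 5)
Your argument follows the paper's proof essentially step for step: decompose the annulus into small balls on which $\Sigma$ (and hence $M_T$, since it is a small graph over $\Sigma$) is a Lipschitz graph over the tangent plane; apply pseudolocality (Lemma~\ref{LmPseudoRMCF}) on each ball to propagate the Lipschitz graph to the dilated region; and then convert the tangent-plane graphs back into a graph over $\Sigma$ on the dilated annulus using the asymptotically conical structure, namely that $T_x\Sigma$ and $T_{e^{t/2}x}\Sigma$ are close for $|x|$ large. The paper's proof phrases the last step as ``the tangent space $T_x\Sigma$ should be very close to the tangent space of $\Sigma$ near $e^{t/2}x$,'' which is the same observation you spell out via the $C^2$-closeness of the dilated annulus to the cone. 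One caveat: the ball radius in the covering, which you take to be $\le \sqrt{e^T}\rho$, should in fact be of fixed size $\rho$ (the paper writes $\sqrt{e^{-T}}\rho$, which also appears to be a misprint). After parabolically rescaling so that the RMCF restarted at time $T$ corresponds to an MCF defined on $[-1,0)$, one finds $\mathbf M'_{-1}=M_T$, so the hypothesis ball is $B_\rho(x)$ with $\rho$ independent of $T$, and the graphicality region at time $T+t$ then scales like $e^{t/2}\delta$ as needed. This does not affect the validity of the argument, only the bookkeeping of how many balls are in the cover.
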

	\begin{proof}
		Suppose at the time $T$ very large, $M_T$ is a graph of the function $m(\cdot, T)$ over $\Sigma$ in the ball of radius $R$, such that $|\nabla m(\cdot, T)|\leq \gamma/3$. Then we know that $M_T$ is a graph on the tangent space $T_x\Sigma$ with Lipschitz constant less than $\gamma/2$. Then we can decompose $B_R\backslash B_{R-1}$ into several small balls $B^j_{\sqrt{\mathrm{e}^{-T}}\rho}(x)$ with radius $\sqrt{\mathrm{e}^{-T}}\rho$. On each $B^j(x)$ we use pseudolocality Lemma \ref{LmPseudoRMCF}, we see that $M_{T+t}\cap \sqrt{\mathrm{e}^{T+t}}B_{\mathrm{e}^{t/2}\delta}^j(\mathrm{e}^{t/2}x)$ is a graph on the same plane with Lipschitz constant $\delta$ for $t>0$.
		
		If we assume $\Sigma$ is conical, then the tangent space $T_x\Sigma$ should be very close to the tangent space of $\Sigma$ near $\mathrm{e}^{t/2}x$. Moreover, this Lipschitz constant can be translated to the gradient of $M_{T+t}$ as a graph $m_{T+t}$ over the ball of radius $\mathrm{e}^{t/2}R$ roughly. So we obtain the corollary.
	\end{proof}
	
	The above results concern the Lipschitz bound of the graph. Next we use Ecker-Huisken's interior estimate to get a higher-order estimate of the graph. The following lemma is a consequence of Ecker-Huisken's interior estimate of MCFs (c.f. \cite[Section 4]{EH}).
	
	\begin{lemma}\label{LmPseudoMCF-high}
		Under the assumptions of Lemma \ref{LmPseudoMCF}, for any integer $\ell>0$, there exists a constant $C_\ell$ depending on $\delta,\gamma$ and $\rho$, such that for all $t\in[-1/2,0]$, we have that $\mathbf{M}_t\cap B_\dt$ is a $C^\ell$ graph with the $C^\ell$-norm less than $C_\ell$.
	\end{lemma}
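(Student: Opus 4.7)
The plan is to upgrade the Lipschitz graphical bound supplied by Lemma \ref{LmPseudoMCF} to $C^\ell$ control via the interior higher-order estimates of Ecker--Huisken \cite{EH}, losing a definite amount of space and time in the process.

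First, by Lemma \ref{LmPseudoMCF}, after choosing $\rho\geq 2\delta$ so that $B_\delta(x)\Subset B_\rho(x)$, the slice $\mathbf M_t\cap B_\rho(x)$ is a Lipschitz graph over $\{x_{n+1}=0\}$ with Lipschitz constant at most $\delta$ throughout $t\in[-1,0]$. Writing this graph as $x_{n+1}=u(y,t)$, the gradient function $v=\sqrt{1+|Du|^2}$ is therefore uniformly bounded by $\sqrt{1+\delta^2}$ on the parabolic cylinder $B_\rho(x)\times[-1,0]$. This is precisely the hypothesis needed to apply the Ecker--Huisken interior curvature estimate (Theorem~3.1 of \cite{EH}), which yields, after shrinking the spatial ball to $B_{\rho/2}(x)$ and waiting until $t\geq -3/4$, a bound $|A|^2\leq C_1(\delta,\rho)$ on $\mathbf M_t\cap B_{\rho/2}(x)$ for $t\in[-3/4,0]$. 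Since $v$ is uniformly bounded, $|A|$ control translates directly into a bound on the second spatial derivatives of $u$.

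Next, I would iterate the corresponding higher-order interior estimates of Ecker--Huisken (see Section~4 of \cite{EH}, which produces bounds on $|\nabla^k A|$ from bounds on lower-order derivatives of $A$ together with a gradient bound) to obtain, inductively on $k=1,2,\ldots,\ell-2$, uniform estimates on $|\nabla^k A|$ on a parabolic cylinder $B_{\rho_k}(x)\times[t_k,0]$ with $\rho_k\searrow \delta$ and $t_k\nearrow -1/2$. In $\ell-2$ many steps we land on $\mathbf M_t\cap B_\delta(x)\times[-1/2,0]$ with uniform control on $|A|,|\nabla A|,\ldots,|\nabla^{\ell-2}A|$. Combined with the already-established $C^1$ bound on $u$, these curvature bounds translate by the standard identities expressing $\nabla^k u$ in terms of $v$ and $\nabla^j A$ ($j\leq k-2$) into a $C^\ell$ bound $\|u\|_{C^\ell}\leq C_\ell$ on the graph, as desired.

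The only mildly delicate point is bookkeeping the loss of domain and time at each bootstrap step so that the final domain is at least $B_\delta$ and the final time interval is $[-1/2,0]$; this is arranged by choosing $\rho=\rho(n,\delta)$ in Lemma \ref{LmPseudoMCF} slightly larger than $\delta$ and fixing at the outset the geometrically decreasing sequence of shrinking radii in the inductive step. No genuinely new PDE input is required beyond Ecker--Huisken; the constants $C_\ell$ end up depending only on $\delta$, $\gamma$, $\rho$ and $\ell$.
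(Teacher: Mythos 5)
Your proposal is correct and follows essentially the same route the paper has in mind: the paper simply cites Ecker--Huisken's interior estimates (Section 4 of \cite{EH}) as the source of the lemma, and your argument is a standard unwinding of that citation, using the Lipschitz/gradient bound from Lemma~\ref{LmPseudoMCF} to trigger the curvature estimate and then bootstrapping to higher derivatives while shrinking the parabolic cylinder. No discrepancy with the paper's intended argument.
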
 
	
	Similarly, this lemma has a reformulation for RMCFs and a consequence for RMCF.
	
	\begin{lemma}\label{LmPseudoRMCF-high}
		Given $\delta>0$, there exist $\gamma>0$ and a constant $\rho=\rho(n,\delta)>0$ such that if an RMCF $\{M_t\}_{t\in[T,\infty)}$ satisfies that $M_{T}\cap B_{\sqrt{\mathrm{e}^T}\rho}(x)$ is a Lipschitz graph over the plane $L$ passing $x$ with Lipschitz constant less than $\gamma$ and $x\in M_{-1}$, then for any integer $\ell>0$, $M_t \cap B_{\sqrt{\mathrm{e}^t}\rho}(\sqrt{\mathrm{e}^{t-T}}x)$ intersects $B_{\sqrt{\mathrm{e}^t}\delta}(\sqrt{\mathrm{e}^{t-T}}x)$ and is a $C^\ell$ graph with the $\ell$-th derivative is bounded by $C_\ell(\sqrt{\mathrm{e}^{t-T}})^{-\ell+1}$ for all $t\in[T+1/2,\infty)$, where $C_\ell$ is a cosntant depending on $\delta,\gamma$ and $\rho$.
	\end{lemma}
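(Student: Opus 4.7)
The plan is to derive this lemma from the MCF interior estimate (Lemma \ref{LmPseudoMCF-high}) by exactly the same passage from MCF to RMCF that was used to deduce Lemma \ref{LmPseudoRMCF} from Lemma \ref{LmPseudoMCF}. The only new ingredient is to track how the $C^{\ell}$-norm of a graph transforms under a dilation.

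First, I would introduce the auxiliary MCF obtained by parabolically rescaling $(\mathbf{M}_{\tau})$ by $\lambda=e^{T/2}$: define $\widetilde{\mathbf{M}}_{\sigma}:=e^{T/2}\mathbf{M}_{\sigma e^{-T}}$ for $\sigma\in[-1,0)$. Then $\widetilde{\mathbf{M}}_{-1}=M_{T}$, and a direct computation using $M_{t}=e^{t/2}\mathbf{M}_{-e^{-t}}$ gives $M_{t}=e^{(t-T)/2}\widetilde{\mathbf{M}}_{-e^{T-t}}$ for $t\ge T$. Under this identification, the assumption that $M_{T}\cap B_{\sqrt{e^{T}}\rho}(x)$ is a Lipschitz graph over $L$ with constant $<\gamma$ is precisely the statement that $\widetilde{\mathbf{M}}_{-1}$ is Lipschitz over $B_{\rho}(x)\subset B_{\sqrt{e^{T}}\rho}(x)$ with constant $<\gamma$, which is the hypothesis of the MCF interior estimate. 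Lemma \ref{LmPseudoMCF-high} applied to $\widetilde{\mathbf{M}}$ then yields, for all $\sigma\in[-1/2,0)$, that $\widetilde{\mathbf{M}}_{\sigma}\cap B_{\delta}(x)$ is a $C^{\ell}$ graph with $C^{\ell}$-norm bounded by some $C_{\ell}=C_{\ell}(\delta,\gamma,\rho)$.

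Second, I would transplant this conclusion back to $M_{t}$ through the dilation $M_{t}=\lambda\widetilde{\mathbf{M}}_{\sigma}$ with $\lambda=\sqrt{e^{t-T}}$. The elementary observation is that if a local graph function $u$ represents a hypersurface and one dilates by $\lambda$, the new graph function is $\tilde{u}(y)=\lambda u(y/\lambda)$, so that $\partial^{\ell}\tilde{u}(y)=\lambda^{1-\ell}\partial^{\ell}u(y/\lambda)$. Hence the first derivatives are scale-invariant (which is what Lemma \ref{LmPseudoRMCF} uses) while the $\ell$-th derivatives for $\ell\ge 2$ acquire a factor $\lambda^{1-\ell}=(\sqrt{e^{t-T}})^{1-\ell}$. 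The MCF ball $B_{\delta}(x)$ transforms into the RMCF ball $B_{\sqrt{e^{t-T}}\delta}(\sqrt{e^{t-T}}x)$, and using the same covering argument that was applied in Corollary \ref{CorPseudo} to pass from the single MCF-scale ball $B_{\rho}$ to the larger RMCF-scale ball $B_{\sqrt{e^{T}}\rho}$ in the hypothesis, one obtains $C^{\ell}$ control on the whole ball $B_{\sqrt{e^{t}}\rho}(\sqrt{e^{t-T}}x)$ with the claimed constant $C_{\ell}(\sqrt{e^{t-T}})^{1-\ell}$.

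The only real subtlety is the bookkeeping of two distinct rescalings---the parabolic MCF rescaling by $e^{T/2}$ that reduces everything to the standard MCF pseudolocality setup, and the MCF-to-RMCF rescaling by $\sqrt{e^{t-T}}$ that governs how balls and derivatives transform at time $t$---and verifying that the MCF interior time window $[-1/2,0]$ translates into the RMCF range $[T+1/2,\infty)$ (the mild discrepancy between $\log 2$ and $1/2$ is absorbed into the constant). Once these are in place, the entire content of the lemma is a direct consequence of the Ecker--Huisken interior derivative estimates packaged in Lemma \ref{LmPseudoMCF-high}.
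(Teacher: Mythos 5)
Your proposal is correct and fills in exactly the argument the paper leaves implicit when it says this lemma follows ``similarly'' from Lemma \ref{LmPseudoMCF-high}: parabolically rescale the MCF so that the hypothesis at RMCF time $T$ becomes the standard MCF pseudolocality setup, invoke the Ecker--Huisken interior estimate, and then transport back through the dilation $M_t=\sqrt{e^{t-T}}\widetilde{\mathbf M}_\sigma$, tracking that the $\ell$-th derivative of a graph function scales by $\lambda^{1-\ell}$. Your observation that the $[-1/2,0]$ window in the MCF lemma transports to $t\ge T+\log 2$ rather than $T+1/2$ is a real (though harmless) looseness that is already present in the paper's own stated time range; as you note, it is handled by replacing $-1/2$ with any fixed $\sigma_0\in(-1,0)$ at the cost of a larger constant.
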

	\begin{corollary}\label{CorPseudo-high}
		Suppose $\Sigma$ is a conical shrinker, and $M_t$ is an RMCF converging to $\Sigma$. Let $R$ be sufficiently large, and suppose $M_T$ is a graph of $u(\cdot,T)$ on $\mathbb A_{R-1,R}\cap\Sigma$ with $|\nabla u(\cdot,T)|\leq \gamma$ for some   $T$ sufficiently large. Then for any integer $\ell>0$, $M_{T+t}$ is a graph of $u(\cdot,T+t)$ on $\mathbb A_{\mathrm{e}^{t/2} (R-1),\mathrm{e}^{t/2}R}\cap\Sigma$ with $\|\nabla^\ell u(\cdot,T+t)\|_{C^0}\leq (\mathrm{e}^{t/2})^{-\ell+1}C_\ell$ for $t>1/2$.
	\end{corollary}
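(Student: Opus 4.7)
The plan is to mimic the proof of Corollary \ref{CorPseudo}, replacing the use of Lemma \ref{LmPseudoRMCF} by its higher-order counterpart Lemma \ref{LmPseudoRMCF-high}, and to use the conical asymptotics of $\Sigma$ to transfer estimates over flat tangent planes to estimates over $\Sigma$ itself.

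First, under the standing hypothesis that $M_T$ is a graph of $u(\cdot,T)$ over $\mathbb A_{R-1,R}\cap \Sigma$ with $|\nabla u(\cdot,T)|\leq \gamma$, I would fix any point $x_0\in M_T$ projecting onto this annulus. Since $T$ is large and $\Sigma$ is asymptotically conical, the tangent plane $L:=T_{x_0}M_T$ is very close to the tangent plane to $\Sigma$ at the footpoint of $x_0$, and so $M_T$ is a Lipschitz graph over $L$ on the ball $B_{\sqrt{e^T}\rho}(x_0)$ with Lipschitz constant at most $2\gamma$, provided $\gamma$ is small (compared to the deviation of $\Sigma$ from its asymptotic cone at scale $\sqrt{e^T}\rho$, which is controlled because $\Sigma$ is conical and $T$ is large). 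This sets up the hypotheses of Lemma \ref{LmPseudoRMCF-high}.

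Next, I would apply Lemma \ref{LmPseudoRMCF-high}: for each such $x_0$ and each integer $\ell$, $M_{T+t}\cap B_{\sqrt{e^{T+t}}\delta}(\sqrt{e^t}x_0)$ is a $C^\ell$ graph over $L$ whose $\ell$-th derivative is bounded by $C_\ell(\sqrt{e^{t}})^{-\ell+1}$ for $t\geq 1/2$. Covering the annulus $\mathbb A_{e^{t/2}(R-1),e^{t/2}R}\cap\Sigma$ by a family of such balls centered at images $\sqrt{e^t}x_0$ and quoting Corollary \ref{CorPseudo} to know that $M_{T+t}$ is still a graph over this annulus with small gradient, I obtain the pointwise $C^\ell$ bound on the graph over the flat plane $L$.

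Finally, I would convert the estimate over $L$ into an estimate of $\nabla^\ell u(\cdot, T+t)$, where $u(\cdot, T+t)$ is the graphical function over $\Sigma$. Since $\Sigma$ is conical, on the annulus $\mathbb A_{e^{t/2}(R-1),e^{t/2}R}\cap\Sigma$ the intrinsic second fundamental form of $\Sigma$ is of order $(e^{t/2}R)^{-1}$, and $\Sigma$ differs from its tangent cone on a ball of radius $\sqrt{e^t}\delta$ around the footpoint by a function with $C^\ell$-norm decaying at least as fast as $(e^{t/2})^{-\ell+1}$. Changing the reference surface from $L$ to $\Sigma$ by composition with this $C^\ell$-small correction and carrying out the chain-rule expansion therefore preserves the bound $\|\nabla^\ell u(\cdot, T+t)\|_{C^0}\leq C_\ell(e^{t/2})^{-\ell+1}$, possibly after enlarging $C_\ell$. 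The main obstacle I anticipate is the last step: keeping careful track of how the conical asymptotics of $\Sigma$ interact with the chain rule so that the error terms from passing between the tangent-plane description and the $\Sigma$-graph description do not degrade the $(e^{t/2})^{-\ell+1}$ decay. This can be handled by using $R$ sufficiently large and $T$ sufficiently large from the start, so that on the relevant annuli $\Sigma$ is as close to its cone as we wish in the scale-invariant $C^\ell$-norm.
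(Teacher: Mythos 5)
Your plan is correct and follows essentially the same route the paper intends: the paper leaves this corollary's proof implicit as the analogue of Corollary \ref{CorPseudo}, namely covering the annulus by pseudolocality balls, invoking the Ecker--Huisken-based Lemma \ref{LmPseudoRMCF-high} in place of Lemma \ref{LmPseudoRMCF}, and then using the conical structure of $\Sigma$ (curvature decay and near-coincidence of $T_x\Sigma$ with $T_{e^{t/2}x}\Sigma$, as in the remark following the corollary) to convert the scale-invariant tangent-plane bounds $C_\ell(e^{t/2})^{-\ell+1}$ into bounds for the graph over $\Sigma$. Your final transfer step is the same idea, with the small caveat that the decay $(e^{t/2})^{-\ell+1}$ applies to the $\ell$-th derivatives of the transition data rather than to the full $C^\ell$-norm, which is exactly the scale-invariant bookkeeping the paper has in mind.
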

	
	\begin{remark}
		We give a remark about the Corollary \ref{CorPseudo} and Corollary \ref{CorPseudo-high}. Note that in Lemma \ref{LmPseudoRMCF} and Lemma \ref{LmPseudoRMCF-high}, the statement is about $M_t$ being a graph over a fixed tangent space. Meanwhile in Corollary \ref{CorPseudo-high}, the statement is about $M_t$ being a graph over $\Sigma$. In fact, here we use the structure of an asymptotically conical shrinker. In particular, we use the fact that the curvature decays on $\Sigma$, and there exists $R_0>0$ such that for any $R\geq R_0$ and $C>1$, any $x$ on $\partial B_R\cap \Sigma$, $T_x\Sigma$ and $T_{C^\#x}\Sigma$ is only different by a very small rotation $\theta$ (we use $C^\# x$ to denote the point on $\Sigma$ whose projection to the asymptotic cone is $C$ times the point on $\Sigma$ which is the projection of $x$). The existence of such $R_0$ is a consequence of the structure of an asymptotically conical shrinker (see \cite{BW} and \cite{CS}). 
	\end{remark}
	We next work on the proof of Proposition \ref{PropPL}. 
	
	\begin{proof}[Proof of Proposition \ref{PropPL}]
		Suppose $\{M_t\}$ is an RMCF.  Fix $R>0$ from Corollary \ref{CorPseudo-high}. We assume $T$ is sufficiently large such that $M_t$ is a graph of a function $m_t$ over $\Sigma\cap B_{4R}$ with $\|m_t\|_{C^\ell(\Sigma\cap B_{4R})}\leq \gamma$, where $\gamma$ comes from Corollary \ref{CorPseudo-high}. Such $T$ exists because $M_t$ converges to $\Sigma$ in $C_{\loc}^\infty$ sense.
		
		Corollary \ref{CorPseudo-high} implies that $M_{T+t}$ is a $C^\ell$ graph over $(\mathbb A_{R \mathrm{e}^{t/2},2R \mathrm{e}^{t/2}})\cap\Sigma$ for $t>1/2$. Meanwhile, for $t<2$, we know that $\mathbb A_{R \mathrm{e}^{t/2},2R \mathrm{e}^{t/2}}\cap B_{4R}$ is always nonempty. So for $t\in[1/2,2]$, $M_{T+t}$ is a $C^\ell$ graph over $B_{2R \mathrm{e}^{t/2}}\cap \Sigma$, with the $C^\ell$-norm satisfying the bound in Corollary \ref{CorPseudo-high}. 
		
		Now we repeat the above process, starting from $t=1,2,3,\cdots$. Then the graphical region expands at the rate $2R\mathrm{e}^{t/2}$. This concludes the proof.	
	\end{proof}

	\subsection{The regularity estimates for the perturbed flow}\label{SSHolder}
	In the last section, we see that once the RMCF is very close to $\Sigma^R$, the graphical region expands exponentially. For $M_t$ converging to $\Sigma$ in the $C^\infty_{\loc}$ sense, it is always close to $\Sigma^R$, and hence the expansion will last for all time. In our setting, the perturbed RMCF $\widetilde{M}_t$ will not be close to $\Sigma^R$ for all time. We need a quantitative characterization of the closeness of $\widetilde{M}_t$ and $\Sigma^R$.
	
	In this subsection we study some estimates of the RMCF as a graph over a part of $\Sigma$. In particular, we generalize the H\"older estimates in \cite{CM2} to the noncompact asymptotically conical shrinker setting. We will always fix a conical shrinker $\Sigma$ and $R_*$ large such that on $B_{R_*}\cap \Sigma$, there exists $C_\ell>0$ such that $|\nabla^k A|\leq C(1+|x|)^{-k-1}$ for $k=0,1,\cdots,\ell+1$, and $\Sigma\cap \partial B_{R_*}$ is the union of several submanifolds in $R_*\mathbb S^n$, which are sufficiently close to the cross sections of the asymptotic cone of the conical ends, and the tangent space on $\Sigma\cap \partial B_{R_*}$ is very close to the tangent space of the asymptotic cones. 
	
	The main goal of this subsection is to prove the following proposition:
	
	\begin{proposition}\label{propHolder}
		Given $\epsilon_1>0$, there exist $\delta_1$, $C$, $\eps$ and $\alpha>0,$ so that the following holds. Suppose $\{\widetilde M_t\}$ is an RMCF with $\mathbf{r}(\widetilde M_T)\geq R_*$ for some large $T$, and $\widetilde M_{T+t},\ t\in [0,T^*],$ can be written as the graph of a function $u$ on $\Sigma^{R_*}$ satisfying
		\begin{enumerate}
			\item	$|u(T)|+|\nabla u(T)|\leq \delta\leq \delta_1, \text{ and } |\Hess_{u(T)}|\leq \delta_1,$	
			\item $\|u(\cdot,T+t)\|_{C^{2}}\leq \delta_1$ on $\Sigma^{R_*}$ for $t\leq T^*$, 
		\end{enumerate}
		then $\widetilde M_{T+t}$ can be written as the graph of a function $u$ on $\Sigma^{\mathbf{r}({T+t})}$ for $t\in[0,T^*]$ and
		\begin{enumerate}
			\item  on $\Sigma^{\mathbf{r}( {T+t})}$ we have the estimates $$|u(x,T+t)|\leq C^t (\delta+\epsilon_1)(1+|x|),\ |\nabla u(T+t)|\leq C^t\sqrt{(\delta+\epsilon_1)},\ |\Hess_{u(T+t)}|\leq C^t\dt_1,$$
			\item on $\Sigma^{R_*}$ we have $\|u(\cdot,T+t)\|_{C^{2,\alpha}(\Sigma^{R_*})}\leq C^t (\delta+\epsilon_1)^\eps$,
			\item for any integer $\ell$, $\|u(\cdot,T+t)\|_{C^\ell(\Sigma^{R_*})}\leq C_\ell$ for some constant $C_\ell$ depending on $\delta_1$.
		\end{enumerate}
	\end{proposition}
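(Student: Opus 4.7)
The plan is to combine pseudolocality with parabolic regularity. First I would verify that $\widetilde M_{T+t}$ remains a graph of a function $u(\cdot,T+t)$ on $\Sigma^{\mathbf{r}(T+t)}$ for all $t\in[0,T^*]$. The hypothesis $\mathbf{r}(\widetilde M_T)\geq R_*$ together with the $C^2$-smallness $\|u(\cdot, T+t)\|_{C^{2}(\Sigma^{R_*})}\leq \delta_1$ ensures that at every unit-time slice we have a small-Lipschitz graph representation on an annular shell near $\partial B_{R_*}$. Applying Corollary \ref{CorPseudo} ball-by-ball, together with the iteration scheme from the proof of Proposition \ref{PropPL}, then propagates the graphical property to $\Sigma^{\mathbf{r}(T+t)}$ with $|\nabla u(\cdot,T+t)|\leq \epsilon_1/2$ on the outer annulus $\Sigma^{\mathbf{r}(T+t)}\setminus \Sigma^{R_*}$.

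\textbf{Step 2: Pointwise estimates on the outer region.} Next, Corollary \ref{CorPseudo-high} immediately supplies the Hessian bound $|\Hess_{u(T+t)}(x)|\leq C|x|^{-1}$ on the outer annulus (with $\ell=2$), which combined with the assumed bound on $\Sigma^{R_*}$ gives $|\Hess_{u(T+t)}|\leq C^t\delta_1$ uniformly on $\Sigma^{\mathbf{r}(T+t)}$. The $C^0$ bound $|u(x,T+t)|\leq C^t(\delta+\epsilon_1)(1+|x|)$ is then obtained in two stages: on $\Sigma^{R_*}$, I would view $u$ as a solution of the quasilinear RMCF graph equation $\partial_t u=L_\Sigma u+Q(u,\nabla u,\Hess_u)$ and apply the parabolic maximum principle against the exponentially growing barrier $C^t(\delta+\epsilon_1)$, upgrading the initial bound $|u(\cdot,T)|\leq \delta$ to $|u(\cdot,T+t)|\leq C^t(\delta+\epsilon_1)$; outside $\Sigma^{R_*}$, integrate the gradient bound from Step 1 along radial paths on $\Sigma$ (which is possible because the cone structure guarantees bounded length elements) to obtain $|u(x,T+t)|\leq C^t(\delta+\epsilon_1)(1+|x|)$. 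The gradient bound in conclusion (1) follows from the pointwise interpolation $|\nabla u|^2\leq 2\|u\|_{C^0}\|\Hess_u\|_{C^0}$.

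\textbf{Step 3: H\"older and higher regularity on the compact part.} For conclusion (2), apply standard parabolic Schauder theory to the RMCF graph equation on the compact manifold-with-boundary $\Sigma^{R_*}$: the assumed $C^2$-bound provides uniform ellipticity and bounded coefficients, yielding $\|u(\cdot,T+t)\|_{C^{2,\alpha'}(\Sigma^{R_*})}\leq C_0$ for some $\alpha'>\alpha$ uniformly for $t\in[1/2,T^*]$ (the initial short interval is absorbed by the hypothesis on the data). Combining this uniform higher-regularity bound with the $C^0$ smallness from Step 2 and the interpolation inequality
\[
\|u\|_{C^{2,\alpha}(\Sigma^{R_*})}\leq C\,\|u\|_{C^{2,\alpha'}(\Sigma^{R_*})}^{\theta}\,\|u\|_{C^{0}(\Sigma^{R_*})}^{1-\theta},\qquad \theta=\tfrac{2+\alpha}{2+\alpha'}\in(0,1),
\]
yields conclusion (2) with $\eps=1-\theta>0$. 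Conclusion (3) follows by iteratively differentiating the equation and applying interior parabolic Schauder estimates on $\Sigma^{R_*}$.

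\textbf{Main obstacle.} The most delicate point is the compatibility between pseudolocality, which requires a small-Lipschitz hypothesis on each annular shell, and the iterated propagation across the exponentially expanding region over a possibly long time $T^*$. One must choose $\delta_1$ and $\epsilon_1$ simultaneously so that the cumulative Lipschitz losses in the iterated application of Corollary \ref{CorPseudo} do not exceed $\epsilon_1$; this is possible because Corollary \ref{CorPseudo-high} supplies uniform-in-time gradient decay on annuli of radius $\sim e^{t/2}$, so the losses per unit-time step are themselves controlled. A secondary subtlety is that the barrier argument on $\Sigma^{R_*}$ in Step 2 must handle the inhomogeneous boundary data coming from the outer region; this is addressed by using the linear-growth bound from the exterior as the Dirichlet data at $\partial\Sigma^{R_*}$, which is compatible with the exponential-in-$t$ barrier.
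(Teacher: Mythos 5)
Your overall architecture is the same as the paper's: pseudolocality propagates the graphical property and gradient smallness on the expanding outer annuli, interior maximum-principle / interpolation / Schauder handle the compact core $\Sigma^{R_*}$, and the bounds are iterated over unit (or $\tau$-sized) time steps, giving the $C^t$ factors. The paper packages this into an auxiliary Proposition~\ref{propHolder-small} (proved from Lemma~\ref{Lem:C0bound}, Lemma~\ref{Lem:CurEst}, interpolation, and Schauder via \cite[Prop.~3.28]{CM2} and \cite[Thm.~3.6]{CS}), then iterates it; you carry out essentially the same steps directly. Two places where your write-up is less careful than it should be:

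First, your derivation of the gradient estimate ``$|\nabla u|^2\leq 2\|u\|_{C^0}\|\Hess_u\|_{C^0}$'' cannot be applied globally on $\Sigma^{\mathbf r(T+t)}$, since $\|u\|_{C^0}$ has linear growth in $|x|$ on the outer annuli; doing so would produce an extra factor $(1+|x|)^{1/2}$ not present in the claimed bound. You must apply this interpolation only on the fixed compact core $\Sigma^{R_*}$ (where $1+|x|\leq 1+R_*$ is a constant) and invoke your Step 1 pseudolocality bound $|\nabla u|\leq \eps_1/2$ on the outer region instead — which is exactly the split the paper makes explicit in the proof of Proposition~\ref{propHolder-small} (``interpolation on $\Sigma^{\mathbf r(T+\tau/2)}$, and pseudolocality on $\Sigma\cap\mathbb A_{\mathbf r(T),\mathbf r(T+\tau/2)}$''). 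As stated your Step 2 would not close.

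Second, ``apply standard parabolic Schauder theory ... the assumed $C^2$-bound provides uniform ellipticity and bounded coefficients, yielding $\|u\|_{C^{2,\alpha'}}\leq C_0$'' is circular at face value: the quasilinear graph equation has coefficients depending on $\nabla u$, and merely bounded (not H\"older) coefficients do not yield $C^{2,\alpha'}$ Schauder estimates. One needs either a DeGiorgi--Nash--Moser step to upgrade $\nabla u$ to $C^\alpha$, or (as the paper does) the careful quasilinear Schauder bootstrap from \cite[Prop.~3.28]{CM2} adapted via the interior estimate of \cite{CS}, or Ecker--Huisken's geometric interior estimate \cite[Thm.~3.4]{EH} which directly gives higher-order curvature bounds from the $C^2$ (curvature) bound. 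The paper uses Ecker--Huisken for conclusion (3) precisely for this reason; your iterated-Schauder route for (3) is workable but must initialize with one of these non-circular steps. These are fixable imprecisions rather than a wrong approach, and once repaired your argument reproduces the paper's proof.
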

	\begin{lemma}\label{Lem:C0bound}
		Suppose $\{\widetilde M_t\}$ is an RMCF. Given $\epsilon_1>0$, there exist $\delta_1>0$,  $\tau>0$, and $C>0$ such that the following holds. Suppose that $\widetilde M_T$ can be written as a graph of the function $u(\cdot,T)$ over $\Sigma^{r},\ r>R_*$ with $\|u(\cdot,T)\|_{C^1(\Sigma^{r})}\leq \delta\leq \delta_1$, then for $t\in[0,\tau]$, we have
		$$\sup_{\Sigma^{r}}|u(\cdot,T+t)|\leq C\sup_{\Sigma^{r}}|u(\cdot,T)|+\epsilon_1.$$
	\end{lemma}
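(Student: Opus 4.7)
The plan is to apply the parabolic comparison principle to the quasilinear equation satisfied by $u$ when $\widetilde M_t$ is written as a graph over $\Sigma^r$. In these coordinates the RMCF equation becomes
\[
\partial_t u=L_\Sigma u+Q(u,\nabla u,\nabla^2 u),
\]
where $L_\Sigma=\Delta_\Sigma-\tfrac12\langle x,\nabla_\Sigma\cdot\rangle+(|A|^2+\tfrac12)$ and $Q$ is at least quadratic in its arguments (see \cite[Appendix A]{SX}). The decisive geometric fact I would exploit is that $\Sigma$ is asymptotically conical, so $|A|^2+\tfrac12$ is bounded on $\Sigma$ by some $C_0$ independent of $r$.

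First I would invoke short-time well-posedness of the graphical RMCF: for $\delta_1$ sufficiently small, the smallness of $\|u(\cdot,T)\|_{C^1}$ combined with the uniform geometry of $\Sigma$ produces a $\tau>0$ independent of $r$ such that $\widetilde M_{T+t}$ remains graphical over $\Sigma^r$ with $\|u(\cdot,T+t)\|_{C^1(\Sigma^r)}\leq 2\delta_1$ for $t\in[0,\tau]$. Interior parabolic regularization (or the Ecker--Huisken estimate of Corollary \ref{CorPseudo-high}) then upgrades this to a pointwise bound $|\partial_t u|\leq C'\delta_1$, uniform on $\Sigma^r\times[0,\tau]$. Integrating this in time at each boundary point gives the crucial boundary estimate
\[
\sup_{[0,\tau]}\sup_{\partial\Sigma^r}|u|\;\leq\;\sup_{\partial\Sigma^r}|u(\cdot,T)|+C'\delta_1\tau.
\]

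Next I would compare $u$ with the explicit spatially constant supersolution $\Phi(t):=e^{C_2 t}\bigl(\sup_{\Sigma^r}|u(\cdot,T)|+\epsilon_1/2\bigr)$, with $C_2$ chosen larger than $C_0$ plus the absorbable contribution of $Q(\Phi,0,0)$ (quadratic in $\Phi$, hence negligible for $\delta_1$ small). Provided $\tau$ is further shrunk so that $C'\delta_1\tau\leq\epsilon_1/2$, $\Phi$ dominates $u$ on the parabolic boundary $(\Sigma^r\times\{0\})\cup(\partial\Sigma^r\times[0,\tau])$; since the principal part of the equation is uniformly parabolic (because $\|u\|_{C^1}$ is small), the classical comparison principle yields $u\leq\Phi$ on $\Sigma^r\times[0,\tau]$, and symmetrically $-u\leq\Phi$. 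This gives $\sup_{\Sigma^r}|u(\cdot,T+t)|\leq e^{C_2\tau}\sup_{\Sigma^r}|u(\cdot,T)|+e^{C_2\tau}\epsilon_1/2$, which is the desired inequality after setting $C:=e^{C_2\tau}$ and shrinking $\tau$ once more so that $e^{C_2\tau}\epsilon_1/2\leq\epsilon_1$.

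The main obstacle is the boundary control on $\partial\Sigma^r$, since the hypotheses say nothing about how $u$ evolves outside of $\Sigma^r$. It is overcome by noting that the flow is smooth and the equation itself bounds $|\partial_t u|$ up to the boundary; the $O(\tau)$ boundary drift is then absorbed into the slack term $\epsilon_1$. The asymptotically conical structure of $\Sigma$ is what makes every constant ($C_0$, $C'$, $C_2$) independent of $r$, so that the lemma holds uniformly in the size of the graphical domain.
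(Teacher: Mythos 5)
Your interior argument (comparison with the ODE supersolution $\Phi$, quadratic error absorbed for $\delta_1$ small, bounded potential $|A|^2+\tfrac12$ on the conical shrinker) is sound and morally the same as the maximum-principle step the paper uses (cf.~\cite[Lemma 3.13]{CM2}). The gap is precisely at the point you flag as the ``main obstacle'': the parabolic boundary $\partial\Sigma^r\times[0,\tau]$.

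Your control there rests on the claimed estimate $|\partial_t u|\le C'\delta_1$ up to the boundary. That estimate is not available. On the one hand $\partial_t u$ is a second-order quasilinear expression in $u$, so a pointwise bound on $\partial_t u$ requires $\Hess_u$ control, whereas the hypothesis is only $\|u(\cdot,T)\|_{C^1(\Sigma^r)}\le\delta$; nothing is assumed about $\nabla^2 u$ at $t=T$. On the other hand the regularization estimates you invoke (Ecker--Huisken, Corollary~\ref{CorPseudo-high}) are interior in \emph{time}: in the paper's formulation they only kick in for $t>1/2$, and the constants $C_\ell$ they produce depend on entropy and the Lipschitz bound, so they are $O(1)$, not $O(\delta_1)$. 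Hence neither the size $C'\delta_1$ nor the validity for all small $t$ is justified, and the earlier step ``remains graphical with $\|u\|_{C^1}\le2\delta_1$ on $\Sigma^r$'' suffers from the same problem: near $\partial\Sigma^r$ the evolution is driven by what happens outside $\Sigma^r$, which your hypotheses do not constrain, so there is no intrinsic well-posedness from which this gradient bound could follow.

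The paper closes exactly this gap by pseudolocality (Lemma~\ref{LmPseudoRMCF} and Corollary~\ref{CorPseudo}): because a surface that is locally a Lipschitz graph with small Lipschitz constant stays trapped in a thin slab for a short time \emph{regardless of what it does outside a small ball}, one gets directly that $\sup_{\mathbb A_{r-1,r}\cap\Sigma}|u(\cdot,T+t)|\le\epsilon_1$ for $t\in[0,\tau]$, using only the $C^1$ hypothesis. This gives the missing boundary slab estimate that feeds into the interior comparison. If you replace your time-integration argument with this pseudolocality bound near $\partial\Sigma^r$, the rest of your comparison-principle argument goes through and the proof is correct.
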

	
	\begin{proof}
		First we prove this bound on $\mathbb A_{r-1,r}\cap\Sigma$. We use the pseudolocality argument. We choose sufficiently small $\delta_1>0$, such that we can use Corollary \ref{CorPseudo}. More precisely, we pick $\delta$ in Lemma \ref{LmPseudoRMCF} to be $\epsilon_1$, and we pick $T$ in Lemma \ref{LmPseudoRMCF} to be $0$, and we pick $\rho<\mathrm{e}^{-1}$ in Lemma \ref{LmPseudoRMCF}. Corollary \ref{CorPseudo} applies to showing that on $\mathbb A_{r-1,r}\cap\Sigma$, the RMCF $\widetilde M_t$ can be written as a graph, whose supremum is bounded by $\mathrm{e}^{t/2} \epsilon_1 \rho  \leq \epsilon_1$.
		
		Next we consider the bound on $\Sigma^{r-1}$. Let $\eta$ be the cutoff function which is $1$ on $\Sigma^{r-1}$ and $0$ outside $\Sigma^{r}$. Let us consider the supremum of $\eta u$. Suppose the supremum of $\eta u(\cdot,t)$ is attained at $p$. If $|p|\geq r-1$, then we can use the previous pseudolocality argument to prove that $\sup_{\Sigma^{r-1}}|u|\leq \epsilon_1$. Otherwise, we know that $p$ is a local maximum of $u(\cdot,t)$. Then we use the maximum principle (c.f. Lemma 4.4 of \cite{CM3}) to show that $\sup_{\Sigma^{r-1}}|u(\cdot,T+t)|\leq C\sup_{\Sigma^{r-1}}|u(\cdot,T)|+\epsilon_1$. Combining the two cases together we get the desired bound. 
	\end{proof}
	
	We next have the following local curvature estimate near a conical singularity. 
	
	\begin{lemma}\label{Lem:CurEst}
		For any $\epsilon_2>0$, there exist $\delta$, $\eta,\tau$ and $C_0$ with the following significance. Suppose $\{\mathbf {\widetilde M}_t\}_{t\in[0,T]}$ is an MCF with $\max_{x\in \mathbf {\widetilde M}_0\cap B_{r}}|A|^2(x,0)\leq C<C_0$,\ $r>R_*$, and  satisfies  
		\begin{itemize}
			\item  $\mathbf {\widetilde M}_0\cap B_\delta(x)$ is a graph over some hyperplane $L$, with Lipschitz constant less than $\eta$, for any $x\in \mathbf {\widetilde M}_0\cap \mathbb A_{r,r+1}$.
		\end{itemize}
		Then for $t\in[\tau/2,\tau]$, we have
		$\max_{x\in \mathbf {\widetilde M}_t\cap B_{r+1}}|A|^2(x)\leq 2C+2\epsilon_2.$
	\end{lemma}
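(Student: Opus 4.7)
The proof splits $B_{r+1}$ into the interior $B_r$ (where the curvature is controlled at time $0$) and the graphical annulus $\mathbb A_{r,r+1}$ (where pseudolocality controls the geometry), and bounds $|A|^2$ on each by different mechanisms, then glues them.

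For the annulus, I would apply pseudolocality (Lemma \ref{LmPseudoMCF}) with output Lipschitz constant $\sqrt{\epsilon_2}$ to small $\delta$-balls around points of $\mathbf{\widetilde M}_0\cap \mathbb A_{r,r+1}$; this requires $\eta<\gamma(n,\sqrt{\epsilon_2})$ and $\delta<\rho(n,\sqrt{\epsilon_2})$. The graphicality is then preserved for $t\in[0,\tau]$ with $\tau=\tau(n,\epsilon_2)$. The Ecker-Huisken interior estimate (Lemma \ref{LmPseudoMCF-high}) then provides a $C^2$-bound on the resulting graphs for $t\in[\tau/2,\tau]$, and by choosing $\eta$ small enough we obtain $|A|^2\leq \epsilon_2$ on $\mathbf{\widetilde M}_t\cap \mathbb A_{r,r+1}$ throughout this time interval.

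For the interior, I would apply a Hamilton-style maximum principle to $\phi^2|A|^2$, where $\phi:\R^{n+1}\to[0,1]$ is an ambient cutoff with $\phi\equiv 1$ on $B_{r-1/2}$ and $\spt\phi\subset B_r$. Because $\spt\phi\subset B_r$, we have $\phi^2|A|^2(\cdot,0)\leq C$ initially. The evolution $(\partial_t-\Delta)|A|^2\leq 2|A|^4$ combined with the ODE comparison $\dot y\leq 2y^2$ then gives $\max\phi^2|A|^2\leq 2C$ on $[0,\tau]$ provided $\tau\leq 1/(4C_0)$, so that $|A|^2\leq 2C$ on $\mathbf{\widetilde M}_t\cap B_{r-1/2}$ for $t\in[\tau/2,\tau]$. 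To upgrade this to $B_r$, I would observe that the bound $|A|^2\leq C<C_0$ on $B_r$ with $C_0$ small implies that $\mathbf{\widetilde M}_0$ is itself a small-Lipschitz graph around each of its points in $B_r$ at the scale $1/\sqrt{C_0}$, so pseudolocality plus Ecker-Huisken apply to the sliver $\mathbb A_{r-1/2,r}$ as well, yielding $|A|^2\leq 2C+\epsilon_2$ there. Combining with the annulus estimate gives $|A|^2\leq 2C+2\epsilon_2$ on $\mathbf{\widetilde M}_t\cap B_{r+1}$.

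The main obstacle is handling the extra terms in the maximum principle that arise from the fact that $\phi$ is an ambient cutoff while the evolution lives on the moving surface. One has $\partial_t\phi(x(t))=-H\,\bn\cdot \nabla_{\R^{n+1}}\phi$ and $\Delta_{\mathbf{\widetilde M}_t}\phi=\Delta_{\R^{n+1}}\phi-\Hess_{\R^{n+1}}\phi(\bn,\bn)-H\,\bn\cdot \nabla_{\R^{n+1}}\phi$, so the heat operator $\partial_t-\Delta$ applied to $\phi$ is bounded purely in terms of ambient derivatives of $\phi$. These corrections live on $\spt\nabla\phi$ and enter the evolution of $\phi^2|A|^2$ through terms of the schematic form $|A|^2\cdot(\partial_t-\Delta)\phi^2$ and a cross-gradient term $\nabla\phi^2\cdot\nabla|A|^2$; the latter is rewritten using $\nabla(\phi^2|A|^2)=0$ at a maximum, producing a term $|A|^2|\nabla\phi|^2/\phi^2$. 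Standard cutoff arithmetic (choosing $\phi=\psi^4$ with $\psi$ Lipschitz, so that $|\nabla\phi|^2/\phi$ is bounded) then absorbs these corrections into the ODE comparison and closes the estimate.
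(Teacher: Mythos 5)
Your global strategy (pseudolocality on the annulus, evolution inequality plus ODE comparison on the interior) is the same as the paper's, and the annulus step is handled identically. But the interior step is where you and the paper diverge, and your version has a gap.

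The paper avoids any cutoff. It sets $m(t)=\max_{B_{r+1/2}}|A|^2$ and argues by cases: if $m(t)\leq\epsilon_2$ for all $t\in[\tau/2,\tau]$ there is nothing to prove; otherwise whenever $m(t)>\epsilon_2$ the maximum must be attained at an interior point of $B_{r+1/3}$ (because the pseudolocality step already gives $|A|^2<\epsilon_2$ on $\mathbb A_{r+1/3,r+1/2}$), so the interior parabolic maximum principle applies to $(\partial_t-\Delta)|A|^2\leq 2|A|^4$ \emph{without} any cutoff, giving $m'\leq 2m^2$ directly and the ODE comparison closes cleanly.

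Your version routes the interior step through a Hamilton-style cutoff $F=\phi^2|A|^2$, and the ODE you need for $m(t)=\max F$ does not close as written. The leading term in $(\partial_t-\Delta)F$ is $\phi^2(\partial_t-\Delta)|A|^2\leq 2\phi^2|A|^4 = 2F^2/\phi^2$, which degenerates as $\phi\to 0$; this is separate from, and worse than, the cross term $8|A|^2|\nabla\phi|^2$ that you address with the $\phi=\psi^4$ trick, and the quartic term cannot be absorbed by any choice of cutoff power. So one does not get $m'\leq 2m^2$ (or even $m'\leq 2m^2+Cm$) from this computation, and the claim "$\max\phi^2|A|^2\leq 2C$ on $[0,\tau]$ provided $\tau\leq 1/(4C_0)$" is unjustified. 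Genuinely closing a cutoff ODE here would require a different test quantity (e.g.\ Ecker--Huisken's $v$-weighted ones), which is not sketched. The fix is easy and is exactly what the paper does: once pseudolocality controls $|A|^2$ on the boundary annulus, simply note that the maximum of $|A|^2$ over the ball is either $\leq\epsilon_2$ or attained in the interior, and apply the interior maximum principle without any cutoff. Your sliver argument on $\mathbb A_{r-1/2,r}$ is in the same spirit as the paper's annulus step and, with care about which parameters depend on which ($C_0$ small enough that the graph at scale $\gamma/\sqrt{C_0}$ has Lipschitz constant $\leq\gamma$, with $\tau\leq\gamma^2/C_0$), is fine; but it is not needed once the cutoff is dropped in favor of the case analysis.
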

	
	\begin{proof} We extend the argument of \cite[Corollary 4.6]{CM3} to the noncompact setting. 
		By the pseudolocality argument, if $\eta$ is sufficiently small, there exists $\tau>0$ such that on $\mathbf {\widetilde M}_t\cap \mathbb A_{r+1/3,r+1/2}$, when $t\in[\tau/2,\tau]$, we have $|A|^2<\epsilon_2$. Next we define 
		$m(t)=\max_{x\in \mathbf {\widetilde M}_t\cap B_{r+1/2}}|A|^2(x,t).$
		We have two cases: if $m(t)\leq \epsilon_2$ for all $t\in[\tau/2,\tau]$, then the inequality is proved. Otherwise, $m(t)$ is attained at somewhere on $\mathbf {\widetilde M}_t\cap B_{r+1/3}$. Using the inequality
		$(\partial_t-\Delta_{\mathbf {\widetilde M}_t})|A|^2\leq |A|^4,$
		we can use the interior maximum principle to show that $ m'(t)\leq m(t)^2$. This implies that 
		$m(t)\leq \min\{2C,2\epsilon_2\},$
		if $t\in[0,\tau]$, where $\tau$ is a constant depending on $C$ and $\epsilon_2$. 
	\end{proof}
	
	\begin{proposition}\label{propHolder-small}
		Given $\epsilon_1>0$, there exist $\tau>0$, $\delta_1$, $C$, $\eps$ and $\alpha>0$ so that the following holds. Suppose $\widetilde M_t$ is an RMCF with $\mathbf{r}( T)\geq R_*$, and $\widetilde M_{T+t}$ can be written as the graph of the function $w$ on $\Sigma\cap B_{\mathbf{r}(T)}$ satisfying
		$
		|w|+|\nabla w|\leq \delta\leq \delta_1, \text{ and } |\Hess_w|\leq \delta_1,
		$
		then $\widetilde M_{T+t}$ can be written as the graph of a function $u$ on $\Sigma^{\mathbf{r}({T+t})}$ for $t\in[\tau/2,\tau]$ with  $u(\cdot,0)=w$ and  
		\begin{enumerate}
			\item On $\Sigma^{\mathbf{r}({T+t})}$ and for  $t\in[\tau/2,\tau]$, we have $$|u(x,t)|\leq C(\delta+\epsilon_1)(1+|x|),\ |\nabla u(\cdot,t)|\leq C^{1/2}\sqrt{(\delta+\epsilon_1)}, \ |\Hess_{u(T+m-t)}|\leq C\dt_1 ,$$
			\item  on $\Sigma^{\mathbf{r}(T)}$ we have $\|u(\cdot,\tau)\|_{C^{2,\alpha}(\Sigma^{\mathbf{r}(T)})}\leq C(\delta+\epsilon_1)^\eps$.
		\end{enumerate}
	\end{proposition}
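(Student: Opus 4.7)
The plan is to combine the local $C^0$ maximum principle of Lemma \ref{Lem:C0bound}, pseudolocality (Corollaries \ref{CorPseudo} and \ref{CorPseudo-high}) to extend the graphical domain, the curvature estimate of Lemma \ref{Lem:CurEst}, and finally standard interpolation to deduce the Hölder bound. First, I would apply Lemma \ref{Lem:C0bound} on $\Sigma^{\mathbf{r}(T)}$ to get $|u(\cdot,T+t)|\le C\dt+\eps_1$ there, for $t\in[0,\tau]$, provided $\dt_1$ and $\tau$ are chosen small. To extend the graphical region past $\Sigma^{\mathbf{r}(T)}$, the hypothesis $|\nabla w|\le\dt_1$ together with the near-preservation of tangent planes along a conical end (the Remark after Corollary \ref{CorPseudo-high}) puts us in the setting of Corollaries \ref{CorPseudo} and \ref{CorPseudo-high}. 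Pseudolocality then shows that, for each $R\le\mathbf{r}(T)$, the flow $\widetilde M_{T+t}$ stays a graph over $\mathbb A_{e^{t/2}(R-1),e^{t/2}R}\cap\Sigma$ with small Lipschitz constant and controlled higher derivatives. Since the graphical scale grows at rate $e^{\cdot/2}$, the two regions together cover all of $\Sigma^{\mathbf{r}(T+t)}$, giving the claimed graph $u(\cdot,T+t)$.

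For the linear growth bound $|u(x,T+t)|\le C(\dt+\eps_1)(1+|x|)$: on the initial region $\Sigma^{\mathbf{r}(T)}$ this is immediate from Step 1, so it suffices to prove it on the new annulus $\Sigma^{\mathbf{r}(T+t)}\setminus\Sigma^{\mathbf{r}(T)}$. Here one integrates the pseudolocality Lipschitz bound along radial paths emanating from $\partial\Sigma^{\mathbf{r}(T)}$, where the boundary value is $\le C(\dt+\eps_1)$; choosing the pseudolocality parameter so that the output Lipschitz constant scales linearly in $\dt+\eps_1$ produces linear growth of $|u|$ in $|x|$. The Hessian bound $|\Hess_u|\le C\dt_1$ for $t\in[\tau/2,\tau]$ follows from Lemma \ref{Lem:CurEst} applied in the corresponding MCF picture: the initial bound on $|A|^2$ comes from the hypothesis $|\Hess_w|\le\dt_1$ together with the Gauss formula relating $\Hess_u$ and $A$ for small-gradient graphs, while the exterior hypothesis of Lemma \ref{Lem:CurEst} is supplied by pseudolocality. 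The gradient bound then falls out of the standard interpolation $|\nabla u|^2\le C\|u\|_{L^\infty}\|\Hess_u\|_{L^\infty}$, yielding $|\nabla u|\le C^{1/2}\sqrt{\dt+\eps_1}$ once $\dt_1$ is absorbed into the constant.

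For the $C^{2,\al}$ bound on $\Sigma^{\mathbf{r}(T)}$ we pass to higher derivatives: Ecker--Huisken interior regularity (the MCF content of Corollary \ref{CorPseudo-high}) combined with the already-established $C^2$ bound produces, for any integer $\ell$, a bound $\|u(\cdot,T+t)\|_{C^\ell(\Sigma^{\mathbf{r}(T)})}\le C_\ell$ for $t\in[\tau/2,\tau]$ with $C_\ell$ depending on $\dt_1$ but independent of $\dt$. Interpolating between the $C^0$ bound $\|u\|_{C^0}\le C(\dt+\eps_1)$ and the $C^\ell$ bound then gives $\|u\|_{C^{2,\al}}\le C(\dt+\eps_1)^{\eps}$, with $\eps=(\ell-2-\al)/\ell>0$ for $\ell$ large.

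The main obstacle is Step 3 — arranging the pseudolocality estimate on the annular extension so that the resulting Lipschitz constant and height are genuinely controlled by $\dt+\eps_1$ (rather than by a fixed small parameter of pseudolocality), and doing so compatibly with the asymptotic conical geometry so that a graph over a fixed tangent plane translates into a graph over $\Sigma$ with the correct dependence on $|x|$. A secondary subtlety, which should however cause no essential trouble, is making $\tau$ uniform in $T$ and $\mathbf{r}(T)$; this reduces to taking $\dt_1$ small enough at the outset so that the smallness requirements of Lemmas \ref{Lem:C0bound}--\ref{Lem:CurEst} and of the pseudolocality results are met simultaneously.
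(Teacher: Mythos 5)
Your proof is essentially correct and mirrors the paper's argument for the three bounds in item (1): Lemma~\ref{Lem:C0bound} gives the $C^0$ bound, Lemma~\ref{Lem:CurEst} (via the MCF picture and Gauss formula) gives the Hessian bound, the elementary interpolation $|\nabla u|^2\lesssim\|u\|_\infty\|\Hess u\|_\infty$ gives the gradient bound on the interior region, and pseudolocality covers the annular extension $\Sigma\cap\mathbb A_{\mathbf r(T),\mathbf r(T+t)}$ and provides the graphicality there. You differ from the paper in how you obtain the $C^{2,\alpha}$ smallness in item (2): the paper observes that the $C^2$ bounds from (1) make the graph equation uniformly strictly parabolic on $\Sigma^{\mathbf r(T)}\times[\tau/2,\tau]$ and then invokes interior \emph{parabolic} Schauder estimates following the argument of \cite[Proposition 3.28]{CM2} (with the interior Schauder estimate from \cite{CS} substituting for the compact-case estimate), whereas you instead establish uniform $C^\ell$ bounds via Ecker--Huisken interior regularity and then interpolate in Hölder scale between $C^0\lesssim\delta+\eps_1$ and $C^\ell\lesssim C_\ell$. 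Both routes are sound; the Schauder route is more direct in that it only needs the parabolic equation and a $C^2$ bound, while your interpolation route requires first controlling all derivatives up to order $\ell$ but avoids invoking the Schauder machinery and is in fact the technique the paper itself uses a few paragraphs later (Proposition~\ref{propHolder}(3) and Lemma~\ref{LmInterpolation}) to bound $C^{2,\alpha}$ by the $Q$-norm, so it is fully consistent with the paper's toolkit. One small caveat: your interpolation between $C^0$ and $C^\ell$ must be carried out on the fixed compact region $\Sigma^{\mathbf r(T)}$ where the $C^0$ bound is $O(\delta+\eps_1)$ without the $(1+|x|)$ growth factor, which is exactly where the proposition asks for it; on the annular region the linear growth would spoil the estimate, but the proposition only claims the $C^{2,\alpha}$ smallness on $\Sigma^{\mathbf r(T)}$, so this is fine.
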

	
	\begin{proof}
		Lemma \ref{Lem:C0bound} shows the first bound in (1) and Lemma \ref{Lem:CurEst} shows the third bound (with the relation between the MCF and the RMCF) in (1). The second bound in (1) comes from the interpolation on $\Sigma^{\mathbf{r}({T+\tau/2})}$, and from pseudolocality on $\Sigma\cap (\mathbb A_{\mathbf{r}({T}),\mathbf{r}({T+\tau/2})})$.
		
		The above statement in (1) implies that $u$ is a strictly parabolic equation on $\Sigma^{\mathbf{r}({T})}$ when $t\in[\tau/2,\tau]$. Then (2) follows from a similar argument to \cite[Proposition 3.28]{CM2}, with the only difference being that we need to use the interior Schauder estimate (c.f. Theorem 3.6 of \cite{CS}).
		
	\end{proof}
	We next give the proof of Proposition \ref{propHolder}. 
	\begin{proof}[Proof of Proposition \ref{propHolder}]
		Whenever $\|u(\cdot,T+t)\|_{C^2(\Sigma^r)}\leq \delta_1$, we can repeatly use Proposition \ref{propHolder-small}, each time in $\Sigma^{\mathbf{r}(T)}$. Outside $\Sigma^{\mathbf{r}(T)}$, we repeatly use pseudolocality arguments. Notice that although at each step we only prove the desired bound for $t\in[\tau/2,\tau]$, we can start from $\tau/2$ to iteratively use the argument, and therefore the desired bound can be obtained for all $t\in[1,T^*]$. For item (3), we use the higher-order estimate of curvature by Ecker-Huisken (c.f. \cite[Theorem 3.4]{EH}). More precisely, Ecker-Huisken proved that the higher-order continuous norm of $u$ is uniformly bounded by a constant depending on $\delta_1$.
	\end{proof}

	\subsection{Bounding the $C^{2,\al}$ norm by the $Q$-norm}\label{SSCQ}
	
	In application, we want to iterate Proposition \ref{LmCM3} on a long period of time, which requires that  $\|u(\cdot,T+t)\|_{C^{2,\al}}<\dt$ on $\Sigma^r$ for $t$ in that period of time. However, $\|u(\cdot,T+t)\|_{C^{2,\al}}$ may become very large if $t$ is large, so Proposition \ref{LmCM3} can only be used on a short time period $[T,T+\tau]$. It may happen that
	the $Q$-norm does not acquire sufficient growth over such a time interval. To overcome this difficulty, in this subsection, we prove items (1) and (2) of Proposition \ref{ThmDynamics}. The key point is to interpolate the higher derivative estimate of Ecker-Huisken with the $Q$-norm to bound the $C^{2,\al}$ norm. A similar argument was used in \cite{Sc}, \cite{CM3} and \cite{CS}. We cite the following interpolation lemma from 
	\cite[Appendix B]{CM3}. It is mentioned in \cite[Appendix B]{CM3} that the result also extends to a hypersurface with scale-invariant curvature bounds. 
	\begin{lemma}\label{LmCMinter}
		There exists $C$ only depending on $\Sigma$ and $r$ such that if $u$ is a $C^\ell$ function on $\Sigma^r$, then  we have for $a_{\ell,n}=\frac{\ell}{\ell+n}$, $b_{\ell,n}=\frac{\ell-1}{\ell+n}$, $c_{\ell,n}=\frac{\ell-2}{\ell+n}$
		\begin{equation*}
			\|u\|_{L^\infty(\Sigma^r)}\leq C\{\|u\|_{L^1(\Sigma^{r+1})}+\|u\|_{L^1(\Sigma^{r+1})}^{a_{\ell,n}} \|\nabla^\ell u \|^{1-a_{\ell,n}}_{L^\infty(\Sigma^{r+1})}\},
		\end{equation*}
		\begin{equation*}
			\|\nabla u\|_{L^\infty(\Sigma^r)}\leq C\{\|u\|_{L^1(\Sigma^{r+1})}+\|u\|_{L^1(\Sigma^{r+1})}^{b_{\ell,n}} \|\nabla^\ell u \|^{1-a_{\ell,n}}_{L^\infty(\Sigma^{r+1})}\},
		\end{equation*}
		\begin{equation*}
			\|\nabla^2u\|_{L^\infty(\Sigma^r)}\leq C\{\|u\|_{L^1(\Sigma^{r+1})}+\|u\|_{L^1(\Sigma^{r+1})}^{c_{\ell,n}} \|\nabla^\ell u \|^{1-a_{\ell,n}}_{L^\infty(\Sigma^{r+1})}\}.
		\end{equation*}
	\end{lemma}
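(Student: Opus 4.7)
The assertion is a classical Gagliardo--Nirenberg-type interpolation on the smooth, precompact piece $\Sigma^{r+1}$ of a shrinker. The exponents are pinned down by a scaling heuristic: on $\mathbb{R}^n$ under $u(x) \mapsto u(\lambda x)$ the quantities $\|u\|_{L^\infty}$, $\|\nabla u\|_{L^\infty}$, $\|\nabla^2 u\|_{L^\infty}$, $\|u\|_{L^1}$, $\|\nabla^\ell u\|_{L^\infty}$ scale with weights $0, -1, -2, -n, -\ell$. Matching weights on the two sides of each inequality forces the exponents to be exactly $\ell/(\ell+n)$, $(\ell-1)/(\ell+n)$, $(\ell-2)/(\ell+n)$, so only the inequality, not the exponents, needs proof.

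My plan is to reduce to the Euclidean case by a chart argument and then prove a local Taylor-expansion estimate. Since $\Sigma$ is a fixed smooth hypersurface, $\Sigma^{r+1}$ has uniformly bounded geometry (bounded curvature and positive injectivity radius), so there exists $\rho_0 > 0$ such that for every $p \in \Sigma^r$ the geodesic ball $B_{2\rho_0}(p) \subset \Sigma^{r+1}$ admits a normal coordinate chart in which the metric is uniformly comparable to the Euclidean one and covariant derivatives agree with ordinary derivatives up to lower-order terms controlled by curvature. Cover $\Sigma^r$ by finitely many such balls, transfer everything to a fixed Euclidean ball $B_1$, and it suffices to prove the inequalities there with an additive $\|u\|_{L^\infty(B_1)}$ term.

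For the Euclidean estimate, fix $x \in B_{1/2}$ and $\rho \in (0,1/4)$ and use Taylor's theorem to write
\[
u(x) = \fint_{B_\rho(x)} u\,dy - \sum_{1 \leq |\alpha| < \ell}\frac{\partial^\alpha u(x)}{\alpha!}\fint_{B_\rho(x)} (y-x)^\alpha\,dy + R,
\]
with $|R| \leq C\rho^\ell \|\nabla^\ell u\|_{L^\infty(B_1)}$. The averaging against monomials of odd total degree vanishes by symmetry, and intermediate derivatives $|\partial^\alpha u(x)|$ for $2 \leq |\alpha| < \ell$ are absorbed using the standard Landau--Kolmogorov chain $\|\nabla^k u\|_{L^\infty} \leq C\|u\|_{L^\infty}^{1-k/\ell}\|\nabla^\ell u\|_{L^\infty}^{k/\ell}$ (proved by the same Taylor argument applied inductively). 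This yields
\[
|u(x)| \leq C\rho^{-n}\|u\|_{L^1(B_1)} + C\rho^\ell \|\nabla^\ell u\|_{L^\infty(B_1)} + C\|u\|_{L^\infty(B_1)},
\]
and optimizing in $\rho$ (balancing the first two terms, constrained by $\rho \leq 1/4$ to produce the additive term) gives the $L^\infty$ interpolation with exponent $a_{\ell,n}$. The estimates for $\nabla u$ and $\nabla^2 u$ follow by differentiating the averaging identity once or twice before integrating against a cutoff, which changes the scale of the $L^1$ term from $\rho^{-n}$ to $\rho^{-n-1}$ or $\rho^{-n-2}$ respectively; the balance of Taylor remainder against this new scale produces precisely $b_{\ell,n}$ and $c_{\ell,n}$.

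The main obstacle I anticipate is bookkeeping: confirming that the change from covariant to coordinate derivatives, together with the intermediate derivatives appearing in the Taylor remainder, can all be absorbed cleanly without disturbing the optimal exponents. The remedy is the Landau--Kolmogorov chain applied to each troublesome intermediate term, combined with the fact that on $\Sigma^{r+1}$ the geometric constants depend only on $\Sigma$, $\ell$, $n$, and $r$. Summing the local estimates over the finite chart cover gives the three stated inequalities on $\Sigma^r$ with a uniform constant $C = C(\Sigma, r, \ell)$.
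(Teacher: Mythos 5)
The paper does not actually prove this lemma: it is cited verbatim from Colding--Minicozzi \cite[Appendix B]{CM3}. So you are supplying an independent proof, not reconstructing one from the paper, and the approach you take (reduce to Euclidean charts via bounded geometry of $\Sigma^{r+1}$, then run a local Taylor/averaging argument, with the exponents pinned down by scaling) is the standard Gagliardo--Nirenberg route. The scaling bookkeeping you do at the start is correct, as is the chart reduction and the observation that on bounded domains the additive term arises from the constraint $\rho\leq 1/4$ in the optimization.

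The genuine gap is in the handling of the intermediate derivatives. When you average $u$ over a ball $B_\rho(x)$, only the odd-order moments vanish; the even-order moments $\fint (y-x)^\alpha\,dy$ for $|\alpha|=2,4,\ldots$ are nonzero and of size $\rho^{|\alpha|}$, so the terms $\rho^{|\alpha|}|\partial^\alpha u(x)|$ survive. You propose to absorb these with the Landau--Kolmogorov chain $\|\nabla^k u\|_{L^\infty}\lesssim\|u\|_{L^\infty}^{1-k/\ell}\|\nabla^\ell u\|_{L^\infty}^{k/\ell}$, but this creates two problems: (i) on a bounded ball this inequality has an additional lower-order term, and (ii) it reintroduces $\|u\|_{L^\infty}$ on the right, i.e.\ the very quantity being estimated, so absorption requires a Young-inequality self-absorption argument that you assert but do not carry out. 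As written the step is circular in appearance and its resolution is not trivial. The cleaner remedy, and what a careful proof should do, is to replace the ball average by convolution with a fixed smooth radial kernel $\varphi_\rho(y)=\rho^{-n}\varphi((y-x)/\rho)$ chosen so that $\int y^\alpha\varphi(y)\,dy=0$ for all $1\leq|\alpha|\leq\ell-1$; then \emph{every} intermediate Taylor term disappears, one gets directly $|u(x)|\leq C\rho^{-n}\|u\|_{L^1}+C\rho^\ell\|\nabla^\ell u\|_{L^\infty}$, and the gradient and Hessian bounds come from differentiating the kernel (giving $\rho^{-n-1}$, $\rho^{-n-2}$) exactly as you anticipate. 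With that replacement your argument closes; without it the key absorption step is not established. A minor additional remark: the natural exponent on $\|\nabla^\ell u\|_{L^\infty}$ in the second and third estimates produced by your argument (and by scaling) is $1-b_{\ell,n}$ and $1-c_{\ell,n}$, not $1-a_{\ell,n}$ as printed; this appears to be a typographical slip in the paper's transcription of CM3, and is worth flagging even though it is not a defect of your proof.
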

	
	\begin{proof}
		The proof is the same as \cite[Appendix B]{CM3} so we omit it here. We only would like to remind the reader that in \cite[Appendix B]{CM3}, the constant $C$ does not depend on $r$ because of a scale-invariant argument. Here $C$ depends on $r$ but this is sufficient for our purpose.
	\end{proof}
	
	We use $\chi_r$ to denote the characterizing function on $\Sigma^r$, i.e., $\chi_r=1$ on $\Sigma^r$ and $0$ elsewhere.

	\begin{lemma}\label{LmInterpolation}
		If $\widetilde M_t$ is a graph of the function $u$ on $\Sigma\cap B_{\mathbf{r}(t)}$ and $\|u(\cdot,t)\|_{C^\ell(\Sigma^r)}\leq C_\ell$, then we have 
		$		\|u(\cdot,t)\|_{C^{2,\alpha}(\Sigma^{r})}\leq C \|u(\cdot,t)\chi_{\mathbf{r}(t)}\|^{\alpha(\ell)}_{Q},$
		where $\alpha(\ell)=\frac{\ell-3}{\ell+n}$ and $C$ is a constant depending only on $\Sigma$, $\delta_1$ and $\ell$.
	\end{lemma}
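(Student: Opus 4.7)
The plan is to combine the Gagliardo--Nirenberg type interpolation estimate (Lemma \ref{LmCMinter}) with the uniform higher-order bound $\|u\|_{C^\ell(\Sigma^r)}\leq C_\ell$ to trade control of derivatives up to order $\ell$ for control of the low-order $L^1$ norm, and then convert that $L^1$ norm on the bounded set $\Sigma^{r+1}$ into the $Q$-norm of $u\chi_{\mathbf r(t)}$.

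First, since $\Sigma^r$ is bounded, the embedding $C^3(\Sigma^r)\hookrightarrow C^{2,\alpha}(\Sigma^r)$ gives $\|u\|_{C^{2,\alpha}(\Sigma^r)}\leq C\|u\|_{C^3(\Sigma^r)}$ with a constant depending only on $r$ and $\Sigma$. Thus it suffices to bound $\|\nabla^s u\|_{L^\infty(\Sigma^r)}$ for $s=0,1,2,3$. Applying Lemma \ref{LmCMinter} (and its immediate extension to $s=3$, which is proved by the same scaling-interpolation argument used in \cite{CM3}) yields
\[
\|\nabla^s u\|_{L^\infty(\Sigma^r)}\leq C\bigl\{\|u\|_{L^1(\Sigma^{r+1})}+\|u\|_{L^1(\Sigma^{r+1})}^{(\ell-s)/(\ell+n)}\|\nabla^\ell u\|_{L^\infty(\Sigma^{r+1})}^{(s+n)/(\ell+n)}\bigr\}.
\]
The worst (smallest) exponent on $\|u\|_{L^1}$ among $s\in\{0,1,2,3\}$ is realized at $s=3$ and equals exactly $\alpha(\ell)=\frac{\ell-3}{\ell+n}$. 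Using the hypothesis $\|\nabla^\ell u\|_{L^\infty(\Sigma^{r+1})}\leq C_\ell$, the top-order factor is absorbed into the constant, leaving
\[
\|u\|_{C^{2,\alpha}(\Sigma^r)}\leq C\bigl(\|u\|_{L^1(\Sigma^{r+1})}+\|u\|_{L^1(\Sigma^{r+1})}^{\alpha(\ell)}\bigr).
\]

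Next I convert $L^1(\Sigma^{r+1})$ to the $Q$-norm. By Proposition \ref{PropPL}, for $t$ large we have $\mathbf r(t)\geq r+1$, so $u=u\chi_{\mathbf r(t)}$ on $\Sigma^{r+1}$. Cauchy--Schwarz on the set $\Sigma^{r+1}$ of bounded volume gives $\|u\|_{L^1(\Sigma^{r+1})}\leq C(r)\|u\|_{L^2(\Sigma^{r+1})}$, and since on $\Sigma^{r+1}$ the Gaussian weight satisfies $1\leq e^{(r+1)^2/4}\,e^{-|x|^2/4}$,
\[
\|u\|_{L^2(\Sigma^{r+1})}^2\leq e^{(r+1)^2/4}\int_{\Sigma^{r+1}}u^2\,e^{-|x|^2/4}d\mu\leq e^{(r+1)^2/4}\|u\chi_{\mathbf r(t)}\|_{L^2(\Sigma)}^2.
\]
Finally, the $Q$-norm is equivalent to the Gaussian-weighted $W^{1,2}$ norm, so $\|u\chi_{\mathbf r(t)}\|_{L^2(\Sigma)}\leq C\|u\chi_{\mathbf r(t)}\|_Q$. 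Combining these estimates, for $\|u\chi_{\mathbf r(t)}\|_Q$ bounded (which holds because the perturbation is small) the linear term is dominated by the $\alpha(\ell)$-power term, and we conclude
\[
\|u(\cdot,t)\|_{C^{2,\alpha}(\Sigma^r)}\leq C\|u(\cdot,t)\chi_{\mathbf r(t)}\|_Q^{\alpha(\ell)},
\]
with $C$ depending on $\Sigma,\ell,\delta_1,r$.

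There is no real obstacle here; the only point requiring a small check is the inclusion $\Sigma^{r+1}\subset\Sigma^{\mathbf r(t)}$ so that $u\chi_{\mathbf r(t)}$ agrees with $u$ on the interpolation domain, which is automatic from Proposition \ref{PropPL} for $t$ large. The interpolation step uses only the bounded geometry of $\Sigma^{r+1}$ together with the uniform curvature bounds on an asymptotically conical shrinker (to ensure the scale-invariant form of the Gagliardo--Nirenberg inequality on $\Sigma^{r+1}$), which is exactly the setting in which Lemma \ref{LmCMinter} is formulated.
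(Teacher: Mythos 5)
Your proposal is correct and follows essentially the same approach as the paper's (terse) proof: apply the Gagliardo--Nirenberg interpolation of Lemma \ref{LmCMinter} together with the uniform $C^\ell$ bound to absorb the top-order term, then convert the resulting low-order norm on the bounded region $\Sigma^{r+1}$ into the Gaussian-weighted $Q$-norm of $u\chi_{\mathbf r(t)}$ using the fact that $\|\cdot\|_Q$ dominates $\|\cdot\|_{L^2}$ (since $\Lambda>\sup_t\lambda_1(t)$). You fill in two points the paper leaves implicit — the need to interpolate up to $s=3$ (not just $s=2$) to reach $C^{2,\alpha}$, which is exactly why the exponent is $\frac{\ell-3}{\ell+n}$, and the absorption of the additive $L^1$ term using smallness — but these are faithful expansions, not a different route.
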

	\begin{proof}
		The interpolation lemma \ref{LmCMinter} together with the $C^\ell$ uniform upper bound proved in Proposition \ref{propHolder} implies that
		$\|u\|_{C^{2,\alpha}(\Sigma^{r})}\leq C\|u\|^{\alpha(\ell)}_{L^2(\Sigma^{r+1})}\leq C\|u\chi_{\mathbf{r}(t)}\|^{\alpha(\ell)}_Q.$
	\end{proof}
	Items (1) and (2) of Proposition \ref{ThmDynamics} follow from this lemma. 
	
	\section{The convergence of geometric quantities}\label{SConvergence}
	In this section, we prove the convergence of eigenfunctions and eigenvalues under assumption $(\star)$, and hence verifying the assumptions of Proposition \ref{mainthm-FK} in the case of conical singularities. In Section \ref{SSConvergeEigen}, we prove the convergence of the leading eigenvalue $\lambda_1(M_t)$, and hence verifying assumption (1) of Proposition \ref{mainthm-FK}. In Section \ref{SSConvergeFunction}, we prove the convergence of the leading eigenfunction. In Section \ref{SSSpectralGap}, we prove the spectral gap, and hence verifying assumption (2) of Proposition \ref{mainthm-FK}.

	Given a hypersurface $M$ (possibly with a boundary), we define the first eigenvalue $\lambda_1(M)$ of the linearized operator $L_M:=\Delta_M-\frac{1}{2}\langle x,\nabla_M\cdot\rangle +(|A|^2+1/2)$ to be the number
	\begin{equation*}
		\lambda_1(M):=-\inf_{f\in W_w^{1,2}(M)}\frac{\int_M [|\nabla f|^2 -(|A|^2+1/2)f^2 ]\mathrm{e}^{-\frac{|x|^2}{4}}d\mu}{\int_M f^2 \mathrm{e}^{-\frac{|x|^2}{4}}d\mu},
	\end{equation*}
	where the $W_w^{1,2}(M)$ is the weighted $W^{1,2}$ space, i.e., the completion of the following space
	\begin{equation*}
		\left\{f\in C_c^\infty(M):\int_M (f^2+|\nabla f|^2)\mathrm{e}^{-\frac{|x|^2}{4}}d\mu<\infty\right\}.
	\end{equation*}
	If $M$ has a boundary, $\lambda_1(M)$ is called the first Dirichlet eigenvalue of $L_M$. Notice that if $M$ is noncompact, then $\lambda_1(M)$ could be $\infty$. We remind the reader that compared with the many other contexts in MCFs like \cite{CM1} and \cite{BW}, our definition has a minus sign.

	We first list some important properties of the first eigenvalue. 
	\begin{proposition}
		Suppose $\Sigma$ is a shrinker with finite entropy.
		\begin{itemize}
			\item $($\cite{CM1}$)$ $\lambda_1(\Sigma)\geq 1$. If $\Sigma$ is neither a sphere $S^n(\sqrt{2n})$ nor a generalized cylinder $S^k(\sqrt{2k})\times\R^{n-k}$, then $\lambda_1(\Sigma)>1$.
			\item $($\cite{BW}$)$ If $\Sigma$ is an asymptotically conical shrinker, then $\lambda_1(\Sigma)<+\infty$. 
		\end{itemize}
	\end{proposition}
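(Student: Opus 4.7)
\smallskip
\noindent\textbf{Proof plan.} For the first part, the plan is to exploit the identity $L_\Sigma H=H$. Since the Rayleigh quotient
$$R(f):=\frac{\int_\Sigma[(|A|^2+1/2)f^2-|\nabla f|^2]e^{-|x|^2/4}\,d\mu}{\int_\Sigma f^2 e^{-|x|^2/4}\,d\mu}$$
satisfies $R(H)=1$ after integration by parts, testing the variational characterization of $\lambda_1(\Sigma)$ against $f=H$ immediately yields $\lambda_1(\Sigma)\geq 1$. For the strict inequality, I would argue by contradiction: assume $\lambda_1(\Sigma)=1$ and test instead against $|H|$. By Kato's inequality $|\nabla|H||\leq|\nabla H|$ pointwise a.e., one has $R(|H|)\geq R(H)=1=\lambda_1(\Sigma)$, so $|H|$ is a maximizer of the Rayleigh quotient and hence a nonnegative weak solution of $L_\Sigma|H|=|H|$. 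The strong maximum principle then gives $|H|>0$ on $\Sigma$, i.e.\ $H$ has a definite sign. The Huisken--Colding--Minicozzi classification of mean-convex self-shrinkers with polynomial volume growth then forces $\Sigma$ to be $S^n(\sqrt{2n})$ or $S^k(\sqrt{2k})\times\R^{n-k}$, contradicting the assumption.

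For the second part, the plan is to use the asymptotically conical structure to produce a uniform bound on $|A|^2$. On compact regions, smoothness of $\Sigma$ gives a bound; outside a large ball, the smooth convergence $\tau^{-1}\Sigma\to\Gamma$ on compact subsets of $\R^{n+1}\setminus\{0\}$ (applied at appropriate scales) yields curvature decay $|A|(x)=O(|x|^{-1})$ for $|x|$ large, because the limiting cone $\Gamma$ has curvature of order $r^{-1}$ at radius $r$. Consequently $C:=\sup_\Sigma(|A|^2+1/2)<\infty$. Dropping the nonnegative gradient term in the Rayleigh quotient gives $R(f)\leq C$ for all admissible $f$, hence $\lambda_1(\Sigma)\leq C<\infty$.

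The main obstacle in part (1) is not the Kato/maximum principle step itself but the justification that $H\in W^{1,2}_w(\Sigma)$ so that one may legitimately test the Rayleigh quotient against $H$ and $|H|$. For shrinkers with polynomial volume growth this follows from the pointwise bounds $|H|\leq \tfrac12|x|$ and standard estimates on $|\nabla H|$, combined with the rapid decay of the Gaussian weight, but it does require care. In addition, one must invoke the noncompact classification of mean-convex self-shrinkers (Huisken's theorem for $|A|$ bounded and its Colding--Minicozzi extension without this hypothesis), which is the nontrivial ingredient behind the rigidity. In part (2) the only substantive point is the curvature decay on the conical end, which is essentially built into the definition of asymptotically conical, so this part is considerably softer; the remaining work is only the Rayleigh-quotient estimate.
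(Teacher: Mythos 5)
The paper does not prove this proposition: it is a recollection of results from \cite{CM1} and \cite{BW}, cited without proof. Your sketch is therefore a blind reconstruction, and it is broadly on the right track. A few comments on the details.

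For the first item, testing the Rayleigh quotient against $H$ (using $L_\Sigma H=H$) and against $|H|$ is indeed the essential idea, but the literal argument has two gaps worth naming. First, the case $H\equiv 0$ must be excluded separately: a hyperplane is a self-shrinker of finite entropy with $H\equiv 0$ and $\lambda_1(\R^n)=1/2$, so the claimed inequality $\lambda_1\geq 1$ fails unless the plane is counted among the ``generalized cylinders'' (or excluded by fiat, as \cite{CM1} does). Second, the step ``$R(|H|)\geq\lambda_1$, hence $|H|$ is a maximizer, hence a weak eigenfunction'' presupposes that the supremum defining $\lambda_1$ is actually attained by an $L^2_w$ eigenfunction; for a noncompact shrinker of polynomial volume growth this is true once $\lambda_1<\infty$, but it must be said. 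The route in \cite{CM1} (Section 9) is organized differently: they establish existence and sign-definiteness of a first eigenfunction $u$ under the polynomial-volume-growth hypothesis, then compare $u$ to $H$ directly using orthogonality of distinct eigenspaces, and invoke the classification of mean-convex shrinkers. Your Kato-inequality route reaches the same place and is arguably cleaner once the two points above are addressed, so I would call this a genuinely equivalent variant rather than the paper's argument. You are also right that verifying $H\in W^{1,2}_w(\Sigma)$ takes care; the pointwise bound $|H|\leq|x|/2$ handles the $L^2_w$ part via polynomial volume growth, but $|\nabla H|\lesssim |A|\,|x|$ needs either curvature control or a cutoff argument, which is exactly how \cite{CM1} proceeds.

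For the second item, your argument (bounded $|A|$ on the conical end via curvature decay, hence bounded potential, hence bounded Rayleigh quotient) is the standard one and matches the content of \cite{BW}; the only thing to flag is that \cite{BW} prove more (decay of the eigenfunction itself, cf.\ Proposition 4.1 there), which this paper actually uses elsewhere (Corollary \ref{cor:1stcutsmall}).
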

	
	\subsection{The convergence of the leading eigenvalues}\label{SSConvergeEigen}
	In this subsection, we will always fix an asymptotically conical shrinker $\Sigma$, and $M_t$ is an RMCF converging to $\Sigma$ in $C_{\loc}^\infty$ sense as $t\to\infty$. We define $\lambda_1(t):=\lambda_1(M_t)$ to be the first eigenvalue of the linearized operator $L_{M_t}$ on $M_t$, and we define $\lambda_1=\lambda_1(\Sigma)$ to be the first eigenvalue of the linearized operator $L_\Sigma$ on $\Sigma$. We will also localize the eigenvalues on the hypersurfaces. For any hypersurface $M$, and radius $R>0$, we define $\lambda_1^R(M)$ to be the Dirichlet eigenvalue of $L_M$ on $M\cap B_R$. By simple comparison argument we have $\lambda_1^R(M)\geq \lambda_1(M)$.
	
	Our goal is to prove the following theorem:

	\begin{theorem}\label{thm:eigenvalue convergence}
		Suppose  $(\star)$, then $\lambda_1(M_t)\to\lambda_1(\Sigma)$ as $t\to\infty$. 
	\end{theorem}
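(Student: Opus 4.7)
The plan is to prove the two directions $\liminf_{t\to\infty}\lambda_1(M_t)\geq \lambda_1(\Sigma)$ and $\limsup_{t\to\infty}\lambda_1(M_t)\leq \lambda_1(\Sigma)$ separately, using the variational characterization of $\lambda_1$ and a compactness argument respectively. For the lower bound, I would invoke the formula $\lambda_1(M)=-\inf_f R_M(f)$ directly. Given $\eps>0$, using the fact that $\lambda_1^R(\Sigma)\nearrow \lambda_1(\Sigma)$ as $R\to\infty$ (Lemma 9.25 of \cite{CM1}), pick $R$ large and a test function $f\in C^\infty_c(\Sigma\cap B_R)$ with $R_\Sigma(f)\leq -\lambda_1(\Sigma)+\eps$. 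For $t$ large, $M_t$ is a normal graph over $\Sigma^R$ with small $C^2$-graph size, so transplant $f$ to $\tilde f$ on $M_t$ via the diffeomorphism $\varphi_t$ from Definition~\ref{DefTransplant}. The $C^\infty_{\loc}$ convergence $M_t\to\Sigma$ gives convergence of the induced metric, of $|A|^2$, and of the Gaussian weight on the support of $\tilde f$, so $R_{M_t}(\tilde f)\to R_\Sigma(f)$. Since $\lambda_1(M_t)\geq -R_{M_t}(\tilde f)$, letting $t\to\infty$ and then $\eps\to 0$ yields the lower bound.

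For the upper bound I would argue by contradiction: suppose along some $t_k\to\infty$ that $\lambda_1(M_{t_k})\to\Lambda>\lambda_1(\Sigma)$. Let $\phi_k>0$ be the first eigenfunction on $M_{t_k}$ with $\|\phi_k\|_{L^2(M_{t_k})}=1$. The crude bound $\lambda_1(M_t)\leq \sup_{M_t}(|A|^2+\tfrac12)\leq C$ (which follows from curvature bounds on $M_t$ coming from pseudolocality and the $C^\infty_{\loc}$ convergence) gives a uniform $L^\infty$ ceiling for $\lambda_1(t_k)$. Combined with standard Moser iteration and Schauder estimates applied to $L_{M_{t_k}}\phi_k=\lambda_1(t_k)\phi_k$, this yields uniform $C^{2,\al}_{\loc}$ bounds on the transplants $\tilde\phi_k=\phi_k\circ \varphi_{t_k}$ on each $\Sigma^R$. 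A diagonal subsequence then converges in $C^2_{\loc}(\Sigma)$ to some $\phi_\infty\geq 0$ satisfying $L_\Sigma \phi_\infty=\Lambda\phi_\infty$.

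Now the argument splits into a dichotomy. If $\phi_\infty\not\equiv 0$, then $\phi_\infty$ is a nontrivial nonnegative eigenfunction of $L_\Sigma$ with eigenvalue $\Lambda$; by the strong maximum principle $\phi_\infty>0$ everywhere, and by the uniqueness of the positive principal eigenfunction this forces $\Lambda=\lambda_1(\Sigma)$, contradicting $\Lambda>\lambda_1(\Sigma)$. Otherwise $\phi_\infty\equiv 0$, which means $\int_{M_{t_k}\cap B_R}\phi_k^2 e^{-|x|^2/4}\to 0$ for every fixed $R$. Dropping the nonnegative gradient term in the Rayleigh quotient gives $\lambda_1(t_k)\leq \int_{M_{t_k}} V\phi_k^2 e^{-|x|^2/4}$ with $V=|A|^2+\tfrac12$. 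I then split this integral over $B_R$, the annulus $B_{\br(t_k)}\setminus B_R$, and the outer region $M_{t_k}\setminus B_{\br(t_k)}$: the $B_R$-contribution is $o_k(1)$ by mass escape; on the annulus $M_{t_k}$ is a small graph over $\Sigma$ by Proposition~\ref{PropPL}, so $V\leq 1/2+\eps(R)$ by the curvature decay of asymptotically conical shrinkers \cite{BW}; and on the outer region the rescaling $M_t=e^{t/2}\mathbf M_{-e^{-t}}$ together with smoothness of $\mathbf M_\tau$ away from $(0,0)$ yields $|A|^2_{M_t}=e^{-t}|\mathbf A|^2_{\mathbf M_{-e^{-t}}}=O(e^{-t})$. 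Summing these bounds gives $\lambda_1(t_k)\leq 1/2+\eps(R)+o_k(1)$, and letting $k\to\infty$ then $R\to\infty$ produces $\Lambda\leq 1/2<1\leq \lambda_1(\Sigma)$, a contradiction.

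The main obstacle is the second case of this dichotomy, specifically controlling $V\phi_k^2$ over the non-graphical outer region $M_{t_k}\setminus B_{\br(t_k)}$ where $M_{t_k}$ is not known to sit as a normal graph over $\Sigma$. I would handle this by combining three ingredients: Proposition~\ref{PropPL}, which guarantees the graphical radius $\br(t)$ grows like $e^{t/2}$ so the outer region sits at Euclidean distance $\gtrsim e^{t/2}$; the asymptotically conical structure of $\Sigma$ giving curvature decay on its ends; and the parabolic rescaling identity between RMCF and MCF, which transforms the smoothness of the original MCF $\mathbf M_\tau$ away from the unique singular point into quantitative smallness of $|A|^2$ on the outer piece of $M_t$. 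Together these convert a region where we have no direct graphical control into one on which $V$ is uniformly close to $1/2$, which is exactly what is needed to close the Rayleigh-quotient estimate.
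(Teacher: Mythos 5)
Your lower bound argument (transplanting a near-optimal compactly supported test function from $\Sigma$ to $M_t$ and exhausting as $R\to\infty$) is essentially the paper's Lemma~\ref{lem:eigenconverge-1}, phrased through test functions rather than through the intermediate eigenvalue comparisons $\lambda_1^R(M_t)\to\lambda_1^R(\Sigma)\to\lambda_1(\Sigma)$, but the mechanism is identical.

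Your upper bound is where you genuinely diverge. The paper's Lemma~\ref{lem:eigenconverge-2} is a \emph{direct} argument: it takes the first eigenfunction $\phi_1^t$, cuts it with $\eta$ into an inner piece $v=\eta\phi_1^t$ and an outer piece $w=(1-\eta)\phi_1^t$, bounds $R_{M_t}(v)$ by $-\lambda_1^{2R}(M_t)$, bounds $R_{M_t}(w)$ via the curvature decay $|A|\leq C(1+|x|)^{-1}$ of Lemma~\ref{lem:eigenconverge-Abound}, and then carefully estimates the cross terms $\int\nabla v\cdot\nabla w$ (inequality \eqref{eq:eigenconverge1}) to combine everything into $\lambda_1(M_t)\leq \lambda_1^{2R}(M_t)+C\kappa$. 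You instead argue by contradiction and compactness: normalize $\|\phi_k\|_{L^2}=1$, extract a $C^2_{\loc}$ limit $\phi_\infty$ on $\Sigma$, and split into the dichotomy $\phi_\infty\not\equiv 0$ (giving a positive $L^2_w$ eigenfunction with eigenvalue $\Lambda>\lambda_1(\Sigma)$, impossible) versus $\phi_\infty\equiv 0$ (mass escape, where the Rayleigh quotient with the gradient term dropped and the same curvature decay pins $\Lambda\leq\frac12<1\leq\lambda_1(\Sigma)$). Both routes rest on the same crucial geometric input — the uniform decay $|A|_{M_t}\leq C(1+|x|)^{-1}$, which you essentially re-derive from pseudolocality plus the rescaling identity $|A|^2_{M_t}=e^{-t}|\mathbf A|^2_{\mathbf M_{-e^{-t}}}$ exactly as in the paper's Lemma~\ref{lem:eigenconverge-Abound}. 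Your approach avoids the cross-term bookkeeping entirely, and the first branch of your dichotomy is essentially the content of the paper's separate Theorem~\ref{thm:conv1steigenfun} on eigenfunction convergence, so you get the two results in one stroke; the cost is that you need the compactness/Harnack/Schauder machinery and the contradiction structure, whereas the paper's cutoff argument is self-contained, elementary, and quantitative (yielding an explicit rate $\lambda_1(M_t)\leq\lambda_1(\Sigma)+\eps$ for $t>T(\eps)$). Two small points worth making explicit in your write-up: (i) in the first branch you should note $\phi_\infty\in L^2_w(\Sigma)$, which follows from Fatou and the normalization $\|\phi_k\|_{L^2(M_{t_k})}=1$, before invoking uniqueness of the positive principal eigenfunction (cf.~\cite[Prop.~4.1]{BW}); (ii) your second branch is actually vacuous under the contradiction hypothesis — since $\Lambda>\lambda_1(\Sigma)\geq 1>\frac12$, the maximum principle forces $\max\phi_k$ to be attained where $|A|^2+\frac12\geq\lambda_1(t_k)$, hence in a fixed ball $B_{r_0}$, and Harnack plus the $L^2$-normalization then bound $\max\phi_k$ from below, so $\phi_\infty\not\equiv 0$ automatically — but your Rayleigh-quotient estimate closing it is also correct.
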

	
	We need several lemmas to prove Theorem \ref{thm:eigenvalue convergence}. The first lemma shows that the RMCF $M_t$ has a similar curvature bound as the shrinker $\Sigma$.
	
	\begin{lemma}\label{lem:eigenconverge-Abound}
		Suppose $(\star)$, then there exist constant $0<T_0<\infty$ and $0<C<\infty$ only depending on the MCF $\mathbf{M}_\tau$ such that for $t>T_0$, $p\in M_t$,
		we have $|A|(p,t)\leq C(1+|p|)^{-1}.$
	\end{lemma}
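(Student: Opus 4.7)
My plan is to split $M_t$ into three regimes according to the size of $|p|$ and bound $|A|$ separately. The essential input is that an asymptotically conical shrinker satisfies the pointwise bound $|A|_\Sigma(x) \leq C_0(1+|x|)^{-1}$: outside a large ball $\Sigma$ is a small $C^2$-graph over its asymptotic cone $\mathcal C$, whose spherical cross-sections at radius $r$ have $|A| = O(1/r)$, and on the compact core $|A|_\Sigma$ is automatically bounded (see \cite{BW} and \cite{CS}).

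Fix a large $R_1$ (to be determined). On the bounded region $\{|p|\leq R_1\}$, the $C^\infty_{\loc}$ convergence $M_t\to\Sigma$ from $(\star)$ gives for $t$ large that $M_t$ is a small $C^\infty$-graph over $\Sigma\cap B_{R_1+1}$, so $|A|_{M_t}(p) \leq 2|A|_\Sigma(\pi p) \leq 2C_0(1+|p|)^{-1}$ where $\pi p \in \Sigma$ is the nearest foot. On the intermediate graphical region $R_1 \leq |p| \leq \mathbf r(t)$, I would apply Proposition \ref{PropPL} and Definition \ref{DefGraphical}: $M_t$ is the normal graph of a function $u$ over $\Sigma\cap B_{\mathbf r(t)}$ with $\|\nabla^i u\|_{C^0(\Sigma\cap \mathbb A_{s-1,s})} \leq s^{-i+1}\eps_0$ for $i=0,1,2$. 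Using the standard expansion for the second fundamental form of a normal graph,
\[
|A|_{M_t}(p) \;\lesssim\; |\mathrm{Hess}\,u|(x) + \bigl(1+|\nabla u|(x)\bigr)|A|_\Sigma(x) + |u|(x)\,|A|_\Sigma^2(x),
\]
for $x=\pi p \in \mathbb A_{s-1,s}\cap\Sigma$ with $s\sim|p|$, and plugging in $|\mathrm{Hess}\,u|\lesssim s^{-1}$, $|u|\lesssim s$ and $|A|_\Sigma\lesssim s^{-1}$ yields $|A|_{M_t}(p) \lesssim s^{-1}\sim (1+|p|)^{-1}$.

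It remains to handle $|p| > \mathbf r(t)$, for which I would return to the original flow via $q := e^{-t/2}p \in \mathbf M_{-e^{-t}}$. Proposition \ref{PropPL} gives $\mathbf r(t) \geq c\,e^{t/2}$ so $|q| \geq c > 0$, while avoidance applied to $\mathbf M_{-1}\subset B_{R_\ast}$ forces $|q| \leq R_\ast$, where $R_\ast$ depends only on $\mathbf M_{-1}$. Thus $q$ lies in the fixed compact annulus $B_{R_\ast}\setminus B_c$ of $\mathbf M_\tau$, on which $(\star)$ guarantees the MCF is smooth with a uniform curvature bound $|A|_{\mathbf M_\tau}(q)\leq K$. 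Scaling back,
\[
|A|_{M_t}(p)(1+|p|) = |A|_{\mathbf M_\tau}(q)\bigl(e^{-t/2}+|q|\bigr) \leq K\bigl(e^{-t/2}+R_\ast\bigr),
\]
which is bounded. The main technical point is the middle regime, where the decay rates $s^{-i+1}$ in the graphical definition and $s^{-1}$ in $|A|_\Sigma$ must align precisely so that the graph-correction terms do not dominate $|A|_\Sigma$; this matching is exactly why the scales in Definition \ref{DefGraphical} are tailored to asymptotically conical limits.
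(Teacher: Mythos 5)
Your proof is correct and follows essentially the same route as the paper: the same three-region decomposition (compact core handled by $C^\infty_{\loc}$ convergence, an intermediate region up to scale $e^{t/2}$ handled by the pseudolocality-derived graphical decay, and the outer region handled by smoothness of $\mathbf M_\tau$ away from the unique singular point together with the parabolic rescaling and a diameter/containment bound). The only cosmetic difference is that you route the intermediate regime through Proposition \ref{PropPL} and Definition \ref{DefGraphical} plus the normal-graph expansion of $|A|$ and the decay $|A|_\Sigma\lesssim(1+|x|)^{-1}$, whereas the paper invokes the pseudolocality argument of Proposition \ref{propHolder} directly; the underlying mechanism is identical.
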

	
	\begin{proof}
		We divide $\R^{n+1}$ into three parts. The first part is $B_{R_0}$ for a sufficiently large $R_0$. Then when $T_0$ is sufficiently large, on $B_{R_0}$, for $t>T_0$, $M_t$ is a graph over $\Sigma$ and the $C^{k}$-norm of the graph is small (depending on $T_0$). Then we can see that when $t>T_0$, inside $B_{R_0}$, $M_t$ has uniformly bounded curvature $|A|$.
		
		The second part is $B_{\mathrm{e}^{t/2}\delta}\backslash B_{R_0}$, where $\delta>0$ is a constant. By the pseudolocality of the RMCF and by the similar argument to Proposition \ref{propHolder} (also see \cite[Section 9]{CS}), inside this part $|A|$ is bounded by $C(1+|x|)^{-1}$. 
		
		The third part is the domain outside $B_{\mathrm{e}^{t/2}\delta}$. We fix this constant $\delta>0$, and we consider the MCF $\mathbf{M}_\tau$. Outside $B_\delta(0)$, $\mathbf{M}_\tau$ has no singularity when $\tau\leq 0$; moreover $\mathbf{M}_\tau$ is compact, and thus $\mathbf{M}_\tau\backslash B_\delta(0)$ has uniform curvature upper bounds (say $C'$) when $\tau\leq 0$. Now we change the view back to the RMCF, which says that outside $B_{\mathrm{e}^{t/2}\delta}$, the curvature on $M_t$ is bounded by $\mathrm{e}^{-t/2}C'$. On the other hand, because $\mathbf{M}_\tau$ is an MCF of closed hypersurfaces, $\diam(\mathbf{M}_\tau)$ is uniformly bounded (say $C''$). Therefore $M_t$ has the diameter at most $C''\mathrm{e}^{t/2}$. As a consequence, outside $B_{\mathrm{e}^{t/2}\delta}(0)$, the curvature of $M_t$ is also bounded by $C(1+|x|)^{-1}$.
		
		Combining three parts together gives us the desired curvature bound.
	\end{proof}
	
	Next lemma shows a uniform upper bound for the $\lambda_1(M_t)$.
	
	\begin{lemma}
		$\lambda_1(M_t)<C<+\infty$.
	\end{lemma}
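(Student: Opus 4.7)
The plan is to use the variational characterization of $\lambda_1(M_t)$ directly, combined with the pointwise curvature bound from the previous lemma. Writing the Rayleigh quotient as
\[
\lambda_1(M_t) \;=\; \sup_{f \in W_w^{1,2}(M_t)} \frac{\displaystyle\int_{M_t}\bigl[(|A|^2+\tfrac12)f^2 - |\nabla f|^2\bigr]e^{-|x|^2/4}\,d\mu}{\displaystyle\int_{M_t} f^2\, e^{-|x|^2/4}\,d\mu},
\]
I would simply drop the nonnegative gradient term to obtain
\[
\lambda_1(M_t) \;\leq\; \sup_{M_t}\bigl(|A|^2+\tfrac12\bigr).
\]

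The second step is to observe that Lemma \ref{lem:eigenconverge-Abound} gives the uniform pointwise bound $|A|(p,t) \leq C(1+|p|)^{-1} \leq C$ for all $t > T_0$, hence $\sup_{M_t}|A|^2 \leq C^2$ uniformly in $t > T_0$. For $t \in [0,T_0]$, the family $\{M_t\}$ is a smooth family of closed hypersurfaces away from the singular time (in the parabolic rescaling picture), so $\sup_{M_t}|A|$ is continuous in $t$ on the compact interval $[0,T_0]$ and thus bounded. Combining these two ranges gives a single constant $C$ with $\lambda_1(M_t) \leq C$ for all $t \geq 0$.

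The argument is essentially immediate once Lemma \ref{lem:eigenconverge-Abound} is in hand; there is no genuine obstacle. The only point that deserves a line of care is that the test-function inequality above is valid for all $f \in W^{1,2}_w(M_t)$ including the noncompact case, which is fine since the expression in the supremum only becomes smaller when $|\nabla f|^2$ is included and the Gaussian weight ensures integrability of the constant-coefficient bound $\sup|A|^2 + \tfrac12$ against $f^2 e^{-|x|^2/4}$.
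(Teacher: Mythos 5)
Your proof is correct and follows essentially the same route as the paper: in the Rayleigh quotient, discard the nonnegative $|\nabla f|^2$ term and bound the potential $|A|^2+\tfrac12$ uniformly via the curvature estimate of Lemma \ref{lem:eigenconverge-Abound}. The only addition is your explicit handling of the compact time range $[0,T_0]$, which the paper leaves implicit.
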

	
	\begin{proof}
		Suppose $u$ is any smooth function on $M_t$, with $\int_{M_t}u^2 \mathrm{e}^{-\frac{|x|^2}{4}}d\mu =1$. By the uniform curvature upper bound, when $t>T_0$,
		\begin{equation*}
			-\frac{\int_{M_t}[|\nabla u|^2 -(|A|^2+1/2)u^2]\mathrm{e}^{-\frac{|x|^2}{4}}d\mu}{\int_{M_t}u^2\mathrm{e}^{-\frac{|x|^2}{4}}d\mu}\leq
			\frac{\int_{M_t}(C+1/2)u^2 \mathrm{e}^{-\frac{|x|^2}{4}}d\mu}{{\int_{M_t}u^2\mathrm{e}^{-\frac{|x|^2}{4}}d\mu}}\leq C+1/2.
		\end{equation*}
	\end{proof}
	
	In the following, we prove the convergence of $\lambda_1(M_t)\to\lambda_1(\Sigma)$. First we need some convergence properties for compact domains.
	
	\begin{proposition}\label{LmEigenfunctions}
		We have the following convergence properties:
		\begin{enumerate} 
			\item  Suppose $M$ is a hypersurface with bounded entropy, then $\lambda_1^R(M)\to \lambda_1(M)$ as $R\to\infty$. 
			\item For fixed $R>0$, $\lambda^R_1(M_t)\to \lambda^R_1(\Sigma)$ as $t\to\infty$.
		\end{enumerate}
	\end{proposition}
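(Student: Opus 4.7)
The plan is to prove the two items by separate methods: item (1) via a cutoff argument in the weighted Sobolev space, and item (2) by transplantation of $L_{M_t}$ onto the fixed compact domain $\Sigma^R$.

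For item (1), I would first observe that $\lambda_1^R(M)$ is monotone in $R$ (via extension by zero, $W^{1,2}_0(M\cap B_{R_1}) \hookrightarrow W^{1,2}_0(M\cap B_{R_2})$ for $R_1 < R_2$) and uniformly comparable to $\lambda_1(M)$, so $\lambda_1^R(M)$ tends to some limit $\lambda^\infty$. To show $\lambda^\infty = \lambda_1(M)$, take a near-minimizing sequence $f_k \in W^{1,2}_w(M)$ for the Rayleigh quotient defining $\lambda_1(M)$, and multiply by a standard cutoff $\eta_R \equiv 1$ on $B_{R-1}$, $\eta_R \equiv 0$ off $B_R$, with $|\nabla \eta_R| \leq 2$. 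Then $\eta_R f_k \in W^{1,2}_0(M \cap B_R)$ is admissible for $\lambda_1^R(M)$, and it remains to argue that the Rayleigh quotient of $\eta_R f_k$ converges to that of $f_k$ as $R \to \infty$. This follows from the Gaussian weight $e^{-|x|^2/4}$ and the bounded entropy hypothesis, which yield polynomial volume growth and hence vanishing of the tail integrals $\int_{M\setminus B_{R-1}}(f_k^2 + |\nabla f_k|^2 + (|A|^2+1/2)f_k^2)\, e^{-|x|^2/4}\, d\mu$ as $R \to \infty$.

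For item (2), fix $R > 0$. The hypothesis $M_t \to \Sigma$ in $C^\infty_{\mathrm{loc}}$ gives that for $t$ sufficiently large, $M_t \cap B_{R+1}$ is a normal graph over $\Sigma \cap B_{R+1}$ of a function $m_t$ with $\|m_t\|_{C^k(\Sigma^{R+1})} \to 0$, so the polar-spherical diffeomorphism $\varphi_t: \Sigma^R \to M_t^R$ of Definition \ref{DefTransplant} converges to the identity in $C^k$. Pulling back via $\varphi_t$, the weighted operator $L_{M_t}$ becomes $L_t^\ast = L_\Sigma|_{\Sigma^R} + E_t$ on $\Sigma^R$ with $\|E_t\|_{C^0}$ bounded by a power of $\|m_t\|_{C^2}$, and the Gaussian volume forms transform with Jacobian $1 + o_t(1)$. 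Applying the min-max characterization on the fixed Hilbert space $W^{1,2}_0(\Sigma^R)$ to both $L_\Sigma|_{\Sigma^R}$ and $L_t^\ast$, and using that the perturbations of the Rayleigh numerator and denominator are $o_t(1)$ uniformly over unit balls, then yields $\lambda_1^R(M_t) \to \lambda_1^R(\Sigma)$.

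The main obstacle lies in item (1), specifically in controlling the weighted tail of the potential term $\int_{M \setminus B_{R-1}} (|A|^2 + 1/2)f_k^2\, e^{-|x|^2/4}\,d\mu$. Membership in $W^{1,2}_w(M)$ provides only $L^2_w$ bounds on $f_k$ and $\nabla f_k$, so the finiteness and vanishing of $\int |A|^2 f_k^2\, e^{-|x|^2/4}$ on the tail is not automatic. For $M = \Sigma$ one could invoke the $|A| \leq C(1+|x|)^{-1}$ decay from Lemma \ref{lem:eigenconverge-Abound}, and more generally the super-exponential decay of the Gaussian weight combined with bounded entropy forces $|A|^2 e^{-|x|^2/4}$ to be weighted-integrable at infinity. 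A dominated-convergence-type estimate then closes the cutoff argument, after which item (1) follows.
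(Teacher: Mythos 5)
For item (2), your transplantation argument is essentially the paper's approach: the paper abstracts it into Proposition \ref{thm:CptEigenConv} (convergence of the leading eigenvalue under $C^2$ metric convergence and $C^0$ potential convergence on a fixed manifold with boundary), but the substance---pull back to $\Sigma^R$, compare the two Rayleigh quotients, and show the metric/potential/Jacobian perturbations are $o_t(1)$ uniformly over the test function---is the same.

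For item (1) the paper simply cites Section 9 of \cite{CM1}, so your cutoff argument is a genuine alternative. The monotonicity observation and the choice of cutoff are fine, and you correctly isolate the delicate term $\int_{M\setminus B_{R-1}}(|A|^2+\tfrac12)f_k^2\,e^{-|x|^2/4}\,d\mu$. But the proposed resolution is wrong: bounded entropy controls the Gaussian area $\int_M e^{-|x|^2/4}\,d\mu$, not the second fundamental form, so ``bounded entropy plus Gaussian decay'' does not by itself make $|A|^2 e^{-|x|^2/4}$ weighted-integrable against $f_k^2$ for an arbitrary $f_k\in W^{1,2}_w(M)$; a hypersurface of bounded entropy can have $|A|$ blow up arbitrarily fast along a sparse sequence of scales. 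The gap is easy to close without invoking any curvature decay, in either of two ways. First, since $W^{1,2}_w(M)$ is by definition the completion of $C_c^\infty(M)$, the infimum defining $\lambda_1(M)$ is approached by compactly supported functions; any such $f$ lies in $W^{1,2}_0(M\cap B_R)$ for some $R$, so $\lambda_1^R(M)\to\lambda_1(M)$ follows from the nested-infimum structure with no cutoff at all (this is essentially the CM1 route). Second, if you do want to cut off a general near-minimizer $f_k$: finiteness of the Rayleigh quotient $Q(f_k)$ together with $f_k\in W^{1,2}_w(M)$ already forces $\int_M(|A|^2+\tfrac12)f_k^2 e^{-|x|^2/4}\,d\mu<\infty$, after which the tail vanishes as $R\to\infty$ by dominated convergence, no decay of $|A|$ required. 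In either version you should also dispose of the degenerate case $\lambda_1(M)=+\infty$ separately: for any $N$ there is a compactly supported $f$ with Rayleigh quotient below $-N$, hence $\lambda_1^R(M)>N$ once $R$ is large. Finally, note Lemma~\ref{lem:eigenconverge-Abound} concerns the flow $M_t$ rather than a static $M$; the decay $|A|\leq C(1+|x|)^{-1}$ for the relevant static case $M=\Sigma$ is instead part of the definition of an asymptotically conical shrinker (cf. \cite{BW},\cite{CS}), though as noted above the argument does not actually need it.
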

	\begin{proof}
		Item (1) was proved in Section 9 of \cite{CM1}. Item (2) is a corollary of the following more general result. 
	\end{proof}
	\begin{proposition}\label{thm:CptEigenConv}
		Suppose $(\Sigma,\partial\Sigma)$ is a manifold with boundary (in our case it is $\Sigma\cap B_R$), $\{g_t\}$ is a family of metrics on $\Sigma$, converging to a limit metric $g_\infty$ in $C^{2}$, and $\{V_t\}$ is a family of positive smooth functions converging to a limit function $V_\infty$ in $C^0$. Suppose $d\mu_t$ is the volume measure with respect to $g_t$. Then the leading eigenvalue of the adjoint operator $L_t$ with respect to the functional
		$	\int_\Sigma |\nabla u|_{g_t}^2-V_t u^2 d\mu_t$
		converges to the leading eigenvalue of $L=L_\infty$, as $t\to\infty$.
	\end{proposition}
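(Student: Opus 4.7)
The plan is to use the variational characterization of the leading eigenvalue and establish convergence by showing two one-sided inequalities. Write
\[
-\lambda_1(L_t) = \inf_{u \in W^{1,2}_0(\Sigma) \setminus \{0\}} Q_t(u), \qquad Q_t(u) := \frac{\int_\Sigma (|\nabla u|^2_{g_t} - V_t u^2)\, d\mu_t}{\int_\Sigma u^2 \, d\mu_t}.
\]
Since $g_t \to g_\infty$ in $C^2$ on the compact manifold $\Sigma$, the norms $\|\cdot\|_{L^p(\Sigma, g_t)}$ and $\|\cdot\|_{W^{1,2}(\Sigma, g_t)}$ are all equivalent to those defined with respect to $g_\infty$, uniformly in $t$. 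In particular the Sobolev space $W^{1,2}_0(\Sigma)$ is identified with a single Banach space independent of $t$, and Rellich--Kondrachov compactness holds uniformly.

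For the easy direction (upper semi-continuity of $-\lambda_1(L_t)$), let $u_\infty \in W^{1,2}_0$ be a first eigenfunction of $L_\infty$ normalized so that $\int u_\infty^2\, d\mu_\infty = 1$. Plugging this fixed function into $Q_t$, the $C^2$ convergence of the metric gives pointwise convergence $|\nabla u_\infty|^2_{g_t} \to |\nabla u_\infty|^2_{g_\infty}$ and $d\mu_t \to d\mu_\infty$, while $V_t \to V_\infty$ uniformly by the $C^0$ assumption. By dominated convergence, $Q_t(u_\infty) \to Q_\infty(u_\infty) = -\lambda_1(L_\infty)$. Hence $\limsup_t (-\lambda_1(L_t)) \leq -\lambda_1(L_\infty)$, i.e.\ $\liminf_t \lambda_1(L_t) \geq \lambda_1(L_\infty)$.

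For the reverse inequality, let $u_t$ be a first (Dirichlet) eigenfunction of $L_t$ with $\int u_t^2\, d\mu_t = 1$, so $Q_t(u_t) = -\lambda_1(L_t)$. The upper bound above, combined with the uniform $C^0$-bound on $V_t$, shows that $\int |\nabla u_t|^2_{g_t}\, d\mu_t$ is uniformly bounded, and therefore $\{u_t\}$ is bounded in $W^{1,2}_0(\Sigma, g_\infty)$. By Rellich--Kondrachov, we may extract a subsequence $u_{t_k}$ converging strongly in $L^2$ and weakly in $W^{1,2}_0$ to some limit $u_* \in W^{1,2}_0$. Strong $L^2$-convergence (together with $d\mu_{t_k} \to d\mu_\infty$) gives $\int u_*^2 \, d\mu_\infty = 1$, so $u_* \not\equiv 0$. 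The $C^0$-convergence of $V_t$ together with strong $L^2$-convergence yields $\int V_{t_k} u_{t_k}^2\, d\mu_{t_k} \to \int V_\infty u_*^2\, d\mu_\infty$; and $C^2$-convergence of the metric plus weak lower semi-continuity of the Dirichlet energy yields $\liminf_k \int |\nabla u_{t_k}|^2_{g_{t_k}}\, d\mu_{t_k} \geq \int |\nabla u_*|^2_{g_\infty}\, d\mu_\infty$. Therefore
\[
\liminf_k (-\lambda_1(L_{t_k})) = \liminf_k Q_{t_k}(u_{t_k}) \geq Q_\infty(u_*) \geq -\lambda_1(L_\infty).
\]
Since this applies to any subsequence, $\limsup_t \lambda_1(L_t) \leq \lambda_1(L_\infty)$, and the two bounds combine to $\lambda_1(L_t) \to \lambda_1(L_\infty)$.

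The main potential obstacle is the lower semi-continuity step, which requires that weak convergence in the $t$-dependent gradient norms passes to a lower bound by the $g_\infty$-gradient norm. This is handled by writing $|\nabla u|^2_{g_t} = g_t^{ij}\partial_i u\, \partial_j u$ and $d\mu_t = \sqrt{\det g_t}\, dx$ in local coordinates; the $C^2$-convergence of the coefficients $g_t^{ij}$ and the uniform convergence of the density $\sqrt{\det g_t}$ reduces the claim to the standard weak lower semi-continuity of a quadratic form with uniformly convergent coefficients, which is elementary. The other potential difficulty --- ensuring the limit $u_*$ is nonzero --- is resolved by the Rellich compactness that gives strong $L^2$-convergence on the compact domain.
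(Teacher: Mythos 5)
Your proof is correct, but it takes a genuinely different route from the paper's. The paper avoids compactness entirely: for the harder direction it takes the time-$t$ eigenfunction $\phi_t$, normalized by $\int \phi_t^2\,d\mu_\infty = 1$, and directly compares the two Rayleigh quotients, using the $C^2$-closeness of $g_t$ to $g_\infty$, the $C^0$-closeness of $V_t$ to $V_\infty$ (positivity of $V_\infty$ is used here), and the uniform closeness of volume forms to write
$Q_t(\phi_t)=Q_\infty(\phi_t)\bigl(1+o(1)\bigr)$, after which the variational characterization on the limit gives $-\lambda_1(t)\geq -\lambda_1(1+o(1))$. You instead run the classic compactness argument: a uniform $W^{1,2}_0$ bound on the eigenfunctions via the uniform upper bound on $-\lambda_1(t)$ and the $C^0$ bound on $V_t$, then Rellich--Kondrachov, strong $L^2$ convergence to a nontrivial limit $u_*$, and weak lower semicontinuity of the Dirichlet energy with uniformly convergent coefficients.

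Two remarks on the comparison. First, the paper's $(1+o(1))$ multiplicative bookkeeping on the \emph{difference} $\int |\nabla\phi_t|_{g_t}^2 - V_t\phi_t^2\,d\mu_t$ hides a step: the additive errors are controlled by $o(1)\int(|\nabla\phi_t|_{g_\infty}^2 + V_\infty\phi_t^2)\,d\mu_\infty$, not by $o(1)|Q_\infty(\phi_t)|$, so one implicitly needs the same uniform $W^{1,2}$ bound on the $\phi_t$'s that you make explicit; your version actually surfaces and closes that gap. Second, your approach pays for this with the extra input of Rellich compactness and weak lower semicontinuity, whereas the paper's comparison is pointwise and needs no extraction of subsequences. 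On a fixed compact domain both are cheap, so the difference is largely stylistic; your version is more robust (for example it would generalize to cases where the leading eigenvalue is not isolated, or where $V_t\to V_\infty$ only in a weaker sense, since weak convergence of $V_{t}u_t^2\,d\mu_t$ would suffice), while the paper's version is shorter and avoids any appeal to Sobolev compactness.
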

	
	\begin{proof}
		Let $\lambda_1(t)$ be the first eigenvalue with respect to the time $t$ moment, and $\lambda_1$ be the first eigenvalue with respect to the limit.
		
		On one hand, suppose $\phi$ is an eigenfunction of $\lambda_1$ on the limit, then
		$$
		\frac{ \int_\Sigma (|\nabla \phi|_{g_\infty}^2-V_\infty \phi^2 )d\mu_\infty }{\int_\Sigma \phi^2 d\mu_\infty}=-\lambda_1.
		$$
		By our assumption and the minimizing property of the leading eigenvalue, we have
		\begin{equation}
			-\lambda_1(t)\leq \frac{\int_\Sigma (|\nabla \phi|_{g_t}^2-V_t\phi^2 )d\mu_t }{\int_\Sigma \phi^2 d\mu_t} 
			\to 
			\frac{\int_\Sigma (|\nabla \phi|_{g_\infty}^2-V_\infty\phi^2) d\mu_\infty }{\int_\Sigma \phi^2 d\mu_\infty}=-\lambda_1
		\end{equation}
		as $t\to\infty$. Thus we get $\limsup_{t\to\infty} -\lambda_1(t)\leq -\lambda_1.$
		
		On the other hand, let $\phi_t$ be the leading eigenfunction with respect to $g_t$. We will assume that $\phi_t>0$ and $\int_\Sigma \phi_t^2 d\mu_\infty=1$. Then
		\begin{equation*}
			-\lambda_1(t)=\frac{\int_\Sigma (|\nabla \phi_t|_{g_\infty}^2-V_\infty \phi^2 )d\mu_\infty }{\int_\Sigma \phi_t^2 d\mu_\infty}.
		\end{equation*} 
		Let us estimate 
		\begin{equation*}
			\left|\frac{\int_\Sigma (|\nabla \phi_t|_{g_\infty}^2-V_\infty \phi_t^2) d\mu_\infty }{\int_\Sigma \phi_t^2 d\mu_\infty}-\frac{\int_\Sigma (|\nabla \phi_t|_{g_t}^2-V_t \phi_i^2 )d\mu_t }{\int_\Sigma \phi_t^2 d\mu_t}\right|.
		\end{equation*}
		Since $g_t\to g_\infty$ in $C^2$, we have the following
		\begin{itemize}
			\item $d\mu_t\to d\mu_\infty$ in $C^0$. Moreover, $\frac{d\mu_t}{d\mu_\infty}=1+o(t)$. When we write $o(t)$ we mean a quantity $\to 0$ as $t\to\infty$.
			\item $\left||\nabla \phi_t|_{g_t}^2-|\nabla \phi_t|_{g_\infty}^2\right|\leq o(t)|\nabla \phi_t|^2_{g_\infty}$ where $o(t)\to 0$ in $C^0$. Similarly, we can write $|\nabla \phi_t|_{g_t}^2= |\nabla \phi_t|_{g_\infty}^2(1+o(t))$.
			\item We can also write $V_t=V_\infty(1+o(t))$ (note $V_\infty$ is positive).
		\end{itemize}
		From the above items, we conclude that
		\begin{equation*}
			\frac{\int_\Sigma (|\nabla \phi_t|_{g_t}^2-V_t \phi_t^2 )d\mu_t }{\int_\Sigma \phi_t^2 d\mu_t}
			=
			\frac{\int_\Sigma (|\nabla \phi_t|_{g_\infty}^2-V_\infty \phi_t^2 )d\mu_\infty }{\int_\Sigma \phi_t^2 d\mu_\infty}(1+o(t)).
		\end{equation*}
		We remark that $o(t)$ is a quantity depending on how much $g_t$ and $V_t$ are close to $g_\infty$ and $V_\infty$ respectively, but does not depend on $\phi_t$. In conclusion, we have
		$
		-\lambda_1(t) \geq -\lambda_1(1+o(t)).
		$
		As a result, $	\liminf_{t\to\infty} -\lambda_1(t)\geq -\lambda_1.$
		Thus we conclude that $\lim\lambda_1(t)=\lambda_1$.
	\end{proof}
	
	\begin{lemma}\label{lem:eigenconverge-1}
		For all $\epsilon>0$, there exists $0<T<\infty$, such that $\lambda_1(M_t)\geq\lambda_1(\Sigma)-\epsilon$ if $t>T$. As a consequence, we have
		$\liminf_{t\to\infty}\lambda_1(t)\geq \lambda_1(\Sigma)$.
	\end{lemma}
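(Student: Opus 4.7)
The plan is to bound $\lambda_1(M_t)$ from below by the Dirichlet eigenvalue $\lambda_1^R(M_t)$ on the compact piece $M_t \cap B_R$, and then use the two convergence properties packaged in Proposition \ref{LmEigenfunctions} to bring $\lambda_1^R(M_t)$ within $\epsilon$ of $\lambda_1(\Sigma)$. The key monotonicity I will invoke is $\lambda_1(M) \geq \lambda_1^R(M)$, which is the variational fact that the Dirichlet problem on a subdomain uses a strictly smaller space of trial functions: writing $\lambda_1(M) = \sup_f \int_M f L_M f\, e^{-|x|^2/4} / \int_M f^2 e^{-|x|^2/4}$, any $f$ admissible for the Dirichlet problem on $M \cap B_R$ extends by zero to an admissible $f$ for the full problem, so the Dirichlet supremum is no larger.

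Given $\epsilon > 0$, I would first invoke Proposition \ref{LmEigenfunctions}(1) applied to $\Sigma$ (finite entropy because $\Sigma$ is asymptotically conical) to fix $R = R(\epsilon)$ with $\lambda_1^R(\Sigma) \geq \lambda_1(\Sigma) - \epsilon/2$. With this $R$ frozen, the $C^\infty_{loc}$ convergence in $(\star)$ provides, for all sufficiently large $t$, a normal graph representation $M_t \cap B_R = \{x + m_t(x)\bn(x) : x \in \Sigma \cap B_R\}$ with $\|m_t\|_{C^2(\Sigma \cap B_R)} \to 0$; pulling back $L_{M_t}$ to $\Sigma \cap B_R$ via the associated diffeomorphism produces exactly the kind of family of self-adjoint operators treated by Proposition \ref{thm:CptEigenConv}, with Riemannian metrics $g_t \to g_\infty$ and potentials $V_t \to V_\infty$ smoothly on the fixed compact domain. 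Proposition \ref{LmEigenfunctions}(2) (or equivalently Proposition \ref{thm:CptEigenConv} directly) then yields $\lambda_1^R(M_t) \to \lambda_1^R(\Sigma)$, so for $t$ large
\[ \lambda_1(M_t) \geq \lambda_1^R(M_t) \geq \lambda_1^R(\Sigma) - \epsilon/2 \geq \lambda_1(\Sigma) - \epsilon, \]
which is exactly the statement of the lemma. The consequence $\liminf_{t\to\infty}\lambda_1(t) \geq \lambda_1(\Sigma)$ is then immediate by letting $\epsilon \to 0$.

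The argument has no serious obstacle, being essentially a repackaging of the two compact convergence results already assembled in Proposition \ref{LmEigenfunctions}. The only point to be careful about is the direction of the domain-monotonicity inequality under the sign conventions in use, together with the fact that the noncompactness of $\Sigma$ is absorbed entirely by choosing the truncation radius $R$ \emph{before} sending $t \to \infty$. In particular, the curvature decay $|A|(p,t) \leq C(1+|p|)^{-1}$ of Lemma \ref{lem:eigenconverge-Abound} is \emph{not} required for this lower bound; where it will become essential is in the matching upper bound $\limsup_t \lambda_1(t) \leq \lambda_1(\Sigma)$ needed by Theorem \ref{mainthm-FK}, which must rule out eigenfunctions of $L_{M_t}$ concentrating at infinity. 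That complementary estimate is properly the subject of the next step of Section \ref{SConvergence} and is not attempted here.
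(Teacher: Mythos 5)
Your proposal matches the paper's proof exactly: bound $\lambda_1(M_t)$ from below by the Dirichlet eigenvalue $\lambda_1^R(M_t)$, invoke Proposition \ref{LmEigenfunctions}(2) (via Proposition \ref{thm:CptEigenConv}) to pass $t\to\infty$ at fixed $R$, and Proposition \ref{LmEigenfunctions}(1) to send $R\to\infty$, with $R$ chosen first. Your care about the sign of domain monotonicity is warranted --- the paper's sentence right after defining $\lambda_1^R$ actually misstates it as $\lambda_1^R(M)\geq\lambda_1(M)$, but the paper's own proof of this lemma, like yours, uses the correct inequality $\lambda_1^R(M_t)\leq\lambda_1(M_t)$.
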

	
	\begin{proof}
		The proof is similar to the proof of Proposition \ref{thm:CptEigenConv}. First, by (2) in Proposition \ref{LmEigenfunctions}, we can show $\lambda_1^R(M_t)\to\lambda_1^R(\Sigma)$ for any fixed $R$, as $t\to\infty$. Second, we have $\lambda_1^R(M_t)\leq \lambda_1(M_t)$, and (1) in Proposition \ref{LmEigenfunctions} shows that $\lambda_1^R(\Sigma)\to\lambda_1(\Sigma)$ as $R\to\infty$. Thus we conclude this lemma.
	\end{proof}
	
	\begin{lemma}\label{lem:eigenconverge-2}
		For all $\epsilon>0$, there exists $0<T<\infty$, such that $\lambda_1(M_t)\leq\lambda_1(\Sigma)+\epsilon$ if $t>T$.
		As a consequence, we have 
		$\limsup_{t\to\infty}\lambda_1(t)\leq \lambda_1(\Sigma).$\end{lemma}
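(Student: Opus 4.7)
The plan is to upgrade Proposition~\ref{LmEigenfunctions}(2), which only gives $\lambda_1^R(M_t)\to\lambda_1^R(\Sigma)\leq \lambda_1(\Sigma)$, into the desired upper bound on the full eigenvalue $\lambda_1(M_t)$ by proving a uniform-in-$t$ non-concentration estimate for the leading eigenfunction. Since each $M_t$ is a compact embedded hypersurface (the rescaling of the closed MCF $\mathbf M_\tau$), the operator $L_{M_t}$ admits a normalized positive leading eigenfunction $u_t$ with $L_{M_t}u_t=\lambda_1(t)u_t$ and $\|u_t\|_{L^2(M_t)}=1$. Writing $V:=|A|^2+\tfrac12$, the goal is to show that for every $\epsilon>0$ there exist $R$ and $T$ with $\int_{M_t\cap\{|x|\geq 2R\}}u_t^2 e^{-|x|^2/4}\,d\mu\leq \epsilon$ uniformly for $t\geq T$; this will permit an essentially free cutoff of $u_t$ into a test function for $\lambda_1^{2R}(M_t)$.

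For the non-concentration estimate, pick a radial cutoff $\eta_R$ equal to $1$ on $\{|x|\geq 2R\}$, vanishing on $\{|x|\leq R\}$, with $|\nabla\eta_R|\leq 2/R$. Multiplying the eigenvalue equation by $\eta_R^2 u_t e^{-|x|^2/4}$, integrating by parts using the Gaussian-weighted self-adjointness of $\mathcal L_{M_t}$, and absorbing the cross term by Cauchy--Schwarz yields the localization inequality
\[
\int_{M_t}\eta_R^2(\lambda_1(t)-V)u_t^2\,e^{-|x|^2/4}\,d\mu \;\leq\; 2\int_{M_t}|\nabla\eta_R|^2 u_t^2 \,e^{-|x|^2/4}\,d\mu.
\]
By Lemma~\ref{lem:eigenconverge-Abound} we have $V\leq \tfrac12+C^2R^{-2}$ on $\{|x|\geq R\}$, while Lemma~\ref{lem:eigenconverge-1} gives $\lambda_1(t)\geq \lambda_1(\Sigma)-o(1)\geq 1-o(1)$. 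Thus for $R,t$ large the spectral gap $\lambda_1(t)-V\geq 1/4$ holds on the support of $\eta_R$, and we conclude $\int_{M_t\cap\{|x|\geq 2R\}}u_t^2 e^{-|x|^2/4}\,d\mu\leq C/R^2$.

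Now set $\widetilde\eta_R:=1-\eta_R$, supported in $B_{2R}$ and equal to $1$ on $B_R$, and let $v_t:=\widetilde\eta_R u_t$ be the resulting compactly supported test function for the Dirichlet eigenvalue $\lambda_1^{2R}(M_t)$. The same integration by parts, with the eigenvalue equation used to eliminate $\int\widetilde\eta_R^2|\nabla u_t|^2$, gives the clean identity
\[
\int_{M_t}\bigl[|\nabla v_t|^2-Vv_t^2\bigr]e^{-|x|^2/4}\,d\mu \;=\; -\lambda_1(t)\|v_t\|_{L^2(M_t)}^2 + \int_{M_t}|\nabla\widetilde\eta_R|^2 u_t^2 \,e^{-|x|^2/4}\,d\mu.
\]
The non-concentration estimate forces $\|v_t\|_{L^2(M_t)}^2\geq 1-C/R^2\geq 1/2$ for $R$ large, while $\int|\nabla\widetilde\eta_R|^2 u_t^2 e^{-|x|^2/4}\leq 4/R^2$. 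Hence the Rayleigh quotient of $v_t$ is at most $-\lambda_1(t)+C''/R^2$, which together with the variational characterization yields $\lambda_1^{2R}(M_t)\geq \lambda_1(t)-C''/R^2$. Applying Proposition~\ref{LmEigenfunctions}(2) and $\lambda_1^{2R}(\Sigma)\leq\lambda_1(\Sigma)$, we obtain $\lambda_1(M_t)\leq \lambda_1(\Sigma)+\epsilon$ upon choosing $R$ with $C''/R^2<\epsilon/2$ and then $t$ large. The main obstacle is the non-concentration estimate itself: its validity rests crucially on the \emph{classically forbidden} character of the far region for the spectral parameter $\lambda_1(t)$, produced by combining the curvature decay of Lemma~\ref{lem:eigenconverge-Abound} (so that $V\to 1/2$ at infinity) with the previously established lower bound $\lambda_1(t)\gtrsim \lambda_1(\Sigma)\geq 1>\tfrac12$; without both Lemma~\ref{lem:eigenconverge-Abound} and Lemma~\ref{lem:eigenconverge-1} one cannot open the spectral gap that drives the entire argument.
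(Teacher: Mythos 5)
Your proof is correct, and it reaches the conclusion by a somewhat different organization of the same circle of ideas as the paper. The paper also localizes the leading eigenfunction $\phi_1^t$ at scale $R$: it writes $\phi_1^t=\eta\phi_1^t+(1-\eta)\phi_1^t$, bounds the Rayleigh quotient of the inner piece by $-\lambda_1^{2R}(M_t)$ and of the outer piece by $-C(1+R)^{-2}-\tfrac12$ using the curvature decay of Lemma \ref{lem:eigenconverge-Abound}, and absorbs the cross terms with a Young-type inequality with a small parameter $\kappa$; it never isolates a non-concentration statement and does not use Lemma \ref{lem:eigenconverge-1}. You instead first prove a $t$-uniform Agmon-type non-concentration estimate (valid because $\lambda_1(t)-V\geq\tfrac14$ on $\{|x|\geq R\}$, which needs both the curvature decay and the lower bound of Lemma \ref{lem:eigenconverge-1} together with $\lambda_1(\Sigma)\geq 1$), and then use the exact IMS identity for $\widetilde\eta_R u_t$ to get the clean bound $\lambda_1^{2R}(M_t)\geq\lambda_1(t)-C''R^{-2}$, concluding via Proposition \ref{LmEigenfunctions}(2) and the domain monotonicity $\lambda_1^{2R}(\Sigma)\leq\lambda_1(\Sigma)$ (this is the correct direction under the paper's sign convention, and it is the direction actually used in the proof of Lemma \ref{lem:eigenconverge-1}, despite the reversed inequality asserted after the definition of $\lambda_1^R$). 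What your route buys is a quantitative eigenfunction decay estimate, uniform in $t$, and the elimination of the cross-term bookkeeping; the cost is the extra reliance on Lemma \ref{lem:eigenconverge-1}, which is harmless since that lemma precedes this one and is independent of it, so there is no circularity. One cosmetic slip: your non-concentration bound controls the mass outside $B_{2R}$, while the lower bound $\|v_t\|_{L^2}^2\geq 1-C/R^2$ requires the mass outside $B_R$; apply your estimate at radius $R/2$ (or put the inner cutoff radius of $\eta_R$ at $R/2$) and the argument goes through verbatim.
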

	\begin{proof}
		We choose two constants $R$ and $\kappa$ to be determined later, and we assume $t$ is sufficiently large such that $\lambda_1^{2R}(M_t)\geq (1-\kappa)\lambda_1^{2R}(\Sigma)\geq (1-\kappa)\lambda_1(\Sigma)$.
		
		Suppose $\phi_1^t$ is a first eigenfunction on $M_t$. Then
		\[\lambda_1(M_t)=-\frac{\int_{M_t}[|\nabla \phi_1^t|^2 -(|A|^2+1/2)(\phi_1^t)^2]\mathrm{e}^{-\frac{|x|^2}{4}}d\mu}{\int_{M_t}(\phi_1^t)^2\mathrm{e}^{-\frac{|x|^2}{4}}d\mu}.
		\]
		We also assume $\eta$ is a smooth cutoff function, which is constant $1$ on $B_R$, and $0$ outside $B_{2R}$, with $|\nabla \eta|\leq C/R$. Define $v=\phi_1^t \cdot\eta$ and $w=\phi_1^t\cdot(1-\eta)$. Then $v$ is supported on $M_t\cap B_R $, and $w$ is supported on $M_t\backslash B_R$, and $v+w=\phi_1^t$. Then
		\[
		\frac{\int_{M_t}[|\nabla v|^2 -(|A|^2+1/2)v^2]\mathrm{e}^{-\frac{|x|^2}{4}}d\mu}{\int_{M_t}v^2\mathrm{e}^{-\frac{|x|^2}{4}}d\mu} \geq -\lambda_1^{2R}(M_t),
		\]
		\[
		\frac{\int_{M_t}[|\nabla w|^2 -(|A|^2+1/2)w^2]\mathrm{e}^{-\frac{|x|^2}{4}}d\mu}{\int_{M_t}w^2\mathrm{e}^{-\frac{|x|^2}{4}}d\mu}
		\geq 	\frac{\int_{M_t}[ -(C(1+|x|)^{-2}+1/2)w^2]\mathrm{e}^{-\frac{|x|^2}{4}}d\mu}{\int_{M_t}w^2\mathrm{e}^{-\frac{|x|^2}{4}}d\mu}
		\geq -C(1+R)^{-2}-1/2.
		\]
		Also, \begin{equation}\label{eq:eigenconverge1}
			\begin{split}
				&\int_{M_t}[\nabla v\cdot\nabla w] \mathrm{e}^{-\frac{|x|^2}{4}}d\mu
				=
				\int_{M_t}[(\phi_1^t\nabla \eta+\eta\nabla \phi_1^t)\cdot(-\phi_1^t\nabla\eta+(1-\eta)\nabla \phi_1^t)] \mathrm{e}^{-\frac{|x|^2}{4}}d\mu
				\\
				=&
				\int_{M_t}[-(\phi_1^t)^2|\nabla\eta|^2 +\eta(1-\eta)|\nabla \phi_1^t|^2
				+\phi_1^t(1-2\eta)\nabla\eta\cdot\nabla\phi_1^t] \mathrm{e}^{-\frac{|x|^2}{4}}d\mu
				\\
				\geq &
				\int_{M_t}[-C(\kappa)(\phi_1^t)^2|\nabla\eta|^2-\kappa|\nabla \phi_1^t|^2] \mathrm{e}^{-\frac{|x|^2}{4}}d\mu
			\end{split}
		\end{equation}
		\begin{equation}
			\begin{split}
				\geq &
				\int_{M_t}[-\big(C(\kappa)|\nabla\eta|^2-\kappa(|A|^{2}+1/2)\big)(\phi_1^t)^2
				\\
				&-\kappa\big(|\nabla \phi_1^t|^2-(|A|^2+1/2)(\phi_1^t)^2\big)] \mathrm{e}^{-\frac{|x|^2}{4}}d\mu\\
				&(\text{note $|\nabla \eta|\leq CR^{-1}$})\geq 
				-(C(\kappa)R^{-2}+\kappa|A|^2+\kappa/2-\kappa\lambda_1(t))
				\int_{M_t}(\phi_1^t)^2\mathrm{e}^{-\frac{|x|^2}{4}}d\mu 
				\\
				&\geq 
				-C\kappa\int_{M_t}(\phi_1^t)^2\mathrm{e}^{-\frac{|x|^2}{4}}d\mu.
			\end{split}
		\end{equation}
		In the last inequality we assume that $R$ is sufficiently large.
		Let 
		$
		m:=\min\{-\lambda_1^{2R}(M_t),-C(1+R)^{-2}-1/2\}.
		$
		Then $m<0$ and when $R$ is sufficiently large, $-\lambda_1^{2R}(\Sigma)\leq\frac{3}{2}( -C(1+R)^{-2}-1/2)$, and therefore when $t$ is very large, $-\lambda_1^{2R}(M_t)\leq( -C(1+R)^{-2}-1/2)$. Therefore $m=-\lambda_1^{2R}(M_t)$. We have
		\begin{multline}\label{eq:eigenconverge2}
			\int_{M_t}[|\nabla v|^2 -(|A|^2+1/2)v^2]\mathrm{e}^{-\frac{|x|^2}{4}}d\mu
			+
			\int_{M_t}[|\nabla w|^2 -(|A|^2+1/2)w^2]\mathrm{e}^{-\frac{|x|^2}{4}}d\mu
			\geq
			\\ m\left(\int_{M_t}v^2\mathrm{e}^{-\frac{|x|^2}{4}}d\mu
			+
			\int_{M_t}w^2\mathrm{e}^{-\frac{|x|^2}{4}}d\mu
			\right)
			=m\int_{M_t}(\phi_1^t)^2\mathrm{e}^{-\frac{|x|^2}{4}}d\mu.
		\end{multline}
		
		Together with \eqref{eq:eigenconverge1} and \eqref{eq:eigenconverge2}, we get
		\begin{multline*}
			-\lambda_1(M_t)=
			\frac{\int_{M_t}[|\nabla \phi_1^t|^2 -(|A|^2+1/2)(\phi_1^t)^2]\mathrm{e}^{-\frac{|x|^2}{4}}d\mu}
			{\int_{M_t}(\phi_1^t)^2\mathrm{e}^{-\frac{|x|^2}{4}}d\mu}
			\\
			\geq m-C\kappa=-\lambda_1^{2R}(M_t)-C\kappa
			\geq (1-\kappa)\lambda_1(\Sigma)-C\kappa.
		\end{multline*}
		So for any $\epsilon>0$, we can choose $\kappa$ small such that $-\lambda_1(M_t)\geq -\lambda_1(\Sigma)-\epsilon$. This shows the desired result.
	\end{proof}

	\begin{proof}[Proof of Theorem \ref{thm:eigenvalue convergence}]
		Combining Lemma \ref{lem:eigenconverge-1} and Lemma \ref{lem:eigenconverge-2} gives the proof.
	\end{proof}
	
	\subsection{The convergence of the leading eigenfunctions} \label{SSConvergeFunction}In this subsection, we prove the following theorem:
	
	\begin{theorem}\label{thm:conv1steigenfun}
		After normalization, the first eigenfunction $\phi_1^t$ on $M_t$ converges to $\phi_1$ as $t\to\infty$ in $C_{\loc}^\infty$ sense, where $\phi_1$ is the first eigenfunction on $\Sigma$.
	\end{theorem}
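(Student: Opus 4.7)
Normalize each $\phi_1^t > 0$ by the Gaussian-weighted $L^2$-norm on $M_t$. Fix a large radius $R$. For $t$ sufficiently large, $M_t \cap B_R$ is a smooth normal graph over $\Sigma \cap B_R$ with graph function converging to $0$ in every $C^k$; transplant $\phi_1^t$ via this graphical parametrization (as in Appendix~\ref{SS:transplantation}) to a function $\bar\phi_1^t$ on $\Sigma \cap B_R$, which satisfies a uniformly elliptic eigenvalue equation whose coefficients converge in $C^k_\loc$ to those of $L_\Sigma$ and whose eigenvalue converges by Theorem~\ref{thm:eigenvalue convergence}. Since the Gaussian weight is bounded above and below on compacts, $\|\bar\phi_1^t\|_{L^2(\Sigma\cap B_R)}$ is uniformly bounded by $1$; then standard interior $L^p$ and Schauder estimates, together with bootstrapping, give uniform $C^k(\Sigma\cap B_{R/2})$ bounds for every $k$.

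A diagonal argument over an exhaustion $R_j \to \infty$ then yields a subsequence along which $\bar\phi_1^t$ converges in $C^\infty_\loc$ to some nonnegative function $\phi_\infty : \Sigma \to \R$ solving $L_\Sigma \phi_\infty = \lambda_1(\Sigma)\phi_\infty$. The remaining, and main, obstacle is to show $\phi_\infty \not\equiv 0$, i.e., that the mass of $\phi_1^t$ does not escape to infinity. I would argue by contradiction: if $\|\phi_1^t\|_{L^2(M_t \cap B_R)} \to 0$ along a subsequence for every fixed $R$, then splitting $\phi_1^t = v + w$ via a cutoff supported on $B_R$ exactly as in Lemma~\ref{lem:eigenconverge-2} and using the curvature decay $|A|(x) \leq C(1+|x|)^{-1}$ from Lemma~\ref{lem:eigenconverge-Abound}, the Rayleigh quotient of $\phi_1^t$ would be dominated by $\sup_{M_t \setminus B_{R/2}}(|A|^2 + 1/2) + o_t(1) \leq CR^{-2} + 1/2 + o_t(1)$. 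Letting $t \to \infty$ then $R \to \infty$ would give $\lambda_1(\Sigma) \leq 1/2$, contradicting the known bound $\lambda_1(\Sigma) \geq 1$.

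Once $\phi_\infty \not\equiv 0$, the strong maximum principle for the operator $L_\Sigma$ forces $\phi_\infty > 0$, and then Perron-Frobenius uniqueness of the principal eigenfunction gives $\phi_\infty = c\phi_1$ for some $c > 0$. The same tail control used to prove $\phi_\infty \neq 0$ can be strengthened to a uniform-in-$t$ smallness estimate $\|\phi_1^t\|_{L^2(M_t \setminus B_R)} \to 0$ as $R \to \infty$, which promotes the convergence from $C^\infty_\loc$ to weighted $L^2$ and thereby fixes $c = 1$. Finally, since every subsequence of $\{\bar\phi_1^t\}$ admits a further subsequence converging to the same limit $\phi_1$, the full sequence converges in $C^\infty_\loc$. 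The genuine difficulty throughout is the noncompactness of $\Sigma$; the tail bound is the only nonroutine input, and it relies crucially on Lemma~\ref{lem:eigenconverge-Abound} and the strict inequality $\lambda_1(\Sigma) > 1/2$.
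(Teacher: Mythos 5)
Your route differs from the paper's and most of it is sound. The paper normalizes pointwise, setting $\phi_1^t(P_t)=1$ at a fixed point, so nontriviality of the limit is automatic from the Harnack inequality and Schauder estimates; its work then goes into showing the limit lies in the weighted $L^2$ space (via the maximum principle, the curvature decay of Lemma \ref{lem:eigenconverge-Abound}, Harnack, and the entropy bound), so that it is genuinely the first eigenfunction. You instead normalize in $L^2(M_t)$, which makes the weighted-$L^2$ membership of any subsequential limit free (by Fatou, $\|\phi_\infty\|_{L^2(\Sigma)}\leq 1$ --- worth stating explicitly, since Perron--Frobenius is applied in the weighted $L^2$ theory of \cite{BW}), and you must instead rule out mass escaping to infinity. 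Your contradiction argument for that is correct: under the escape hypothesis the contribution of $B_R$ to $\int(|A|^2+\tfrac12)(\phi_1^t)^2e^{-|x|^2/4}d\mu$ is $o_t(1)$, the exterior contribution is at most $CR^{-2}+\tfrac12$ by Lemma \ref{lem:eigenconverge-Abound}, and dropping the gradient term gives $\lambda_1(t)\leq CR^{-2}+\tfrac12+o_t(1)$, contradicting $\lambda_1(t)\to\lambda_1(\Sigma)\geq 1$. Up to and including $\phi_\infty=c\phi_1$, $c>0$, this is a valid alternative to the paper's argument.

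The genuine gap is the sentence claiming that ``the same tail control \ldots can be strengthened to a uniform-in-$t$ smallness estimate $\|\phi_1^t\|_{L^2(M_t\setminus B_R)}\to0$ as $R\to\infty$.'' It does not follow from what you proved: the Rayleigh-quotient inequality, run quantitatively, only yields a uniform positive lower bound on $\|\phi_1^t\|_{L^2(M_t\cap B_R)}^2$ for some fixed $R$ (roughly $\geq(\tfrac12-CR^{-2}-\eps)/\sup_{B_R}(|A|^2+\tfrac12)$), not a tail that tends to zero as $R\to\infty$ uniformly in $t$. That uniform tail bound is exactly the content of the paper's Corollary \ref{cor:1stcutsmall} and Theorem \ref{thm:conv1steigenfunL2}, and it is proved there by a different mechanism: the maximum principle localizes all local maxima of $\phi_1^t$ to the region where $|A|^2+\tfrac12\geq\lambda_1(t)$, i.e.\ a fixed ball, and combined with the decay of $\phi_1$ from \cite{BW} and the entropy bound this gives the pointwise bound $\phi_1^t\leq CR^{-1}$ outside $B_R$ and hence the $L^2$ tail estimate. (Alternatively one can get it by iterating a Caccioppoli-type cutoff estimate on dyadic annuli, using $\lambda_1(t)\geq 1-\eps>\tfrac12$, but that too is an additional argument, not a restatement of your no-escape step.) Since you use $c=1$ to make the subsequential limit independent of the subsequence, this gap propagates to your final full-convergence claim. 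Either supply the tail estimate along the paper's lines, or sidestep it for the theorem as stated: since every subsequential limit is a positive multiple of $\phi_1$, renormalizing so that $\phi_1^t(P_t)=1$ at a fixed point (the paper's normalization) makes all subsequential limits coincide and yields full $C^\infty_{\loc}$ convergence without any tail control.
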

	
	\begin{proof}
		By the standard elliptic theory, $\phi_1^t$ is smooth and positive on $M_t$. For any fixed ball of radius $R$, when $t$ is very large, a part of $M_t$ can be written as a graph over $\Sigma\cap B_R$. So if we fix a point $P$ on $\Sigma$ with $|P|\leq \sqrt{2n}$, and we denote by $P_t$ the corresponding point on $M_t$ ($P_t$ is well-defined when $t$ is sufficiently large), we can divide $\phi_1^t$ by a constant so that $\phi_1^t(P_t)=1$. Then the standard Harnack inequality and the Schauder estimate show that $\phi_1^t$ converges locally smoothly to a limit function $\phi_1$. Finally, because $\lambda_1(t)\to\lambda_1$, $\phi_1$ must satisfy the equation 
		$L\phi_1=\lambda_1(\Sigma)\phi_1.$
		Thus $\phi_1$ is exactly the first eigenfunction if it belongs to the space
		$\left\{u:\int_\Sigma u^2\mathrm{e}^{-\frac{|x|^2}{4}}d\mu<\infty\right\}$  (see \cite[Proposition 4.1]{BW}).

		On $M_t$, $\phi_1^t$ satisfies the equation
		$\cL_{M_t}\phi_1^t+(|A|^2+1/2-\lambda_1(t))\phi_1^t=0.$
		So by the maximum principle, $\max\phi_1^t$ is attained at somewhere $|A|^2+1/2\geq \lambda_1(t)$. By the decay of $|A|$ (see Lemma \ref{lem:eigenconverge-Abound}) and the convergence of the first eigenvalues, when $t$ is sufficiently large, $\max \phi_1^t$ is attained only in $M_t\cap B_{r_0}$ with some fixed $r_0>0$. Then by the Harnack inequality and the entropy bound of the flow, we know that $\int_{M_t} (\phi_1^t)^2\mathrm{e}^{-\frac{|x|^2}{4}}d\mu$ is uniformly bounded. Therefore $\int_\Sigma (\phi_1)^2\mathrm{e}^{-\frac{|x|^2}{4}}d\mu$ is finite.
	\end{proof}
	
	We may also assume that the normalization is $\|\phi_1^t\|_{L^2(M_t)}=1$, and similarly $\phi_1^t$ converges to a limit $\phi_1$, which is a first eigenfunction on $\Sigma$. At this moment, we do not know whether $\|\phi_1\|_{L^2(\Sigma)}=1$. Our goal is to show that $\|\phi_1\|_{L^2(\Sigma)}=1$.

	\begin{corollary}\label{cor:1stcutsmall}
		there exist $R_0>0$ and a function $\eta:[R_0,\infty)\to \R_+$ such that $\lim_{R\to\infty}\eta(R)=0$, and $\|\phi_1^t(1-\chi_R)\|_{L^2(M_t)}<C R^{-1}$.   
	\end{corollary}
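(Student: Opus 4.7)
The plan is to obtain a weighted energy estimate for $\phi_1^t$ directly from the eigenvalue equation $\cL_{M_t}\phi_1^t=(\lambda_1(t)-|A|^2-\tfrac12)\phi_1^t$ by cutting off in the annular region $B_R \setminus B_{R/2}$. The key inputs are the convergence $\lambda_1(t)\to\lambda_1(\Sigma)\geq 1$ from Theorem \ref{thm:eigenvalue convergence} and the curvature decay $|A|(p,t)\leq C(1+|p|)^{-1}$ from Lemma \ref{lem:eigenconverge-Abound}. Together these force the potential $\lambda_1(t)-|A|^2-\tfrac12$ to be bounded below by a strictly positive constant on $\{|x|\geq R_0/2\}$ once both $R_0$ and $t$ are sufficiently large.

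First I would fix a smooth radial cutoff $\eta=\eta_R:\R^{n+1}\to[0,1]$ with $\eta\equiv 0$ on $B_{R/2}$, $\eta\equiv 1$ outside $B_R$, and $|\nabla\eta|\leq C/R$. Writing $w=e^{-|x|^2/4}$ and testing the eigenvalue equation against $\eta^2\phi_1^t$ in the weighted pairing, the self-adjointness of $\cL_{M_t}$ with respect to $w\,d\mu$ gives after one integration by parts
\[
\int_{M_t}\eta^2|\nabla\phi_1^t|^2 w\,d\mu + \int_{M_t}\eta^2(\phi_1^t)^2\bigl(\lambda_1(t)-|A|^2-\tfrac12\bigr)w\,d\mu = -2\int_{M_t}\eta\,\phi_1^t\,\nabla\eta\cdot\nabla\phi_1^t\,w\,d\mu.
\]
Applying the Cauchy-Schwarz bound $|2\eta\phi_1^t\,\nabla\eta\cdot\nabla\phi_1^t|\leq \eta^2|\nabla\phi_1^t|^2+|\nabla\eta|^2(\phi_1^t)^2$ to the right-hand side and absorbing $\int\eta^2|\nabla\phi_1^t|^2 w\,d\mu$ into the left, I arrive at the localized coercivity estimate
\[
\int_{M_t}\eta^2(\phi_1^t)^2\bigl(\lambda_1(t)-|A|^2-\tfrac12\bigr)w\,d\mu \leq \int_{M_t}|\nabla\eta|^2(\phi_1^t)^2 w\,d\mu.
\]

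Next I would choose $R_0$ large enough that $|A|^2\leq C'R_0^{-2}\leq \tfrac18$ on $\{|x|\geq R_0/2\}$ (using Lemma \ref{lem:eigenconverge-Abound}), and then $T$ large enough that $\lambda_1(t)\geq \tfrac78$ for all $t\geq T$ (using Theorem \ref{thm:eigenvalue convergence}). Then for every $R\geq R_0$ and $t\geq T$, the potential coefficient $\lambda_1(t)-|A|^2-\tfrac12$ is at least $\tfrac14$ on $\mathrm{supp}\,\eta_R\subset\{|x|\geq R/2\}$. Combining this with $|\nabla\eta_R|^2\leq CR^{-2}$ and the normalization $\|\phi_1^t\|_{L^2(M_t)}=1$ yields
\[
\tfrac14\int_{M_t\setminus B_R}(\phi_1^t)^2 w\,d\mu \leq \frac{C}{R^2}\int_{M_t}(\phi_1^t)^2 w\,d\mu = \frac{C}{R^2},
\]
and taking square roots gives precisely $\|\phi_1^t(1-\chi_R)\|_{L^2(M_t)}\leq C'R^{-1}$, as required.

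The only nontrivial ingredient in the argument is ensuring that the potential coefficient is uniformly positive on the support of the cutoff, which is exactly the combination of the two preceding results already proved in the section. Once that is in hand, the rest is a routine Moser-style weighted energy estimate; in particular, no fine information about $\phi_1^t$ near infinity (beyond the normalization and the equation it satisfies) is needed.
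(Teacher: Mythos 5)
Your proof is correct, and it takes a genuinely different route from the paper's argument. The paper establishes a \emph{pointwise} bound $\phi_1^t(p)\leq C|p|^{-1}$ outside $B_R$: it first imports the decay $\phi_1\leq C|x|^{-1}$ on $\Sigma$ from Bernstein--Wang, transfers this to $\phi_1^t$ on the fixed boundary $\partial B_{R_0}$ via the locally smooth convergence of Theorem \ref{thm:conv1steigenfun}, and then propagates it outward with the maximum-principle observation (from the proof of Theorem \ref{thm:conv1steigenfun}) that a local maximum of $\phi_1^t$ can only occur where $|A|^2+\tfrac12\geq\lambda_1(t)$, which is excluded far from the origin. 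The $L^2$ estimate then follows by integrating the pointwise bound against the Gaussian weight and using the uniform entropy bound. You instead bypass the pointwise decay entirely: testing the eigenvalue equation against $\eta^2\phi_1^t$, integrating by parts (legitimate since $M_t$ is closed), absorbing the gradient term, and using the same two inputs (curvature decay from Lemma \ref{lem:eigenconverge-Abound} and $\lambda_1(t)\to\lambda_1(\Sigma)\geq1$ from Theorem \ref{thm:eigenvalue convergence}) to ensure the potential $\lambda_1(t)-|A|^2-\tfrac12$ is uniformly coercive on the support of the cutoff. Your argument is more self-contained --- it does not need the Bernstein--Wang pointwise decay of $\phi_1$ or the explicit propagation through the convergence theorem --- and is essentially a textbook Caccioppoli/Agmon-type exponential-decay estimate, while the paper's version produces the stronger pointwise decay as a byproduct, which is of independent interest in other parts of the analysis. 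Both require $t$ sufficiently large, which is implicit in the corollary's use downstream. One minor point worth making explicit in your write-up: the absorption step and the manipulation $\int \eta^2\phi_1^t\,\cL_{M_t}\phi_1^t\,e^{-|x|^2/4}d\mu=-\int\nabla(\eta^2\phi_1^t)\cdot\nabla\phi_1^t\,e^{-|x|^2/4}d\mu$ rely on the closedness of $M_t$; on a noncompact surface one would need the kind of justification given in Corollary \ref{cor:integration by parts}.
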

	
	\begin{proof}
		From the decay rate of $\phi_1$ in \cite[Proposition 4.1]{BW}, there exists $R_0>0$ such that for $p\in\Sigma$ and $|p|>R\geq R_0$, we have $\phi_1(p)<C|p|^{-1}$. Then by the locally smooth convergence Theorem \ref{thm:conv1steigenfun}, $\phi_1^t\to\phi_1$ on $\Sigma\cap B_{R+1}$ for some constant $\eta>0$. Thus, $\phi_1^t\leq C|p|^{-1}$ on $\Sigma\cap \partial B_{R_0}$ as well, when $t$ is sufficiently large.  
		
		In the proof of Theorem \ref{thm:conv1steigenfun}, we have shown that by the maximum principle, $\max\phi_1^t$ is attained at somewhere $|A|^2+1/2\geq \lambda_1(t)$. This is also true if we replace the maximum with the local maximum. Thus, when $R$ is sufficiently large, we know that $\phi_1^t\leq CR^{-1}$ on $M_t\backslash B_R$. This implies that
		\begin{equation*}
			\|\phi_1^t(1-\chi_R)\|_{L^2(M_t)}\leq\left( \int_{M_t\backslash B_R} CR^{-2}\mathrm{e}^{-\frac{|x|^2}{4}} d\mu\right)^{1/2} \leq CR^{-1},
		\end{equation*}
		where in the last inequality we use the fact that the entropy of $M_t$ is uniformly bounded. This yields the desired inequality.
	\end{proof}
	
	As a consequence, we can show that $\phi_1^t$ actually converges to $\phi_1$ with $\|\phi_1\|_{L^2(\Sigma)}=1$. It is a consequence of the following theorem.

	\begin{theorem}\label{thm:conv1steigenfunL2}
		There exists $R_0>0$ with the following significance. Let us normalize $\phi_1^t$ such that $\|\phi_1^t\|_{L^2(M_t)}=1$. Then for any $R>R_0$, $\|\overline{\phi_1^t}-\phi_1\|_{L^2(M_t\cap B_R)}\to 0$ as $t\to\infty$ and $\|\phi_1\|_{L^2(\Sigma)}=1$. Here $\overline{\phi_1^t}$ is the function $\phi_1^t$ pulled back to $\Sigma$.
	\end{theorem}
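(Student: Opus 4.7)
The strategy is to show that no $L^2$-mass of $\phi_1^t$ escapes to infinity as $t\to\infty$, which forces the locally smooth limit $\phi_1$ to absorb the full unit mass on $\Sigma$.

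First I would observe that the pointwise tail estimate used in the proof of Corollary \ref{cor:1stcutsmall}, namely $\phi_1^t(p)\leq C(1+|p|)^{-1}$ outside a large ball, comes from the maximum principle applied to $\cL_{M_t}\phi_1^t+(|A|^2+1/2-\lambda_1(t))\phi_1^t=0$ together with the curvature decay of Lemma \ref{lem:eigenconverge-Abound}, and hence survives the switch to the $L^2$-normalization up to a bounded multiplicative constant. Thus, for $R>R_0$ and $t$ large,
\begin{equation*}
\int_{M_t\setminus B_R}(\phi_1^t)^2\,e^{-|x|^2/4}\,d\mu\leq CR^{-2},
\end{equation*}
and combined with $\|\phi_1^t\|_{L^2(M_t)}=1$ this yields $1-CR^{-2}\leq\|\phi_1^t\|_{L^2(M_t\cap B_R)}^2\leq 1$.

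Next, for $t$ sufficiently large $M_t^R$ is the normal graph of a function $m_t$ over $\Sigma^R$ with $\|m_t\|_{C^2(\Sigma^R)}\to 0$, so the transplantation diffeomorphism $\varphi_t:\Sigma^R\to M_t^R$ of Definition \ref{DefTransplant} has Jacobian close to $1$ and changes the Gaussian weight by a multiplicative factor $1+o(1)$ uniformly on $B_R$ (using $|x\cdot\bn(x)|=2|H(x)|$ bounded on $\Sigma\cap B_R$, as in the discussion following \eqref{EqPullBack}). Hence $\|\overline{\phi_1^t}\|_{L^2(\Sigma\cap B_R)}^2=(1+o(1))\|\phi_1^t\|_{L^2(M_t\cap B_R)}^2$. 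By Theorem \ref{thm:conv1steigenfun}, after appropriate rescaling $\overline{\phi_1^t}\to\phi_1$ in $C^\infty_{\loc}(\Sigma)$, and since $\Sigma\cap B_R$ has finite Gaussian measure and the convergence is uniform there, this upgrades to $\|\overline{\phi_1^t}-\phi_1\|_{L^2(\Sigma\cap B_R)}\to 0$ for each fixed $R$.

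Combining the previous two steps and letting $t\to\infty$ produces $1-CR^{-2}\leq\|\phi_1\|_{L^2(\Sigma\cap B_R)}^2\leq 1$; then sending $R\to\infty$ and using monotone convergence yields $\|\phi_1\|_{L^2(\Sigma)}=1$. The main technical obstacle is the normalization issue: Theorem \ref{thm:conv1steigenfun} is phrased under a pointwise normalization $\phi_1^t(P_t)=1$, so one must verify that after dividing by $\|\phi_1^t\|_{L^2(M_t)}$ the $C^\infty_{\loc}$ convergence persists and the limit is the same first eigenfunction on $\Sigma$ (up to a nonzero scalar). This reduces to showing that the ratio of the two normalizing constants stays bounded above and below as $t\to\infty$, for which the upper bound is provided by the uniform $L^2$-bound established in the proof of Theorem \ref{thm:conv1steigenfun} and the lower bound is given exactly by the tightness estimate above, which prevents the mass of $\phi_1^t$ from concentrating at infinity during the renormalization.
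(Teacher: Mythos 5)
Your proposal is correct and follows essentially the same route as the paper: both proofs use the tail estimate of Corollary \ref{cor:1stcutsmall} to show no $L^2$-mass escapes to infinity, combine this with the locally smooth convergence from Theorem \ref{thm:conv1steigenfun} (via Harnack and Arzel\`a--Ascoli), and then pass to the limit in $t$ and then $R$ to conclude $\|\phi_1\|_{L^2(\Sigma)}=1$. The only cosmetic difference is that you handle the change from pointwise to $L^2$ normalization by bounding the ratio of normalizing constants, while the paper reruns the compactness argument directly under the $L^2$ normalization by noting that $\|\phi_1^t\|_{L^2(M_t)}=1$ together with the bounded entropy forces a uniform positive lower bound on $\max\phi_1^t$ in $M_t\cap B_{r_0}$; both implementations are sound and logically equivalent.
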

	
	\begin{proof}
		The proof is the same as that of Theorem \ref{thm:conv1steigenfun}. Recall that $\max \phi_1^t$ is attained only in $M_t\cap B_{r_0}$ with some fixed $r_0>0$. Also, $\|\phi_1^t\|_{L^2(M_t)}=1$ implies that $\max\phi_1^t\geq \lambda(t)^{-1}\geq C$ for some constant $C$, inside $M_t\cap B_{r_0}$. Again, by the Harnack inequality, we know that $\phi_1^t$ has a uniformly lower bound $c_0$ on $M_t\cap B_{r_0}$. As a consequence of the Arzela-Ascoli theorem, $\phi_1^t$ smoothly converges to a limit on $M_t\cap B_{r_0}$. From Theorem \ref{thm:conv1steigenfun}, this limit $\phi_1$ must be a first eigenfunction on $\Sigma$. 
		
		It remains to show $\|\phi_1\|_{L^2(\Sigma)}=1$. Corollary \ref{cor:1stcutsmall} implies that $\|\phi_1^t(1-\chi_R)\|_{L^2(M_t)}<CR^{-1}$ when $t$ is sufficiently large. Thus $\|\phi_1^t\chi_R\|_{L^2(M_t)}>1-CR^{-1}$. This implies that $\|\phi_1\|_{L^2(\Sigma\cap B_R)}\geq 1-CR^{-1}$, and as a consequence $\|\phi_1\|_{L^2(\Sigma)}\geq1$. On the other hand, $\|\phi_1\|_{L^2(\Sigma)}\leq \limsup \|\phi_1^t\|_{L^2(M_t)}=1$. So $\|\phi_1\|_{L^2(\Sigma)}=1$.
	\end{proof}
	
	\subsection{The spectral gap between the first two eigenvalues}\label{SSSpectralGap}
	In this subsection, we study the gap between the first eigenvalue and the second eigenvalue. We denote by $\lambda_2(t)=\lambda_2(M_t)$ and $\lambda_2=\lambda_2(\Sigma)$.
	\begin{theorem}
		Suppose $(\star)$. Then there exists $C>0$ such that $$\lambda_1(t)-\lambda_2(t)\geq C.$$
	\end{theorem}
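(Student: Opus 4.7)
The plan is to argue by contradiction, combining the convergence $\lambda_1(M_t) \to \lambda_1(\Sigma)$ from Theorem \ref{thm:eigenvalue convergence} with the simplicity of the top eigenvalue of $L_\Sigma$. Suppose there is a sequence $t_k \to \infty$ with $\lambda_1(t_k) - \lambda_2(t_k) \to 0$, so $\lambda_2(t_k) \to \lambda_1(\Sigma)$. Since $\Sigma$ is asymptotically conical it is neither $S^n(\sqrt{2n})$ nor a generalized cylinder, hence $\lambda_1(\Sigma) > 1 > \tfrac{1}{2}$ by \cite{CM1}. Let $\psi^k$ be a second eigenfunction on $M_{t_k}$ with $\|\psi^k\|_{L^2(M_{t_k})} = 1$ and $\psi^k \perp \phi_1(t_k)$ in $L^2(M_{t_k})$.

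The first step is a concentration estimate for $\psi^k$. Choose a cutoff $\eta_R$ with $\eta_R \equiv 0$ on $B_R$, $\eta_R \equiv 1$ outside $B_{2R}$, and $|\nabla \eta_R| \leq 2/R$. Multiplying $L_{M_{t_k}} \psi^k = \lambda_2(t_k) \psi^k$ by $\psi^k \eta_R^2 e^{-|x|^2/4}$, integrating by parts against the Gaussian weight, and absorbing the cross term $2|\psi^k \eta_R \nabla \psi^k \cdot \nabla \eta_R|$ via $2|ab| \leq a^2 + b^2$ gives
\[
\int_{M_{t_k}} \bigl(\lambda_2(t_k) - |A|^2 - \tfrac{1}{2}\bigr)(\psi^k)^2 \eta_R^2 \, e^{-|x|^2/4}\, d\mu \leq \int_{M_{t_k}} (\psi^k)^2 |\nabla \eta_R|^2 \, e^{-|x|^2/4}\, d\mu \leq \frac{C}{R^2}.
\]
By Lemma \ref{lem:eigenconverge-Abound}, $|A|^2 \leq C R^{-2}$ on $M_{t_k} \setminus B_R$ once $t_k$ is large; combined with $\lambda_2(t_k) \to \lambda_1(\Sigma) > \tfrac{1}{2}$, the bracket is bounded below by a positive constant for $R, k$ sufficiently large. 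Hence $\|\psi^k\|_{L^2(M_{t_k} \setminus B_{2R})}^2 \leq C R^{-2}$, so $\|\psi^k\|_{L^2(M_{t_k} \cap B_{2R})} \geq 1/2$ for $R$ large.

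Elliptic regularity for $L_{M_{t_k}} \psi^k = \lambda_2(t_k)\psi^k$ on compact subsets, combined with $M_{t_k} \to \Sigma$ in $C^\infty_{loc}$, yields uniform $C^{k,\alpha}$ estimates for the transplanted functions $\overline{\psi^k}$ on $\Sigma$. Passing to a subsequence, $\overline{\psi^k} \to \psi_\infty$ in $C^\infty_{loc}(\Sigma)$ with $L_\Sigma \psi_\infty = \lambda_1(\Sigma)\psi_\infty$ and $\|\psi_\infty\|_{L^2(\Sigma \cap B_{2R})} \geq 1/2 - o(1)$, so $\psi_\infty \not\equiv 0$. The orthogonality $\int_{M_{t_k}} \psi^k \phi_1(t_k) e^{-|x|^2/4} = 0$, together with the concentration of $\psi^k$ from Step 1 and of $\phi_1(t_k)$ from Corollary \ref{cor:1stcutsmall}, plus the $L^2$-convergence $\overline{\phi_1(t_k)} \to \phi_1$ on $\Sigma \cap B_{2R}$ from Theorem \ref{thm:conv1steigenfunL2}, passes to the limit via Cauchy-Schwarz to give $\langle \psi_\infty, \phi_1\rangle_{L^2(\Sigma)} = 0$. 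To finish, I invoke simplicity of $\lambda_1(\Sigma)$: since $\phi_1 > 0$ on $\Sigma$, any $\lambda_1(\Sigma)$-eigenfunction $\psi$ has $|\psi|$ realizing the same weighted Rayleigh quotient and is therefore itself a nonnegative eigenfunction; the Harnack inequality forces $|\psi| > 0$, so $\psi$ has constant sign, contradicting $\psi_\infty \perp \phi_1$. Hence no such sequence $t_k$ exists, proving the gap. The main technical point is the concentration estimate: the strict inequality $\lambda_1(\Sigma) > \tfrac{1}{2}$ is essential because the potential $|A|^2 + \tfrac{1}{2}$ only decays to $\tfrac{1}{2}$ at infinity, so without this gap the weighted cutoff argument would not yield positivity of the integrand outside a fixed ball.
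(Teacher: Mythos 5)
Your proof is correct, but it takes a genuinely different route from the paper's. The paper also argues by contradiction and passes to a $C^\infty_{\loc}$ limit, but the normalization and the final contradiction are different. In the paper, $\phi_2^t$ is normalized by its supremum $\phi_2^t(p_t)=1$; the maximum principle plus the curvature decay of Lemma \ref{lem:eigenconverge-Abound} and the strict inequality $\lambda_2(t)\to\lambda_1(\Sigma)>1$ force the extremal points $p_t,q_t$ to lie in a fixed ball $B_{R_0}$, a gradient estimate on $B_{R_0}$ gives a global $L^\infty$ (hence weighted $W^{1,2}$) bound, and the limit $\phi_2$ is a \emph{positive} first eigenfunction on $\Sigma$ — but it carries the sign change $\phi_2^t(q_t)<0$ to the limit, giving the contradiction directly, with no reference to orthogonality. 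You instead normalize $\psi^k$ in $L^2$, record the orthogonality $\psi^k\perp\phi_1(t_k)$, and replace the paper's pointwise localization by an integration-by-parts Caccioppoli estimate against the cutoff $\eta_R$; the concentration estimate $\|\psi^k\|_{L^2(M_{t_k}\setminus B_{2R})}\leq CR^{-1}$ (which relies on exactly the same two ingredients — the decay of $|A|$ and $\lambda_2(t_k)>\tfrac12$) both rules out the trivial limit and lets the orthogonality pass to the limit, and then simplicity of $\lambda_1(\Sigma)$ finishes. The two contradictions are essentially dual: the paper contradicts sign-changing against positivity, you contradict $L^2$-orthogonality against positivity (via simplicity). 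The paper's route avoids the concentration/cutoff bookkeeping and the orthogonality-passage; yours avoids the pointwise gradient estimate and the tracking of $p_t,q_t$, and makes the role of the simplicity of $\lambda_1(\Sigma)$ explicit. Both are sound. One small point worth stating explicitly in your version: your Caccioppoli argument also yields $\int_{M_{t_k}\setminus B_{2R}}|\nabla\psi^k|^2 e^{-|x|^2/4}\leq CR^{-2}$, which is needed (together with the $L^2$ decay) to guarantee that the limit $\psi_\infty$ actually lies in $W^{1,2}_w(\Sigma)$, so that it qualifies as a genuine first eigenfunction to which the simplicity argument applies.
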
 
	
	\begin{proof}
		We prove by contradiction. Assume $\lambda_1(t)-\lambda_2(t)\to 0$ as $t\to\infty$. Then since $\lambda_1(t)\to\lambda_1(\Sigma)$, we have $\lambda_2(t)\to\lambda_1(\Sigma)$ as $t\to\infty$. Let $\phi_2^t$ be a second eigenfunction of $L_{M_t}$, namely 
		$\phi_2^t$ satisfies the equation
		\[\cL_{M_t}\phi_2^t+(|A|^2+1/2-\lambda_2(t))\phi_2^t=0.\]
		Suppose $p_t$ is the maximum point of $\phi_2^t$ and $q_t$ is the minimum point of $\phi_2^t$. By the elliptic theory, $\phi_2^t$ must change sign, so $\phi_2^t(p_t)>0$ and $\phi_2^t(q_t)<0$. Moreover, by the maximum principle, $|A|^2+1/2-\lambda_2(t)\geq 0$ at $p_t$ and $q_t$. When $t$ is sufficiently large, $\lambda_2(t)$ is close to $\lambda_1(\Sigma)>1$. Therefore Lemma \ref{lem:eigenconverge-Abound} implies that $|p_t|$ and $|q_t|$ are bounded by some uniform constant $R_0>0$.
		
		By multiplying a constant, we may assume $\phi^t_2(p_t)=1$, and by the gradient estimate of $\phi^2_t$ on a bounded domain $M_t\cap B_{R_0}$, we have $\phi^t_2(q_t)\geq -C$ where $C$ is a constant. This also implies a global $L^\infty$ bound of $\phi^t_2$, and hence a weighted $W^{1,2}$ bound. Then elliptic theory shows that after passing to a subsequence, $\phi^t_2$ converges to a limit $\phi_2$ on $\Sigma$ in $C_{\loc}^{\infty}$ sense, and if $p_t\to p$, we have $\phi_2(p)=1$. Moreover, $\phi_2$ satisfies the equation
		$\cL_{\Sigma}\phi_2+(|A|^2+1/2-\lambda_1)\phi_2=0.$
		As a consequence, $\phi_2$ must be a first eigenvalue of $L_{\Sigma}$. This means that $\phi_2>0$ everywhere, which contradicts the fact that $\phi^t_2(q_t)<0$ for all $t$ (note that $q_t$ is uniformly bounded).
	\end{proof}

	\appendix
	
	\section{The transplantation of functions}\label{SS:transplantation}
	In this section we fix a radius $R$ sufficiently large, and we only consider $t$ sufficiently large, such that a part of $M_t$ can be written as a smooth graph of a function $f$ on $\Sigma\cap B_R$. We denote this part by $M_t^R$. Recall that in Section \ref{SSConeLinear} we introduce the notion called the transplantation, namely for any function $g$ defined on $M_t^R$, we defined a function $\varphi_t^* g$ defined on $\Sigma\cap B_R$, such that 
	$ \varphi_t^*g(x)= g(x+f(x)\bn).$
	For simplicity, sometimes we just write $\bar{g}$ to denote $\varphi^*_t g$.
	
	Suppose $N_t^R$ is a graph of the function $g$ over $M_t^R$. If $f$ and $g$ are sufficiently small (in the $C^\ell$-norm for $\ell\geq 4$), then $N_t^R$ can be viewed as a graph of the function $v$ where $v$ is defined on $\Sigma$. The following lemma is similar to Theorem C.2 in \cite{SX} and we omit the proof here.
	
	\begin{lemma}[Theorem C.2 in \cite{SX}]\label{Lem;transplantation}
		For any $\eps'>0$, there exists $\eps>0$ with the following significance: if for $\|f\|_{C^{\ell}(\Sigma\cap B_{R+1})}\leq \eps$, $\|g\|_{C^{\ell}(\Sigma\cap B_{R+1})}\leq \eps$, then
		\[\|v-(f+\bar g)\|_{C^{\ell}(\Sigma\cap B_R)}\leq \eps'\|\bar g\|_{C^{\ell}(\Sigma\cap B_{R+1})}.\]
	\end{lemma}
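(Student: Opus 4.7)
The plan is a first-order perturbative expansion in $(f,g)$ exploiting a bilinearity-type vanishing of the remainder; the argument is a conical analogue of Theorem C.2 in \cite{SX}. I would parametrize $N_t^R$ by the map $\Phi:\Sigma\cap B_{R+1}\to\R^{n+1}$ defined by $\Phi(x)=x+f(x)\bn_\Sigma(x)+g(\varphi(x))\bn_{M_t^R}(\varphi(x))$, where $\varphi(x)=x+f(x)\bn_\Sigma(x)$ is the transplantation diffeomorphism of this section. When $\eps$ is chosen small enough that $\|f\|_{C^1}$ and $\|g\|_{C^1}$ are below a constant depending only on the fixed geometry of $\Sigma\cap B_{R+1}$, the implicit function theorem applied to the normal projection onto $\Sigma$ produces a unique function $v$ on $\Sigma\cap B_R$ together with a reparametrization $x\mapsto \tilde x$ such that $\Phi(x)=\tilde x+v(\tilde x)\bn_\Sigma(\tilde x)$.

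The key observation is that the remainder $E(f,g):=v-f-\bar g$ vanishes identically on both coordinate axes of the parameter space $C^\ell\times C^\ell$. Indeed, when $f\equiv 0$ one has $\varphi=\mathrm{id}$, $\bn_{M_t^R}=\bn_\Sigma$, $\bar g=g$, and $v=g=f+\bar g$; when $g\equiv 0$, $N_t^R=M_t^R$ so $v=f=f+\bar g$. Since $(f,g)\mapsto E$ is a smooth map in a $C^\ell$-neighborhood of the origin, vanishing on both axes forces $E$ to be of bilinear order, yielding $\|E\|_{C^\ell(\Sigma\cap B_R)}\leq C\|f\|_{C^\ell}\|g\|_{C^\ell}$ with $C$ depending only on $R$, $\Sigma$, and $\ell$.

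To make the bilinear estimate concrete I would Taylor expand $\bn_{M_t^R}(\varphi(x))=\bn_\Sigma(x)+\mc B(x;f,\nabla f)+O(\|f\|_{C^1}^2)$ via the standard formula for the unit normal to a graph, and use $g(\varphi(x))=\bar g(x)$ by definition. Substituting gives $\Phi(x)-x=(f(x)+\bar g(x))\bn_\Sigma(x)+\bar g(x)\cdot \mc B(x;f,\nabla f)+(\text{higher order})$. The tangential component of $\bar g\cdot\mc B$ is absorbed into the base-point shift $x\mapsto\tilde x$ through a second application of the implicit function theorem, while the normal component contributes directly to $v-f-\bar g$. Both contributions carry a factor of $\|f\|_{C^\ell}$ times $\|g\|_{C^\ell}$, as needed.

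The main technical obstacle is keeping track of the $C^\ell$ Schauder norms through the composition with $\bn_{M_t^R}$ (which involves up to $\ell$ derivatives of $f$) and through the implicit function theorem step for the normal projection, with constants independent of $f$ and $g$. Since $\Sigma\cap B_{R+1}$ is fixed and smooth, all such constants depend only on its geometry and on $\ell$. Finally, using $\|g\|_{C^\ell}\asymp\|\bar g\|_{C^\ell}$ (which follows from $\varphi$ being a $C^\infty$ diffeomorphism close to the identity when $\|f\|_{C^\ell}$ is small), one obtains $\|E\|_{C^\ell(\Sigma\cap B_R)}\leq C\eps\|\bar g\|_{C^\ell(\Sigma\cap B_{R+1})}$, and choosing $\eps:=\eps'/C$ completes the proof.
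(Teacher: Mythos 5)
The paper does not actually supply a proof of this lemma; it writes ``the following lemma is similar to Theorem C.2 in \cite{SX} and we omit the proof here,'' so there is no argument in this paper for you to match. Your strategy --- observe that the remainder $E(f,g)=v-f-\bar g$ vanishes on both coordinate axes of $(f,g)$-space and then invoke a bilinear Taylor estimate --- is the natural one and captures the geometric content correctly, including the concrete leading-order expansion $\bn_{M_t^R}=\bn_\Sigma+\mathcal B(x;f,\nabla f)+O(\|f\|_{C^1}^2)$.

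However, the central assertion that ``$(f,g)\mapsto E$ is a smooth map in a $C^\ell$-neighborhood of the origin,'' and hence that vanishing on the axes yields $\|E\|_{C^\ell}\leq C\|f\|_{C^\ell}\|g\|_{C^\ell}$, is not quite right as stated. The unit normal $\bn_{M_t^R}$ depends on $\nabla f$, so $\bn_{M_t^R}\in C^{\ell-1}$ when $f\in C^\ell$, and the leading contribution to $E$ (in the flat model it is $\bar g\,|\nabla f|^2(1+|\nabla f|^2)^{-1/2}$ plus similar terms) has $\ell$-th derivatives involving $\nabla^{\ell+1}f$. Thus $(f,g)\mapsto E$ is not a $C^2$ map from $C^\ell\times C^\ell$ into $C^\ell$; there is a loss of one derivative, and your remark that the composition ``involves up to $\ell$ derivatives of $f$'' undercounts by one. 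The bilinear estimate you derive therefore lands in $C^{\ell-1}$, not $C^\ell$. To make the argument close, you should either require $\|f\|_{C^{\ell+1}}\leq\eps$ in the hypothesis or weaken the conclusion to a $C^{\ell-1}$ (or $C^{\ell-1,\alpha}$) bound. This gap is harmless for the paper's applications, where $f$ is the graph of $M_t$ over $\Sigma$ and comes with uniform control of all higher derivatives, but it should be acknowledged rather than absorbed into ``constants depending only on $\Sigma$, $R$, and $\ell$.''
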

	
	The above bound requires $\|f\|_{C^\ell}$ and $\|g\|_{C^\ell}$ are sufficiently small. On the asymptotically conical part, $f$ and $g$ may not have a small $C^\ell$-norm. So we also need the following estimate, in the spirit of the pseudolocality theorem. 
	
	\begin{lemma}\label{Lem:transplantation-cone}
		For any $\eps'>0$ and integer $\ell>0$, there exists $\eps>0$ with the following significance: suppose $M_t$ and $\widetilde{M}_t$ are two RMCFs as in Section \ref{SSDifferenceEq}, and $m(\cdot,t)$ is the graph function of $M_t$ over $\Sigma$ and $v(\cdot,t)$ is the graph function of $\widetilde{M}_t$ over $M_t$. Suppose at $t=0$, $\|v(\cdot,0)\|_{C^{2,\alpha}(M_0^r)}\leq \eps$ and $\|m(\cdot,0)\|_{C^{2,\alpha}(\Sigma\cap B_r)}\leq \eps$. Let $u(\cdot,t)$ be the graph function of $\widetilde{M}_t$ on $\Sigma\cap B_{\mathbf{r}(M_t)}$. Then on $(B_{\mathbf{r}(M_t)}\backslash B_{r})\cap\Sigma$, at the time $t$, for $r+1<R<\mathbf{r}(M_t)-2$, we have
		\begin{equation*}
			\|\nabla^k(u(\cdot,t)-(m(\cdot,t)+\bar u))\|_{C^0((B_{R+1}\backslash B_{R})\cap\Sigma)}\leq C\eps'|\nabla^k \bar u|_{C^0((B_{R+2}\backslash B_{R-1})\cap\Sigma)}
		\end{equation*}
		for $k=0,1,2,\cdots,\ell$.
	\end{lemma}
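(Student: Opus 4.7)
The plan is to set up the implicit equation relating $u$, $m$, and $\bar u$, show that in the conical region the leading contribution is $u = m + \bar u$, and estimate the error using the fact that $\nabla^k m$ and $\nabla^k \bar u$ for $k\ge 1$ are scale-invariantly small on $\mathbb A_{R-1,R+2}\cap\Sigma$, even though $m$ and $\bar u$ themselves may be of size $O(R\eps_0)$.

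\textbf{Step 1: Setting up the implicit equation.} Fix $x_0\in(B_{R+1}\setminus B_R)\cap\Sigma$. A point on $\widetilde M_t$ above $x_0$ through the two-step graph construction is
\[
z=x_0+m(x_0)\mathbf n_\Sigma(x_0)+v(y)\,\mathbf n_{M_t}(y),\qquad y=x_0+m(x_0)\mathbf n_\Sigma(x_0).
\]
We want to write $z=x_*+u(x_*)\mathbf n_\Sigma(x_*)$ for some $x_*\in\Sigma$ close to $x_0$. Using that $M_t$ is the graph of $m$ over $\Sigma$, the unit normal on $M_t$ is
\[
\mathbf n_{M_t}(y)=\frac{1}{\sqrt{1+|\nabla_\Sigma m|^2}}\Bigl(\mathbf n_\Sigma(x_0)-\nabla_\Sigma m(x_0)\Bigr)+O(|x_0|^{-1}\|m\|),
\]
where the last term comes from the curvature $|A_\Sigma|\lesssim(1+|x|)^{-1}$ of the asymptotically conical shrinker. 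Substituting this into the formula for $z$ and projecting onto $T_{x_0}\Sigma$ and $\mathbf n_\Sigma(x_0)$ yields
\[
x_*=x_0-\frac{\bar u(x_0)\,\nabla_\Sigma m(x_0)}{\sqrt{1+|\nabla_\Sigma m|^2}}+\mathcal E_1,\qquad u(x_*)=m(x_0)+\frac{\bar u(x_0)}{\sqrt{1+|\nabla_\Sigma m|^2}}+\mathcal E_2,
\]
with $\mathcal E_1,\mathcal E_2$ lower order in the scale-invariant sense.

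\textbf{Step 2: Zeroth order estimate.} By the definition of the graphical scale (Definition \ref{DefGraphical}) and Ecker--Huisken higher-order estimates, $|\nabla_\Sigma m|\le\eps_0$ and $|\nabla_\Sigma\bar u|\le C\eps_0$ on $\mathbb A_{R-1,R+2}\cap\Sigma$. Thus $|x_*-x_0|\lesssim\eps_0|\bar u(x_0)|$ is much smaller than $1$, so we can invert $x\mapsto x_*(x)$ by the implicit function theorem. Expanding $u(x_*)=u(x_0)+\nabla u\cdot(x_*-x_0)+O(|x_*-x_0|^2)$ and substituting gives
\[
u(x_0)=m(x_0)+\bar u(x_0)+O\bigl(|\nabla_\Sigma m|\cdot|\bar u|+|x_0|^{-1}\|m\|_{\mathrm{scale}}|\bar u|\bigr),
\]
which is bounded by $C\eps|\bar u|_{C^0(\mathbb A_{R-1,R+2}\cap\Sigma)}$, provided $\eps$ is chosen small relative to $\eps'$.

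\textbf{Step 3: Higher derivatives by induction.} Differentiating the implicit identity $k$ times and collecting the terms, the $k$-th derivative of the error $E:=u-(m+\bar u)$ is a sum of products of the form $\nabla^{k_1}m\cdot\nabla^{k_2}\bar u\cdot(\text{bounded factors})$ with $k_1\ge 1$ and $k_1+k_2\le k+1$, plus curvature corrections involving $\nabla^{k_3}\mathbf n_\Sigma$. Using the scale-invariant bounds $|\nabla^j m|+|\nabla^j\bar u|\le C\eps_0\,|x_0|^{-j+1}$ and $|\nabla^j\mathbf n_\Sigma|\le C(1+|x_0|)^{-j}$, each such product is dominated by $C\eps_0\cdot|\nabla^k\bar u|_{C^0(\mathbb A_{R-1,R+2}\cap\Sigma)}$ (possibly after trading factors via interpolation on the unit-scale annulus). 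Choosing $\eps$ small gives the desired inequality with $\eps'$ in place of $C\eps_0$.

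\textbf{Main obstacle.} The delicate point is that $m$ and $\bar u$ themselves need not be small in $C^0$ on the conical region (they grow like $R\eps_0$), so the error must be controlled using only their first and higher derivatives, which by pseudolocality (Corollary \ref{CorPseudo}, Corollary \ref{CorPseudo-high}) and the definition of $\mathbf r(t)$ are genuinely small. The bookkeeping of which derivative factors enter each term of $\nabla^kE$, and the observation that at least one factor of $\nabla^{\ge 1}m$ always appears (otherwise the identity $u=m+\bar u$ would be exact), is the technical heart of the argument. Once this algebraic structure is in place, the estimate reduces to combining the scale-invariant bounds with the Schauder/Ecker--Huisken control of higher derivatives on the annulus.
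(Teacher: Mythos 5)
Your proposal takes a genuinely different route from the paper's. The paper's proof is a one-paragraph reduction: rescale the annulus $\mathbb A_{R,R+1}$ back to the MCF picture, where it sits at unit scale and the hypersurfaces are graphs over a cone with small $C^\ell$-norms by pseudolocality; then invoke the compact transplantation estimate Lemma \ref{Lem;transplantation} (Theorem C.2 of \cite{SX}) directly and rescale the resulting inequality. Your proof instead works entirely at the RMCF scale, sets up the two-step graph composition $z=x_0+m(x_0)\bn_\Sigma+v(y)\bn_{M_t}(y)$ explicitly, inverts the foot-point map by the implicit function theorem, and then bounds the Leibniz expansion of $\nabla^k E$ using the scale-invariant decay $|\nabla^j m|\lesssim\eps_0|x|^{1-j}$, $|\nabla^j A_\Sigma|\lesssim|x|^{-j-1}$. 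This is essentially the paper's rescaling argument "unrolled inline"; it buys a self-contained presentation at the cost of explicit bookkeeping, while the paper's version buys brevity by offloading the computation to the compact lemma.

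Two points you should tighten. First, in Step 2 you assert $|x_*-x_0|\lesssim\eps_0|\bar u(x_0)|\ll 1$ to invoke the implicit function theorem, but in your "Main obstacle" paragraph you allow $|\bar u|$ to be as large as $O(R\eps_0)$, which would make $\eps_0|\bar u|\sim\eps_0^2 R$, not small for $R\sim e^{t/2}$ large. There is no real contradiction — by the pseudolocality bound (Proposition \ref{PropPL1} and Lemma \ref{Lem:C0bound}) the graph function $v$ of $\widetilde M_t$ over $M_t$, and hence $\bar u$, stays uniformly $C^0$-small on the whole graphical region, so $|x_*-x_0|$ really is $\ll 1$ — but this should be stated rather than left to the reader, since you yourself flag the alternative reading. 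Second, the parenthetical "possibly after trading factors via interpolation on the unit-scale annulus" in Step 3 is where the actual work lives: the Leibniz expansion produces terms $\nabla^{k_1}m\cdot\nabla^{k_2}\bar u$ with $k_2<k$, and it is not automatic that such a term is controlled by $\eps'\|\nabla^k\bar u\|_{C^0}$ alone; one either needs to rescale to the MCF picture where pseudolocality gives all derivatives of $\bar u'$ the same $O(\eps_0)$ size at unit scale (the paper's route), or argue scale-invariantly as you do but then carry the full $C^\ell$-norm of $\bar u$ on the RHS rather than the single $k$-th derivative. As written your Step 3 glosses over this, though to be fair the paper's proof is an equally terse sketch on exactly this point.
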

	
	The proof uses Lemma \ref{Lem;transplantation} on the asymptotically conical region of $\Sigma$. In fact, the asymptotically conical region is a part of the MCF after rescaling. Before the rescaling, the MCF is a graph very close to a cone. Thus the desired estimate is just the usual transplantation after a rescaling.

	\section{The polar-spherical transplantation and the $L$-operator}\label{AppP}
	As we have explained in Section \ref{SSDifferenceEq}, we introduce a polar-spherical coordinates approach to transplant functions on $M_t^{\mathbf r(t)}$ to $\Sigma^{\mathbf r(t)}$. In this appendix, we shall study this transplantation in more detail and in particular give the estimate of the error term $\mathcal Q$ in \eqref{EqDifference1} (see Lemma \ref{LmAppP} below). 
	
	Suppose $\Sigma$ is a fixed asymptotically conical shrinker. Then $\Sigma$ is very close to a cone $\cC:=\{r \theta,\ \theta\in \mathcal S\subset\mathbb S^n(1),\ r\geq 0\}$ far away from the origin, where $\mathcal S$ is a codimension-$1$ submanifold of $\mathbb S^n(1)$.  Then there is $R$ sufficiently large such that $\Sigma\backslash B_R$ can be identified with $\cC\backslash B_R$. In the following we will use $(r,\theta)$ to describe the points on $\Sigma\backslash B_R$, such that for $x=(r,\theta)$, $|x|=r$ and $x/|x|$ is perpendicular to $\theta$ on the unit sphere.
	
	In the following, $\bar r=\bar r(r)$ is a smooth function that is identified with $r$ when $r\geq R+1$, and equals $0$ when $r\leq R$. Then any function $m$ defined on $\Sigma\backslash B_R$ can be written as $c(r,\theta)\bar r+f(x)$.
	Now suppose $M$ is a hypersurface that is very close to $\Sigma$ so that $M$ can also be written as a graph over $\cC$ outside $B_R$ (but possibly inside a much larger ball). Then we can write $M$ as a graph of $m(x)=c(r,\theta)\bar r+f(x)$, where $f$ is supported on $B_{R+1}$. Here $m$ is defined as follows: inside $B_{R}$, $M$ can be written as a graph of the function $f$ over $\Sigma$, and outside $B_{R+1}$, $m$ is the difference between them when they are both viewed as spherical graphs over $\cC$.
	
	We consider an RMCF $M_t$ converging to a conical shrinker in the $C^\infty_{\loc}$ sense as $t\to\infty$. We next estimate the function $c$ in the representation $c(r,\theta)r+f$ above. Let $\mathbb A_{r_1,r_2}$ be the annulus with the inner radius $r_1>R$ and the outer radius $r_2$. Suppose $M$ is a graph of $m(x)=c(r,\theta)\bar r$ over $\cC\cap \mathbb A_{r_1,r_2}$, with $\|m\|_{C^{2,\alpha}}\leq \eps$. Then $aM$, the rescaling of $M$ by $a$, is a graph of the function $c(r/a,\theta)\bar r$ over $\cC\cap \mathbb A_{ar_1,ar_2}$. As a consequence, the $C^{2,\al}$ norm of $c$ is unchanged after rescaling (actually, the derivative terms become better).
	If $M$ is a graph of $m(x)=c(r,\theta)\bar r$ over $\Sigma\cap \mathbb A_{r_1,r_2}$ rather than $\cC\cap \mathbb A_{r_1,r_2}$, similar analysis holds true when $R$ is sufficiently large. In fact, from \cite{BW} and \cite{CS} we know that the geometry of $\Sigma\cap \mathbb A_{ar_1,ar_2}$ converges to $\cC\cap \mathbb A_{ar_1,ar_2}$ as $a\to\infty$.
	
	As a consequence, the pseudolocality estimates for the graph of the RMCF do hold for this setting. We have the following lemma:
	
	\begin{lemma}
		Suppose $\Sigma$ is an asymptotically conical shrinker, and $M_t$ is an RMCF converging to $\Sigma$. Let $R>R_0$ sufficiently large. Then when $T$ is sufficiently large, suppose $M_T$ is a graph of the function $u(\cdot,T)=c(\cdot,T)\bar r$ on $\mathbb A_{R-1,R}\cap\Sigma$ with $|\nabla c(\cdot,T)|\leq \gamma$, then for any integer $\ell\geq 0$, $M_{T+t}$ is a graph of $u(\cdot,T+t)$ on $\mathbb A_{\mathrm{e}^{t/2}(R-1),\mathrm{e}^{t/2}R}\cap\Sigma$ with 
		\[
		\|\nabla^\ell_r c(\cdot,T+t)\|_{C^0}\leq \mathrm{e}^{-lt/2}C_\ell,\ \ \ 
		\|\nabla^\ell_\theta c(\cdot,T+t)\|_{C^0}\leq C_\ell,
		\]
		for $t>1/2$. Moreover, $C_\ell\to 0$ as $\gamma\to 0$.
	\end{lemma}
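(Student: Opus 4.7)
The plan is to reduce this to the already-proven higher-order pseudolocality estimate Corollary~\ref{CorPseudo-high} for RMCF, and then pass algebraically from the derivative bounds on $u$ it provides to the claimed radial/angular bounds on $c$ via the decomposition $u = c\bar r$. First I would verify the hypothesis of Corollary~\ref{CorPseudo-high}. On the annulus $\mathbb A_{R-1,R}$ we have $\bar r = r$, so the relation $u = cr$ gives
\begin{equation*}
\nabla_r u = r\,\nabla_r c + c, \qquad \nabla_\theta u = r\,\nabla_\theta c.
\end{equation*}
Because the intrinsic metric of $\Sigma \setminus B_{R_0}$ is $C^{\ell+1}$-close to the cone metric $dr^2 + r^2 g_\cS$ for $R_0$ large (see \cite{BW,CS}), one has $|\nabla_\Sigma u|^2 \approx (\nabla_r u)^2 + r^{-2}|\nabla_\theta u|^2$. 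Combined with $|\nabla_\Sigma c| \leq \gamma$ and the a priori smallness of $|c|$ on this unit-width annulus (integrating $|\nabla c|$ from the inner boundary, where $M_T$ is close to $\Sigma$), this yields $|\nabla_\Sigma u| \leq C R\gamma$, so Corollary~\ref{CorPseudo-high} applies after shrinking $\gamma$ slightly.

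Next, Corollary~\ref{CorPseudo-high} gives, for each $t > 1/2$ and $\ell \geq 0$,
\begin{equation*}
\|\nabla^\ell u(\cdot, T+t)\|_{C^0(\mathbb A_{e^{t/2}(R-1), e^{t/2}R}\cap\Sigma)} \leq (e^{t/2})^{-\ell + 1}\, C'_\ell,
\end{equation*}
with $C'_\ell \to 0$ as $\gamma \to 0$, inheriting the quantitative dependence of Ecker--Huisken's higher-order interior estimates. To extract the bounds on $c$, I would induct on $\ell$ using the Leibniz identity $\nabla_r^\ell u = r\,\nabla_r^\ell c + \ell\,\nabla_r^{\ell - 1} c$: together with $r \sim e^{t/2} R$ on the annulus, the base case $\ell = 0$ gives $|c| \leq C_0$ and the inductive step yields $|\nabla_r^\ell c| \leq e^{-\ell t/2}\, C_\ell$. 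For the angular direction the identity $\nabla_\theta^\ell u = r\,\nabla_\theta^\ell c$ (since $r$ is independent of $\theta$) together with $|\nabla_\theta^\ell u| \leq C r^\ell |\nabla^\ell u|$ (from the cone metric) gives $|\nabla_\theta^\ell c| \leq C r^{\ell - 1}(e^{t/2})^{-\ell + 1}\, C'_\ell = C_\ell$.

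The main obstacle is that $\Sigma$ is only asymptotically, not exactly, conical, so both the metric decomposition $|\nabla_\Sigma f|^2 = (\nabla_r f)^2 + r^{-2}|\nabla_\theta f|^2$ and the Leibniz identities above are only approximate, picking up correction terms from the Christoffel symbols of the deviation and from higher covariant derivatives of $\bar r$. These corrections are controlled by fixing $R_0$ large enough that the $C^{\ell+1}$-closeness of $\Sigma\setminus B_{R_0}$ to its asymptotic cone (quantified in \cite{BW,CS}) renders them of strictly lower order in $r$, and hence absorbable into the final constants $C_\ell$. The moreover clause $C_\ell \to 0$ as $\gamma \to 0$ follows immediately from the corresponding smallness of $C'_\ell$ in Corollary~\ref{CorPseudo-high}.
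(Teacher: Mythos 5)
Your argument is correct and is essentially the route the paper intends: the paper introduces this lemma as following from the Section~\ref{SSPL} pseudolocality machinery together with the scale-invariance of $c$ under dilation recorded just above the statement, and your reduction to Corollary~\ref{CorPseudo-high} followed by the Leibniz induction to pass from $\nabla^\ell u$ to $\nabla_r^\ell c$ and $\nabla_\theta^\ell c$ is precisely that sketch made explicit. The one point worth adding a line for is the ``moreover'' clause: Corollary~\ref{CorPseudo-high} (via Lemmas~\ref{LmPseudoMCF-high} and~\ref{LmPseudoRMCF-high}) only records that $C_\ell$ depends on $\delta,\gamma,\rho$, not that it vanishes as $\gamma\to0$, so rather than treating $C'_\ell\to 0$ as immediate you should cite that the Ecker--Huisken interior estimates produce $C^\ell$-bounds that vanish with the initial Lipschitz constant.
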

	
	We next define $\varphi_t$ as in Definition \ref{DefTransplant}. It is clear that $\varphi$ preserves the Gaussian weight. We next show that the weighted Sobolev norm behaves well under this pullback. Suppose $M$ can be writen as a graph of the function $m(x)=c(r,\theta)\bar r+f(x)$ over $\Sigma^{\mathbf{r}(t)}$, with $\|f\|_{C^{2,\alpha}}\leq \eps$, $\|c\|_{C^{2,\alpha}}\leq \eps$. Suppose $v$ is a function over $M$ and define $u^*=\varphi^* v$, then we have $|D\varphi^{-1}|\leq (1+C\eps)$, $|\langle \partial_t\varphi_t,\nabla v\rangle|\leq |\langle \vec H_{M_t}+\frac{x^\perp_{M_t}}{2},\bn_{\Sigma}\rangle||\nabla v|\leq C\eps |\nabla v|$, which shows the smallness of the last two terms in \eqref{EqQ}. We also have
	\begin{equation}\label{EqNormCompare}
		\begin{aligned}
			\int_{M^{\mathbf r(t)}} |v(x)|^2\mathrm{e}^{-\frac{|x|^2}{4}}d\mu(x)&=\int_{\Sigma^{\mathbf r(t)}} |u^*(y)|^2\mathrm{e}^{-\frac{|\varphi^{-1}_t(y)|^2}{4}}\det(D\varphi^{-1})d\mu(y)\\
			&\leq (1+ C\eps ) \int_{\Sigma^{\mathbf r(t)}} |u^*|^2\mathrm{e}^{-\frac{|y|^2}{4}} d\mu(y). 
		\end{aligned}
	\end{equation}
	This implies that the weighted $L^2$-norm is comparable to the functions on $M$ to $\Sigma$. We have similar conclusions for weighted higher-order Sobolev norms.

	We next discuss the difference between the two operators $\varphi^*(L_{M}v)$ and $L_{\Sigma}(\varphi^*v)$ under the transplantation. We have the following lemma.
	\begin{lemma}\label{LmAppP} Suppose $M$ satisfies
		\begin{enumerate}
			\item the function $m:\ \Sigma\to \R$ can be written as $ m(x)=c(r,\theta)\bar r+f(x)$, where $\|c\|_{C^{2,\al}}<\eps$ and $\|f\|_{C^{2,\al}}\leq \eps $;
			\item the second fundamental form decays as $|\nabla^j A|(x)\leq C\frac{1}{|x|^{j+1}},\ j=0,1,2.$
		\end{enumerate}
		Then for all smooth $v:\ M\to \R$, we have the following pointwise bound $$|\varphi^*(L_{M}v)-L_{\Sigma}(\varphi^*v)|(x)\leq C\eps(1+|\nabla \varphi^*v(x)|+|\nabla^2 \varphi^*v(x)|+|x|\cdot |\nabla \varphi^*v(x)|).$$
	\end{lemma}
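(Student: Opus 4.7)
My plan is to decompose $\varphi^{*}(L_M v) - L_\Sigma(\varphi^* v)$ into three pieces matching the three parts of $L = \Delta - \tfrac{1}{2}\langle x, \nabla\cdot\rangle + (|A|^2 + \tfrac{1}{2})$, and to estimate each using the polar-spherical structure of $\varphi$ together with hypotheses (1) and (2). The $+\tfrac12$ constant cancels automatically, so only three geometric differences remain to control.

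For the Laplacian piece, I will work in a local tangent frame on $\Sigma$ and push it forward by $d\varphi$, writing $\varphi^{*}(\Delta_M v)$ in terms of the pulled-back metric $\tilde g := \varphi^{*} g_M$, its inverse, and its Christoffel symbols. Because $\varphi$ maps $(r,\theta) \in \Sigma$ to the point of $M$ on the same sphere $\mathbb S^n(r)$ along the normal to $r\mathcal S$, a direct polar-spherical calculation gives $|\tilde g - g_\Sigma| \le C\eps$ and $|\partial \tilde g - \partial g_\Sigma| \le C\eps/r$, where the compensating $1/r$ for angular directions comes from differentiating along a sphere of radius $r$. Expanding the Laplace--Beltrami operator then yields $|\varphi^{*}\Delta_M v - \Delta_\Sigma \varphi^* v|(x) \le C\eps(|\nabla^2 \varphi^* v|(x) + |\nabla \varphi^* v|(x))$.

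For the drift term, the crucial property is that $\varphi$ preserves $|x|$, so the position vector at $\varphi(x)$ equals $x$ plus a vector tangent to the sphere of radius $|x|$. Its tangential component on $T_{\varphi(x)}M$ therefore differs from the tangential component of $x$ on $T_x \Sigma$ by a vector of length $O(\eps)$; pairing with $\nabla v$ and noting that the ambient coefficient $x$ has length $|x|$ gives $|\varphi^{*}\langle x, \nabla_M v\rangle - \langle x, \nabla_\Sigma \varphi^* v\rangle| \le C\eps(|\nabla \varphi^* v|(x) + |x|\cdot |\nabla \varphi^* v|(x))$, which produces the $|x|\cdot|\nabla|$ summand in the claimed bound. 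For the curvature potential I use the standard identity $\bigl| |A_M|^2 \circ \varphi - |A_\Sigma|^2 \bigr| \le C(|\nabla^2_\Sigma m| + |A_\Sigma|\cdot |\nabla_\Sigma m|)$ valid for small normal graphs; under hypothesis (2), $|A_\Sigma|\le C/|x|$ and $|\nabla^2_\Sigma m| = O(\eps/|x|)$ on the conical region, so this zeroth-order difference is bounded by $C\eps$, accounting for the additive $C\eps$ in the statement (interpreted as a coefficient of $\varphi^{*}v$).

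The main obstacle is making the metric comparison uniform in $|x|$, since $m = c\bar r + f$ itself grows linearly through $c\bar r$. The resolution is structural: tangential derivatives along $\Sigma$ of $c\bar r$ split into a radial piece bounded by $\|c\|_{C^{2,\alpha}}\le \eps$ and angular pieces that pick up a compensating $1/r$ from differentiation along spheres of radius $r$, so that all relevant Christoffel-symbol and inverse-metric differences produce bounded (in fact decaying) errors rather than growing ones. Summing the three estimates then yields the pointwise bound of the lemma.
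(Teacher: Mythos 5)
Your proposal is correct, and its strategy is essentially the paper's: compare the transplanted operators directly under the polar--spherical identification, exploit the conical scaling so that angular derivatives gain a compensating factor $1/r$, treat the drift term separately so that it alone produces the $|x|\,|\nabla\varphi^*v|$ contribution, and use the curvature decay $|\nabla^jA|\le C|x|^{-j-1}$ for the zeroth-order piece. The implementations differ only in bookkeeping: the paper writes both hypersurfaces as spherical graphs over the cone $\mathcal C$, rescales the relevant annulus to unit scale (using that the $C^{2,\al}$-norm of $c$ is scale-invariant and that its radial derivatives improve under rescaling), and compares $\Delta+|A|^2+\tfrac12$ with the operator of a slightly twisted Euclidean metric in Fermi coordinates near the cross-section, quoting \cite{LZ} for the error, while inside $B_{R+1}$ it falls back on the standard normal-graph computation behind \eqref{EqP}; you instead compute $\varphi^*g_M-g_\Sigma$ and the Christoffel difference directly in polar-spherical coordinates and compare $|A_M|^2$ with $|A_\Sigma|^2$ by the usual graph formula, which is more self-contained but amounts to the same estimate. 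One point you should state more carefully: a radial tangential derivative of $c\bar r$ is $c+r\partial_r c$, so the bound $\|c\|_{C^{2,\al}}\le\eps$ by itself does not yield $|\varphi^*g_M-g_\Sigma|\le C\eps$ or your bound on the derivative of the metric difference; one needs the scale-invariant reading in which the radial derivatives of $c$ decay in $r$, which is exactly what the paper's rescaling remark and the pseudolocality lemma preceding the proof provide, i.e.\ the same device you already invoke for the angular directions. Two minor slips that do not affect the conclusion: the tangential parts of the position vectors differ by $O(\eps|x|)$ rather than $O(\eps)$ (your stated drift bound is nevertheless the right one), and, as you observe, the additive $C\eps$ in the statement is indeed used later in the paper as a coefficient of $|v^*|$, so your reading of that term is the intended one.
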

	
	\begin{proof}
		For simplicity, we denote by $u=\varphi^*v$. Inside $B_{R+1}$ the calculation is the standard computation on the normal bundle, similar to \eqref{EqP}. 
		
		Let us do the computations outside $B_{R+1}$, namely, we only care about the situation where $m=c(r,\theta)\bar{r}$, which is the distance between $M$ and $\Sigma$ as two sections of the conical neighbourhood of $\cC$.
		
		Firstly, we consider a graph $N$ over the cone $\cC$, locally given by $c(r,\theta)r$. Temporarily we use $\varphi$ to denote the retraction to $\cC$. Let $r\theta\in \cC$. We define $\gamma_\theta(t)$ to be the exponential curve of $\theta\in \cC\cap S^n(1)$ inside the unit sphere, along the unit normal direction at $\theta$. We also view it as a curve in $\R^{n+1}$. Then $N$ is the submanifold given by $\{r\gamma_\theta(c(r,\theta))\}$. By rescaling, we may assume $(r,\theta)=(1,\theta)$.
		
		Now we are going to use Fermi coordinates near $(r,\theta)\in\cC$. For the definition of the Fermi coordinate (see \cite[Lemma A.2]{LZ}). It is canonical to define a Fermi coordinate near $(r,\theta)\in \cC$ by conically extending the Fermi coordinate of $\theta\in\cC\cap S^n(1)$ in the unit sphere. With this coordinate, the term $|\varphi^*((\Delta_{M}+|A|^2+1/2)v)-(\Delta_{M}+|A|^2+1/2)(\varphi^*v)|(x)$ can be computed by using a slightly twisted metric of the Euclidean metric at $(r,\theta)$, while the error is estimated in \cite[Appendix A]{LZ}, implying that we have the desired bound. Similarly, $|\varphi^*(\langle v,x\rangle)-\langle \varphi^*v,x\rangle|(x)$ is straightforwardly bounded by $C\eps |x|\cdot |\nabla \varphi^*v(x)|$. 
		
		To extend the above estimate to the situation that $M$ is a graph over $\Sigma$, we can first write both of them as graphs over $\cC$. Then the calculations are mostly verbatim. Thus we get the desired bound.
	\end{proof}
	
	\section{Proof of Proposition \ref{LmCM3}}\label{AppPropCM}
	In this appendix, we give the proof of Proposition \ref{LmCM3}. 
	\begin{proof}[Proof of Proposition \ref{LmCM3}]
		We shall compare the solutions to the equations \eqref{EqDifference1} and \eqref{EqLinearAuto}. 
		The proof is similar to that of Proposition 4.3 of \cite{CM2}. The main difficulty is that we do not have the Dirichlet boundary value condition for $v^*$. Thus, there is a boundary term when doing integration by parts that need to be absorbed. 
		
		For simplicity and without loss of generality, we consider only $n=0$ and suppress the subscript of $v_n$. We consider the $L^2$ bound first. Define $w=v^*\chi -v$
		where $\chi$ is a smooth function that is 1 on the set $\{|x|\leq \mathbf r(t)-1\}$ and $0$ on $\{|x|\geq \mathbf r(t)\}$ with $\mathbf r(t):=\mathbf r(M_{T_\sharp+t})$. Therefore $\frac{\partial}{\partial t}\chi$ is of order $O(\mathrm{e}^{t/2})$ and supported on the annulus $\mathbb A_{\mathbf r(t)}:=\{\mathbf r(t)-1\leq |x|\leq \mathbf r(t)\}$ and $\nabla_x \chi$  and $\nabla_x^2 \chi$ are bounded by a constant and supported on the boundary of $B_{\mathbf r(t)}$. 
		
		We have $\partial_t v^*=L_\Sigma v^*+\mathcal Q(v^*)$ over $\Sigma^{\mathbf r(t)}$ (equation \eqref{EqDifference1}) and $\partial_t v=L_\Sigma v$ over $\Sigma$ (equation \eqref{EqLinearAuto}), so we get
		\begin{equation}\label{Eqdw}
			\begin{aligned}
				\partial_t w&=\chi\partial_t v^*+v^*\partial_t\chi-\partial_tv=\chi(L_\Sigma v^*+\mathcal Q(v^*))+v^*\partial_t\chi-\partial_tv\\
				&=L_\Sigma w+\chi \mathcal Q(v^*)+(-2\nabla\chi\cdot\nabla v^*-v^*\mathcal L_\Sigma \chi+v^*\partial_t\chi)\\
				&=:L_\Sigma w+\chi \mathcal Q(v^*)+\mathcal B.
			\end{aligned}
		\end{equation}
		Here the term $\mathcal B$ is supported on the annulus $\mathbb A_{\mathbf r(t)}$ and is bounded by $O(\|v^*\|_{C^1(\mathbf r(t))}\mathrm{e}^{t})$. 
		
		We next estimate $\mathcal Q( v^*)=P(v^*)+\varphi_t^*Q(v^*)$ over the ball $B_{\mathbf r(t)}$ (c.f. \eqref{EqQ}).  The estimate  $|P(v^*)|\leq \dt(|\mathrm{Hess}_{v^*}|+|\nabla v^*|+|v^*|+|x||\nabla v^*|)$ is given in Lemma \ref{LmAppP}. We also have the same estimate for $ \varphi_t^*Q( v^*)$, following from Lemma 5.3 of \cite{CM3}. In the following, we shall use the abbreviation  $V=|A|^2+\frac12. $ We will need the following estimate of Ecker (see also \cite{BW} Lemma B.1). 
		\begin{equation}\label{EqEcker}\int_\Sigma|f|^2|x|^2\mathrm{e}^{-\frac{|x|^2}{4}}d\mu\leq 4\int_\Sigma (n f^2+4|\nabla f|^2)\mathrm{e}^{-\frac{|x|^2}{4}}d\mu \end{equation}
		to suppress the slow linear growth term $\dt|x||\nabla v^*|$ in $\mathcal Q$. 
		
		Then we compute
		\begin{equation}\label{Eq1stDifference}
			\begin{aligned}
				&\partial_t \frac12\int_\Sigma |w(t)|^2 \mathrm{e}^{-\frac{|x|^2}{4}}d\mu = \int_\Sigma w\partial_t w\mathrm{e}^{-\frac{|x|^2}{4}}d\mu
				=\int_\Sigma w(  L_\Sigma w+\chi \mathcal Q+\mathcal B) \mathrm{e}^{-\frac{|x|^2}{4}}d\mu\\
				&=\int_\Sigma \left(-|\nabla w|^2 +Vw^2+ w\chi\mathcal Q(v^*)\right) \mathrm{e}^{-\frac{|x|^2}{4}}d\mu+O(\|v^*,v\|^2_{C^1(\mathbb A_{\mathbf r(t)})}\mathrm{e}^{-\mathbf r(t)^2/4})\\
				&\leq C\int_\Sigma |w(t)|^2 \mathrm{e}^{-\frac{|x|^2}{4}}d\mu+\int_\Sigma |\mathcal Q(v^*)|^2\chi \mathrm{e}^{-\frac{|x|^2}{4}}d\mu+O(\|v^*,v\|^2_{C^1(\mathbb A_{\mathbf r(t)})}\mathrm{e}^{-\mathbf r(t)^2/4})\\
				&\leq C\int_\Sigma |w(t)|^2 \mathrm{e}^{-\frac{|x|^2}{4}}d\mu+\dt\int_\Sigma (|\mathrm{Hess}_{v^*}|+|\nabla v^*|+|v^*|)^2\chi \mathrm{e}^{-\frac{|x|^2}{4}}d\mu+O(\|v^*,v\|^2_{C^1(\mathbb A_{\mathbf r(t)})}\mathrm{e}^{-\frac{\mathbf r(t)^2}{4}}),
			\end{aligned}
		\end{equation}
		where we use a constant $C$ to bound $V$ and in the last $\leq$, we use Lemma \ref{LmAppP} and \eqref{EqEcker}. 
		
		We next have 
		\begin{equation}\label{Eq2ndDifference}
			\begin{aligned}
				&\partial_t\frac12\int_\Sigma (|\nabla w|^2-V w^2) \mathrm{e}^{-\frac{|x|^2}{4}}d\mu=\int_\Sigma (\nabla w\cdot\nabla  \partial_tw-Vw \partial_t w) \mathrm{e}^{-\frac{|x|^2}{4}}d\mu\\
				&=\int_\Sigma - (L_\Sigma w)  (  L_\Sigma w+\chi \mathcal Q(v^*)+\mathcal B) \mathrm{e}^{-\frac{|x|^2}{4}}d\mu\\
				&\leq \int_\Sigma (-|L_\Sigma w|^2+ L_\Sigma w\cdot \mathcal Q(v^*))\chi \mathrm{e}^{-\frac{|x|^2}{4}}d\mu+O(\|v^*,v\|^2_{C^2(\mathbb A_{\mathbf r(t)})}\mathrm{e}^{-\mathbf r(t)^2/4})\\
				&\leq \int_\Sigma 2 | \mathcal Q(v^*)|^2\chi  \mathrm{e}^{-\frac{|x|^2}{4}}d\mu+O(\|v^*,v\|^2_{C^2(\mathbb A_{\mathbf r(t)})}\mathrm{e}^{-\mathbf r(t)^2/4})\\
				&\leq \int_\Sigma \dt(|\mathrm{Hess}_{v^*}|+|\nabla v^*|+|v^*|)^2\chi  \mathrm{e}^{-\frac{|x|^2}{4}}d\mu+O(\|v^*,v\|^2_{C^2(\mathbb A_{\mathbf r(t)})}\mathrm{e}^{-\mathbf r(t)^2/4}).
			\end{aligned}
		\end{equation}
		Combining \eqref{Eq1stDifference} and \eqref{Eq2ndDifference}, we get
		$$\partial_t \|w\|^2_{Q}\leq C(A)\|w\|^2_{L^2}+\int_\Sigma \dt(|\mathrm{Hess}_{v^*}|+|\nabla v^*|+|v^*|)^2\chi  \mathrm{e}^{-\frac{|x|^2}{4}}d\mu+O(\|v^*,v\|^2_{C^2(\mathbb A_{\mathbf r(t)})}\mathrm{e}^{-\mathbf r(t)^2/4}).$$
		Integrating over the time interval $[0,1]$, we get (noting $\|w(0)\|_Q=0$)
		$$\|w(1)\|^2_{Q}\leq \int_0^1 \mathrm{e}^{C(A)t}\int_\Sigma \dt(|\mathrm{Hess}_{v^*}|+|\nabla v^*|+|v^*|)^2\chi \mathrm{e}^{-\frac{|x|^2}{4}}d\mu\,dt+O(\|v^*,v\|^2_{C^2(\mathbb A_{\mathbf r(t)})}\mathrm{e}^{-\mathbf r(t)^2/4}).$$
		Let us next estimate $\int_\Sigma (|\mathrm{Hess}_{v^*}|+|\nabla v^*|+|v^*|)^2\chi \mathrm{e}^{-\frac{|x|^2}{4}}d\mu$. First we recall the drifted Bochner formula
		$|\Hess_u|^2=\frac{1}{2}\cL|\nabla u|^2-\Ric_{-|x|^2/4}(\nabla u,\nabla u)-\langle \nabla\cL u,\nabla u\rangle.$
		Notice that the Bakry-Emery Ricci on the conical shrinker $\Sigma$ is always bounded, so we have
		\[\begin{split}
			\int_\Sigma |\Hess_{v^*}|^2\chi  \mathrm{e}^{-\frac{|x|^2}{4}}d\mu
			\leq &
			\int_\Sigma\left(\frac{1}{2}\cL|\nabla v^*|^2+C|\nabla v^*|^2-\langle \nabla\cL v^*,\nabla v^*\rangle\right)\chi  \mathrm{e}^{-\frac{|x|^2}{4}}d\mu
			\\
			=&
			\int_\Sigma (C|\nabla v^*|^2 + (\cL v^*)^2)\chi  d\mu + \int_\Sigma (\frac{1}{2}\nabla|\nabla v^*|^2+\cL v^*\nabla v^*)\cdot\nabla\chi  \mathrm{e}^{-\frac{|x|^2}{4}}d\mu
			\\
			=&
			\int_\Sigma (C|\nabla v^*|^2+ (\cL v^*)^2)\chi \mathrm{e}^{-\frac{|x|^2}{4}} d\mu
			+ O(\|v^*\|^2_{C^2(\mathbb A_{{\mathbf r}(t)})}\mathrm{e}^{-\mathbf r(t)^2/4})
			.
		\end{split} \]
		Next we calculate
		\[\begin{split}
			&\partial_t\int_\Sigma|\nabla v^*|^2\chi \mathrm{e}^{-\frac{|x|^2}{4}}\mathrm{e}^{-\frac{|x|^2}{4}} d\mu
			\leq \int_\Sigma(\langle \nabla \partial_t v^*,\nabla v^*\rangle \chi+|\nabla v^*|^2 \frac{d}{dt}\chi )\mathrm{e}^{-\frac{|x|^2}{4}} d\mu
			\\
			=&
			\int_\Sigma \langle \nabla L v^*,\nabla v^*\rangle \chi \mathrm{e}^{-\frac{|x|^2}{4}}d\mu +O(\|v^*\|^2_{C^2(\mathbb A_{{\mathbf r}(t)})}\mathrm{e}^{-\mathbf r(t)^2/4})
			\\
			=&
			\int_\Sigma	(-|\cL v^*|^2 +C|\nabla v^*|^2+C|v^*|^2)\chi \mathrm{e}^{-\frac{|x|^2}{4}}d\mu+O(\|v^*\|^2_{C^2(\mathbb A_{{\mathbf r}(t)})}\mathrm{e}^{-\mathbf r(t)^2/4})
			.
		\end{split} \]
		
		So by adding these two inequalities we get
		\[\int_\Sigma |\Hess_{v^*}|^2 \chi  \mathrm{e}^{-\frac{|x|^2}{4}}d\mu+\partial_t\int_\Sigma |\nabla v^*|^2\chi  \mathrm{e}^{-\frac{|x|^2}{4}}d\mu \leq C\int_\Sigma |\nabla v^*|\chi  \mathrm{e}^{-\frac{|x|^2}{4}}d\mu
		+O(\|v^*\|^2_{C^2(\mathbb A_{{\mathbf r}(t)})}\mathrm{e}^{-\mathbf r(t)^2/4}).\]
		Integrating this, we get 
		\[\begin{split}
			\int_{T}^{T+1}\int_\Sigma |\Hess_{v^*}|^2 \chi \mathrm{e}^{-\frac{|x|^2}{4}} d\mu dt+ \mathrm{e}^{-C(T+1)}\int_\Sigma |\nabla v^*(\cdot,T+1)|^2\chi  \mathrm{e}^{-\frac{|x|^2}{4}}d\mu 
			\\
			\leq  \mathrm{e}^{-CT}\int_\Sigma |\nabla v^*(\cdot,T)|^2\chi  \mathrm{e}^{-\frac{|x|^2}{4}}d\mu 
			+C\int_{T}^{T+1}\int_\Sigma |v^*|^2 \chi \mathrm{e}^{-\frac{|x|^2}{4}} d\mu dt
			+O(\|v^*\|^2_{C^2(\mathbb A_{{\mathbf r}(t)})}\mathrm{e}^{-\mathbf r(t)^2/4}).
		\end{split}
		\]
		
		We then absorb $O(\|v^*,v\|^2_{C^2(\mathbb A_{\mathbf r(t)})}\mathrm{e}^{-\mathbf r(t)^2/4})$ by $\|v^*\|^2_{Q}$ using the definition of $T_\sharp$ (Proposition \ref{LmT}) as well as Proposition \ref{prop:v bound} on $v$.  This completes the proof. 
		
	\end{proof}

	\section{The heat kernel of $L$ on conical shrinkers}\label{AppKernel}
	In this section, we sketch the existence of the heat kernel of the linearized operator $L_\Sigma$ on a conical shrinker $\Sigma$ and give the necessary gradient estimate for the solution to the heat equation $\partial_t v=L_\Sigma v$. 
	
	\subsection{Existence of the heat kernel}
	We prove the following result on the existence of the heat kernel in this subsection. 
	\begin{proposition}\label{prop:HKnoncpt}
		There exists heat kernel $\cH(x,y,t):=\sum_{i}\mathrm{e}^{-\mu_i t}u_i(x)u_j(x)$ defined on $\Sigma\times\Sigma\times(0,\infty)$, satisfying 
		\begin{enumerate}
			\item $\partial_t \cH(x,y,t)=L_x\cH(x,y,t)$,
			\item $\cH(x,y,t)=\cH(y,x,t)$,
			\item the reproducing property \eqref{EqReproducing},
			\item the semi-group property \eqref{EqCocycle}.
		\end{enumerate}
	\end{proposition}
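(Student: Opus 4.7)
The plan is to construct $\cH$ via spectral decomposition, exploiting the self-adjointness of $L_\Sigma$ on the Gaussian-weighted space $L^2(\Sigma,e^{-|x|^2/4}d\mu)$. The starting point is the spectral theorem for $L_\Sigma$ on an asymptotically conical shrinker, established by Bernstein--Wang in \cite{BW}: the operator $-L_\Sigma$ is essentially self-adjoint with compact resolvent on this Hilbert space, giving a discrete spectrum with eigenvalues $\mu_i \to \infty$ and an orthonormal basis $\{u_i\}$ of smooth eigenfunctions with $-L_\Sigma u_i = \mu_i u_i$. Each $u_i$ is smooth and satisfies the polynomial-type decay and derivative bounds from \cite[Proposition 4.1]{BW}, which will be crucial for the convergence of the series.

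Given the spectral data, I would simply define $\cH(x,y,t):=\sum_i e^{-\mu_i t}u_i(x)u_i(y)$ and verify properties (1)--(4). The semigroup property (4) and symmetry (2) are formal consequences of orthonormality. For the reproducing property (3), one expands $f=\sum_i c_i u_i$ in the weighted $L^2$ basis with $c_i=\int_\Sigma f u_i e^{-|y|^2/4}d\mu$, so that
\[
\int_\Sigma \cH(x,y,t) f(y) e^{-|y|^2/4}d\mu(y) = \sum_i e^{-\mu_i t} c_i u_i(x),
\]
which is precisely $e^{tL_\Sigma}f(x)$ by definition of the functional calculus. The PDE property (1) follows by term-by-term differentiation provided the series and its derivatives converge uniformly on compact sets for $t>0$.

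The main obstacle is therefore the smooth convergence of the spectral series on the noncompact shrinker $\Sigma$. For this I would carry out the following. First, a Weyl-type asymptotic for $\mu_i$: since the Gaussian weight confines the spectrum (the drift term $-\tfrac12 \langle x,\nabla\cdot\rangle$ acts like a harmonic-oscillator trap), one gets polynomial growth $\mu_i\gtrsim i^{c}$ for some $c>0$. This can be obtained either by a Dirichlet--Neumann bracketing argument comparing $\Sigma$ to a ball and its exterior (where the weighted Laplacian has a harmonic-oscillator-type spectrum by a Mehler-type expansion along the cone), or by the min-max characterization combined with the volume growth bound of \cite{CM1}. Second, an $L^\infty$ and $C^k$ bound on eigenfunctions in terms of $\mu_i$: from Moser iteration applied to $-L_\Sigma u_i=\mu_i u_i$ together with the decay estimates of \cite{BW}, one obtains $\|u_i\|_{C^k(K)} \leq C_{K,k}(1+\mu_i)^{N_k}$ on any compact $K$. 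Combining the polynomial eigenvalue growth with polynomial eigenfunction bounds defeats any polynomial prefactor against $e^{-\mu_i t}$, giving absolute convergence of $\sum_i e^{-\mu_i t} u_i(x) u_i(y)$ and all its spatial and temporal derivatives uniformly on compact subsets of $\Sigma\times\Sigma\times(t_0,\infty)$ for every $t_0>0$.

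Once these estimates are in place, the verification of (1)--(4) is essentially formal, and uniqueness of the heat kernel (for instance among kernels of exponential growth in $y$ at infinity) can be proved by a standard energy argument using the Gaussian weight. An alternative route, which I would keep in reserve if the spectral approach runs into technical trouble near infinity, is a parametrix construction: build an approximate kernel by gluing the compactly-supported interior heat kernel to a model heat kernel on the asymptotic cone (which admits an Ornstein--Uhlenbeck-type explicit form adapted to the shrinker equation), then correct by Duhamel's formula as in \cite[Chapter 26]{CC}. The spectral approach, however, has the advantage of producing the series representation demanded in the statement directly.
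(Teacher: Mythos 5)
Your proposal takes the same overall strategy as the paper — constructing the heat kernel as the spectral series $\sum_i e^{-\mu_i t}u_i(x)u_i(y)$ and reducing the problem to discreteness of the spectrum, a Weyl-type growth estimate on the eigenvalues, and quantitative pointwise/derivative bounds on the eigenfunctions — but the technical route to those three ingredients differs. You propose to establish eigenvalue growth by Dirichlet--Neumann bracketing or min-max plus the volume bound, and to control eigenfunctions by Moser iteration. The paper instead follows the parabolic template of Colding--Minicozzi \cite{CM5}: each eigenfunction is transferred to a subsolution of the heat equation on the associated MCF $\sqrt{-t}\,\Sigma$ via $v(y,t)=(-t)^{-\mu'}|\phi|(y/\sqrt{-t})$, the pointwise decay $|\phi|(x)\lesssim(4+|x|^2)^{-(\mu-C)}$ then comes from the parabolic mean-value estimates of \cite{CM5}, and the counting-function bound $\cN(\mu)\lesssim \lambda(\Sigma)(-(\mu-C))^n$ comes from an extension of the dimension bound on spaces of polynomially growing caloric functions (\cite[Theorem~0.5]{CM5}) to subsolutions. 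Once these estimates are in place, the paper simply says the remainder of the argument is verbatim the proof of \cite[Theorem~5.3]{CM5} with $\cL$ replaced by $L$. Both routes work; the paper's version gets the needed eigenfunction decay and Weyl estimate in one stroke by piggy-backing on existing parabolic machinery, whereas yours is more elementary in spirit but would require you to carry out the bracketing/Moser arguments on the noncompact shrinker from scratch.

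One small caution: you attribute the discreteness of the spectrum (compact resolvent of $-L_\Sigma$) directly to Bernstein--Wang \cite{BW}. What \cite{BW} supplies, and what the paper cites it for, is the existence and decay of the \emph{first} eigenfunction (\cite[Proposition~4.1]{BW}); the paper does not take discreteness of the whole spectrum as given from \cite{BW} but derives it as a corollary of the Weyl-type bound $\cN(\mu)<\infty$. If you want to cite compact resolvent as known, you should make sure the reference actually contains it; otherwise the harmonic-oscillator-confinement heuristic you mention has to be turned into an actual argument, which is essentially what the counting-function step accomplishes.
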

	
	In Section \ref{SSHeat}, we have discussed the properties of the heat kernel on a closed RMCF, where the existence comes from the classical theory of the heat kernel of adjoint elliptic operators. For noncompact hypersurfaces, there is no classical theory, and we need to show the existence of the heat kernel.
	
	The proof is very similar to Colding-Minicozzi's proof of the existence of the drifted heat kernel on a noncompact shrinker (see \cite[Section 5]{CM5}). We will only sketch the essential modifications here, and the rest of the steps are verbatim from \cite{CM5}.
	
	The key step is to estimate the eigenfunctions of the linearized operator $L$. Suppose $\phi_i$ is the $i$-th eigenfunction on the conical $\Sigma$ with $\int \phi_i^2 \mathrm{e}^{-\frac{|x|^2}{4}}d\mu=1$. In \cite{BW}, Bernstein-Wang proved that $\phi_1$ decays at infinity. Here we show similar results for higher eigenfunctions.
	
	Suppose $\phi$ satisfies $L\phi=\mu\phi$ on $\Sigma$. Note that by the elliptic spectral theory, there are only finitely many $\mu\geq 0$. Then we have
	$\cL|\phi|+(|A|^2+1/2)|\phi|\geq \mu|\phi|.$
	Since the conical shrinker has uniformly bounded curvature, we conclude that
	$\cL |\phi| \geq (\mu-C)|\phi|=\mu'|\phi|.$
	Then we write $\Sigma_t=\sqrt{-t}\Sigma$ to be the MCF associated with the shrinker $\Sigma$, and let 
	\begin{equation}\label{eq:Eigentosubsol}
		v(y,t)=(-t)^{-\mu'}|\phi|\left(\frac{y}{\sqrt{-t}}\right).
	\end{equation}
	Then similar to the computations in \cite[Lemma 2.4]{CM5}, we have $(\partial_t-\Delta_{\Sigma_t})v\leq 0$. Moreover, since in our case $v\geq 0$, \cite[Lemma 2.11]{CM5} still holds, and hence the proof of \cite[Theorem 2.1]{CM5} is still true. In particular, we can show that
	\begin{equation*}
		|\phi|^2(x)\leq C_n\lambda(\Sigma)\||\phi|\|_{L^2(\Sigma)}^2(4+|x|^2)^{-2\mu'}.
	\end{equation*}
	Here $C_n$ is a dimensional constant and $\lambda(\Sigma)$ is the entropy of $\Sigma$. Thus, we get the following pointwise estimate:
	\begin{lemma}\label{lem:eigenfunctionbound}
		Suppose $\phi$ satisfies $L\phi=\mu\phi$ on $\Sigma$, then $		|\phi|(x)\leq C\|\phi\|_{L^2(\Sigma)}(4+|x|^2)^{-(\mu-C)}.$
	\end{lemma}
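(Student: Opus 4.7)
The plan is to reduce the pointwise bound on the eigenfunction $\phi$ to a parabolic mean value inequality on the associated mean curvature flow, following the template developed by Colding--Minicozzi in \cite{CM5} and explicitly flagged in the paragraph preceding the lemma.

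First I would show that $|\phi|$ satisfies a drifted differential inequality with a bounded shift. Starting from $L\phi = \mu\phi$, which reads $\mathcal{L}\phi + (|A|^2 + 1/2)\phi = \mu\phi$, applying Kato's inequality gives $\mathcal{L}|\phi| \geq \mu|\phi| - (|A|^2 + 1/2)|\phi|$ in the distributional sense. Lemma \ref{lem:eigenconverge-Abound} (or rather its consequence for the limiting $\Sigma$, together with the asymptotically conical structure) yields $|A|^2 \leq C$ uniformly on $\Sigma$, so we arrive at $\mathcal{L}|\phi| \geq (\mu - C)|\phi| = \mu'|\phi|$.

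Next I would carry out the homothetic rescaling already spelled out in \eqref{eq:Eigentosubsol}. Setting $\Sigma_t = \sqrt{-t}\Sigma$ for $t < 0$ and $v(y,t) = (-t)^{-\mu'}|\phi|(y/\sqrt{-t})$, a direct chain-rule calculation using the shrinker equation $H = \langle x,\bn\rangle/2$ and the inequality $\mathcal{L}|\phi| \geq \mu'|\phi|$ shows $(\partial_t - \Delta_{\Sigma_t})v \leq 0$; this is exactly the content of \cite[Lemma~2.4]{CM5} transplanted to our sign conventions. Thus $v$ is a nonnegative subsolution of the usual heat equation along the MCF $\{\Sigma_t\}$.

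With that in hand, I would apply the parabolic mean value inequality of \cite[Lemma~2.11]{CM5} to $v$, which estimates $v(x_0, t_0)$ by a backward-heat-kernel weighted $L^2$ integral of $v$ over a parabolic cylinder. The essential analytic input is Huisken's monotonicity formula together with the finite entropy $\lambda(\Sigma)$, which controls the Gaussian volume of $\Sigma_t$ uniformly in $t$; this turns the weighted space-time integral into a multiple of $\lambda(\Sigma)\|\phi\|_{L^2(\Sigma)}^2$ after undoing the rescaling via \eqref{eq:Eigentosubsol}. Evaluating at an arbitrary point $x$ and time $t = -(4+|x|^2)/4$ (so that the rescaling factor produces the polynomial decay $(4+|x|^2)^{-2\mu'}$) yields $|\phi|^2(x) \leq C_n\lambda(\Sigma)\|\phi\|_{L^2(\Sigma)}^2(4+|x|^2)^{-2\mu'}$, and taking square roots gives the stated estimate.

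The main obstacle is verifying that \cite[Lemma~2.11]{CM5}, originally formulated for positive subsolutions of the \emph{shrinker} heat equation, applies cleanly to $v$ viewed as a subsolution on the evolving flow $\{\Sigma_t\}$. Since $\Sigma_t$ is a pure rescaling of a fixed $\Sigma$ with entropy and curvature bounds both preserved under the rescaling, the mean value inequality and its proof go through verbatim; one only needs to track that the constants depend on $\lambda(\Sigma)$, the dimension, and $\mu'$, but not on $t$. A secondary point to check is that the Kato-type step at the start is justified at points where $\phi$ vanishes, which is handled as usual by approximating $|\phi|$ with $\sqrt{\phi^2 + \eps^2}$ and letting $\eps \to 0$.
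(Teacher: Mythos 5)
Your proposal matches the paper's proof essentially step by step: Kato's inequality to produce a subsolution inequality for $|\phi|$, absorbing the bounded potential $|A|^2+1/2$ into the eigenvalue shift $\mu' = \mu - C$, the homothetic rescaling \eqref{eq:Eigentosubsol} to obtain a nonnegative subsolution of the heat equation along $\Sigma_t = \sqrt{-t}\,\Sigma$, and then the parabolic mean value inequality of Colding--Minicozzi (\cite[Lemma~2.11, Theorem~2.1]{CM5}) using finite entropy to get the pointwise polynomial decay, finishing by a square root. The only additions beyond the paper's write-up are small elaborations (the $\sqrt{\phi^2+\eps^2}$ regularization and the explicit remark that rescaling preserves the curvature and entropy bounds), which are consistent and do not change the route.
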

	From now on, we will assume $\{\phi_i\}_{i=1}^\infty$ are eigenfunctions of $L$ on $\Sigma$, with $\|\phi_i\|_{L^2(\Sigma)}=1$. With our notation, $\mu_i$ is non-increasing in $i$. Let us define the spectrum counting function $\cN(\mu)$ to be the number of eigenvalues $\mu_i\geq \mu$ counted multiplicity. In order to study $\cN(\mu)$, we introduce the space of functions defined on an ancient MCF $M_t$:
	\begin{equation*}
		\tilde{\cP}_d:=\{u\ |\ (\partial_t-\Delta_{M_t})u\leq 0,\ |u(x,t)|\leq C(1+|x|^d+|t|^{d/2})\text{ for all $x\in M_t, t<0$}\}.
	\end{equation*}
	Compared with the space $\cP_d$ defined in \cite{CM5}, $\tilde{\cP}_d$ consists of subsolutions to the heat equation on the MCF. However, subsolutions still fit in \cite{CM5}. Repeat Colding-Minicozzi's proof, we have the following theorem, which is \cite[Theorem 0.5]{CM5}:
	\begin{theorem}\label{thm:Thm0.5inCM}
		There exists a dimensional constant $C_n$ so that for ancient MCF $M_t$ with $\lambda(M_t)\leq \lambda_0$ and $d\geq 1$, then $\dim\tilde\cP_d\leq C_n\lambda_0d^n$. Here $\lambda(M_t)$ is the entropy of $M_t$.
	\end{theorem}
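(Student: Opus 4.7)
The statement is the subsolution analogue of Theorem 0.5 of \cite{CM5}, and I would prove it by following their argument while verifying at each step that only the subsolution property is used. The three main ingredients are a parabolic mean value inequality, an entropy-based volume bound, and a dimension-counting argument of P.~Li adapted to the parabolic setting on time-dependent manifolds.

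First I would establish the parabolic mean value inequality for $u\in\tilde{\cP}_d$:
$$u(p,0)_+^2 \;\leq\; \frac{C_n}{r^{n+2}}\int_{P_r(p,0)} u_+^2\, d\mu_t\, dt,\qquad r\geq 1,$$
where $P_r(p,0)=\{(x,t):x\in M_t\cap B_r(p),\ -r^2<t\leq 0\}$. Since $u$ is a heat-subsolution on $M_t$, Kato's inequality yields that $u_+$ is also a distributional subsolution, so Moser iteration applies. The Michael--Simon Sobolev inequality (with a universal constant) on each slice $M_t$, combined with the entropy bound $\lambda(M_t)\leq\lambda_0$, supplies the functional inequalities needed to run the iteration uniformly in $t$. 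Next, Huisken's monotonicity formula and the definition of entropy give the volume bound $\vol(M_t\cap B_r(p))\leq C_n\lambda_0 r^n$ for all $r\geq 1$ and $t\leq 0$.

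With these two ingredients I would perform the dimension count as in Section~3 of \cite{CM5}. Let $V\subset\tilde{\cP}_d$ be any subspace of dimension $N$ and equip it with the inner product $\langle u,v\rangle_r=\int_{P_r(p_0,0)} uv\,d\mu_t\,dt$ at a reference point $p_0$ and a scale $r$ of order $d$ to be chosen. For any $L^2(P_r)$-orthonormal basis $\{u_i\}_{i=1}^N$, the basis-independent function $F(x,t)=\sum_i u_i(x,t)^2$ equals $\sup_{u\in V,\,\|u\|_{L^2}=1}u(x,t)^2$. Integrating, $N=\int_{P_r} F\, d\mu_t\, dt\leq \vol(P_r)\cdot \sup_{P_r}F$. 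The mean value inequality combined with the polynomial growth $|u(x,t)|\leq C(1+|x|^d+|t|^{d/2})$ yields a doubling estimate of the form $\|u\|_{L^2(P_{2r})}^2\leq C(\lambda_0)\, 2^{2d+n+2}\|u\|_{L^2(P_r)}^2$, from which a standard Weyl-type counting yields $N\leq C_n\lambda_0 d^n$.

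The principal technical obstacle is step~(i): the classical mean value inequality on ancient MCF flows in \cite{CM5} relies on $u^2$ being a subsolution of a related equation, which is automatic when $u$ is itself a solution but fails in general for subsolutions. The fix is to work with $u_+$ and $u_+^2$ and use Kato's inequality, which produces a nonnegative distributional correction that is harmless for Moser iteration. The uniformity of the Moser constants in $t$ is the delicate point, but it is guaranteed by the Michael--Simon Sobolev inequality together with the entropy bound. Once (i) is established, (ii) is standard and (iii) is essentially formal, so the final bound $\dim\tilde{\cP}_d\leq C_n\lambda_0 d^n$ follows just as for $\cP_d$ in \cite{CM5}.
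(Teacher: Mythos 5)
Your proposal follows essentially the same route as the paper. The paper's proof is just the remark that the argument of Theorem~0.5 in \cite{CM5} carries over verbatim because the key lemmas there (Lemma~3.4 and Lemma~4.1 of \cite{CM5}) are already stated for subsolutions; you have identified the same three ingredients (mean value inequality, entropy volume bound, Li-style dimension count) and the same potential pitfall about the mean value inequality for subsolutions, which your Kato-inequality remark resolves correctly.
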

	
	The proof is exactly the same as the proof in \cite{CM5}. Although now the space consists of subsolutions, one can see that the discussions in \cite{CM5} (like \cite[Lemma 3.4]{CM5}, \cite[Lemma 4.1]{CM5} - actually in the papers cited there, these lemmas were stated for subsolutions) are also valid for subsolutions. 
	
	Once we have Theorem \ref{thm:Thm0.5inCM}, let $M_t=\sqrt{-t}\Sigma$ and we notice that by \eqref{eq:Eigentosubsol} we can transfer an eigenfunction $\phi_i$ to a subsolution $v_i$ of the heat equation on $\sqrt{-t}\Sigma$ which belongs to $\tilde{\mathcal P}_{-2\mu+2C}$. Thus, a consequence of Theorem \ref{thm:Thm0.5inCM} is the following Theorem, which is the analogue of Theorem \cite[Theorem 0.7]{CM5}.
	
	\begin{theorem}
		There exists $C_n$ so that $	\cN(\mu)\leq C_n\lambda(\Sigma)(-(\mu-C))^n 
		$ for $-(\mu-C)\geq 1/2$.
	\end{theorem}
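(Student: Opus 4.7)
My plan is to adapt the argument in \cite[Section 4]{CM5} deriving a counting estimate for eigenvalues from the polynomial-growth dimension bound. Set $N=\cN(\mu)$ and let $\phi_1,\ldots,\phi_N$ be linearly independent eigenfunctions with $L\phi_i=\mu_i\phi_i$ and $\mu_i\geq \mu$; write $C=\sup_\Sigma(|A|^2+\tfrac12)$, finite by Lemma \ref{lem:eigenconverge-Abound}. For each $\phi_i$ I define
\[
v_i(y,t)=(-t)^{-\mu'_i}\phi_i\bigl(y/\sqrt{-t}\bigr),\qquad \mu'_i:=\mu_i-C,
\]
on the self-similar ancient MCF $M_t=\sqrt{-t}\,\Sigma$. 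A direct differentiation together with $L\phi_i=\mu_i\phi_i$ yields
\[
(\partial_t-\Delta_{M_t})v_i=\frac{|A|^2+\tfrac12-C}{-t}\,v_i,
\]
whose prefactor is everywhere nonpositive; hence Kato's inequality shows that $|v_i|$ is a subsolution of the heat equation on $M_t$. Since this derivation is linear in $\phi_i$, the same equation and the subsolution property for the absolute value extend to any combination $v_\psi=\sum c_iv_i$ associated to $\psi=\sum c_i\phi_i$.

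The assignment $\psi\mapsto v_\psi$ is linear and injective (evaluating at $t=-1$ recovers $\psi$), so the image $\mathcal{W}:=\{v_\psi\}$ is an $N$-dimensional vector space whose elements have absolute values that are subsolutions of the heat equation. To control growth I will use Lemma \ref{lem:eigenfunctionbound}, $|\phi_i|(x)\leq C\|\phi_i\|_{L^2}(4+|x|^2)^{-\mu'_i}$, which substituted into $v_\psi$ gives
\[
|v_\psi(y,t)|\leq C\sum_i|c_i|\bigl(4|t|+|y|^2\bigr)^{-\mu'_i}\leq C'\bigl(1+|y|^d+|t|^{d/2}\bigr),\qquad d:=-2(\mu-C),
\]
since $\mu_i\geq \mu$ forces $-\mu'_i\leq d/2$. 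Thus the absolute value of every element of $\mathcal{W}$ lies in $\tilde{\cP}_d$.

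Applying Theorem \ref{thm:Thm0.5inCM} to the MCF $M_t=\sqrt{-t}\,\Sigma$, whose entropy equals $\lambda(\Sigma)$ by scale invariance of $F$, then bounds the dimension by $C_n\lambda(\Sigma)d^n$, which rewrites as $\cN(\mu)\leq C_n\lambda(\Sigma)(-(\mu-C))^n$ under the assumption $d=-2(\mu-C)\geq 1$. The main obstacle is that Theorem \ref{thm:Thm0.5inCM} is stated for the cone of subsolutions, whereas $\mathcal{W}$ is a genuine linear space of functions only whose absolute values are subsolutions. I will resolve this either by splitting $v_\psi=v_\psi^+-v_\psi^-$ (each of $v_\psi^\pm$ being an honest subsolution by Kato, at the cost of doubling the dimension harmlessly) or, equivalently, by observing that the mean value and parabolic Harnack estimates underlying the proof of Theorem \ref{thm:Thm0.5inCM} use only the subsolution inequality for $|u|$; both viewpoints legitimize the desired application.
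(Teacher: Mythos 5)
Your proof follows the same strategy as the paper: transfer eigenfunctions of $L$ to ancient subsolutions of the heat equation on $M_t=\sqrt{-t}\,\Sigma$ via self-similar scaling, establish polynomial growth from Lemma~\ref{lem:eigenfunctionbound}, and invoke the dimension bound of Theorem~\ref{thm:Thm0.5inCM} with $d=-2(\mu-C)$. The difference is in how the linearity is handled, and here your version is cleaner. The paper defines $v_i(y,t)=(-t)^{-\mu'_i}|\phi_i|(y/\sqrt{-t})$ with an absolute value (see \eqref{eq:Eigentosubsol}), which produces individual subsolutions but not a linear space, so it is not explicit to what $N$-dimensional object the dimension bound is applied. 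You scale $\phi_i$ itself and make the key observation that the resulting evolution equation
\begin{equation*}
(\partial_t-\Delta_{M_t})v_i=\frac{|A|^2+\tfrac12-C}{-t}\,v_i
\end{equation*}
has a zeroth-order coefficient that is nonpositive \emph{and independent of $i$} (the eigenvalue dependence is exactly cancelled by the choice of exponent $-\mu'_i$). Hence every $v_\psi$ in the $N$-dimensional span $\mathcal{W}$ satisfies the same equation, and Kato's inequality then makes $|v_\psi|$ a subsolution for all $\psi$ simultaneously; the map $\psi\mapsto v_\psi$ is visibly injective by restriction to $t=-1$. The growth bound is correct as written once one notes $\mu_i\leq\lambda_1(\Sigma)\leq C$, so $-\mu'_i\geq 0$ and there is no blow-up near the spacetime origin, and the $\|\phi_i\|_{L^2}=1$ normalization absorbs the eigenfunction norms. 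One caution: your first proposed fix (splitting $v_\psi=v_\psi^+-v_\psi^-$ and ``doubling the dimension'') is not quite right, since the nonnegative elements of $\tilde\cP_d$ form a convex cone rather than a vector space, so ``doubling'' has no clear meaning; but your second fix --- that the estimates behind Theorem~\ref{thm:Thm0.5inCM} only use the subsolution property of $|u|$ (equivalently of $u^2$) --- is exactly the right observation, and it is the same point the paper makes when it remarks that the relevant lemmas of \cite{CM5} already hold for subsolutions.
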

	This theorem immediately shows the following properties of the spectrum of $L$ on $\Sigma$:
	\begin{itemize}
		\item the spectrum of $L$ on $\Sigma$ is discrete,
		\item the eigenvalues $\mu_i\to -\infty$ as $i\to\infty$.
	\end{itemize}
	
	Now let us sketch the proof of Proposition \ref{prop:HKnoncpt}. The proof is the same as that proof of \cite[Theorem 5.3]{CM5}. The only change is to replace $\cL$ with $L$. All the previous Theorems suggest that the proof is almost verbatim.
	
	\subsection{The estimate of $v$}
	Let $v$ be the solution to the equation
	$\begin{cases}
		\partial_t v=L_\Sigma v\\
		v|_{t=T}=v_0
	\end{cases}$.
	Here $v_0\geq 0$ is supported on $B_{\mathbf{r}(M_T)}$. We can solve $v$ by convoluting the initial data with the heat kernel of $L_\Sigma$. Equivalently, we can expand $v$ into Fourier series in the weighted $L^2$ space, namely
	$v=\sum_{i=1}^\infty \mathrm{e}^{\lambda_i t}a_i \phi_i,$
	if
	$v_0=\sum_{i=1}^\infty a_i \phi_i.$ 
	We want to estimate $v_0$ near the boundary of $B_{\mathbf{r}(M_{T+t})}$ where $t\leq 1$. 
	
	\begin{proposition}\label{prop:v bound}
		There exists a constant $C$ only depending on the shrinker $\Sigma$ such that 
		\[\|v\|_{C^2(\Sigma)}\leq C\mathrm{e}^{Ct}\|v_0\|_{C^1(B_{\mathbf{r}(M_T)})}.\]
	\end{proposition}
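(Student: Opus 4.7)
The plan is to combine the parabolic maximum principle for a $C^0$ bound with Bernstein--Bochner estimates for the first and second derivatives, exploiting the asymptotically conical structure of $\Sigma$ (so that $|A|$, $|\nabla A|$, $|\nabla^2 A|$ decay like inverse powers of $|x|$) together with the compact support of $v_0$.

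First I would establish the $C^0$ bound. Because $\Sigma$ is asymptotically conical, $V := |A|^2 + \tfrac12$ is uniformly bounded by some constant $C_0$. Setting $\tilde v = e^{-C_0 t} v$, we obtain $\partial_t \tilde v = \mathcal L_\Sigma \tilde v + (V - C_0)\tilde v$ with $V-C_0 \le 0$, so the maximum principle yields $\|v(t)\|_{C^0(\Sigma)} \le e^{C_0 t}\|v_0\|_{C^0}$. Next, for the gradient bound I would run a Bochner computation: differentiating the equation and using the identity $\nabla \Delta_\Sigma v = \Delta_\Sigma \nabla v - \mathrm{Ric}_\Sigma(\nabla v, \cdot)$ together with $\nabla\langle x,\nabla v\rangle = \nabla v + \mathrm{Hess}_v \cdot x^\top$, one obtains
\begin{equation*}
\partial_t |\nabla v|^2 \;\le\; L_\Sigma |\nabla v|^2 \;-\; 2|\nabla^2 v|^2 \;+\; C(|\nabla v|^2 + v\,|\nabla v|),
\end{equation*}
where the constant $C$ absorbs the bounded Bakry--Emery Ricci of $\Sigma$ and the bound on $|\nabla V| = 2|A||\nabla A|$. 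Dropping the favorable $-2|\nabla^2 v|^2$ term, applying Cauchy--Schwarz, and using the $C^0$ bound on $v$, the maximum principle gives $\|\nabla v(t)\|_{C^0} \le C e^{Ct}\|v_0\|_{C^1}$. The analogous Bochner identity for $|\nabla^2 v|^2$, which introduces only lower-order terms bounded in $|\nabla v|$, $v$, and the (bounded) curvature derivatives of $\Sigma$, yields $\|\nabla^2 v(t)\|_{C^0} \le C e^{Ct}\|v_0\|_{C^1}$ by the same mechanism; summing gives the stated $C^2$ estimate.

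The main obstacle is that $\Sigma$ is non-compact and the drift $-\tfrac12\langle x,\nabla\cdot\rangle$ is unbounded, so the maximum principle requires justification that the auxiliary quantities $\tilde v$, $|\nabla v|^2$, $|\nabla^2 v|^2$ decay at spatial infinity and attain their suprema in the interior. This is where the compact support of $v_0$ enters: via the heat-kernel representation $v(x,t) = \int \mathcal H(x,y,t)\,v_0(y)\,e^{-|y|^2/4}\,d\mu$ from Proposition \ref{prop:HKnoncpt}, together with the Gaussian-type decay of $\mathcal H$ and the eigenfunction decay estimate in Lemma \ref{lem:eigenfunctionbound}, $v(\cdot,t)$ and its derivatives decay rapidly as $|x| \to \infty$; local parabolic Schauder estimates on balls of uniform geometric size then promote this to decay of $|\nabla v|$ and $|\nabla^2 v|$. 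With the relevant quantities vanishing at infinity, the maximum principle applies to each Bochner inequality, and the exponential factor $e^{Ct}$ emerges with $C$ depending only on the uniform geometric bounds of $\Sigma$.
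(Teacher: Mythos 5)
Your Bochner--maximum-principle scheme matches the paper's computation almost line by line: the evolution inequality you write for $\tfrac12|\nabla v|^2$ (using bounded Bakry--\'Emery Ricci, bounded $|A|$ and $|\nabla|A|^2|$ on the asymptotically conical $\Sigma$, and the already-obtained $C^0$ bound to control $v\langle\nabla v,\nabla|A|^2\rangle$) is exactly the paper's, as is the analogous step for $|\nabla^2 v|^2$, and both lead to the Gr\"onwall factor $e^{Ct}$ by the same mechanism.

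The one place where you depart from the paper, and where your argument is not actually closed, is the justification that the quantities to which you apply the maximum principle vanish at spatial infinity. You invoke a ``Gaussian-type decay of $\mathcal H$'' and the eigenfunction estimate from Lemma \ref{lem:eigenfunctionbound}. The former is never established in the paper (the heat kernel is built only as an $L^2$ eigenfunction expansion, with no off-diagonal Gaussian bounds), and the latter reads $|\phi|(x)\lesssim(4+|x|^2)^{-(\mu-C)}$, which for the infinitely many eigenvalues with $\mu<C$ is a \emph{growth} bound rather than a decay bound, so the modewise sum does not obviously decay in $x$. The paper sidesteps all of this with a barrier: since $v_0$ is compactly supported and $\phi_1>0$ with $\phi_1(x)\to 0$ at infinity, one may pick $A$ so large that $|v_0|\le A\phi_1$; positivity of the heat kernel and the fact that $e^{\lambda_1 t}\phi_1$ solves the linear equation exactly then give $|v(x,t)|\le Ae^{\lambda_1 t}\phi_1(x)\to 0$. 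The decay of $|\nabla v|$ and $|\nabla^2 v|$ then follows from the $C^0$ decay by local interior estimates, as you suggest at the very end. You should replace the heat-kernel-decay claim with this barrier comparison; as written, that intermediate step in your proposal is not supported by anything the paper actually proves.
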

	
	\begin{proof}
		First we prove 
		$\|v\|_{C^0(\Sigma)}\leq C\mathrm{e}^{t}\|v_0\|_{C^0(B_{\mathbf{r}(M_T)})}.$ We start by proving that $|v|(x)\to 0$ as $x\to\infty$. In fact, we can always choose $\pm A\phi_1$ as the upper barrier and the lower barrier of the initial data $v_0$, where $A$ is a large constant. Then $v(x)\leq \mathrm{e}^{\lambda_1 t}A\phi_1(x)$ (this can be seen from expressing the solutions to the linearized equation by convoluting the initial value with the heat kernel), and since $\phi_1(x)\to 0$ as $x\to\infty$ (see \cite{BW}), $|v|(x)\to 0$ as $x\to \infty$.
		
		Then the maximum of $v$ is attained at some bounded region, so we can use the the maximum principle to show that 
		$\partial_t(\max v)\leq (|A|^2+1/2)\max v.$
		We also have a similar estimate for the minimum. Therefore, $\|v\|_{C^0(\Sigma)}\leq C\mathrm{e}^t\|v_0\|_{C^0(B_{\mathbf{r}(M_T)})}$.
		
		Next we estimate the gradient of $v$. $f:=\frac{1}{2}|\nabla v|^2$ satisfies the equation:
		\begin{equation*}
			\partial_t f=\cL f+2(|A|^2+1/2)f-|\nabla ^2 v|^2 -(\Ric +\nabla x)(\nabla v,\nabla v)+v\langle \nabla v,\nabla |A|^2\rangle.
		\end{equation*}
		Observe that on $\Sigma$, $\Ric+\nabla x$, $|A|^2$ and $\nabla|A|^2$ are all bounded from above. Moreover from previous discussions $\|v\|_{C^0}$ is bounded by $C\mathrm{e}^t\|v_0\|_{C^0}$. Therefore,	$	\partial_t f\leq \cL f+C f+C\mathrm{e}^{t}\|v_0\|_{C^0},$	where $C$ only depends on $\Sigma$. As a consequence, $g:=\mathrm{e}^{-Ct}f-C'\mathrm{e}^{Ct}\|v_0\|^2_{C^0}$ satisfies the equation $	\partial_t g\leq \cL g\leq Lg.$ Then use the same argument as above for the maximum shows that
		$g\leq  C\mathrm{e}^t\|g\|_{C^0(B_{\mathbf{r}(M_T)})}.$
		Therefore $	f \leq C\mathrm{e}^{Ct}(\|f\|_{C^0(B_{\mathbf{r}(M_T)})}+\|v_0\|^2_{C^0(B_{\mathbf{r}(M_T)})}).$ 
		Thus,
		\begin{equation*}
			\|v\|_{C^1(\partial B_{\mathbf{r}(M_{T+t})})}\leq C\mathrm{e}^{Ct}\|v_0\|_{C^1(B_{\mathbf{r}(M_T)})}.
		\end{equation*}
		
		Finally we prove the $C^2$ estimate of $v$. The idea is similar to the $C^1$ estimate. Let $f=\frac{1}{2}|D^2 v|^2$, then
		\[\partial_t f=\langle D^2 v, D^2 \partial_t v\rangle=\langle D^2 v,D^2\Delta v\rangle -\frac{1}{2}\langle D^2 v,D^2 \langle x,\nabla v\rangle \rangle +\langle D^2v, D^2(|A|^2+1/2)v\rangle.\]
		Then similar to the calculation of the previous $C^1$ case, we have
		\begin{equation*}
			\partial_t f\leq \cL f+C f+C |v|^2+C|\nabla v|^2\leq \cL f+Cf +C\|v\|_{C^1}^2.
		\end{equation*}
		Here $C$ depends on the curvature and the derivative of the curvature on the shrinker, and we know they are uniformly bounded. Then just like above, now we choose $g:= \mathrm{e}^{-Ct}f-C'\mathrm{e}^{Ct}\|v_0\|_{C^1}^2$, then $g$ is an upper barrier and the maximum principle shows that 
		$g\leq C\mathrm{e}^{t}\|g\|_{C^0(B_{\mathbf{r}(M_T)})}.$
		So again we obtain $		f \leq C\mathrm{e}^{Ct}(\|f\|_{C^0(B_{\mathbf{r}(M_T)})}+\|v_0\|^2_{C^1(B_{\mathbf{r}(M_T)})}),$ and as a consequence we get 
		$	\|v\|_{C^2(\partial B_{\mathbf{r}(M_{T+t})})}\leq C\mathrm{e}^{Ct}\|v_0\|_{C^2(B_{\mathbf{r}(M_T)})}.$
	\end{proof}
	
	We use the integration by parts formula in the proof of Proposition \ref{LmCM3}. To verify that we can do integration by parts of higher-order derivatives of $v$, we need the following lemma:
	
	\begin{lemma}
		We have $	\int_\Sigma (|v|^2+|\nabla v|^2 +|\cL v|^2)\mathrm{e}^{-\frac{|x|^2}{4}}d\mu <\infty.$
	\end{lemma}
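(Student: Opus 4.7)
My plan is to deduce the lemma directly from the global pointwise $C^2$-bound on $v$ that was just established in Proposition \ref{prop:v bound}, together with the fact that $\Sigma$ is a shrinker of finite entropy, so that the Gaussian weight $e^{-|x|^2/4}\,d\mu$ is a finite measure on $\Sigma$. The only real growth in the integrand is the factor $|x|$ inside the drifted Laplacian $\cL v = \Delta_\Sigma v - \tfrac12\langle x,\nabla v\rangle$, and this growth is absorbed by the Gaussian weight via the Ecker-type inequality already invoked in the paper (the inequality \eqref{EqEcker}).

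First, Proposition \ref{prop:v bound} gives constants $C_0,C_1,C_2<\infty$ (depending on $t$ and $\|v_0\|_{C^1}$, both fixed) such that pointwise on all of $\Sigma$
\[
|v(x)|\leq C_0,\qquad |\nabla v(x)|\leq C_1,\qquad |\Hess_v(x)|\leq C_2.
\]
Second, since $\Sigma$ has finite entropy, $F(\Sigma)=\int_\Sigma e^{-|x|^2/4}d\mu<\infty$. Combining these two facts trivially yields
\[
\int_\Sigma |v|^2 e^{-|x|^2/4}d\mu\le C_0^2\, F(\Sigma),\qquad
\int_\Sigma |\nabla v|^2 e^{-|x|^2/4}d\mu\le C_1^2\, F(\Sigma),
\]
which handles the first two terms.

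For the drifted Laplacian term I would write
\[
|\cL v|^2\le 2|\Delta_\Sigma v|^2+\tfrac12|x|^2|\nabla v|^2\le 2nC_2^2+\tfrac12 C_1^2|x|^2,
\]
so the only thing left to check is the finiteness of $\int_\Sigma |x|^2 e^{-|x|^2/4}d\mu$. Applying Ecker's inequality \eqref{EqEcker} with $f\equiv 1$ gives
\[
\int_\Sigma |x|^2 e^{-|x|^2/4}d\mu\le 4n\int_\Sigma e^{-|x|^2/4}d\mu=4n\,F(\Sigma)<\infty,
\]
and therefore $\int_\Sigma |\cL v|^2 e^{-|x|^2/4}d\mu<\infty$.

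The only potentially subtle point is the interplay between the unbounded drift coefficient $|x|$ and the non-compactness of $\Sigma$; but this is resolved precisely by the Gaussian weight and the Ecker estimate, which is the same mechanism used throughout the paper (e.g.\ in the proof of Proposition \ref{LmCM3}) to tame the linear-in-$|x|$ term coming from $\cL$. Since the $C^2$ bound is global, no localization or cutoff argument is needed here, and no eigenfunction expansion is required.
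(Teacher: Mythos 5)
Your proof is correct and follows essentially the same route as the paper's: invoke the global $C^2$-bound from Proposition \ref{prop:v bound}, split $\cL v$ into $\Delta_\Sigma v$ and $\tfrac12\langle x,\nabla v\rangle$, handle the latter by Cauchy--Schwarz, and absorb the resulting $|x|^2$ factor with the Gaussian weight. You are a bit more explicit than the paper in closing the last gap by citing Ecker's inequality \eqref{EqEcker} with $f\equiv 1$ to get $\int_\Sigma|x|^2e^{-|x|^2/4}d\mu<\infty$ (the paper leaves this implicit; it also follows directly from the polynomial volume growth that comes with finite entropy), but this is a presentational refinement rather than a genuinely different argument.
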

	
	\begin{proof}
		Proposition \ref{prop:v bound} implies that 
		$		\int_\Sigma (|v|^2+|\nabla v|^2)\mathrm{e}^{-\frac{|x|^2}{4}}d\mu <\infty.$
		So it only remains to show
		$	\int_\Sigma |\cL v|^2 \mathrm{e}^{-\frac{|x|^2}{4}}d\mu <\infty,$
		and we only need to prove that 
		\[	\int_\Sigma \|v\|_{C^2(\Sigma)} \mathrm{e}^{-\frac{|x|^2}{4}}d\mu <\infty,\ \text{and}\ \int_\Sigma |\langle x,\nabla v\rangle|^2 \mathrm{e}^{-\frac{|x|^2}{4}}d\mu <\infty.\]
		For the first integral, we use Proposition \ref{prop:v bound} 
		$ \|v\|_{C^2(\Sigma)}\leq C \mathrm{e}^{Ct}\|v_0\|_{C^2(B_{\mathbf{r}(M_T)})},$
		so the first integral is finite; for the second integral, we use the Cauchy-Schwarz inequality to see that
		$|\langle x,\nabla v\rangle|^2\leq |x|^2 |\nabla v|^2,$
		so the second integral is finite.
	\end{proof}
	
	As a consequence, we have the following corollary.
	
	\begin{corollary}\label{cor:integration by parts}
		For any $u$ satisfying
		$		\int_\Sigma (|u|^2+|\nabla u|^2)\mathrm{e}^{-\frac{|x|^2}{4}}d\mu <\infty,$
		we have 
		\begin{equation*}
			\begin{aligned}
				&\int_{\Sigma} \nabla u\cdot \nabla v \mathrm{e}^{-\frac{|x|^2}{4}}d\mu
				=
				-\int_{\Sigma} u \cL v \mathrm{e}^{-\frac{|x|^2}{4}}d\mu
				=
				-\int_{\Sigma} v \cL u   \mathrm{e}^{-\frac{|x|^2}{4}}d\mu,\quad \mathrm{and}\\ 
				&		\int_{\Sigma} \nabla u\cdot \nabla \cL v \mathrm{e}^{-\frac{|x|^2}{4}}d\mu
				=
				-\int_{\Sigma} \cL u \cdot \cL v \mathrm{e}^{-\frac{|x|^2}{4}}d\mu.
			\end{aligned}
		\end{equation*}
	\end{corollary}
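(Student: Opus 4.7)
The strategy is a standard cutoff-and-pass-to-the-limit argument, exploiting the Gaussian weight to drive boundary contributions to zero and the integrability hypotheses established in the preceding lemma. Let $\eta_R:\R\to[0,1]$ be a smooth cutoff with $\eta_R\equiv 1$ on $[0,R]$, $\eta_R\equiv 0$ on $[2R,\infty)$, and $|\eta_R'|\leq 2/R$; set $\chi_R(x)=\eta_R(|x|)$, which is Lipschitz on $\Sigma$ with $|\nabla\chi_R|\leq C/R$ and compact support in $\Sigma\cap B_{2R}$. The key identity on the weighted Gaussian measure is that $\cL_\Sigma$ can be written as $\cL_\Sigma v \, e^{-|x|^2/4}=\dv_{\Sigma}(e^{-|x|^2/4}\nabla v)$, so that for compactly supported test functions $\phi$ the usual divergence theorem gives
\[
\int_{\Sigma}\phi\,\cL_\Sigma v\, e^{-|x|^2/4}d\mu = -\int_{\Sigma}\nabla\phi\cdot\nabla v\,e^{-|x|^2/4}d\mu.
\]

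Applying this with $\phi=\chi_R u$ yields
\[
\int_{\Sigma}\chi_R\,\nabla u\cdot\nabla v\, e^{-|x|^2/4}d\mu
= -\int_{\Sigma}\chi_R\, u\,\cL_\Sigma v\, e^{-|x|^2/4}d\mu - \int_{\Sigma} u\,\nabla\chi_R\cdot\nabla v\, e^{-|x|^2/4}d\mu.
\]
The remainder term is controlled by Cauchy--Schwarz:
\[
\left|\int_{\Sigma}u\,\nabla\chi_R\cdot\nabla v\, e^{-|x|^2/4}d\mu\right|
\leq \frac{C}{R}\|u\|_{L^2_w(\Sigma)}\|\nabla v\|_{L^2_w(\Sigma)},
\]
which vanishes as $R\to\infty$ because $\|u\|_{L^2_w}<\infty$ by hypothesis and $\|\nabla v\|_{L^2_w}<\infty$ by Proposition \ref{prop:v bound}. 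Meanwhile the two main integrands are dominated by $|\nabla u||\nabla v|e^{-|x|^2/4}\in L^1$ and $|u||\cL_\Sigma v|e^{-|x|^2/4}\in L^1$ (the latter by the preceding lemma and Cauchy--Schwarz), so dominated convergence as $R\to\infty$ gives the first equality $\int\nabla u\cdot\nabla v\,e^{-|x|^2/4}=-\int u\,\cL_\Sigma v\,e^{-|x|^2/4}$. Symmetry of the left-hand side immediately gives the second equality $-\int v\,\cL_\Sigma u\, e^{-|x|^2/4}$ after reading the identity in reverse with $u$ and $v$ swapped (the hypotheses are symmetric once we interpret $\int v\,\cL_\Sigma u\,e^{-|x|^2/4}$ as the equal quantity $-\int\nabla v\cdot\nabla u\, e^{-|x|^2/4}$).

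For the last identity, the idea is to apply the first identity with $v$ replaced by $\cL_\Sigma v$. Since $\partial_tv=L_\Sigma v$ and $L_\Sigma$ commutes with itself, $\cL_\Sigma v = L_\Sigma v - (|A|^2+\tfrac12)v$ inherits the same structural estimates as $v$ via Proposition \ref{prop:v bound} applied to $\partial_tv=L_\Sigma v$ differentiated once more in time; in particular $\|\cL_\Sigma v\|_{L^2_w}$ and $\|\nabla \cL_\Sigma v\|_{L^2_w}$ are finite. With these weighted bounds in hand, the cutoff argument above repeats verbatim to yield $\int\nabla u\cdot\nabla\cL_\Sigma v\, e^{-|x|^2/4}=-\int\cL_\Sigma u\cdot\cL_\Sigma v\, e^{-|x|^2/4}$.

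The main technical obstacle is verifying the integrability $\|\nabla\cL_\Sigma v\|_{L^2_w}<\infty$ needed for the second identity. This does not follow directly from the preceding lemma, which only controls $|\cL_\Sigma v|^2$. The remedy is to iterate the Bochner-type argument used in the proof of Proposition \ref{prop:v bound}: since $\partial_t(\cL_\Sigma v)=L_\Sigma(\cL_\Sigma v)$ (using commutativity and the fact that the coefficients of $L_\Sigma$ are time-independent on $\Sigma$), one obtains $C^k$ pointwise estimates on $\cL_\Sigma v$ in terms of $\|v_0\|_{C^k}$; combining this with the Gaussian weight and $|x|^2$-weighted Ecker-type inequalities yields the required weighted integrability.
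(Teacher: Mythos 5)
Your proof of the first identity is a correct reprise of the standard cutoff-and-pass-to-the-limit argument for the drifted Laplacian with Gaussian weight; the paper instead obtains it as a citation to Corollary 3.10 of \cite{CM1} together with the preceding integrability lemma, so you are reproving the cited lemma, which is fine. The gap is in your treatment of the second identity. You assert that $\cL_\Sigma$ commutes with $L_\Sigma$ so that $\partial_t(\cL_\Sigma v)=L_\Sigma(\cL_\Sigma v)$; this is false. Writing $L_\Sigma=\cL_\Sigma+V$ with $V=|A|^2+\tfrac12$, one has
\[
[\cL_\Sigma,L_\Sigma]f=[\cL_\Sigma,V]f=f\,\cL_\Sigma V+2\nabla V\cdot\nabla f,
\]
which vanishes only for constant $V$, and $V$ is non-constant on any asymptotically conical shrinker (indeed $|A|^2\to 0$ at infinity). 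The correct evolution is
\[
\partial_t(\cL_\Sigma v)=\cL_\Sigma(L_\Sigma v)=L_\Sigma(\cL_\Sigma v)+v\,\cL_\Sigma V+2\nabla V\cdot\nabla v,
\]
an inhomogeneous equation, so $\cL_\Sigma v$ does \emph{not} ``inherit the same structural estimates'' via Proposition \ref{prop:v bound} applied directly. Your strategy can be repaired: the commutator source terms are controlled pointwise since $V$, $\nabla V$, $\cL_\Sigma V$ are bounded with decay on $\Sigma$, so the Bochner-type iteration goes through after tracking them; alternatively, since the coefficients of $L_\Sigma$ are smooth and time-independent, parabolic Schauder smoothing upgrades the $C^2$ bound in Proposition \ref{prop:v bound} to interior $C^3$ control for $t>0$, which together with finite entropy gives $\nabla\cL_\Sigma v\in L^2_w$. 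As written, however, the commutation step is an error, not merely an omission.

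A smaller point: your derivation of $-\int v\,\cL_\Sigma u$ ``by symmetry'' is circular unless $\cL_\Sigma u\cdot v\in L^1_w$, which the stated hypothesis ($u,\nabla u\in L^2_w$ only) does not supply. The paper's statement is equally loose here, and in the applications $u$ has the requisite extra regularity, but you should flag that the third expression in the first chain needs this additional integrability rather than presenting it as automatic.
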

	
	\begin{proof}
		This is a consequence of the previous lemma together with Corollary 3.10 of \cite{CM1}.
	\end{proof}

\end{document}